\documentclass[11pt]{article}

\usepackage{amsmath,amsthm,amssymb,fancyhdr,graphicx,bbm,cancel,color,mathrsfs,todonotes,xypic, pinlabel,graphbox}
\usepackage[all]{xy}

\usepackage{hyperref}

\usepackage{bm}

\usepackage{lipsum}

\usepackage{tabto}
\newcommand\Tab{\tab \hspace{-3cm}}

\let\OLDthebibliography\thebibliography
\renewcommand\thebibliography[1]{
  \OLDthebibliography{#1}
  \setlength{\parskip}{0pt}
  \setlength{\itemsep}{0pt plus 0.3ex}
}

\newtheorem{thm}{Theorem}[section]
\newtheorem{mainthm}{Theorem}

\newtheorem{lem}[thm]{Lemma}
\newtheorem{prop}[thm]{Proposition}
\newtheorem{cor}[thm]{Corollary}
\theoremstyle{definition}
\newtheorem{defn}[thm]{Definition}
 
\newtheorem{example}[thm]{Example}

\newtheorem{rmk}[thm]{Remark}
\theoremstyle{remark}

\newcommand{\dmo}{\DeclareMathOperator}

\newcommand{\R}{\mathbb{R}}
\newcommand{\Q}{\mathbb{Q}}
\newcommand{\co}{\mathbb{C}}\newcommand{\Z}{\mathbb{Z}}

\newcommand{\al}{\alpha}\newcommand{\be}{\beta}\newcommand{\ga}{\gamma}\newcommand{\de}{\delta}\newcommand{\ep}{\epsilon}
\newcommand{\Om}{\Omega}\newcommand{\De}{\Delta}\newcommand{\ka}{\kappa}\newcommand{\la}{\lambda}
\newcommand{\Ga}{\Gamma}
\newcommand{\what}{\widehat}\newcommand{\wtil}{\widetilde}\newcommand{\Si}{\Sigma}\newcommand{\Lam}{\Lambda}
\newcommand{\cd}{\cdots}
\newcommand{\sbs}{\subset}\newcommand{\pa}{\partial}

\newcommand{\xra}{\xrightarrow}

\newcommand{\ra}{\rightarrow}
\newcommand{\hra}{\hookrightarrow}\newcommand{\onto}{\twoheadrightarrow}
\newcommand{\bb}[1]{\mathbb{#1}}\newcommand{\ca}[1]{\mathcal{#1}}\newcommand{\un}[1]{\underline{#1}}\newcommand{\ov}[1]{\overline{#1}}

\newcommand{\fr}[2]{\frac{#1}{#2}}

\newcommand{\rt}{\rtimes}

\newcommand{\ti}{\times}

\newcommand{\pair}[1]{\langle #1 \rangle}
\dmo{\sgn}{sign}\dmo{\Span}{span}
\dmo{\we}{\wedge}
\dmo{\ind}{ind}\dmo{\Ind}{Ind}
\dmo{\bop}{\bigoplus}\dmo{\pic}{Pic}
\dmo{\coker}{coker}\dmo{\vol}{Vol}\dmo{\gal}{Gal}\dmo{\perm}{Perm}
\dmo{\tor}{Tor}\dmo{\ext}{Ext}\dmo{\Ext}{Ext}
\dmo{\aut}{aut}
\dmo{\Aut}{Aut}
\dmo{\inn}{Inn}\dmo{\var}{Var}
\dmo{\dep}{depth}\newcommand{\rest}[2]{#1\bigr\vert_{#2}}
\dmo{\ad}{ad}\dmo{\curl}{curl}
\dmo{\hy}{\bb H}\dmo{\Sl}{SL}
\dmo{\SO}{SO}\dmo{\psl}{PSL}
\dmo{\isom}{Isom}\dmo{\Isom}{Isom}
\dmo{\conf}{Conf}
\dmo{\stab}{Stab}\dmo{\Jac}{Jac }
\dmo{\diam}{diam}\dmo{\fix}{Fixed}\dmo{\Fix}{Fix}
\dmo{\injR}{injRad}\dmo{\Ad}{Ad}
\dmo{\esv}{ess-vol}\dmo{\out}{Out}\dmo{\Out}{Out}
\dmo{\nil}{Nil}\dmo{\sol}{Sol}
\dmo{\Div}{div}
\dmo{\SU}{SU}
\dmo{\SP}{SP}
\dmo{\Sp}{Sp}
\dmo{\rk}{rk}
\dmo{\rank}{rank}
\dmo{\psp}{PSp}\dmo{\psu}{PSU}
\dmo{\PU}{PU}\dmo{\pgl}{PGL}
\dmo{\Mod}{Mod}\dmo{\range}{Range}
\dmo{\eu}{eu}\dmo{\mi}{mi}
\dmo{\Log}{Log}\dmo{\supp}{supp}
\dmo{\maps}{Maps}\dmo{\Gr}{Gr}
\dmo{\Pin}{Pin}
\dmo{\Spin}{Spin}\dmo{\Str}{Str}
\dmo{\Sq}{Sq}\dmo{\Symp}{Symp}
\dmo{\pd}{PD}\dmo{\PD}{PD}\dmo{\sig}{Sig}
\dmo{\Set}{Set}\dmo{\Top}{Top}
\dmo{\ev}{ev}\dmo{\St}{St}
\dmo{\Pt}{Pt}\dmo{\pt}{pt}

\dmo{\colim}{colim }\dmo{\Pl}{PL}
\dmo{\String}{String}\dmo{\smear}{smear}

\dmo{\dev}{dev}
\dmo{\met}{Met}\dmo{\contact}{Contact}
\dmo{\teich}{Teich}\dmo{\Teich}{Teich}\dmo{\qi}{QI}
\dmo{\der}{Der}
\dmo{\cl}{Cliff}\dmo{\Cl}{Cl}
\dmo{\Pf}{Pf}
\dmo{\GL}{GL}
\dmo{\PSL}{PSL}
\dmo{\ch}{ch}\dmo{\diag}{diag}
\dmo{\grad}{grad}\dmo{\Char}{char}
\dmo{\spec}{Spec}\dmo{\Arg}{Arg}
\dmo{\rad}{rad}\dmo{\im}{Im}
\dmo{\Hom}{Hom}\dmo{\End}{End}
\dmo{\tr}{tr}\dmo{\id}{Id}
\dmo{\gl}{GL}
\dmo{\sym}{Sym}\dmo{\Sym}{Sym}
\dmo{\com}{Comm}
\dmo{\Lk}{Lk}
\dmo{\CAT}{CAT}
\dmo{\Rep}{Rep}
\dmo{\Res}{Res}
\dmo{\Conf}{Conf}
\dmo{\PConf}{PConf}
\dmo{\Push}{Push}
\dmo{\Cont}{Cont}
\dmo{\sm}{\setminus}
\dmo{\vn}{\varnothing}
\dmo{\disk}{\mathbb D}
\dmo{\Trd}{Trd}\dmo{\Mat}{Mat}
\dmo{\Riem}{Riem}
\dmo{\Diffn}{\Diff_0}\dmo{\diff}{diff}
\dmo{\Diff}{Diff}\dmo{\homeo}{Homeo}
\dmo{\Homeo}{Homeo}\dmo{\Fr}{Fr}
\dmo{\rot}{rot}\dmo{\Emb}{Emb}
\dmo{\Ham}{Ham}\dmo{\Met}{Met}
\dmo{\Ein}{Ein}\dmo{\CP}{\co P}
\dmo{\Per}{Per}\dmo{\Ric}{Ric}
\dmo{\Nrd}{Nrd}
\dmo{\Comp}{Comp}\dmo{\PSC}{PSC}
\dmo{\Cent}{Cent}\dmo{\Orb}{Orb}
\dmo{\aind}{a-ind}\dmo{\tind}{t-ind}
\dmo{\constant}{constant}
\dmo{\Td}{Td}
\dmo{\LMod}{LMod}
\dmo{\SMod}{SMod}
\dmo{\SDiff}{SDiff}
\dmo{\Br}{Br}
\dmo{\csch}{csch}
\dmo{\triv}{triv}
\dmo{\genus}{genus}
\dmo{\Homeq}{HomEq}
\dmo{\PP}{\mathbb{P}}
\dmo{\U}{U}
\dmo{\Gal}{Gal}
\dmo{\BDiff}{\wtil{\Diff}}
\dmo{\BAut}{\wtil{\Aut}}
\dmo{\Iso}{Iso}
\dmo{\SL}{SL}
\dmo{\Cone}{Cone}
\dmo{\codim}{codim}
\dmo{\II}{II}
\dmo{\I}{I}
\dmo{\InjRad}{InjRad}
\dmo{\Inn}{Inn}
\dmo{\sys}{sys}
\dmo{\Comm}{Comm}
\dmo{\PO}{PO}
\dmo{\vertex}{Vert}
\dmo{\POm}{P\Om}
\dmo{\ab}{ab}
\dmo{\PSO}{PSO}
\dmo{\CRS}{CRS}

\usepackage{enumerate,pdfpages,stmaryrd} 
\usepackage[margin=1.5in]{geometry}

\setcounter{tocdepth}{1}

\usepackage{tikz}
\usepackage{dsfont}
\usetikzlibrary{matrix}

\setlength{\parindent}{0pt}
\setlength{\parskip}{1.25ex}

\begin{document}

\providecommand{\keywords}[1]{\textit{Key words and phrases.} #1}

\title{Convex-compact subgroups of the Goeritz group}

\author{Bena Tshishiku}



\date{\today}

\maketitle

\begin{abstract}
Let $G<\Mod_2$ be the Goeritz subgroup of the genus-2 mapping class group. We show that finitely-generated, purely pseudo-Anosov subgroups of $G$ are convex cocompact in $\Mod_2$, addressing a case of a general question of Farb--Mosher. We also give a simple criterion to determine if a Goeritz mapping class is pseudo-Anosov, which we use to give very explicit convex-cocompact subgroups. In our analysis, a central role is played by the primitive disk complex $\ca P$. In particular, we (1) establish a version of the Masur--Minksy distance-formula for $\ca P$, (2) classify subsurfaces $X\subset S$ that are infinite-diameter holes of $\ca P$, and (3) show that $\ca P$ is quasi-isometric to a coned-off Cayley graph for $G$. 
\end{abstract}

\setcounter{tocdepth}{1}
\tableofcontents

\keywords{Geometric group theory, convex cocompactness, mapping class groups, curve complexes, 3-manifolds, Heegaard splittings}

\section{Introduction}

Let $S^3=V\cup_S \what V$ be the genus-2 Heegaard splitting of $S^3$ (so $V,\what V$ are handlebodies and $S=\pa V=\pa \what V$ is a closed surface of genus 2). The \emph{Goeritz group} $\bb G<\Mod(S)$ is the group of isotopy classes of homeomorphisms of $S$ that extend to both handlebodies $V,\what V$. The algebraic structure of $\bb G$ is well understood \cite{goeritz,scharlemann, akbas, cho}. In particular, $\bb G$ is generated by the elements $\alpha,\beta,\gamma,\delta$ pictured in Figure \ref{fig:generators}, and with respect to this generating set, $\bb G$ has a simple amalgamated structure: 
\begin{equation}\label{eqn:splitting}
\begin{array}{ccccccccccccccc}
\bb G&\cong \big[&(\Z_2&\times&\Z_2)&\rtimes&\Z&\big]*_{\Z_2\times\Z_2}\big[&(\Z_3&\rtimes&\Z_2)&\times& \Z_2&\big]\\
&&\alpha&&\gamma&&\beta&&\delta&&\gamma&&\alpha
\end{array}\end{equation}

In this paper we study geometric aspects of $\bb G$ as a subgroup of $\Mod(S)$. Our main results follow.

\begin{mainthm}[Purely pseudo-Anosov implies convex cocompact]\label{thm:ccc}
Finitely-generated, purely pseudo-Anosov subgroups of the genus-$2$ Goeritz group $\bb G$ are convex cocompact in the genus-$2$ mapping class group $\Mod(S)$. 
\end{mainthm}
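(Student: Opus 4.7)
The plan is to show that the orbit map $H \to \ca C(S)$ is a quasi-isometric embedding; by the characterizations of convex cocompactness due to Kent--Leininger and Hamenst\"adt (combined with hyperbolicity of $\ca C(S)$), this suffices to prove $H$ is convex cocompact in $\Mod(S)$. I would factor the orbit map through the primitive disk complex as
\[ H \longrightarrow \ca P \longrightarrow \ca C(S), \]
and establish quasi-isometric embeddedness of each stage on $H$.

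For the first stage, I use result (3) of the abstract: $\ca P$ is quasi-isometric to a coned-off Cayley graph $\wh{\bb G}$ of $\bb G$, where the cones are taken over vertex-stabilizers (stabilizers of primitive disks in $\ca P$). These stabilizers consist entirely of reducible mapping classes, so a purely pseudo-Anosov subgroup $H$ intersects every such stabilizer and its $\bb G$-conjugates trivially. Combining this with the paper's analysis of the $\bb G$-action on $\ca P$ --- via either acylindrical hyperbolicity or relative hyperbolicity in the sense of Dahmani--Groves--Osin or Hruska --- the $H$-orbit in $\wh{\bb G}$, hence in $\ca P$, is quasi-isometrically embedded, and $H$ is necessarily word-hyperbolic.

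For the second stage, I would invoke the distance formula (result (1)):
\[ d_{\ca P}(\mu, \nu) \asymp d_{\ca C(S)}(\mu, \nu) + \sum_{X \in \ca H} [\![d_X(\mu, \nu)]\!]_k, \]
where $\ca H$ is the family of proper infinite-diameter holes of $\ca P$ classified in result (2). A uniform bound on $\sum_{X \in \ca H} [\![d_X(\mu, h\mu)]\!]_k$ as $h$ ranges over $H$ would then yield $d_{\ca C(S)}(\mu, h\mu) \gtrsim d_{\ca P}(\mu, h\mu) - C \gtrsim |h|_H - C'$, completing the argument. Establishing this uniform bound is the main obstacle. The approach is as follows: each $X \in \ca H$ has a large reducible subgroup of $\bb G$ stabilizing it; if $d_X(\mu, h_n \mu) \to \infty$ for some sequence $h_n \in H$, a Behrstock--Kleiner--Minsky--Mosher-type pigeonhole/acylindricity argument would produce a non-trivial element of $H$ fixing a subsurface in $\bb G \cdot \ca H$, hence a reducible element of $H$, contradicting the pure pseudo-Anosov hypothesis. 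The explicit classification of holes (result (2)) is essential here, as it reduces the analysis to a finite family of $\bb G$-orbits of subsurfaces with well-understood stabilizers.
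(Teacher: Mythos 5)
Your architecture matches the paper's: factor the orbit map as $H\to\ca P(V)\to\ca C(S)$, handle the first stage via the coned-off Cayley graph and the second via the distance formula together with the classification of holes. However, there are two gaps, one of which is serious. In the first stage, trivial intersection of $H$ with the conjugates of the disk stabilizer $\bb G_E$ is not by itself enough to make the orbit map $H\to\Cone(\Ga_{\bb G},\bb G_E)$ a quasi-isometric embedding: you also need $H$ to be undistorted (quasiconvex) in $\bb G$, which the paper gets from the fact that $\bb G$ is virtually free (via the amalgam structure and Marshall Hall's theorem), and you need a criterion converting ``trivial intersection'' into ``q.i.\ embedded in the cone-off.'' The paper uses Abbott--Manning: for a quasiconvex $H$ in a hyperbolic group, the orbit map to the cone-off is a q.i.\ embedding iff $\Lam(H)$ misses the limit sets of all conjugates of $\bb G_E$, and disjointness follows from $\Lam(G_1)\cap\Lam(G_2)=\Lam(G_1\cap G_2)$ for quasiconvex subgroups. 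Relative or acylindrical hyperbolicity of the pair $(\bb G,\bb G_E)$ is not established (nor needed); as stated, your appeal to it does not close this step, and without undistortedness the conclusion is simply false (a distorted subgroup meeting all cosets trivially still fails to embed, since coning only decreases distances).

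The serious gap is in the second stage. Your plan is to prove a uniform bound on $\sum_{X}\{d_X(\mu,h\mu)\}_k$ over proper holes by arguing that $d_X(\mu,h_n\mu)\to\infty$ would ``produce a reducible element by a pigeonhole/acylindricity argument.'' That implication is essentially the content of the theorem and cannot be asserted: the reducibility criterion actually available (Prop.~3.1 of \cite{BBKL}, the paper's Proposition \ref{prop:reducibility}) requires a subsurface $Z$ and an element $g$ with $d_{\ca M(Z)}(\mu,g\mu)\ge c\,|g|$, i.e.\ a projection that is large \emph{linearly in the word length of $g$}, not merely unbounded along a sequence. Producing such a pair is the real work: the paper's Proposition \ref{prop:linear-factors} (adapted from \cite[Prop.~4.1]{BBKL}) uses the q.i.\ embedding $H\to\ca P(V)$, the distance formula, the BBF decomposition into finitely many families of overlapping subsurfaces, the Behrstock inequality to order the large subsurfaces, and a prefix decomposition $g=g_1\cdots g_k$ to localize the projections, before the reducibility criterion can be triggered. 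Note also that the paper's logic is a contradiction argument assuming $H\to\ca C(S)$ is \emph{not} a q.i.\ embedding (so that $d_S(\mu,g\mu)\le |g|/M$ forces the proper-hole terms to carry linearly much of $|g|$); it never establishes, and does not need, the a priori uniform bound on projections that your outline takes as the goal. Your classification-of-holes observation (finitely many $\bb G$-orbits of infinite-diameter proper holes, each a figure-8 Seifert surface) is correct and relevant, but it enters the paper only through the distance formula, not as a substitute for the linear-summing step.
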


Farb--Mosher \cite[Question 1.5]{farb-mosher} ask if \emph{purely pseudo-Anosov} implies \emph{convex cocompact} for finitely-generated (free) subgroups of mapping class groups. This question is a special case of Gromov's hyperbolicity question; see \cite[Question 1.1]{bestvina-questions} and \cite{kent-leininger-subgroups}. In particular, by \cite{farb-mosher,hamenstadt}, Theorem \ref{thm:ccc} implies that for any purely pseudo-Anosov subgroup $G$ of the Goeritz group $\bb G$, the associated extension group $\wtil G$ 
\[1\ra\pi_1(S)\ra\wtil G\ra G\ra 1\]
is Gromov hyperbolic.

The question of Farb--Mosher remains open in general, but the answer is ``yes" in many special cases of geometric interest, including Veech groups, certain hyperbolic 3-manifold subgroups \cite{KLS,dowdall-kent-leininger}, and certain right-angled Artin subgroups \cite{mangahas-taylor,koberda-mangahas-taylor}. Theorem \ref{thm:ccc} is a new direction in this family of results.

\paragraph{Characterizing pseudo-Anosov elements of the Goeritz group.} 
We complement Theorem \ref{thm:ccc} by giving a simple criterion to determine if $g\in\bb G$ is pseudo-Anosov. 

\begin{mainthm}[Pseudo-Anosov characterization]\label{thm:pseudo}
Let $g\in\bb G<\Mod(S)$ be a mapping class in the genus-$2$ Goeritz group. Then $g$ is pseudo-Anosov if and only if $g$ is not conjugate into any of the following subgroups, where the elements $\alpha,\beta,\gamma,\delta$ are shown in Figure \ref{fig:generators}.
\begin{enumerate}
\item[$(i)$] \emph{(primitive-disk stabilizer)} $\pair{\alpha,\beta,\gamma\delta}\cong(\Z_2\ti\Z)*_{\Z_2}(\Z_6)$
\item[$(ii)$] \emph{(reducing-sphere stabilizer)} $\pair{\alpha,\beta,\gamma}\cong(\Z_2\ti\Z)\rt\Z_2$
\item[$(iii)$] \emph{(primitive pants-decomposition stabilizer)} $\pair{\alpha,\gamma,\delta}\cong \Z_2\times S_3$
\item[$(iv)$] \emph{(figure-8 knot stabilizer)} $\pair{\al,\beta\delta\beta^{-1}\delta,\ga\de}\cong\Z_2\ti(\Z\rt\Z_2)$
\end{enumerate} 
\end{mainthm}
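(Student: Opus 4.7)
The ``if'' direction follows by direct inspection. Subgroups (i), (ii), and (iii) are each defined as the stabilizer of a finite, nonempty collection of disks or spheres in $V\cup_S\wh V$, so every element fixes a corresponding nonempty multicurve on $S$ (the boundary of the primitive disk, the intersection of the reducing sphere with $S$, or the three pants curves). Hence each element is reducible or periodic, never pseudo-Anosov. For (iv), the finite-order factor contributes periodic elements, and I would argue the infinite-order generator $\beta\delta\beta^{-1}\delta$ preserves a simple closed curve on $S$ (a twist curve in the complement of the figure-8 spine), making it reducible as well.

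For the ``only if'' direction I would prove the contrapositive via the Nielsen--Thurston trichotomy: if $g\in\bb G$ is not pseudo-Anosov, then $g$ is either periodic or reducible. In the periodic case, the splitting (\ref{eqn:splitting}) identifies $\bb G\cong A*_C B$ with vertex groups $A=\pair{\al,\be,\ga}$ and $B=\pair{\al,\ga,\de}$, which are precisely the subgroups (ii) and (iii). By Bass--Serre theory, every finite subgroup of an amalgamated free product is conjugate into a vertex group, so such $g$ is conjugate into (ii) or (iii).

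In the reducible infinite-order case, let $\mu$ be the canonical reduction system of $g$; after replacing $g$ with a power, $g$ fixes each component of $\mu$. The central task is to enumerate the possible topological types of $\mu\sbs S$ (modulo $\bb G$) that can arise. I would argue the list consists of exactly four types: $\mu$ contains the boundary of a primitive disk (yielding (i)); $\mu$ contains a reducing curve from a reducing sphere (yielding (ii)); $\mu$ is a primitive pants decomposition (yielding (iii)); or $\mu$ is the multicurve bounding a regular neighborhood of a figure-8 knot spine compatible with the Heegaard splitting (yielding (iv)). In each case, after a conjugation in $\bb G$ bringing $\mu$ to a standard position, the element $g$ must lie in the corresponding stabilizer.

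The main obstacle is establishing that this list is exhaustive; a priori, one could imagine further exotic invariant multicurves. I expect this step to use the machinery developed earlier in the paper, namely the classification of infinite-diameter holes of the primitive disk complex $\ca P$ together with the quasi-isometry between $\ca P$ and a coned-off Cayley graph of $\bb G$. These results constrain which subsurfaces $X\sbs S$ can support a reducible $\bb G$-element acting with unbounded orbit on $\ca C(X)$, and matching the resulting holes to known Goeritz stabilizers should rule out any topological configuration of $\mu$ not already covered by (i)--(iv), with the figure-8 configuration being the only non-obvious one.
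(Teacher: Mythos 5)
Your skeleton matches the paper's: the ``if'' direction by inspection, finite-order elements conjugated into the vertex groups of the splitting (\ref{eqn:splitting}) via Bass--Serre theory, and infinite-order reducible elements handled through the canonical reduction system. But the entire content of the theorem lives in the step you defer --- showing that every possible $\CRS(g)$ forces $g$ into one of the listed stabilizers --- and the machinery you propose for it is not the right machinery. The quasi-isometry between $\ca P(V)$ and the coned-off Cayley graph plays no role here (it is used only for Theorem \ref{thm:ccc}). What the paper actually needs, beyond the classification of holes, is a case-by-case analysis over the six topological types of multicurve on $\Sigma_2$, driven by two results of Oertel: (1) a multitwist on $\pa V$ extends over $V$ only if the curves bound essential disks and annuli, and (2) an invariant compressible subsurface has an invariant characteristic compression body (a solid torus when $X\cong\Sigma_{1,1}$ supports a disk, forcing $g$ to fix the unique primitive disk there). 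These are combined with the Mayer--Vietoris observation that a multicurve null-homologous in both handlebodies but not in $S$ would give $H_2(S^3)\neq 0$. Without these inputs you cannot, for example, rule out the case where $\CRS(g)$ is a single non-reducing separating curve $c$ with $g^2$ a power of $T_c$, nor conclude that a primitively compressible genus-1 side yields a fixed primitive disk.

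Two smaller points. Your enumeration of $\CRS$ types includes a ``primitive pants decomposition'' case for infinite-order reducible $g$; this case in fact cannot occur (the stabilizer in $\bb G$ of such a configuration is the finite group in (iii), and the paper's Cases (E)--(F) show a nontrivial Goeritz multitwist about three curves contradicts $H_2(S^3)=0$), though listing an impossible case does not invalidate the logic. More substantively, the figure-8 case arises not from a ``multicurve bounding a neighborhood of a spine'' but from a single separating curve $c$ that is not a reducing curve and whose two complementary $\Sigma_{1,1}$'s are primitively incompressible in both handlebodies; it is the hole classification (Theorem \ref{thm:holes}) that then forces $c$ to be a figure-8 knot, and the computation of the figure-8 stabilizer (Proposition \ref{prop:fig8-stabilizer}) that identifies the subgroup in (iv). As written, your proposal names the right destination but leaves the road to it unbuilt.
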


\begin{figure}[h!]
\labellist
\small
\pinlabel $\alpha \text{ : hyperelliptic}$ at 300 600
\pinlabel $\beta \text{ : half-twist}$ at 800 540
\pinlabel $\gamma$ at 1200 560
\pinlabel $\delta$ at 1580 560
\pinlabel $\pi$ at 560 710
\pinlabel $\pi$ at 1070 670
\pinlabel $\pi$ at 1340 500
\pinlabel $4\pi/3$ at 1700 460
\endlabellist
\centering
    \includegraphics[scale=.2]{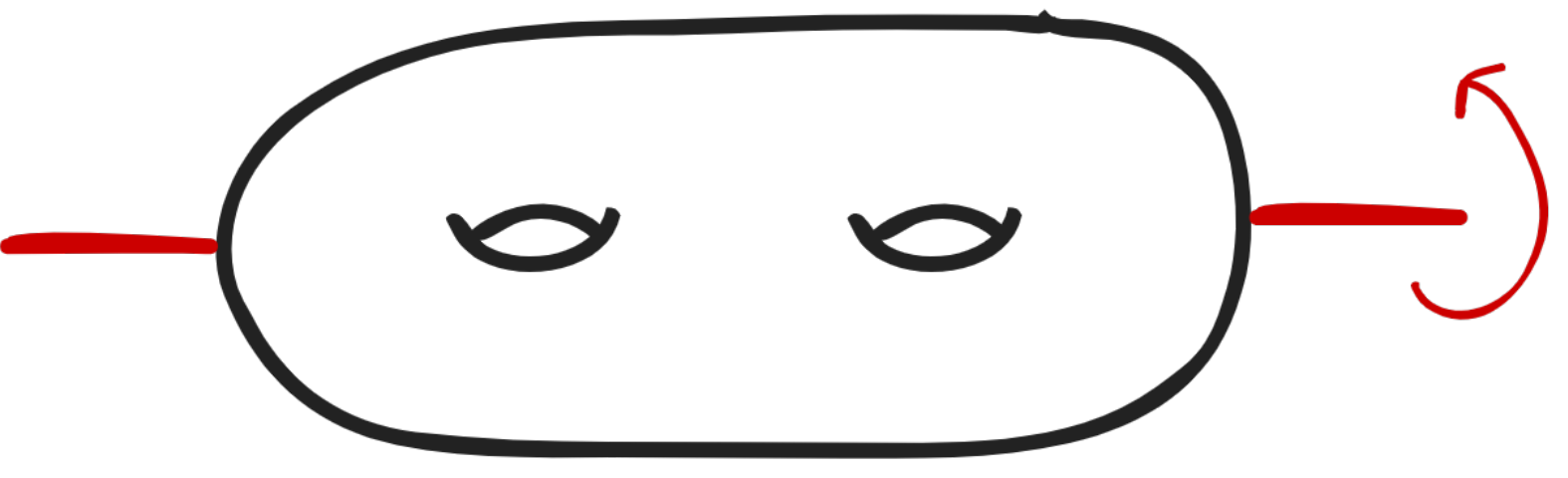}\hspace{.2in}
        \includegraphics[align=c,scale=.2]{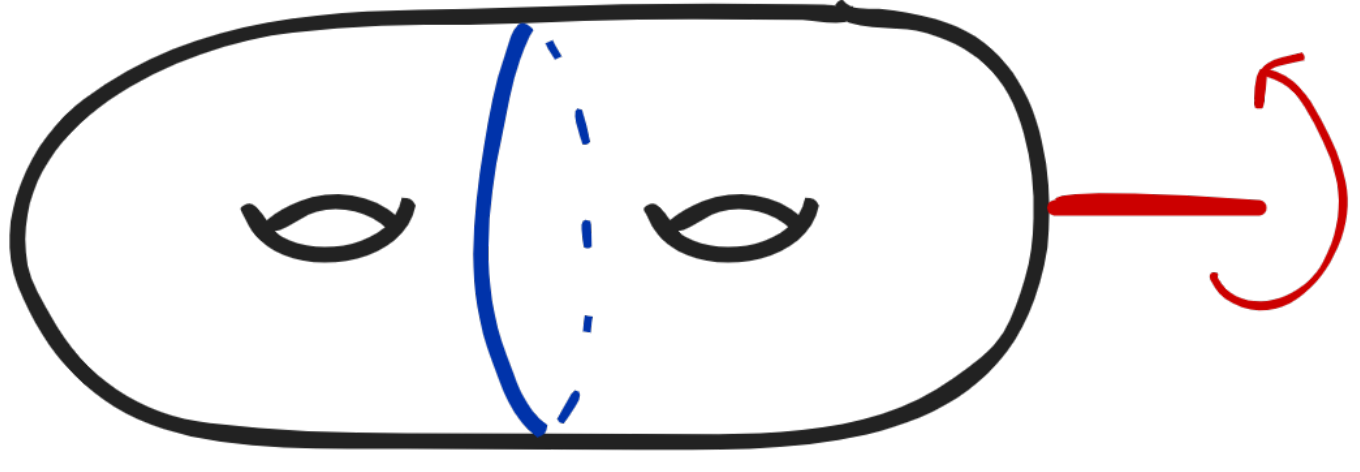}\hspace{.2in}
            \includegraphics[align=c,scale=.2]{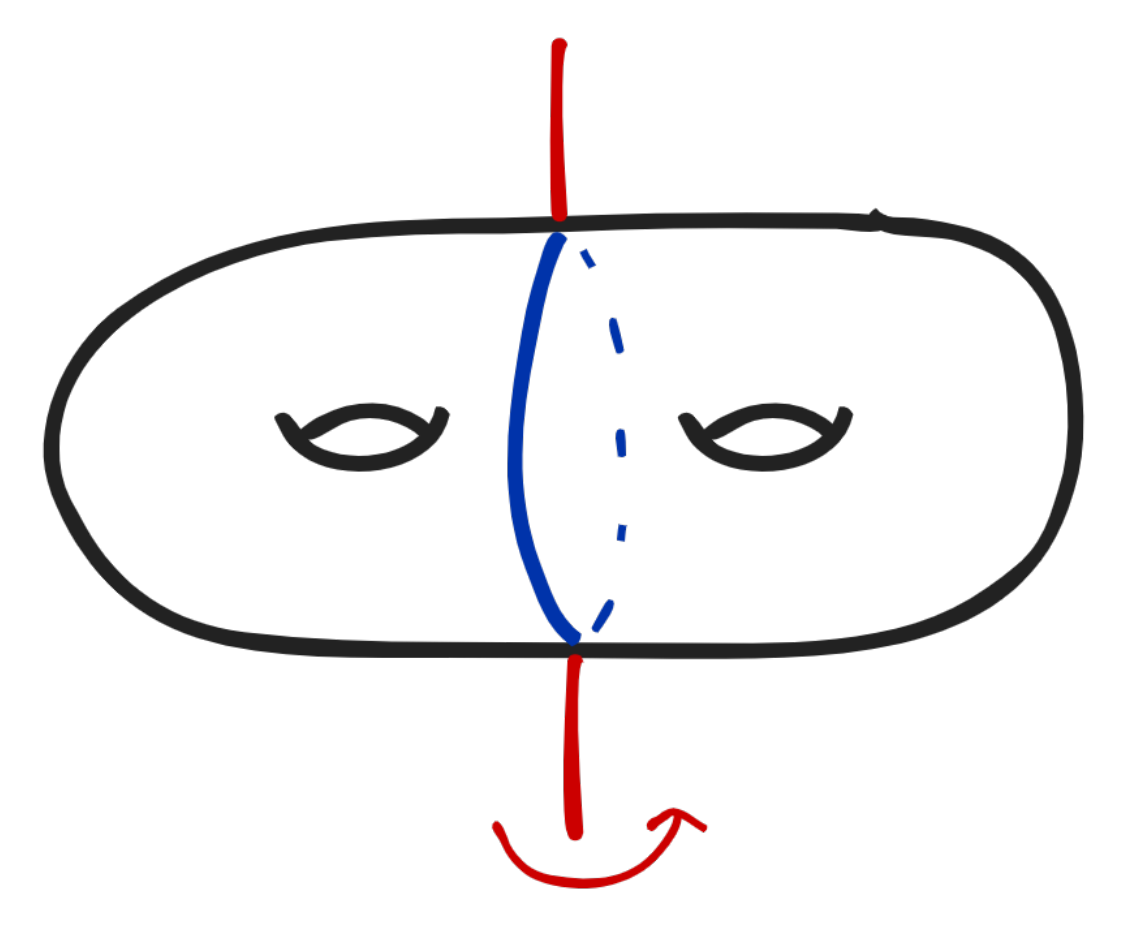}\hspace{.2in}
                \includegraphics[align=c,scale=.2]{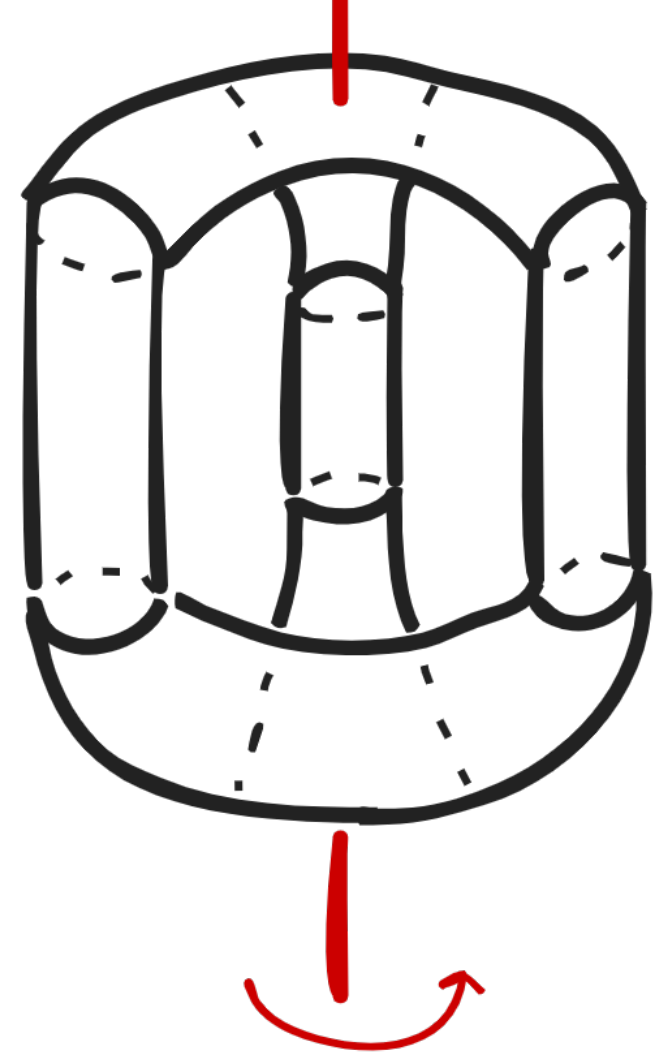}
\caption{Generators $\alpha,\beta,\gamma,\delta$ of the Goeritz group $\bb G$. }
\label{fig:generators}
\end{figure}

In practice, it is easy to use Theorem \ref{thm:pseudo} and the presentation (\ref{eqn:splitting}) to decide the Nielsen--Thurston type (finite-order, reducible, or pseudo-Anosov) of a mapping class $g\in \bb G$ given as a word in these generators. 

\begin{rmk} The ``figure-8 knot stabilizer" in Theorem \ref{thm:pseudo} comes from the fibering of the figure-8 knot complement
\begin{equation}\label{eqn:fig8-fiber}(T^2\setminus \pt)\ra S^3\setminus K\ra S^1.\end{equation} The union of two fibers of (\ref{eqn:fig8-fiber}) with $K$ is a genus-2 Heegaard surface in $S^3$; see Figure \ref{fig:fig8}. The figure-8 knot stabilizer is the subgroup of the Goeritz group that fixes $K$. For example, the monodromy of (\ref{eqn:fig8-fiber}) yields an infinite-order element of the figure-8 knot stabilizer that is conjugate to $\be\de\be^{-1}\de$. \end{rmk}

\begin{figure}[h!]
\labellist
\endlabellist
\centering
\includegraphics[scale=.5]{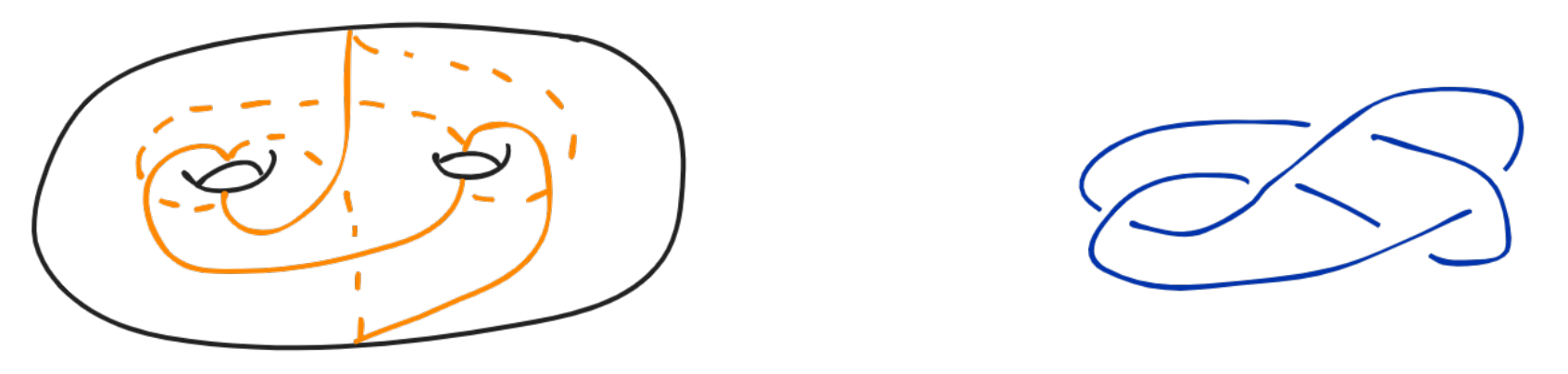}
\caption{The figure-8 knot embedded on the standard genus-2 Heegaard surface.}
\label{fig:fig8}
\end{figure}

\paragraph{Refined Nielsen--Thurston classification.} Theorem \ref{thm:pseudo} also classifies the ways an element of the Goeritz group can be reducible (in terms of what structure is preserved). This provides a refinement of the Nielsen--Thurston classification for reducible elements of the Goeritz group. We illustrate this with the following result. 

\begin{cor}[Classification of canonical reduction systems for Goeritz elements]\label{cor:CRS}
Let $g\in\bb G$ be an infinite-order, reducible element. Then the canonical reduction system $\CRS(g)$ is either
\begin{enumerate}
\item[(i)] (weakly reducing pair) a pair of curves $c$ and $\what c$ that bound primitive disks in $V$ and $\what V$ respectively, 
\item[(ii)] (reducing curve) a curve $c$ that bounds disks in both $V$ and $\what V$, or 
\item[(iii)] (figure-8 curve) a curve $c$ whose embedding in $S^3$ is the figure-8 knot.
\end{enumerate} 
\end{cor}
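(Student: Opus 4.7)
Since $g$ is infinite-order and reducible, $g$ is not pseudo-Anosov, so Theorem~\ref{thm:pseudo} places a conjugate of $g$ into one of the four subgroups (i)--(iv). Subgroup (iii) $\pair{\al,\ga,\de}\cong\Z_2\ti S_3$ is finite of order $12$, so it contains no infinite-order element. Conjugating if necessary, $g$ lies in subgroup (i), (ii), or (iv).

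Each of these three subgroups is by definition the stabilizer of a concrete multicurve $M\subset S$ matching the three cases of the corollary: the boundary pair $\{\pa D,\pa\wh D\}$ of a primitive-disk system (case (i)); a reducing curve (case (ii)); and the figure-8 knot curve $K$ (case (iv)). Because $g$ preserves $M$, the multicurve $M$ is a reduction system for $g$, and in particular $\CRS(g)\subseteq M$.

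To conclude $\CRS(g)=M$, one verifies that $g$ acts on each component of $S\sm M$ as either a finite-order or a pseudo-Anosov map. In case (iv), cutting along $K$ produces the two once-punctured torus fibers of the fibration \eqref{eqn:fig8-fiber}, and an infinite-order element of the figure-8 stabilizer projects to a nonzero power of the $\Z$ factor in $\Z\rt\Z_2$, which acts on each fiber by a power of the pseudo-Anosov monodromy. In case (ii), cutting along the reducing curve produces two once-punctured tori, and the action of $\pair{\al,\be,\ga}\cong(\Z_2\ti\Z)\rt\Z_2$ on each side is analyzed directly from the generators in Figure~\ref{fig:generators}. In case (i), $S\sm M$ is a four-punctured sphere, and one must examine the induced homomorphism $(\Z_2\ti\Z)*_{\Z_2}\Z_6\to\Mod(S_{0,4})$ and verify that infinite-order elements map to pseudo-Anosov classes.

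The main obstacle is case (i): the four-punctured sphere in the complement supports many candidate reducing curves, and one must rule out that an infinite-order $g\in\pair{\al,\be,\ga\de}$ secretly preserves additional curves on this sphere. If $g$ does reduce further on the sphere, I expect the CRS to collapse to a single curve that simultaneously bounds disks in both handlebodies, thereby falling into case (ii) of the corollary. Confirming this dichotomy, likely by a direct analysis of the image of the amalgamated product in $\Mod(S_{0,4})$ together with the Bass--Serre structure of $(\Z_2\ti\Z)*_{\Z_2}\Z_6$, is the technical heart of the argument.
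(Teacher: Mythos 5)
Your strategy---feed the \emph{statement} of Theorem \ref{thm:pseudo} back in, reduce to the three infinite stabilizer subgroups, and compute $\CRS(g)$ inside each---is not the paper's route, and as written it has two genuine gaps. (For comparison: the paper does not use Theorem \ref{thm:pseudo} as a black box; the corollary is read off from the case analysis in \S\ref{sec:nielsen-thurston}, which is organized by the topological type of the multicurve $\CRS(g)$ on a genus-2 surface and eliminates the unwanted types using Oertel's extension criterion for multitwists, the Mayer--Vietoris observation of Lemma \ref{lem:mayer-vietoris}, and the classification of infinite-diameter holes.) The first gap is the inference ``$M$ is a reduction system, in particular $\CRS(g)\subseteq M$.'' This is false: the canonical reduction system is contained in every \emph{maximal} reduction system, not in every reduction system (e.g.\ $\CRS(T_c)=\{c\}$ is not contained in the reduction system $\{d\}$ for $d$ disjoint from and not equal to $c$). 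What you need is that each component of $M$ is an essential reduction curve and that there are no others; your plan to check that the restrictions of $g$ to the components of $S\sm M$ are finite order or pseudo-Anosov is the right substitute, but then $\CRS(g)=M$ is the \emph{output} of that verification, not something you may assume along the way.

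The second and more serious gap is that the verification you defer in case (i) is exactly the content of the corollary, and your proposed resolution is only an expectation. The stabilizer $\pair{\al,\be,\ga\de}$ fixes a single primitive disk $E\subset V$; it does not hand you the disjoint pair $\{c,\what c\}$. You must produce a preserved primitive disk in $\what V$ disjoint from $E$, and you must decide what happens when $g$ preserves additional curves in the complementary four-holed sphere---note that $\beta$ itself lies in $\pair{\al,\be,\ga\de}$ yet has $\CRS(\beta)$ equal to a reducing curve, so membership in the primitive-disk stabilizer does not determine which case of the corollary occurs, and your claimed ``collapse to case (ii)'' requires showing that $\pa E$ fails to be an essential reduction curve in that situation. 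The tools the paper uses at precisely these junctures are Oertel's theorem that a multitwist extends over a handlebody only if the curves bound disks and essential annuli (which, via Lemma \ref{lem:mayer-vietoris}, forces $H_2(S^3)\neq 0$ in the bad configurations) and the fact that a $\Sigma_{0,4}$ or $\Sigma_{1,2}$ piece cannot be an infinite-diameter hole for $\ca P(V)$ (Theorem \ref{thm:holes}). Your outline invokes none of these, and without some such input I do not see how to close case (i); the proposal is therefore incomplete.
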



\paragraph{Explicit convex cocompact examples.} Using Theorems \ref{thm:ccc} and \ref{thm:pseudo}, one may give explicit examples of convex cocompact subgroups of $G<\Mod(S)$. We illustrate this with the following corollary, which is motivated by Problem 3.6 of \cite{mosher-survey} that asks for new constructions/examples.

\begin{cor}[Explicit convex cocompact subgroups]\label{cor:explicit}
Let $\beta,\delta\in\bb G$ be the elements pictured in Figure \ref{fig:generators}. For each $n\ge2$, the subgroup 
\[G_n=\pair{\be^n\de,\de\be^n}<\Mod(S)\] is purely pseudo-Anosov and hence convex cocompact by Theorem \ref{thm:ccc}. 
\end{cor}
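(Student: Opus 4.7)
The strategy is to verify $G_n$ is purely pseudo-Anosov and then invoke Theorem~\ref{thm:ccc}. By Theorem~\ref{thm:pseudo}, it suffices to show that no nontrivial element of $G_n$ is $\bb G$-conjugate into any of the four subgroups $(i)$--$(iv)$. Write $x=\be^n\de$ and $y=\de\be^n$, and work throughout with the amalgamated splitting (\ref{eqn:splitting}) $\bb G\cong A*_{A\cap B}B$, where $A=(\Z_2\ti\Z_2)\rt\Z$, $B=(\Z_3\rt\Z_2)\ti\Z_2$, and $A\cap B=\pair{\al,\ga}$ has exponent~$2$.

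The first step is a ping-pong / normal-form argument showing $G_n\cong F_2$ and that every nontrivial element acts hyperbolically on the Bass--Serre tree $T$ of (\ref{eqn:splitting}). Since $\be$ has infinite order and $\de$ has order $3$, for $n\ge 1$ each of $\be^{\pm n}$ and $\de^{\pm 1}$ lies outside $A\cap B$, so $x$ and $y$ are reduced $2$-syllable words. Inspecting the twelve possible boundary concatenations $gg'$ with $g,g'\in\{x^{\pm 1}, y^{\pm 1}\}$ and $g'\ne g^{-1}$, one finds that each boundary either has no cancellation or produces a single merge yielding one of $\be^{\pm 2n}$ or $\de^{\pm 2}$, all of which lie outside $A\cap B$. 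Hence any reduced word of length $k$ in $\{x^{\pm 1}, y^{\pm 1}\}$ has amalgam-syllable length $\ge k+1$ in $\bb G$; it is in particular nontrivial, and cyclic reduction shows it acts hyperbolically on $T$. This step rules out cases $(ii)$ and $(iii)$ of Theorem~\ref{thm:pseudo} at once, since those subgroups are contained in the vertex groups $A$, $B$ and hence fix vertices of $T$.

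The main obstacle is ruling out $(i)$ and $(iv)$: both contain $T$-hyperbolic elements, so the Bass--Serre argument of Step 1 is insufficient. Here the plan is to exploit the induced amalgamated splittings of the two subgroups --- $(i)\cong(\Z_2\ti\Z)*_{\Z_2}\Z_6$ and $(iv)\cong\Z_2\ti(\Z\rt\Z_2)$ --- whose Bass--Serre trees embed equivariantly as proper invariant subtrees $T'\sbs T$. An element conjugate into $(i)$ or $(iv)$ has axis contained in a translate of $T'$, which constrains the sequence of $A\cap B$-cosets appearing as syllables along the axis: e.g.\ the $B$-syllables of a reduced representative conjugate into $(i)$ must lie in a single coset of $A\cap B$ in the order-six subgroup of $B$, and an element of $(iv)$ must exhibit the $\be,\de,\be^{-1},\de$ alternation of its infinite-cyclic generator $\be\de\be^{-1}\de$. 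Inspecting cyclically reduced elements of $G_n$ --- for instance $x^2y^{-2}$, whose normal form $\be^n\de\be^n\de\be^{-n}\de^{-1}\be^{-n}\de^{-1}$ is cyclically reduced of syllable length $8$ and has $B$-syllables visiting \emph{both} nontrivial cosets of $A\cap B$ in $B$ --- shows that these constraints fail. The technical heart of Step 3 is to promote such example-level checks into a uniform combinatorial incompatibility covering every cyclically reduced word in $\{x^{\pm 1}, y^{\pm 1}\}$; I expect this to be the most delicate part of the argument, and the role of the hypothesis $n\ge 2$ to appear in ensuring the required coset-pattern richness.

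Once no conjugate of a nontrivial element of $G_n$ lies in any of $(i)$--$(iv)$, Theorem~\ref{thm:pseudo} gives that $G_n$ is purely pseudo-Anosov, and Theorem~\ref{thm:ccc} then concludes that $G_n$ is convex cocompact in $\Mod(S)$.
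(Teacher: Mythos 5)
Your overall strategy is the paper's: reduce to Theorem~\ref{thm:pseudo}, use the splitting (\ref{eqn:splitting}), and show that no nontrivial element of $G_n$ is $\bb G$-conjugate into the subgroups $(i)$--$(iv)$. Your Step 1 (the twelve-boundary check showing every nontrivial word in $x^{\pm1},y^{\pm1}$ is reduced in $\pair{\beta,\delta}\cong\Z*\Z_3$ with at worst benign merges into $\beta^{\pm2n}$ or $\delta^{\mp1}$) is precisely the paper's observation that nontrivial products of the generators are cyclically reduced with no cancellation, and your ellipticity argument correctly disposes of $(ii)$ and $(iii)$, which are the vertex groups.

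The gap is Step 3, which is where the content of the corollary actually lives. You verify only the single example $x^2y^{-2}$ and then state that promoting such checks to a uniform incompatibility is ``the most delicate part of the argument'' --- that is an announcement of a proof, not a proof. Moreover, the obstruction you propose is not well defined: you argue that the $B$-syllables of an element conjugate into $(i)$ must lie in a single coset of $A\cap B$, but that constraint applies to elements \emph{of} $\pair{\alpha,\beta,\gamma\delta}$, not to elements merely \emph{conjugate into} it. By the conjugacy criterion in an amalgam $A*_CB$, two cyclically reduced words of length $\ge 2$ are conjugate iff one is a cyclic permutation of the other followed by conjugation by some $c\in C$, which replaces each syllable $s_i$ by $c_{i-1}s_ic_i^{-1}$ for a linked chain of elements of $C$; since $\gamma\delta\gamma^{-1}=\delta^{-1}$ in $B\cong\Z_2\times S_3$, such moves swap the left cosets $\delta C$ and $\delta^{-1}C$, so the coset sequence your $x^2y^{-2}$ computation tracks is not a conjugacy invariant (indeed $C\delta C$ and $C\delta^{-1}C$ coincide, so the coarse double-coset sequence sees nothing). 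A correct argument has to exploit the linked-chain condition together with the $A$-syllables $\beta^{\pm n},\beta^{\pm 2n}$ --- which is also where $n\ge2$ genuinely enters, since it prevents the $A$-syllable $\beta^{\pm1}$ of the figure-8 monodromy $\beta\delta\beta^{-1}\delta$ from occurring --- and must cover every cyclically reduced word in $x^{\pm1},y^{\pm1}$, not one example. Until that is written down, cases $(i)$ and $(iv)$ remain open and the corollary is not proved.
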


\begin{proof}[Proof of Corollary \ref{cor:explicit}]
By the presentation (\ref{eqn:splitting}), the subgroup $\pair{\beta,\delta}\sbs\bb G$ is isomorphic to $\Z*\Z_3$. This implies that nontrivial products of the generators of $G_n$ are cyclically reduced with no cancellation. From this one quickly deduces that no word in $G_n$ is conjugate into any of the subgroups listed in Theorem \ref{thm:pseudo}. Therefore $G_n$ is purely pseudo-Anosov. 
\end{proof}

Explicit examples are rare in the study of convex cocompact subgroups of mapping class groups. More general constructions of convex cocompact groups, e.g.\ using ping-pong as in \cite{mosher-schottky,min}, involve replacing a given collection of pseudo-Anosov elements by large (non-explicit!) powers of these elements. Other explicit examples in genus 2, different from the ones we produce, appear in \cite[\S7]{mangahas-taylor} and \cite[\S6]{whittlesey} (the latter examples are convex cocompact by \cite{KLS}). 

\paragraph{Techniques.} 

The main technical results used in the proofs of Theorems \ref{thm:ccc} and \ref{thm:pseudo} are Theorems \ref{thm:distance} and \ref{thm:holes} below. 

To show a group $G<\Mod(S)$ is convex cocompact, it suffices to show that the orbit map (with respect to some basepoint) of $G$ acting on the curve complex $\ca C(S)$ is a quasi-isometric embedding \cite{kent-leininger-ccc}. For studying subgroups $G<\bb G$, it is convenient to choose the basepoint so that the orbit is contained in the primitive disk complex $\ca P(V)\sbs\ca C(S)$ associated to the Heegaard splitting $S^3=V\cup_S\what V$ (see \S\ref{sec:background} for the definition). Theorem \ref{thm:distance} relates the distance in $\ca P(V)$ to the distance in $\ca C(S)$ and more generally $\ca C(X)$, where $X\subset S$ is a subsurface whose complement does not support a primitive disk; such subsurfaces are called \emph{holes} for $\ca P(V)$; see \S\ref{sec:background}. 

\begin{mainthm}[Distance formula]\label{thm:distance} 
Given $c>0$, there is $K>0$ so that for any two vertices $D,E$ of $\ca P(V)$, 
\begin{equation}\label{eqn:distance}
\frac{1}{K}\sum_{X}\{d_X(D,E)\}_c-K\le d_{\ca P(V)}(D,E)\le K\sum_{X}\{d_X(D,E)\}_c+K,
\end{equation}
where the sums range over subsurfaces $X\sbs S$ that are \emph{holes} for $\ca P(V)$, $d_X(D,E)$ is defined by \emph{subsurface projection}, and $\{\cdot\}_c$ is the \emph{cutoff function}. Definitions appear in \S\ref{sec:background}. 
\end{mainthm}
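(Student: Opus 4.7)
The plan is to adapt the Masur--Minsky/Masur--Schleimer strategy for distance formulas to the primitive disk complex, proving the two sides of (\ref{eqn:distance}) separately. For the \textbf{lower bound}, I would use that the vertex inclusion $\ca P(V) \hookrightarrow \ca C(S)$ is $1$-Lipschitz, since edges of $\ca P(V)$ record disjointness of primitive disks as curves on $S$; hence $d_{\ca C(S)}(D,E) \le d_{\ca P(V)}(D,E)$. Combining with the Masur--Minsky distance formula for $\ca C(S)$, which supplies constants $K_0, c_0$ with $\sum_{Y \sbs S}\{d_Y(D,E)\}_{c_0} \le K_0\, d_{\ca C(S)}(D,E) + K_0$ where the sum ranges over all essential subsurfaces, and then restricting the sum to the subcollection of holes for $\ca P(V)$, which only decreases it, yields the lower bound after choosing $c \ge c_0$ and absorbing constants.

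The \textbf{upper bound} is the core of the theorem, and I would prove it by induction on $N := \sum_{X \text{ hole}}\{d_X(D,E)\}_c$, constructing a path in $\ca P(V)$ from $D$ to $E$ of length $\lesssim N + 1$. For the inductive step, pick a hole $X$ realising a projection $d_X(D,E) \ge c$ and produce an intermediate primitive disk $D' \in \ca P(V)$ whose projection $\pi_X(D')$ lies on a $\ca C(X)$-geodesic roughly halfway between $\pi_X(D)$ and $\pi_X(E)$, together with a Behrstock-style consecutive-projection estimate $\sum_{Y \ne X}\bigl(d_Y(D,D') + d_Y(D',E)\bigr) \le \sum_{Y \ne X} d_Y(D,E) + O(1)$. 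Since the total projection sum strictly decreases, induction supplies subpaths $D \leadsto D'$ and $D' \leadsto E$ whose concatenation gives the desired path.

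The base case of the induction says that bounded combinatorics in all holes forces $D$ and $E$ to be uniformly close in $\ca P(V)$; I would prove it by running a Masur--Minsky hierarchy path in $\ca C(S)$, which has bounded length by the classical distance formula, and upgrading each intermediate disk to a primitive one. The main obstacle---and the hardest step of the whole argument---is a \emph{primitive surgery lemma}: given a primitive disk $D$ and a hole $Y$ for $\ca P(V)$, one can surger $D$ to produce a primitive disk $D'$ with $\pi_Y(D')$ prescribed up to bounded error, while simultaneously controlling $\pi_Z(D')$ for every other hole $Z$. Standard disk surgery supplies a disk but not a primitive one, so this step would rely essentially on the amalgamated structure (\ref{eqn:splitting}) of $\bb G$ together with the classification of holes in Theorem \ref{thm:holes}, which restricts the analysis to a finite list of hole types specific to genus $2$. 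This controlled interaction between primitivity and subsurface projection is what makes the distance formula feasible in this setting, even though no analogous formula is known for general subcomplexes of $\ca C(S)$.
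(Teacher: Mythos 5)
Your overall strategy (two separate bounds, Masur--Schleimer-style surgery, reliance on the classification of holes) matches the paper's, but your lower bound rests on a false inequality. There is no estimate of the form $\sum_{Y\sbs S}\{d_Y(D,E)\}_{c_0}\le K_0\, d_{\ca C(S)}(D,E)+K_0$ with the sum over all essential subsurfaces: the Masur--Minsky distance formula compares such a sum to distance in the \emph{marking complex} $\ca M(S)$, not the curve complex. Concretely, two curves filling a common proper subsurface $X\sbsneq S$ are at distance at most $2$ in $\ca C(S)$, while $d_X$ between them can be arbitrarily large; so the right-hand side of your inequality stays bounded while the left-hand side blows up. The correct lower-bound argument (the one the paper invokes, via \cite[Thms.\ 6.10, 6.12]{masur-minsky2} as packaged in \cite[Thm.\ 5.14]{masur-schleimer}) uses in an essential way that every vertex of $\ca P(V)$ \emph{cuts} every hole $X$: along an edge path in $\ca P(V)$ consecutive vertices are disjoint, so each $\pi_X$ is defined at every vertex and moves a bounded amount per edge, and the number of holes accumulating projection at least $c$ along the path is linear in its length. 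Your argument never uses the definition of a hole on this side of the inequality, which is the tell that something is off.

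For the upper bound your outline is in the right spirit but leaves the hard content unaddressed. The paper does not run a direct ``halve the projection in one hole'' induction; it verifies the seven axioms of \cite[\S 13]{masur-schleimer} for $\ca P(V)$, using surgery paths of primitive disks (which remain primitive in genus $2$ --- the key genus-2 fact), the associated train-track and marking paths of \cite{masur-minsky-qc}, and a relative complex $\ca P(V;X)$ for each hole, whose structure for the figure-8 Seifert-surface holes must be computed explicitly (it is quasi-isometric to a line, \S\ref{sec:fig8}). Your ``primitive surgery lemma'' is roughly the Replacement Axiom, but your base case --- run a hierarchy path in $\ca C(S)$ and ``upgrade'' each vertex to a primitive disk --- is precisely the step for which no argument is given and which the axiomatic machinery is designed to circumvent. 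As written, the proposal does not establish either inequality.
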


Distance formulas as in (\ref{eqn:distance}) originate in the work of Masur--Minsky \cite{masur-minsky2}. We arrive at (\ref{eqn:distance}) by closely following the work of Masur--Schleimer \cite{masur-schleimer}, who prove an analogous result for the disk complex $\ca D(V)$ and provide an axiomatic approach to distance formulas more generally. 

To apply Theorem \ref{thm:distance}, it is necessary to classify the holes $X\subset S$ with the property that the image of the subsurface projection $\pi_X:\ca P(V)\rightarrow\ca C(X)$ has infinite diameter. For brevity we say that $X$ \emph{has infinite diameter} (with respect to $\ca P(V)$). 

\begin{mainthm}[Classification of large holes for $\ca P(V)$]\label{thm:holes} 
Let $X\subset S$ be a hole for $\ca P(V)$ with diameter $\ge 61$. Then either $X=S$ or $X$ is a genus-$1$ Seifert surface for the figure-$8$ knot. In either case there exists $g\in\bb G$ that preserves $X$ and so that $\rest{g}{X}$ is pseudo-Anosov. Consequently, every hole of diameter $\ge 61$ has infinite diameter. 
\end{mainthm}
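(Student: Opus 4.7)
The plan is to split the theorem into two independent tasks: (I) a topological classification showing that every hole $X \subset S$ with $\diam \pi_X(\ca P(V)) \ge 61$ is either $S$ itself or a figure-$8$ Seifert surface $F$, and (II) the construction of pseudo-Anosov Goeritz elements preserving each. Given both, the infinite-diameter consequence follows formally: a pseudo-Anosov $g \in \bb G$ preserving $X$ and restricting to a pseudo-Anosov on $X$ sends any primitive disk $D$ to an orbit whose projection to $\ca C(X)$ is an unbounded quasi-geodesic, so $\pi_X(\ca P(V))$ has infinite diameter.

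Part (II) is the easier half. For $X = F$, I would use the fibration $F \to S^3 \setminus K \to S^1$ from the remark after Theorem \ref{thm:pseudo}: since the figure-$8$ knot complement is hyperbolic, the monodromy is pseudo-Anosov on $F$, and it extends to a Goeritz element (conjugate to $\be\de\be^{-1}\de$) preserving both the Heegaard surface and $F$. For $X = S$, I would exhibit a pseudo-Anosov element of $\bb G$ by a direct construction independent of Theorem \ref{thm:pseudo}; for instance, by fixing a specific word in the generators and verifying via the Nielsen--Thurston trichotomy that it preserves no essential multicurve and has infinite order.

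Part (I) is the technical core. The approach is to adapt the axiomatic framework of Masur--Schleimer \cite{masur-schleimer}, who classified holes for the disk complex $\ca D(V)$. Since $\ca P(V) \sbs \ca D(V)$, every hole for $\ca P(V)$ is a subsurface whose complement contains no primitive disk boundary, a weaker condition than being a hole for $\ca D(V)$. I would enumerate the finitely many topological types of essential subsurfaces of the genus-$2$ surface $S$ (annuli, pants, four-holed spheres, once- and twice-holed tori, once-holed genus-$2$ surfaces), and for each: decide whether it can be a hole at all, and if so, bound $\diam \pi_X(\ca P(V))$ using standard tools such as the bounded geodesic image theorem and Behrstock's inequality. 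The constant $61$ is chosen large enough that every non-exceptional hole falls below this threshold.

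I expect the main obstacle to be the case of a genus-$1$ subsurface $X$ with one boundary component $c$. Here I must show that the only such holes of large diameter are Seifert surfaces for the figure-$8$ knot, which reduces to characterizing those curves $c \subset S$ for which $S \setminus c$ contains no primitive disk boundary. Since primitive curves form a rich family on $S$, this avoidance is a strong constraint; I expect to show, via combinatorial analysis of a fixed primitive Heegaard diagram together with the action of $\bb G$, that the only solutions are figure-$8$ knot embeddings on the Heegaard surface, whose complement in $S$ is another once-holed torus supporting no primitive disks in $V$.
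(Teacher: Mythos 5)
Your part (II) and the formal deduction of infinite diameter are fine, and for the figure-$8$ Seifert surface you use the same monodromy argument as the paper (for $X=S$ the paper instead applies Thurston's construction to the two filling reducing curves $c,d$ of Figure \ref{fig:thurston-pseudo}, which is cleaner than hunting for a word with no invariant multicurve). The gap is in part (I), and it is twofold. First, your reduction of the genus-$1$ case to ``characterizing those curves $c\subset S$ for which $S\setminus c$ contains no primitive disk boundary'' cannot yield the stated conclusion: genus-$1$ Seifert surfaces for the \emph{trefoil} also have this property (the complement of such an $X$ is the other fiber $F\times 0$ of an $I$-bundle structure $V\cong\Sigma_{1,1}\times I$, which is incompressible in $V$ and so supports no disks at all), yet they are not figure-$8$ surfaces. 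Being a hole is not the discriminating condition; having \emph{large diameter} is. The paper separates these: a primitively incompressible hole of diameter $\ge 61$ forces $I$-bundle structures on \emph{both} handlebodies via the Masur--Schleimer $\Theta$-graph construction applied to a pair of primitive disks with large projection distance, whence $\pa X$ is a genus-$1$ fibered knot (figure-$8$ or trefoil); then a separate explicit computation (enumerating the vertical primitive disks by slope and a primitivity test on cyclically reduced words) shows the trefoil hole has only finitely many vertical primitive disks and hence \emph{finite} diameter, eliminating it. Your proposal contains neither the $I$-bundle step nor the trefoil exclusion, and without them you would either wrongly admit trefoil surfaces or have no mechanism to rule them out.

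Second, your plan to bound the diameter of the non-exceptional holes (annuli, compressible subsurfaces) ``using the bounded geodesic image theorem and Behrstock's inequality'' does not engage the actual difficulty. Those tools control projections of geodesics and markings, not the set $\pi_X(\ca P(V))$ itself; the paper's bounds ($\le 11$ for annular holes, $\le 13$ for compressible ones) come from surgery arguments on primitive disks --- choosing $D$ minimizing intersection with the core of the annulus and deriving a contradiction from an outermost bigon, or producing a triple of disjoint surgered disks whose boundaries force $X=S$. Note also that the naive adaptation of Masur--Schleimer breaks precisely here: their first claim after Theorem 10.1 ($i(\pa D,c)\ge 2$ for annular holes) is false for $\ca P(V)$, so the annular case requires genuinely new casework rather than citation. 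Finally, you are conflating two distinct parts of \cite{masur-schleimer}: the ``axiomatic framework'' there (\S13) is for the distance formula, while the classification of holes is the $I$-bundle analysis of \S9--12, which is the part you actually need.
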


\begin{rmk}[Genus-1 fibered knots]
The proof of Theorem \ref{thm:holes} uses the classification of genus-1 fibered knots in $S^3$. The only such knots are the figure-8 and the (left/right-handed) trefoil \cite{fico}. The appendix contains a modern proof of this result. 
\end{rmk}

\begin{rmk}[Relation to the work of Masur--Schleimer] Our proofs of the distance formula and classification of holes for $\ca P(V)$ (Theorems \ref{thm:distance} and \ref{thm:holes}) build on the work of \cite{masur-schleimer}, who prove similar results for the disk complex $\ca D(V)$. The primitive disk complex $\ca P(V)$ is a subcomplex of $\ca D(V)$. As such, the techniques of \cite{masur-schleimer} are useful for this paper; however, we note that the results of \cite{masur-schleimer} cannot be applied directly to prove our main results, since e.g.\ there are holes for $\ca P(V)$ that are not holes for $\ca D(V)$, and there are holes for $\ca D(V)$ that have infinite diameter with respect to $\ca D(V)$, but finite diameter with respect to $\ca P(V)$. See Remark \ref{rmk:D-vs-P}. Many of the arguments of \cite{masur-schleimer} ``break" when restricting only to primitive disks, and some of the work in the current paper involves finding ways to ``repair" these arguments. 
\end{rmk}

\begin{rmk}[Holes for the reducing sphere complex]
Using our analysis one could also classify holes for the complex $\ca R(V,\what V)$ of reducing spheres for the Heegaard splitting. This problem was posed by Schleimer \cite[\S7]{schleimer-cc}. One might try to use $\ca R(V,\what V)$ to prove Theorem \ref{thm:ccc} (indeed, this is one motivation for Schleimer's problem), but it seems more convenient to work with the primitive disks instead of reducing spheres. For example, it's easier to construct paths between primitive disks using surgery. 
\end{rmk}

\begin{rmk}[The assumption $g=2$]
Our approach to proving each of Theorems \ref{thm:ccc} -- \ref{thm:holes} fails to extend to genus $g\ge3$. A fundamental difference between $g=2$ and $g\ge3$ is that in genus 2, every surgery path between a pair of primitive disks is a path of \emph{primitive} disks, but this is not true in higher genus; see \cite[Thm.\ 1.1]{CKL}.
\end{rmk}
 
\paragraph{About the proof of Theorem \ref{thm:ccc}.} The proof of Theorem \ref{thm:ccc} is given in \S\ref{sec:ccc}. Here we explain the main ideas using the following diagram. 

\[\begin{xy}
(-10,0)*+{\bb G}="A";
(20,0)*+{\Cone(\Ga_{\bb G},\bb G_D)}="B";
(-10,-15)*+{G}="C";
(20,-15)*+{\ca P(V)}="D";
(20,-30)*+{\ca C(S)}="E";
{\ar"A";"B"}?*!/_3mm/{};
{\ar@{^{(}->} "C";"A"}?*!/^5mm/{};
{\ar "C";"B"}?*!/_3mm/{};
{\ar "C";"D"}?*!/_3mm/{};
{\ar "C";"E"}?*!/_3mm/{};
{\ar@{^{(}->} "D";"E"}?*!/_3mm/{};
{\ar@{<->} "B";"D"}?*!/_3mm/{};
\end{xy}\]

Assuming $G<\bb G$ is purely pseudo-Anosov, to show $G$ is convex cocompact in $\Mod(S)$, it suffices to show that the orbit map $G\rightarrow\ca C(S)$ is a q.i.\ embedding \cite{kent-leininger-ccc}. First we use Theorem \ref{thm:distance} (distance formula) and results of \cite{BBKL} to reduce to showing that the orbit map $G\rightarrow\ca P(V)$ is a q.i.\ embedding (see Prop.\ \ref{prop:contains-reducible}). This step is nontrivial because the inclusion $\ca P(V)\hra\ca C(S)$ is not a q.i.\ embedding by Theorems \ref{thm:distance} and \ref{thm:holes}. Next we show, using work of Cho \cite{cho}, that $\ca P(V)$ is quasi-isometric to a cone-off of the Cayley graph $\Ga_{\bb G}$ with respect to the stabilizer $\bb G_D$ of a primitive disk (see Prop.\ \ref{prop:cone}). Finally $G\hra\bb G$ is a quasi-isometric embedding because $G$ is virtually 
free, and to prove that the composition $G\hra\bb G\hra\Cone(\Ga_{\bb G},\bb G_E)$ is a quasi-isometric embedding, we use a criterion of Abbott--Manning \cite{abbott-manning} (see Prop.\ \ref{prop:manning-abbott}). 

\paragraph{About the proof of Theorem \ref{thm:pseudo}.} Given Theorem \ref{thm:holes} (classification of holes), the proof of Theorem \ref{thm:pseudo} is a straightforward argument using the canonical reduction system and some results of Oertel \cite{oertel} about homeomorphisms of handlebodies. The details are in \S\ref{sec:nielsen-thurston}.


\paragraph{Acknowledgements.} The author thanks J.\ Manning for helpful conversations about \cite{abbott-manning} and for observing that $\ca P(V)$ is quasi-isometric to a coned-off Cayley graph. Thanks also to C.\ Leininger for comments on a draft of this paper. 

\section{Background and auxiliary results} \label{sec:background} 

In this section we quickly summarize the necessary terminology and results that we will need in the rest of the paper. At the end of the paper we have included a guide to the notation for easy reference. 


\subsection{Surfaces and curves}

We use $\Sigma_{g,b}$ to denote the compact oriented surface of genus $g$ with $b$ boundary components. A \emph{simple closed curve} (usually referred to simply as a \emph{curve}) $c$ on a surface $\Sigma$ is an embedded circle. A curve $c$ is \emph{essential} if it does not bound a disk and \emph{non-peripheral} if it is not isotopic to a boundary component. We will often use the same notation for a curve and its isotopy class. For a curve $c\sbs\Sigma$, we use $n(c)\subset\Sigma$ to denote a small regular neighborhood of $c$. For a pair of isotopy classes $c_1,c_2$, the \emph{geometric intersection number} is denoted $i(c_1,c_2)$. 


\subsection{Primitive disks and curve complexes} \label{sec:complexes}


Fix a genus-2 Heegaard splitting $S^3=V\cup_S\what V$. Some of the definitions of this section can be stated more generally, but several of the results are specific to genus 2, so we focus on that case. 


\paragraph{Primitive disks.} A properly embedded disk $D\sbs V$ is called \emph{primitive} if there exists a properly embedded disk $\what D\sbs \what V$ so that $i(\pa D,\pa\what D)=1$. 

For example, in Figure \ref{fig:handlebody}, the disks $\what E_1,\what E_2$ illustrate that $E_1,E_2\subset V$ are primitive disks. We call $E_1,E_2$ and $\what E_1,\what E_2$ a \emph{dual pair} (these are the curves for the standard genus-2 Heegaard diagram for $S^3$).

\begin{figure}[h!]
\labellist
\small
\pinlabel $E_1$ at 540 220
\pinlabel $E_2$ at 875 230
\pinlabel $E_3$ at 720 195
\pinlabel $\what E_1$ at 665 262
\pinlabel $\what E_2$ at 760 265
\endlabellist
\centering
\includegraphics[scale=.45]{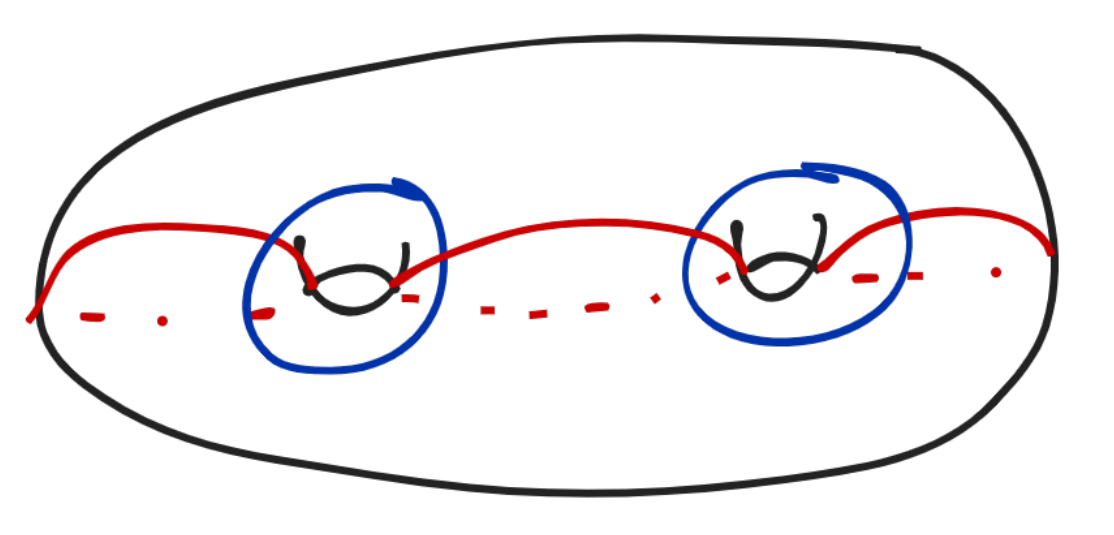}
\caption{Standard Heegaard diagram for the Heegaard splitting $S^3=V\cup_S \what V$. Here $E_1,E_2,E_3$ are disks in $V$ (the ``inside" handlebody), and $\what E_1,\what E_2$ are disks in $\what V$.}
\label{fig:handlebody}
\end{figure}

\begin{rmk}[Primitivity testing]\label{rmk:primitive-test}
A disk $D\subset V$ is primitive if and only if the conjugacy class of $\pa D$ in $\pi_1(\what V)\cong F_2$ is primitive. More precisely, if we orient $\pa D$, $\pa\what E_1$, and $\pa\what E_2$, then the intersection pattern of $\pa D$ with $\what E_1$ and $\what E_2$ determines a word $w_D$ in $x_1$ and $x_2$, say. Then disk $D$ is primitive if and only if the word $w_D$, when cyclically reduced, is part of a free basis for the free group $F_2=\pair{x_1,x_2}$. See \cite{zieschang} and also \cite{gordon}. 





There is an easy algorithm to decide if a word in $F_2$ is primitive \cite{piggott}. A simple obstruction to a cyclically reduced word $w\in F_2$ being primitive is if $w$ contains both $x_1$ and $x_1^{-1}$ (or $x_2$ and $x_2^{-1}$). For example, Figure \ref{fig:primitive} gives an example of a curve $c\sbs S$ that bounds a disk, but does not bound a primitive disk. 

\begin{figure}[h!]
\labellist
\pinlabel $c$ at 210 115
\endlabellist
\centering
\includegraphics[scale=.45]{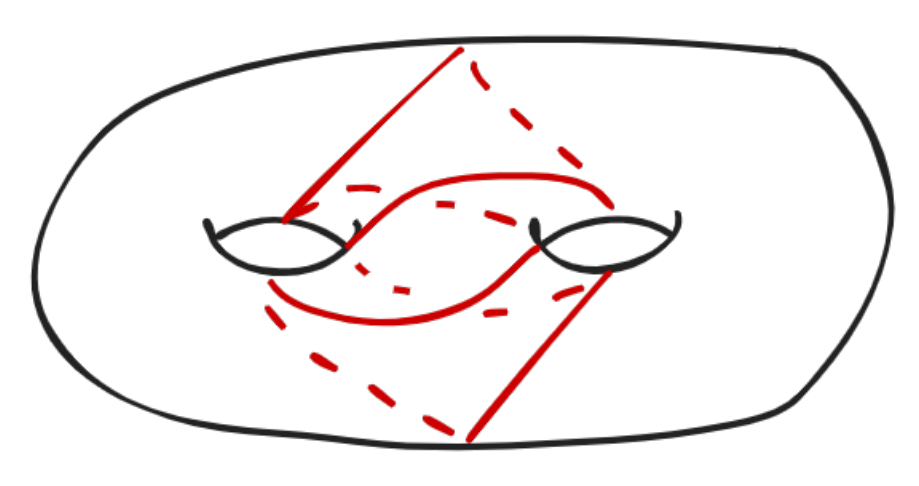}
\caption{The curve $c$ is nonseparating and bounds a disk in $D\subset V$, but $D$ is not primitive. Indeed, (for a particular choice of orientations) the word $w_D$ is conjugate to $x_1x_2^{-1}x_1x_2x_1^{-1}x_2$, which is not primitive.}
\label{fig:primitive}
\end{figure}
\end{rmk}

\paragraph{Primitive disk complex.} For a compact surface $X$, we use $\ca C(X)$ to denote the curve complex, which is the simplicial complex with a vertex for each isotopy class of essential, non-peripheral simple closed curve on $X$, and a $k$-simplex for each $(k+1)$-tuple of isotopy classes that can be realized disjointly. We define the distance $d_{\ca C(X)}(a,b)$ between two vertices of $\ca C(X)$ as the fewest edges in an edge path in $\ca C(X)$ connecting them. 



The key complex of interest in this paper is the \emph{primitive disk complex} $\ca P(V)$, which is the full subcomplex of the curve complex $\ca C(S)$ whose vertices are isotopy classes of simple closed curves on $S$ that bound primitive disks in $V$. This complex was defined in \cite{cho}. Sometimes we abuse notation and refer to $D$ (instead of $\pa D$) as a vertex of $\ca P(V)$. 

By definition, $\ca P(V)$ is a subcomplex of the (more familiar) disk complex $\ca D(V)\sbs\ca C(S)$ whose vertices are curves on $S$ that bound disks in $V$. The disk complex will be helpful for our study of $\ca P(V)$ because analogues of Theorems \ref{thm:distance} and \ref{thm:holes} are known for $\ca D(V)$ \cite{masur-schleimer}. 

To understand $\ca P(V)$, we will use two other complexes: the reducing sphere complex $\ca R(V,\what V)$ and a new complex that we call the Heegaard marking complex $\ca M(V,\what V)$. 

\paragraph{Reducing sphere complex.} The \emph{reducing sphere complex} $\ca R(V,\what V)$ is the subcomplex of $\ca C(S)$ whose vertices are curves that bound disks in both $V$ and $\what V$ (we call these \emph{reducing curves}; the union of the two disks is called a \emph{reducing sphere}). Edges correspond to curves with geometric intersection number 4 (this is the minimal possible intersection since vertices of $\ca R(V,\what V)$ are in particular separating curves on $S$). See Figure \ref{fig:reducing-sphere} for an example. The complex $\ca R(V,\what V)$ is studied in \cite{scharlemann,akbas}. 

\begin{figure}[h!]
\labellist
\small
\pinlabel $P$ at 885 755
\pinlabel $Q$ at 970 580
\endlabellist
\centering
\includegraphics[scale=.25]{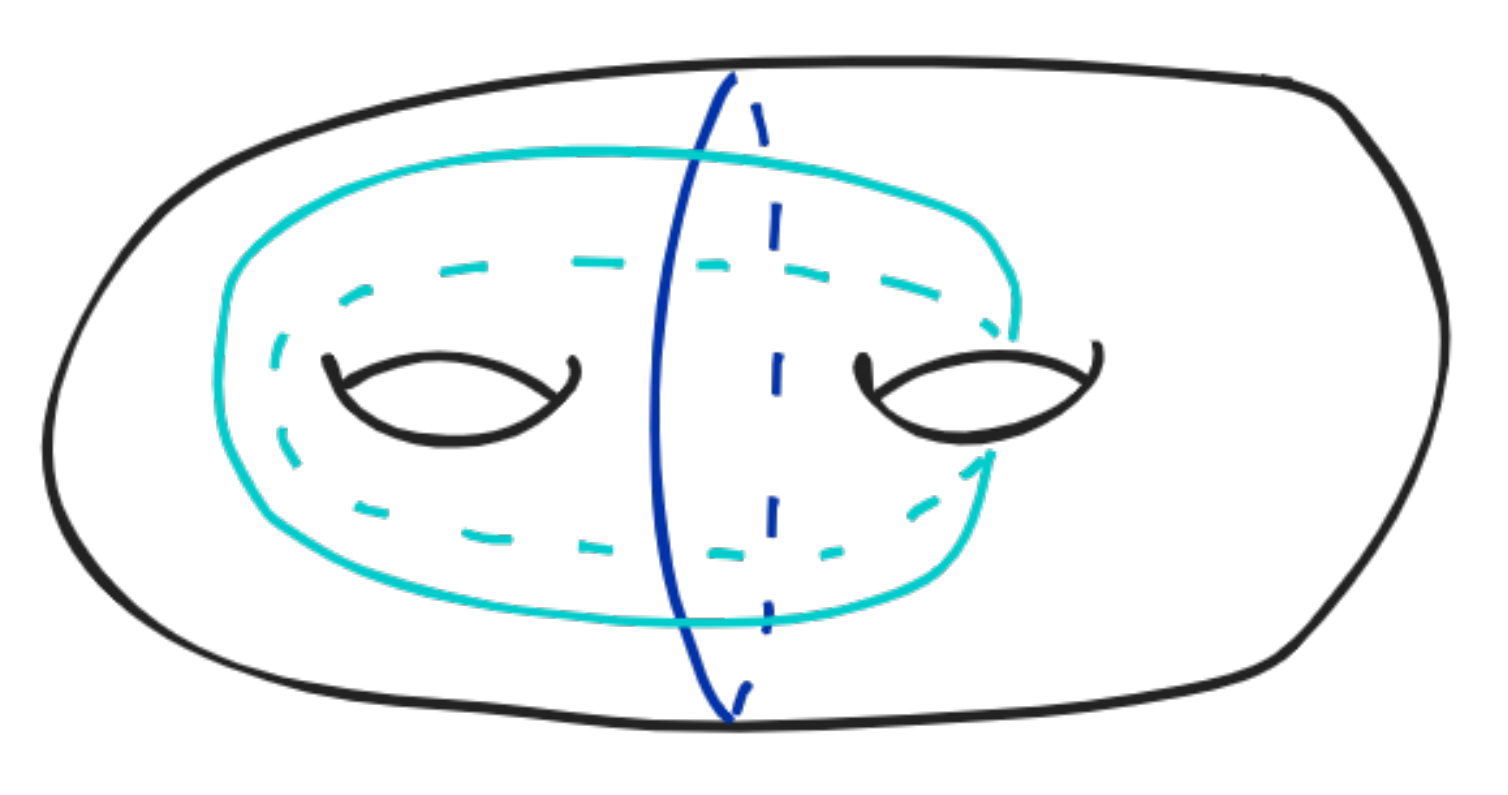}
\caption{Adjacent vertices of $\ca R(V,\what V)$.}
\label{fig:reducing-sphere}
\end{figure}

There is an embedding $\ca R(V,\what V)\hra \ca P(V)$ that appears in \cite{cho}. First observe that there is a bijection between vertices of $\ca R(V,\what V)$ and edges of $\ca P(V)$: given a reducing sphere $R$, there is a unique pair of disjoint primitive disks $D_1^R,D_2^R\sbs V$ that are disjoint from $R$; conversely, a pair of disjoint primitive disks $D_1,D_2$ in $V$ determines a unique reducing curve disjoint from $D_1\cup D_2$ by \cite[Lem.\ 2.2]{cho}. Then there is a map from vertices of $\ca R(V,\what V)$ to $\ca P(V)$ that sends $R$ to the midpoint of the edge $\{D_1^R,D_2^R\}$. As explained in \cite[\S6]{cho}, this map extends to an embedding $\ca R(V,\what V)\hra\ca P(V)$, where each triangle of $\ca R(V,\what V)$ maps into a unique triangle of $\ca P(V)$, as pictured in Figure \ref{fig:primitive-reducing}. 

\begin{figure}[h!]
\labellist
\endlabellist
\centering
\includegraphics[scale=.3]{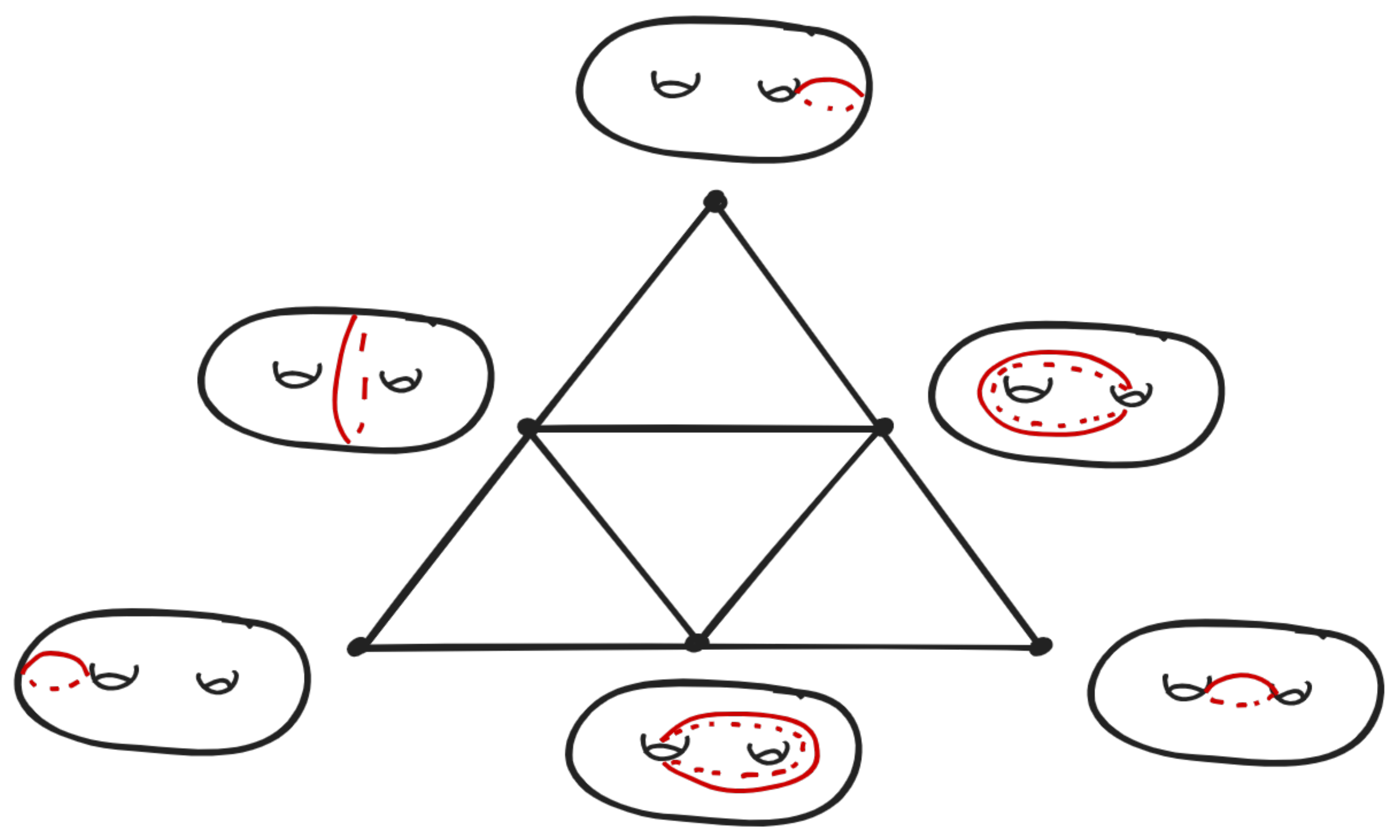}
\caption{A triangle of $\ca R(V,\what V)$ embedded in a triangle of $\ca P(V)$.}
\label{fig:primitive-reducing}
\end{figure}

\begin{rmk}\label{rmk:cone}
Using the preceding paragraph, we find that $\ca P(V)$ is obtained from $\ca R(V,\what V)$ by adding, for each primitive disk $D$, a vertex $v_D$ and the cone $v_D*\ca R_D$, where $\ca R_D\sbs\ca R(V,\what V)$ is the subgraph spanned by reducing spheres that are disjoint from $D$. 
\end{rmk}

\paragraph{Heegaard marking complex.} We define the \emph{Heegaard marking complex} $\ca M(V,\what V)$ as the graph with
\begin{itemize}
\item a vertex for each pair $(R,D)$, where $R$ is a reducing curve, and $D\sbs V$ is a primitive disk that is disjoint from the primitive disks $D',D''\subset V$ that are disjoint from $R$.
\item an edge between pairs of vertices if they differ by one of the following moves: 
\begin{itemize}
\item twist: replace $(R,D)$ by $(R,\beta_R(D))$, where $\beta_R\in\bb G$ is a half-twist about $R$ (analogous to $\beta$ in Figure \ref{fig:generators}).
\item 3-cycle: replace $(R,D)$ by $(\rho(R),\rho(D))$, where $\rho\in\bb G$ is the order-3 element defined as follows. Let $D',D''\subset V$ be the two primitive disks that are disjoint from $R$. Let $\rho\in\bb G$ be a mapping class that cyclically permutes $D,D', D''$ and preserves the two components $S\setminus (\pa D\cup\pa D'\cup\pa D'')$ (analogous to $\delta$ in Figure \ref{fig:generators}). 
\end{itemize} 
\end{itemize}

A vertex of $\ca M(V,\what V)$ naturally determines a clean, complete marking in the sense of \cite[\S2.5]{masur-minsky2}. To see this, fix a vertex $(R,D)$ of $\ca M(V,\what V)$. Let $D',D''\subset V$ and $\what D',\what D''\subset\what V$ be the primitive disks that are disjoint from $R$ (with the respective pairs $D',\what D'$ and $D'',\what D''$ dual). Then 
\[\mu=\{(R,D), (D',\what D'), (D'',\what D'')\}\] 
is a clean, complete marking. We refer to either $(R,D)$ or $\mu$ as a \emph{Heegaard marking}.

Recall \cite[\S2.5]{masur-minsky2} that the marking complex $\ca M(S)$ has vertices for clean, complete markings, and a vertex for markings that differ by a twist or a flip. 

\begin{lem}\label{lem:marking-comparison}
Fix adjacent vertices $(R_1,D_1)$ and $(R_2,D_2)$ of $\ca M(V,\what V)$, and let $\mu_1,\mu_2$ be the corresponding vertices of $\ca M(S)$. Then $d_{\ca M(S)}(\mu_1,\mu_2)\le 2$. 
\end{lem}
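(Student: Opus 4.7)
Our strategy is to analyze the two types of elementary moves in $\ca M(V,\what V)$ (the twist and the 3-cycle) and show that each corresponds to a short path in $\ca M(S)$.

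For the \emph{twist move}, $(R_2, D_2) = (R, \beta_R(D_1))$ where $\beta_R$ is a half-twist about the reducing sphere $R$. Since $\beta_R$ preserves the set $\{R, D', D''\}$ and correspondingly the set $\{\what D', \what D''\}$ of dual primitive disks, the markings $\mu_1$ and $\mu_2$ have identical pants decompositions $\{R,D',D''\}$ and agree on the transversals for $D'$ and $D''$ (possibly after swapping the labels $'$ and $''$, which does not change the marking). The only difference is the transversal to $R$: it is $D_1$ for $\mu_1$ and $\beta_R(D_1)$ for $\mu_2$. The key input is that $\beta_R^2$ equals $T_R$ modulo elements of $\bb G$ that stabilize the Heegaard marking (twists about $D', D''$ and label-permutations). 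Thus $\beta_R(D_1)$ differs from $D_1$ by at most one Dehn twist about $R$, so $d_{\ca M(S)}(\mu_1, \mu_2) \le 1$.

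For the \emph{3-cycle move}, a direct computation using the definition of $\mu_i$ shows that $\mu_2 = \rho(\mu_1)$, where $\rho \in \bb G$ has order $3$ and cyclically permutes $\{D_1, D', D''\}$. Writing $\rho: D_1\mapsto D'\mapsto D''\mapsto D_1$, the pants decomposition of $\mu_2$ is $\{\rho(R), D'', D_1\}$, which shares the single curve $D''$ with the pants decomposition $\{R, D', D''\}$ of $\mu_1$. My plan is to connect $\mu_1$ to $\mu_2$ by two successive flip moves: the first, at the pair $(R, D_1)$ of $\mu_1$, produces an intermediate marking $\mu'$ with pants decomposition $\{D_1, D', D''\}$; the second, at a suitable pair $(D', b)$ of $\mu'$, produces $\mu_2$. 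One exploits the freedom in the cleaning step of a flip (transversals are only defined up to twists about the new base curves) to arrange that $b$ lies in the same class as $\rho(R)$ modulo twists about $D'$, so that the second flip replaces $D'$ precisely by $\rho(R)$.

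The main technical obstacle is in verifying this second step: one must show that after the cleaning step of the first flip, the resulting transversal for $D'$ in $\mu'$ can be chosen (within the permitted ambiguity) to equal $\rho(R)$ up to twists about $D'$. This reduces to the geometric fact that $\rho(R)$ is a separating curve disjoint from $D''$ and from $D_1$ which, together with $D'$, completes the new base; hence $\rho(R)$ is a legitimate transversal for $D'$ in $\mu'$ in the marking-complex sense. Combined with Case~$1$, this gives $d_{\ca M(S)}(\mu_1, \mu_2) \le 2$ in all cases.
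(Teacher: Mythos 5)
Your overall strategy --- a single twist move for the twist case and two flip moves for the 3-cycle case, the first performed at the pair $(R,\pa D_1)$ --- is exactly the route the paper takes; the paper simply verifies the 3-cycle case by drawing the three explicit clean markings (Figures \ref{fig:3cycle} and \ref{fig:marking-path}). The twist case is fine.

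The gap is in the 3-cycle case, at precisely the step you flag as the main obstacle. After the first flip, the transversal to $\pa D'$ in the intermediate marking $\mu'$ is not free for you to choose: the cleaning step of a flip requires the new transversal to represent (up to the bounded cleaning tolerance) the same point of the annular complex $\ca C(\pa D')$ as the old transversal, which here is $\pa\what D'$. So for the second flip to replace $\pa D'$ by $\rho(R)$, you must check that $\pi_{\pa D'}(\rho(R))$ agrees with $\pi_{\pa D'}(\pa\what D')$. Observing that $\rho(R)$ is disjoint from $\pa D_1$ and $\pa D''$ only shows it is \emph{a} legitimate transversal for $\pa D'$ in the new base; it does not show it is \emph{the} transversal produced by cleaning, and ``equal up to twists about $D'$'' is not good enough, since correcting by a twist costs an additional elementary move and would break the bound of $2$. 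The same issue recurs at the endpoint of your path: after two flips the transversal to $\pa D_1$ is a cleaning of $R$, whereas $\mu_2$'s transversal to $\pa D_1$ is $\pa\what D_1$, so you also need $\pi_{\pa D_1}(R)$ and $\pi_{\pa D_1}(\pa\what D_1)$ to agree. Both compatibilities do hold in the standard configuration, but they are exactly the content that must be verified --- the paper does so by exhibiting the markings explicitly --- and your argument asserts rather than checks them.
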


\begin{proof}
If $(R_1,D_1)$ and $(R_2,D_2)$ differ by a twist, then $\mu_1$ and $\mu_2$ also differ by a twist, so they are adjacent in $\ca M(S)$. 

Suppose that $(R_1,D_1)$ and $(R_2,D_2)$ differ by a 3-cycle move. Up  to homeomorphism, we can assume that the pairs are the ones pictured in Figure \ref{fig:3cycle}. Then $\rho$ is the standard generator $\delta$. Figure \ref{fig:marking-path} illustrates a path of length-2 between $\mu_1$ and $\mu_2$ in $\ca M(S)$. 
\end{proof}

\begin{figure}[h!]
\labellist
\small
\pinlabel $R_1$ at 270 790
\pinlabel $\searrow$ at 230 908
\pinlabel $D_1$ at 200 920
\pinlabel $D_2$ at 810 890
\pinlabel $\uparrow$ at 670 832
\pinlabel $R_2$ at 670 800
\endlabellist
\centering
\includegraphics[scale=.4]{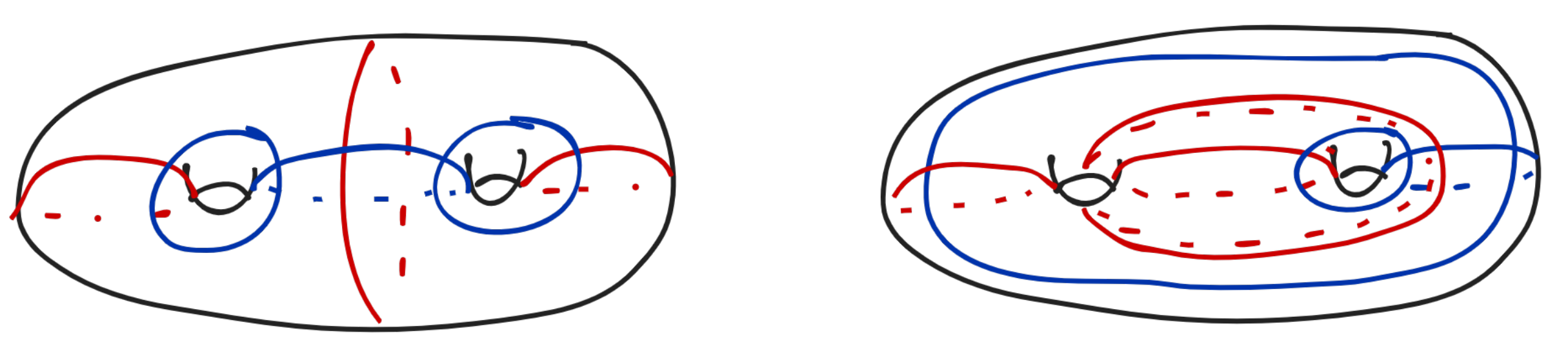}
\caption{Heegaard markings that differ by a 3-cycle move. }
\label{fig:3cycle}
\end{figure}

\begin{figure}[h!]
\labellist
\small
\pinlabel $\boxed{1}$ at 130 580
\pinlabel $\text{flip}$ at 450 700
\pinlabel $\text{clean}$ at 820 700
\pinlabel $\boxed{3}$ at 1100 570
\pinlabel $\text{flip}$ at 230 520
\pinlabel $\text{clean}$ at 620 525
\pinlabel $\boxed{5}$ at 990 450
\endlabellist
\centering
\includegraphics[scale=.35]{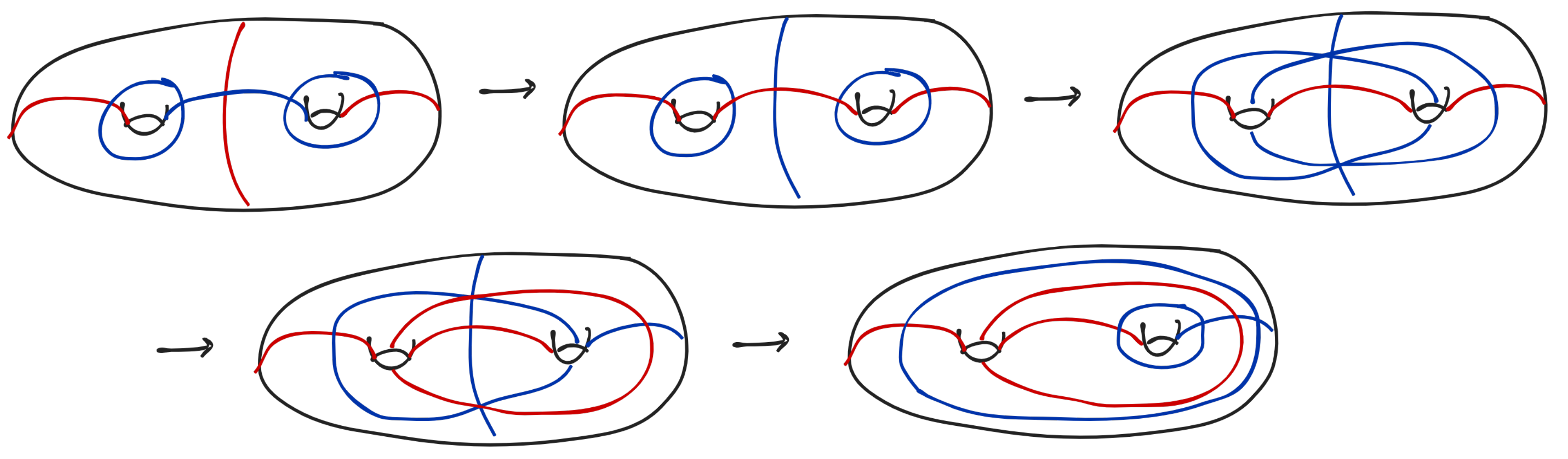}
\caption{Path $\boxed{1}\ra\boxed{3}\ra\boxed{5}$ of complete, clean markings obtained by doing two flip moves. After performing a flip move, and additional step is needed to replace the resulting marking by a clean marking, c.f.\ \cite[\S2.5]{masur-minsky2}.}
\label{fig:marking-path}
\end{figure}

\begin{lem}\label{lem:marking-qi-G}
$\bb G$ is quasi-isometric to $\ca M(V,\what V)$. 
\end{lem}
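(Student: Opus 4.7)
The plan is to apply the Milnor--Svarc lemma (equivalently, a standard orbit-map criterion) to a natural isometric action of $\bb G$ on $\ca M(V,\what V)$. I would verify four ingredients: (i) $\bb G$ acts on $\ca M(V,\what V)$ by graph automorphisms, (ii) the action has finitely many vertex orbits, (iii) vertex stabilizers are finite, and (iv) $\ca M(V,\what V)$ is connected. Together these imply that the orbit map $\bb G\to\ca M(V,\what V)$, $g\mapsto g\cdot v_0$, is a quasi-isometry.

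For (i), any $g\in\bb G$ sends reducing curves to reducing curves and primitive disks to primitive disks (since $g$ extends to both handlebodies and is $V$-$\what V$ equivariant). Setting $g\cdot(R,D)=(gR,gD)$ gives an action on vertices. The twist/3-cycle edges are $\bb G$-equivariant: $\beta_{gR}=g\beta_Rg^{-1}$ and the order-3 element $\rho$ associated to $(gR,gD)$ is $g\rho g^{-1}$, so adjacency is preserved.

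For (ii), I would use that $\bb G$ acts transitively on reducing spheres (Scharlemann, Akbas), so it suffices to fix a reducing sphere $R_0$ and show the $\bb G$-stabilizer $\bb G_{R_0}$ acts with finitely many orbits on the set of primitive disks $D$ disjoint from the pair $D',D''\subset V$ determined by $R_0$. Cutting $V$ along $D'\cup D''$ produces a 3-ball with four disk scars, and any such $D$ is determined up to isotopy by its separation pattern of the scars; there are only finitely many such combinatorial types, and the subgroup $\pair{\alpha,\gamma,\beta_{R_0}}\subset\bb G_{R_0}$ (which contains the half-twist about $R_0$ and the involutions preserving $R_0$) shuffles them into finitely many orbits. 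For (iii), the Heegaard marking $\mu=\{(R,D),(D',\what D'),(D'',\what D'')\}$ is a complete clean marking of $S$, so its $\Mod(S)$-stabilizer is finite, and hence so is its $\bb G$-stabilizer. For (iv), one picks the basepoint $v_0=(R_0,D_0)$ compatible with the symmetries so that the generators $\alpha$ and $\gamma$ fix $v_0$, while $\beta$ and $\delta$ move $v_0$ to vertices adjacent to $v_0$ by a twist and a 3-cycle respectively; since these four elements generate $\bb G$ and $\bb G\cdot v_0$ exhausts the vertex set up to finitely many orbits, connectedness follows.

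Finally, combining (ii) and (iii) shows $\ca M(V,\what V)$ is locally finite (each vertex has only finitely many orbit-representatives of neighbors under its finite stabilizer), so $\ca M(V,\what V)$ is a proper geodesic metric space. The Milnor--Svarc lemma then yields the required quasi-isometry. The main obstacle is step (ii): verifying that the set of primitive disks disjoint from a fixed pair $D',D''$ breaks into finitely many $\bb G_{R_0}$-orbits requires an explicit analysis of the disks in the genus-2 handlebody, using the primitivity criterion of Remark \ref{rmk:primitive-test}. Steps (i), (iii), and (iv) are essentially formal once the definitions are unpacked.
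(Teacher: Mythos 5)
Your overall strategy---an orbit-map/Milnor--\v Svarc argument from a cocompact action with finite stabilizers---is the same as the paper's, which simply asserts that $\bb G$ acts transitively on the vertices of $\ca M(V,\what V)$ and that the stabilizer of the standard vertex $(P,E_3)$ is the finite group $\pair{\alpha,\gamma}$. The problem lies in your step (ii), which is exactly the step you flag as the main obstacle. The claim that a primitive disk $D$ disjoint from $D'\cup D''$ is ``determined up to isotopy by its separation pattern of the scars,'' so that there are only finitely many such $D$, is false. Cutting $V$ along $D'\cup D''$ does give a ball with four scars, but $\pa D$ is then an essential simple closed curve on the complementary four-holed sphere, and such curves form an infinite (Farey-graph) family; the partition of the scars does not determine the isotopy class. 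Concretely, for the standard reducing curve $P$ with associated pair $E_1,E_2$, the half-twist $\beta=\beta_P$ has infinite order in $\bb G_P=\pair{\alpha,\beta,\gamma}$ while the stabilizer of $(P,E_3)$ is the finite group $\pair{\alpha,\gamma}$, so the disks $\beta^{2n}(E_3)$, $n\in\Z$, are pairwise non-isotopic primitive disks all disjoint from $E_1\cup E_2$. The set you need to control is therefore infinite, and the finitely-many-orbits conclusion must come from showing that $\bb G_{R_0}$ (essentially the infinite cyclic group $\pair{\beta_{R_0}}$ together with the finite symmetries) acts transitively, or at least cofinitely, on it---which is the real content of the transitivity claim and is not supplied by your sketch.

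This gap propagates into step (iv): knowing only ``finitely many vertex orbits,'' the observation that each generator moves $v_0$ to a vertex adjacent to $v_0$ shows only that the orbit $\bb G\cdot v_0$ lies in a single component of $\ca M(V,\what V)$; it says nothing about vertices in the other putative orbits, so connectedness does not follow. If you instead prove genuine transitivity on vertices (as the paper asserts), steps (ii) and (iv) close simultaneously, and the remainder of your argument---equivariance of the twist and 3-cycle edges, and finiteness of the stabilizer of a complete clean marking---is fine.
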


\begin{proof}
It is easy to see that $\bb G$ acts transitively on vertices of $\ca M(V,\what V)$. The stabilizer of the standard pair $(P,E_3)$ (Figure \ref{fig:handlebody} and Figure \ref{fig:reducing-sphere}) is the subgroup $\pair{\alpha,\gamma}\sbs\bb G$. Since $\pair{\alpha,\gamma}\cong\Z/2\Z\times\Z/2\Z$ is finite, the lemma follows. 
\end{proof}

\paragraph{Goeritz group action.} The Goeritz group $\bb G$ obviously acts on all of our complexes. The following computation will be used in \S\ref{sec:ccc}.

\begin{prop}[Primitive disk stabilizer]\label{prop:primitive-stabilizer} 
Let $E=E_2$ be the primitive disk pictured in Figure \ref{fig:handlebody}. The stabilizer $\bb G_E<\bb G$ is the subgroup generated by $\alpha,\beta,\gamma\delta$. 
\end{prop}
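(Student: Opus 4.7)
For the forward inclusion $\langle\alpha,\beta,\gamma\delta\rangle \subseteq \bb G_E$, I check each generator fixes $E = E_2$ by inspection. The hyperelliptic involution $\alpha$ acts trivially on isotopy classes of essential simple closed curves on $\Sigma_2$, so fixes $\partial E_2$. The half-twist $\beta$ preserves each of the disjoint primitive disks $\{E_1, E_2\}$ individually (rather than swapping them), as is visible from Figure \ref{fig:generators}. For $\gamma\delta$: by Theorem \ref{thm:pseudo}(iii), $B = \langle\alpha,\gamma,\delta\rangle\cong\Z_2\times S_3$ stabilizes the primitive pants decomposition $\{E_1, E_2, E_3\}$; under $B\twoheadrightarrow S_3$, $\delta$ maps to a $3$-cycle and $\gamma$ to a compatible transposition, so $\gamma\delta$ maps to the transposition fixing $E_2$.

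For the reverse inclusion, set $H = \langle\alpha,\beta,\gamma\delta\rangle$, and use the amalgamation $\bb G = A *_C B$ from (\ref{eqn:splitting}) with $A = \langle\alpha,\beta,\gamma\rangle$, $B=\langle\alpha,\gamma,\delta\rangle$, $C=\langle\alpha,\gamma\rangle$. The core computations are the intersections of $\bb G_E$ with the factors,
\[
\bb G_E\cap B = \langle \alpha, \gamma\delta\rangle \quad\text{and}\quad \bb G_E\cap A = \langle\alpha,\beta\rangle,
\]
both contained in $H$. The first is the $S_3$-stabilizer computation above. For the second, \cite[Lem.\ 2.2]{cho} implies that the reducing sphere $P$ stabilized by $A$ uniquely determines the pair $\{E_1, E_2\}$ of primitive disks disjoint from $P$, giving a homomorphism $A\to \Sym\{E_1,E_2\}=S_2$ under which $\gamma$ maps to the swap; computing the kernel using the semidirect-product presentation of $A = (\Z_2\times\Z_2)\rtimes\Z$ (where the $\Z$-factor acts on $\langle\alpha,\gamma\rangle$ by $\beta\gamma\beta^{-1}=\alpha\gamma$ and commutes with $\alpha$) yields exactly $\langle\alpha,\beta\rangle$.

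To conclude $\bb G_E \subseteq H$, I apply Bass--Serre theory to the action of $\bb G_E$ on the tree $T$ of the splitting $\bb G = A *_C B$. An elliptic $g \in \bb G_E$ is conjugate into $A$ or $B$ and lies in $H$ by the above intersection calculations. For a loxodromic $g \in \bb G_E$, I write $g$ in alternating normal form $g = g_1g_2\cdots g_n$ with $g_i \in A\setminus C$ and $g_{i+1}\in B\setminus C$ (or vice versa), and argue inductively by tracking the image of the vertex $E\in\ca P(V)$ under partial products; using the structure of the link of $E$ in $\ca P(V)$ described in Remark \ref{rmk:cone}, I show that each alternating factor $g_i$ must individually fix $E$ and so lies in $\bb G_E\cap A$ or $\bb G_E\cap B$. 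This loxodromic step is the principal obstacle, as it requires a careful interplay between the Bass--Serre tree of $\bb G$ and the structure of $\ca P(V)$ near the vertex $E$.
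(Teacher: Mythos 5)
Your forward inclusion and the two intersection computations $\bb G_E\cap A=\pair{\alpha,\beta}$, $\bb G_E\cap B=\pair{\alpha,\gamma\delta}$ are fine (the paper uses the first of these as well). The gap is in the reduction of an arbitrary element of $\bb G_E$ to these intersections. In the elliptic case, knowing that $g\in\bb G_E$ is conjugate into $A$ or $B$ does not place $g$ in $A$ or $B$: if $g=kak^{-1}$ with $a\in A$, then $a$ stabilizes $k^{-1}E$, not $E$, so the intersection calculations do not apply unless you already know something about $k$. In the loxodromic case, the assertion that each alternating factor $g_i$ of a normal form of $g\in\bb G_E$ must individually fix $E$ is both unjustified and false as stated: normal-form factors are only determined up to multiplication by elements of $C=\pair{\alpha,\gamma}$, and $\gamma\notin\bb G_E$ (it swaps $E_1$ and $E_2$), so even for elements that \emph{are} products of factors fixing $E$ one can exhibit normal forms whose factors do not fix $E$. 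More fundamentally, the statement "every element of $\bb G_E$ admits a normal form whose factors lie in $\bb G_E\cap A$ and $\bb G_E\cap B$" is equivalent to $\bb G_E=\pair{\bb G_E\cap A,\bb G_E\cap B}=\pair{\alpha,\beta,\gamma\delta}$, i.e., to the proposition itself; tracking the orbit of $E$ under partial products gives no traction because the hypothesis only constrains the full product. So the step you flag as the principal obstacle is indeed where the proof is missing, and no argument is supplied for it.

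What is needed to close the gap is a genuinely geometric input, and this is where the paper's proof differs. The paper has $\bb G_E$ act on the subtree $\ca R_E\subset\ca R(V,\what V)$ of reducing spheres disjoint from $E$, and proves (a) $\ca R_E$ is connected, by rerunning Scharlemann's surgery argument and checking that the surgery creates no new intersections with $E$; (b) $\bb G_E$ acts transitively on vertices and edges of $\ca R_E$, by correcting Scharlemann's transitivity with the element $\gamma$; and (c) $\gamma\delta$ realizes an edge inversion. The standard fact that a group acting on a connected graph, transitively on vertices and edges and with an inversion, is generated by a vertex stabilizer together with the inversion then yields $\bb G_E=\pair{\bb G_E\cap\bb G_P,\gamma\delta}=\pair{\alpha,\beta,\gamma\delta}$. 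The connectivity and transitivity statements (a)--(b) are precisely the content your normal-form induction would have to reproduce; without them the Bass--Serre bookkeeping has nothing to act on.
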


\begin{proof}
The following proof is based on the computation in  \cite{scharlemann} of a generating set for $\bb G$. Let $\ca R_E\sbs\ca R(V,\what V)$ denote the subcomplex spanned by reducing spheres that are disjoint from $E$. Let $P,Q$ be the reducing curves pictured in Figure \ref{fig:reducing-sphere}. 

We prove the proposition by showing that (1) $\ca R_E$ is connected, (2) $\bb G_E$ acts transitively on vertices and edges of $\ca R_E$, and (3) $\bb G_E$ contains an edge inversion $P\leftrightarrow Q$. Once we show this, it is a basic result of geometric group theory that $\bb G_E$ is generated by the stabilizer of $P$ and the edge inversion \cite[Lem.\ 4.10]{farb-margalit}. The stabilizer of $P$ in $\bb G$ is $\pair{\alpha,\beta,\gamma}\cong(\Z_2\ti\Z)\rt\Z_2$; see \cite[\S2]{scharlemann}. It is easy to see that the intersection of this group with $\bb G_E$ is $\pair{\alpha,\beta}$. In addition $\gamma\delta$ inverts $P$ and $Q$, so it only remains to show that $\ca R_E$ is connected, and that $\bb G_E$ acts transitively on its vertices and edges.

One can show that $\ca R_E$ is connected in the same way that Scharlemann shows that $\ca R(V,\what V)$ is connected. Given $R\in\ca R_E$, Scharlemann \cite[\S3]{scharlemann} gives a surgery procedure that replaces $R$ by an adjacent curve with fewer intersections with $P$. This construction does not create intersections with $E$. (The main input of the surgery operation is an arc of $R$ on one side of $P$ whose slope (defined in \cite[\S3]{scharlemann}) is $\infty$ and an arc on the other side that has slope 0 and is disjoint from $R$. By our choice of $E$, the arc of slope $\infty$ is not on the same side of $P$ as $E$. Since $E$ is disjoint from arcs of slope 0, the surgery procedure does not create any new intersections with $E$. See \cite{scharlemann} for details.)

Next we show that $\bb G_E$ acts transitively on vertices $\ca R_E$. Fix a vertex $R\in\ca R_E$ we show that there is $k\in\bb G_E$ with $k(P)=R$. By Scharlemann's result, there exists $g\in\bb G$ with $g(P)=R$. There are two possibilities for $g(E)$ (each of the solid tori components of $V\setminus R$ contains exactly one primitive disk). If $g(E)\neq E$, then $(g\circ\gamma)(E)=E$, and so either $k=g$ or $k=g\circ\gamma$ has the desired properties $k(E)=E$ and $k(P)=R$ (note that $\ga(P)=P$). The fact that $\bb G_E$ acts transitively on edges follows by observing that $\beta\in \bb G_E$ acts transitively on reducing spheres that are adjacent to $P$ in $\ca R_E$. 
\end{proof}

To summarize, we have a sequence of forgetful maps
\[\ca M(V,\what V)\ra \ca R(V,\what V)\ra\ca P(V),\]
and each is distance non-increasing. Up to quasi-isometry, each map is the inclusion of a space in a coned-off space. Identifying $\ca M(V,\what V)$ with $\bb G$ (up to quasi-isometry), the space $\ca R(V,\what V)$ is quasi-isometric to the cone off of $\bb G$ by cosets of $\bb G_P=\pair{\alpha,\beta,\gamma}$, and the space $\ca P(V)$ is quasi-isometric to the cone off of $\bb G$ by cosets of $\bb G_E=\pair{\alpha,\beta,\gamma\delta}$. Since a finite-index subgroup of $\bb G_P$ is contained in $\bb G_E$, we can also view $\ca P(V)$ as a cone off of $\ca R(V,\what V)$, up to quasi-isometry. This is discussed further in \S\ref{sec:ccc}. 

To end this section, we record the following lemma for later use. The proof is elementary. 
\begin{lem}\label{lem:contraction}
Let $G$ be a finitely generated group. Fix complexes $\ca X$ and $\ca Y$ with isometric $G$ actions, and assume there is a distance non-increasing $G$-equivariant map $\pi:\ca X\ra\ca Y$. If the orbit map $G\ra\ca Y$ is a quasi-isometric embedding, then so is the orbit map $G\ra\ca X$. 
\end{lem}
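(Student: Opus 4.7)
The plan is to compare the two orbit maps directly, using $G$-equivariance of $\pi$ to push the lower bound on $\ca Y$ back to $\ca X$, and the fact that $G$ is finitely generated for the upper bound. Fix a basepoint $x \in \ca X$ and set $y := \pi(x) \in \ca Y$. Denote the orbit maps by $\phi_{\ca X}\colon G \to \ca X$, $g\mapsto g\cdot x$, and $\phi_{\ca Y}\colon G \to \ca Y$, $g\mapsto g\cdot y$. By $G$-equivariance of $\pi$, we have $\phi_{\ca Y} = \pi \circ \phi_{\ca X}$.

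For the upper bound, fix a finite generating set $\Sigma$ for $G$ and let $M = \max_{s\in\Sigma} d_{\ca X}(x, s\cdot x)$, which is finite. A straightforward induction on word length using the triangle inequality and the $G$-action by isometries gives $d_{\ca X}(x, g\cdot x) \le M\cdot |g|_G$ for every $g \in G$, so $\phi_{\ca X}$ is coarsely Lipschitz.

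For the lower bound, use that $\pi$ is distance non-increasing and $G$-equivariant to get
\[
d_{\ca Y}(y, g\cdot y) = d_{\ca Y}(\pi(x), \pi(g\cdot x)) \le d_{\ca X}(x, g\cdot x).
\]
Since $\phi_{\ca Y}$ is assumed to be a quasi-isometric embedding, there exist constants $K \ge 1$ and $C \ge 0$ with $|g|_G \le K\cdot d_{\ca Y}(y, g\cdot y) + C$. Combining the two inequalities yields $|g|_G \le K\cdot d_{\ca X}(x, g\cdot x) + C$, which together with the upper bound proves that $\phi_{\ca X}$ is a quasi-isometric embedding.

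There is no real obstacle here; the argument is essentially formal, relying only on the fact that a $G$-equivariant Lipschitz map between $G$-spaces pulls back quasi-isometric embeddings of orbits. The only point worth stating explicitly is that finite generation of $G$ is used precisely to produce the constant $M$ in the upper bound.
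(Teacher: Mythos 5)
Your argument is correct and is exactly the standard one; the paper omits the proof entirely, remarking only that it is elementary, and what you wrote is the intended argument (coarse Lipschitz upper bound from finite generation, lower bound pulled back through the distance non-increasing equivariant map). No issues.
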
 


\subsection{Surgery of primitive disks}\label{sec:surgery}

In this section we explain the construction of surgery paths in $\ca P(V)$. 

\paragraph{Surgery.} Primitive disks are manipulated by surgery along bigons (boundary compressions). Some care must be taken to ensure that the disks resulting from surgery are primitive. Below we mostly follow the terminology in \cite[\S8]{masur-schleimer}.

Fix a primitive disk $D$. A \emph{surgery bigon} for $D$ is a triple $(B,a,b)$, where $B\subset V$ is a closed disk whose boundary is decomposed into two arcs $a,b$ with $a=B\cap D$, $b=B\cap S$, and $B\cap\pa D=\pa a=\pa b$. Sometimes we refer to the surgery bigon as $B$, leaving $a$ and $b$ implicit. 

To surger $D$ along $(B,a,b)$, observe that $a$ decomposes $D$ into closed disks $D=D_1\cup D_2$ whose intersection is $a$. The disks obtained from $D$ by surgery along $B$ are $D'=D_1\cup B$ and $D''=D_2\cup B$. After an isotopy, $D,D',D''$ are pairwise disjoint. Furthermore, $D',D''$ are not isotopic, since otherwise $\pa D\sbs S$ would be separating (hence not primitive). 

A surgery bigon $(B,a,b)$ for $D$ is called a \emph{boundary compression} if $b$ is an essential arc in $S\setminus\pa D$. This implies that $\pa D'$ and $\pa D''$ are essential in $S$. 

We call a boundary compression $(B,a,b)$ for $D$ \emph{primitive} if there exists $\what D_b\sbs \what V$ so that $\pa D,\pa \what D_b$ are disjoint and $b,\pa \what D_b$ intersect transversely in a single point. If $B$ is a primitive boundary compression, then $D',D''$ are both primitive disks; this follows from the definitions. Below we will only ever be interested in primitive boundary compressions.

\begin{figure}[h!]
\labellist
\pinlabel $E$ at 59 735
\pinlabel $B$ at 190 680
\pinlabel $b$ at 160 745
\pinlabel $a$ at 59 760
\pinlabel $E_1$ at 400 725
\pinlabel $E_2$ at 457 725
\endlabellist
\centering
    \includegraphics[scale=.6]{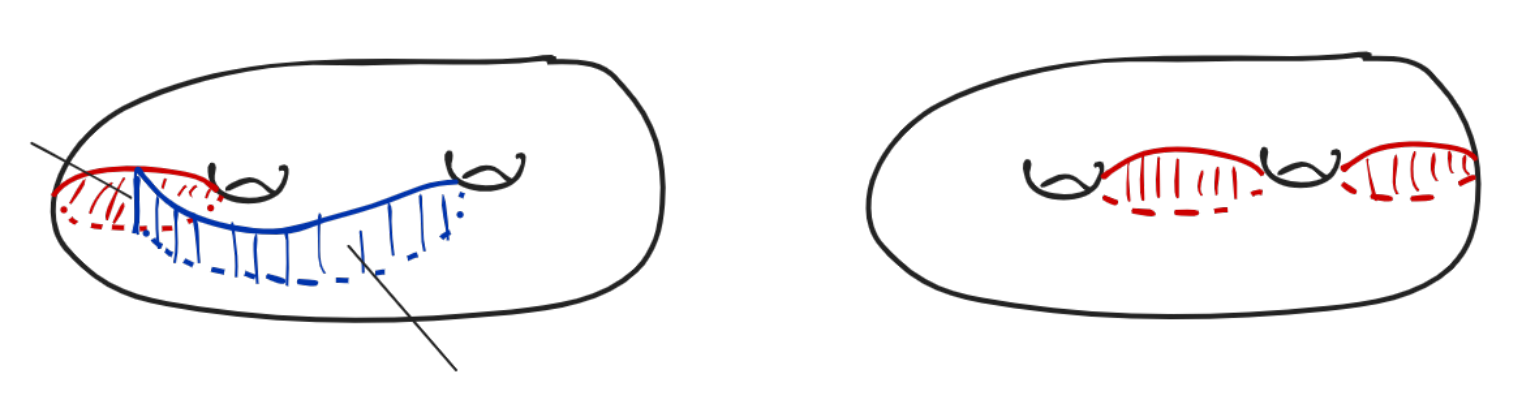}
\caption{Left: a primitive disk $E$ and a surgery bigon $(B,a,b)$. Right: the result of surgery is two primitive disks $E_1,E_2$.} 
\label{fig:surgery}
\end{figure}

\begin{example}\label{ex:outermost}
The following example gives a basic but important construction of primitive boundary compressions. Fix primitive disks $D,E\sbs V$ with $i(\pa D,\pa E)\neq0$. We assume $\pa D,\pa E$ are in minimal position and $D,E$ intersect transversely. Since $V$ is irreducible, we can isotope $D,E$ so that the 1-manifold $E\cap D$ has no closed components, i.e.\ each component of $D\cap E$ is a proper arc. An outermost arc $a$ of $D\cap E\subset E$ cuts off a bigon $(B,a,b)$, which is always a primitive boundary compression for $D$ by \cite[Thm.\ 2.3]{cho}. The surgered disks $D',D''$ satisfy 
\[i(\pa D',\pa E)+i(\pa D'',\pa E)<i(\pa D,\pa E),\] which implies that neither of $D',D''$ is isotopic to $D$. Furthermore, neither of $D',D''$ is isotopic to $E$ because $D',D''$ are disjoint from $D$ and $i(\pa D,\pa E)\neq0$. 
\end{example}

\begin{rmk}\label{rmk:multidisk} We can repeat the above discussion with a single primitive disk $D$ replaced by a collection $\Delta\sbs S$ of $k\ge1$ disjoint primitive disks. A primitive boundary compression for $\Delta$ is a primitive boundary compression $B$ for some component $D\sbs \Delta$ such that $B$ is disjoint $\Delta\setminus D$. Surgering $D$ along $B$ results in a collection $\Delta'$ of $k+1$ disjoint primitive disks. Furthermore, generalizing Example \ref{ex:outermost}, if $E$ is a primitive disk with nonempty minimal intersection with $\De$, then an outermost bigon of $E\setminus (\De\cap E)$ is a primitive boundary compression for $\Delta$, and $i(\Delta',E)<i(\Delta,E)$.  
\end{rmk}

\paragraph{Subsurfaces and surgery sequences.} A subsurface $X\subset S$ is \emph{essential} if every component of $\pa X$ is essential in $S$. By convention, we will always assume our subsurfaces are connected. 

We say a primitive disk $D$ is \emph{supported in} $X$ if $\pa D\sbs X$ and $\pa D$ is not parallel to a component of $\pa X$. More generally, $D$ \emph{cuts} $X$ if $\pa D$ cannot be isotoped to be disjoint from $X$. 

If there is a primitive disk supported in $X$, we say $X$ is \emph{primitively compressible}. Otherwise, $X$ is \emph{primitively incompressible}. Similarly, if $D$ cuts $X$ and there exists a primitive boundary compression $(B,a,b)$ for $D$ with $b\sbs X$, then we say that $D$ is \emph{primitively boundary compressible} into $X$. 

\begin{defn}[Surgery sequences]
Fix a subsurface $X\sbs S$ and primitive disk $D$, and assume $\pa X,\pa D$ are in minimal position and have nonempty intersection. A \emph{primitive compression sequence} is a sequence $\{\De_k\}_{k=1}^n$, where (i) $\De_1=\{D\}$, (ii) $\De_{k+1}$ is a collection of $k+1$ disjoint disks obtained from $\De_k$ by a primitive boundary compression supported in $S\sm \pa X$, and (iii) for each $k$, each $D_k\in \De_k$ intersects $\pa X$ nontrivially. 
\end{defn}

\begin{rmk}\label{rmk:surgery-end}
If $S\setminus n(\pa X)$ supports a primitive disk $E$ and $\{\De_k\}_{k=1}^n$ is a surgery sequence for $X$, then either (i) $E$ is disjoint from $\De_n$ or (ii) $\De_n$ can be primitively boundary compressed into $S\setminus n(\pa X)$; cf.\ Example \ref{ex:outermost} and Remark \ref{rmk:multidisk}. This motivates the following definition, which agrees with \cite[Definition 11.2]{masur-schleimer}. 
\end{rmk}

\begin{defn}\label{defn:maximal}
A primitive compression sequence $\{\De_k\}_{k=1}^n$ for $X,D$ is \emph{maximal} if either $X$ supports a primitive disk that is disjoint from $\De_n$ or there is no primitive boundary compression for $\De_n$ supported in $S\setminus n(\pa X)$. In the former case, the surgery sequence is said to \emph{end in $S\setminus n(\pa X)$} and in the latter case it is said to \emph{end essentially}. 
\end{defn}

Whether or not a maximal primitive compression sequence for $(X,D)$ ends essentially or ends in $S\setminus n(\pa X)$ depends on whether or not $S\setminus \pa X$ is primitively compressible. By Remark \ref{rmk:surgery-end}, if $S\setminus n(\pa X)$ is primitively compressible, then every maximal surgery sequence ends in $S\setminus n(\pa X)$, and the converse is also true, trivially. Then if $S\setminus \pa X$ is primitively incompressible, and $\{\De_k\}_{k=1}^n$ is a maximal primitive compression sequence, then $\De_n$ cannot be primitively boundary compressed into $S\setminus n(\pa X)$. 

Any surgery sequence can be extended to a maximal sequence. If $\{\De_k\}_{k=1}^n$ is not maximal then, there exists a primitive boundary compression for $\De_n$ supported in $S\setminus n(\pa X)$, which produces a collection $\De_{n+1}$. Since the given sequence is not maximal, every disk in $\De_{n+1}$ intersects $\pa X$, which extends the sequence. Since each $D_k\in\De_k$ intersects $\pa X$ and the total intersection $i(\De_k,\pa X)$ is bounded above by $i(\pa D,\pa X)$, every surgery sequence is finite and can be extended to a maximal sequence.

\paragraph{General position.}\label{rmk:standard-position}

For various arguments it is helpful to put curves on $S$ or disks in $V$ in a ``general position". Precise notions follow. 

Recall that simple closed curves $a,b\sbs S$ are said to be in \emph{minimal position} if they meet transversely and $S\setminus(a\cup b)$ has no bigon components. Equivalently, $a,b$ realize the geometric intersection number for the corresponding isotopy classes \cite[\S1.2]{farb-margalit}.  

Disks $D,E\sbs V$ that intersecting transversely can be isotoped so that $D\cap E$ is a union of arcs. This is because $V$ is irreducible: if $c\subset D\cap E$ is a circle, then $c$ bounds a disk in both $D$ and $E$, giving a 2-sphere in $V$ that can be filled by a 3-ball; using these 3-balls, circular components of $D\cap E$ can be removed by a sequence of isotopies. Then we say that $D\cap E$ is \emph{circle free}.

If given simple closed curves $a,b\sbs S$ and a subsurface $X\sbs S$, we can isotope $a,b$ so that $a,b,\pa X$ are (pairwise) in minimal position. Then $S\setminus (a\cup b\cup\pa X)$ generally will contain triangular components, and we can always isotope further so that these triangle components are all in $X$ or all in $S\setminus X$ (the same observation appears in \cite[\S10]{masur-schleimer}). 

Combining the preceding paragraphs, we say $D,E,X$ are in \emph{standard position} if (1) each pair of $\pa D,\pa E,\pa X$ are in minimal position, (2) the triangles of $S\setminus(\pa D\cup \pa E\cup\pa X)$ are either all in $X$ or all in $S\setminus X$, and (3) $D\cap E$ is circle free. Any triple $D,E,X$ can be put in standard position. 

%

\subsection{Subsurface projection}\label{sec:subsurface-projection}

In this section we recall the terminology needed to state the distance formula.

\paragraph{Subsurface projections, holes, and diameter.} We use subsurface projections to understand the geometry of $\ca P(V)$. For a subsurface $X\sbs S$ and a curve $a\sbs S$, the \emph{subsurface projection} $\pi_X(a)\subset\ca C(X)$ is defined as the union of nontrivial isotopy classes represented by the boundary of $n\big((a\cap X)\cup \pa X\big)$. 
See Figure \ref{fig:projection} for an example. 

\begin{figure}[h!]
\labellist
\pinlabel $a$ at 100 600
\pinlabel $b$ at 180 550
\pinlabel $\pi_{X}(a)$ at 310 600
\pinlabel $\pi_Y(a)$ at 540 600
\pinlabel $b$ at 435 550
\endlabellist
\centering
    \includegraphics[scale=.6]{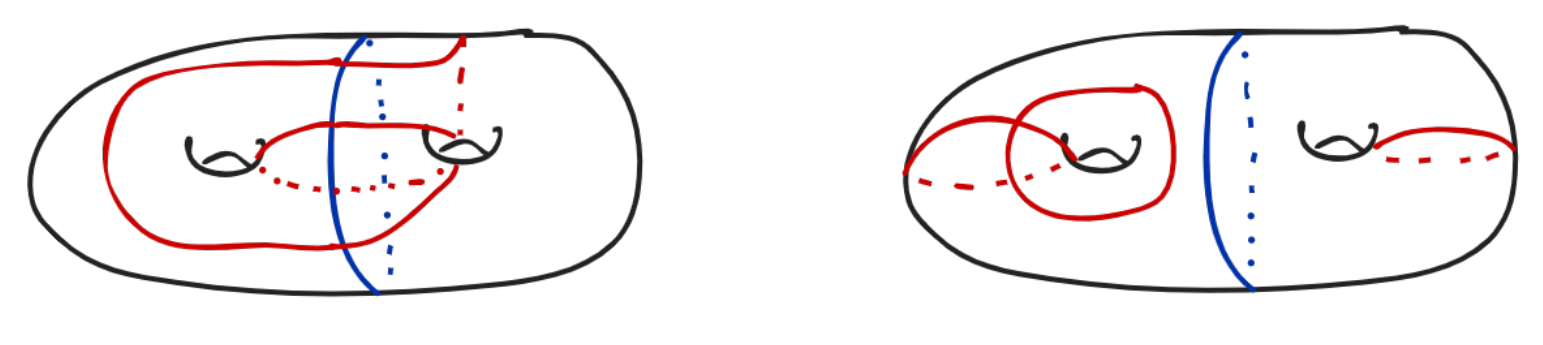}
\caption{A curve $a$ and its projections to the subsurfaces $X$ and $Y$ on the left and right sides of the curve $b$.}
\label{fig:projection}
\end{figure}

For curves $a,b\sbs S$, the \emph{subsurface-projection distance} is defined as
\[d_X(a,b)=\diam_{\ca C(X)}(\pi_X(a)\cup\pi_X(b).\]

For $X\cong\Si_{0,2}$ it is useful to (re)define $\ca C(X)$ to be the arc complex. This requires a different definition of the subsurface projection; see \cite[\S2.4]{masur-minsky2} or \cite[\S4.1]{masur-schleimer}. 

The function $d_X$ is not a distance function, since it isn't necessarily true that $d_X(a,a)=0$; on the other hand, $d_X$ is symmetric and satisfies the triangle inequality. 
If $d_X$ is bounded on $\ca P(V)$, then its maximum value is called the \emph{diameter} of $X$ (with respect to $\ca P(V)$). If $d_X$ is unbounded, we say $X$ has \emph{infinite diameter}.

One says $X$ is \emph{hole} for $\ca P(V)$ if every primitive disk $D\sbs V$ cuts $X$. 

\begin{rmk}\label{rmk:D-vs-P} One can similarly define holes for the disk complex $\ca D(V)$ and the diameter for $X\sbs S$ with respect to $\ca D(V)$. Since $\ca P(V)\sbs\ca D(V)$, the following statements follow from the definitions. 
\begin{itemize}
\item If $X$ is a hole for $\ca D(V)$, then $X$ is a hole for $\ca P(V)$.
\item If $X$ has infinite diameter with respect to $\ca P(V)$, then $X$ has infinite diameter with respect to $\ca D(V)$. 
\end{itemize} 
The converses of these statements are not true. For example, if $D\in\ca D(V)$ is not primitive and $\pa D$ is nonseparating, then $X=S\setminus n(\pa D)$ is a hole for $\ca P(V)$, but not a hole for $\ca D(V)$. Regarding the diameter, there exists a genus-1 surface $X\subset S$ that is a Seifert surface for an embedding of the trefoil knot, and $X$ has infinite diameter with respect to $\ca D(V)$, but finite diameter with respect to $\ca P(V)$ (see \S\ref{sec:trefoil}).  
\end{rmk}

\paragraph{Subsurface projection of a surgery sequence.} For the disks in a primitive compression sequence for $(X,D)$, the subsurface projections to $X$ do not change much. By \cite[Lem.\ 11.5]{masur-schleimer}, if $\{\De_k\}$ is a compression sequence for $(X,D)$, then there exists $D_n\in\Delta_n$ and arcs $a\sbs \pa D\cap X$ and $b\sbs\pa D_n\cap X$ so that $a$ and $b$ are disjoint.\footnote{The statement of \cite[Lem.\ 11.5]{masur-schleimer} includes the hypothesis that the sequence is maximal, but this is not used in the proof. This is noteworthy because our surgery sequences can be maximal as sequences of primitive disks without being maximal as a sequence of all disks.} As a consequence, one obtains the following result, proved in \cite[Lem.\ 11.7]{masur-schleimer}. 

\begin{lem}\label{lem:surgery-sequence}
Fix a subsurface $X\subset S$, and a primitive disk $D$ that cuts $X$. Then there exists a primitive disk $E$ such that $d_X(D,E)\le 6$ and either $E$ supported in $X$ (if $X$ is primitively compressible) or $E$ cannot be primitively compressed into $S\setminus n(\pa X)$ (if $X$ is primitively incompressible). 
\end{lem}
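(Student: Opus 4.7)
The plan is to mimic the proof of \cite[Lem.~11.7]{masur-schleimer}, working throughout with primitive disks and primitive boundary compressions. The fact that such surgeries stay within $\ca P(V)$ is exactly what Remark \ref{rmk:multidisk} and Example \ref{ex:outermost} give us in genus $2$ (this is the step that would fail in higher genus, cf.\ the earlier remark about \cite{CKL}).

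First, extend the one-term sequence $\De_1=\{D\}$ to a maximal primitive compression sequence $\{\De_k\}_{k=1}^n$ for $(X,D)$ in the sense of Definition \ref{defn:maximal}. Apply the cited lemma \cite[Lem.~11.5]{masur-schleimer} to obtain some $D_n\in\De_n$ together with arcs $a\subset\pa D\cap X$ and $b\subset \pa D_n\cap X$ that are disjoint. Each of $a$ and $b$ is an essential arc in $X$ (after putting $\pa X$ in minimal position with the boundaries involved), and taking the boundary of a regular neighborhood of $a\cup\pa X$ produces a curve $\ov a$ that lies in the subsurface projection $\pi_X(D)$; similarly one obtains $\ov b\in\pi_X(D_n)$. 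Since $a$ and $b$ are disjoint, the curves $\ov a,\ov b$ can be realized disjointly in $X$, so $d_X(\ov a,\ov b)\le 2$. Combined with the standard bound $\diam_{\ca C(X)}\bigl(\pi_X(\gamma)\bigr)\le 2$ for any curve $\gamma$ cutting $X$, we get
\[
d_X(D,D_n)\ \le\ d_X(D,\ov a)+d_X(\ov a,\ov b)+d_X(\ov b,D_n)\ \le\ 2+2+2\ =\ 6.
\]

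Next, split into the two cases of Definition \ref{defn:maximal}. If $X$ is primitively incompressible, then by the discussion following Definition \ref{defn:maximal}, the sequence must end essentially, meaning $\De_n$ admits no primitive boundary compression supported in $S\sm n(\pa X)$. In this case take $E=D_n$, which satisfies both conclusions of the lemma. If instead $X$ is primitively compressible, the maximal sequence ends in $S\sm n(\pa X)$, which by definition means that there is a primitive disk $E$ supported in $X$ and disjoint from $\De_n$. Then $\pi_X(E)=\pa E$, and since $E$ is disjoint from $D_n$, we have $d_X(D_n,E)\le 2$. It remains to verify that the total distance still fits within $6$; a slight sharpening of the estimate above suffices, since $E$ disjoint from $D_n$ lets us replace $\ov b$ by $\pa E$ directly in the triangle inequality and absorb the last summand, giving $d_X(D,E)\le 6$.

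The main obstacle I anticipate is bookkeeping at the level of subsurface projections: one must verify that the arc $a$ (resp.\ $b$) really contributes an essential curve to $\pi_X(\pa D)$ (resp.\ $\pi_X(\pa D_n)$) and that the constants in the final estimate truly close up to $6$ rather than something larger. This is handled by insisting on standard position throughout (see \S\ref{rmk:standard-position}) so that $\pa D\cup\pa D_n\cup\pa X$ has no bigons, which forces $a$ and $b$ to be essential arcs in $X$; and by the fact that the projection of any simple closed curve to $\ca C(X)$ has diameter at most $2$. All other steps, including the extension to a maximal sequence and the guarantee that surgeries remain primitive, have already been set up in \S\ref{sec:surgery}.
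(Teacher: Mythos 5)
Your proposal is correct and follows essentially the same route as the paper, which simply attributes this statement to \cite[Lem.\ 11.7]{masur-schleimer} after noting that \cite[Lem.\ 11.5]{masur-schleimer} (whose maximality hypothesis is not actually used) supplies the disjoint arcs $a\subset\pa D\cap X$ and $b\subset\pa D_n\cap X$; your $2+2+2$ triangle-inequality bookkeeping and the case split on Definition \ref{defn:maximal} is exactly the content of that citation. The only point stated more casually than it deserves is the passage from ``$\De_n$ admits no primitive boundary compression into $S\setminus n(\pa X)$'' to ``the single disk $D_n$ admits none,'' which requires an innermost-bigon argument as in Masur--Schleimer; the paper leaves this to the citation as well.
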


\subsection{$I$-bundles} \label{sec:I-bundles}

An $I$-bundle $T\xra{p} B$ is a fiber bundle with fiber $I=[0,1]$. In this paper $B$ will always be a compact surface $B$. The \emph{horiztonal boundary} $\pa_hT\sbs T$ is the union of endpoints of each fiber, i.e.\ it is the total space of the induced bundle with fiber $\partial I$; the restriction $p:\pa_hT\ra B$ is a 2-fold covering map. The \emph{vertical boundary} $\pa_vT$ is defined as $p^{-1}(\pa B)$. A subset of $T$ is called \emph{vertical} if it is a union of fibers. 

If $T$ is orientable and $\pa_hT$ is connected, then the base $B$ is non-orientable. Indeed, if $\pa_hT$ is connected, then there exists a curve $c\sbs B$ so that $p^{-1}(c)$ is a M\"obius band. In order for $T$ to be orientable, $c$ must be 1-sided. From this it follows that $\pa_hT\ra B$ is the orientation double cover of $B$. 


For example, if $\pa_hT=\Si_{0,2n}$, then $B=\R P^2\setminus(\bigcup_nD^2)$, and if $\pa_hT=\Si_{1,2n}$, then $B=(\R P^2\#\R P^2)\setminus(\bigcup_nD^2)$.

\subsection{Fibered knots and Heegaard splittings} \label{sec:fibered-link}

\begin{lem}\label{lem:fibered-link}
Let $L\subset S^3$ be a fibered link
\[\mathring F\ra S^3\setminus L\xra{\pi} S^1.\]
The union of two fibers with $L$ is a Heegaard surface. 
\end{lem}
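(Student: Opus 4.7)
The plan is to exhibit the Heegaard splitting concretely from the fibration. Parametrize $S^1 = [0,1]/(0 \sim 1)$ and, for each $t \in S^1$, let $F_t = \ov{\pi^{-1}(t)} \sbs S^3$ be the closure of the open fiber, a compact surface with $\pa F_t = L$. Set
\[\Sigma = F_0 \cup F_{1/2}, \qquad V = \ov{\pi^{-1}((0,1/2))} \sbs S^3, \qquad \what V = \ov{\pi^{-1}((1/2,1))} \sbs S^3.\]
I will show: (i) $\Sigma$ is a closed surface; (ii) $V \cap \what V = \Sigma$ and $V \cup \what V = S^3$; and (iii) each of $V$ and $\what V$ is a handlebody.

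For (i) and (ii), the key ingredient is the local model for $\pi$ near $L$. Pick a tubular neighborhood $N(L) = L \times D^2$ so that on $N(L) \sm L$ the fibration is given by $\pi(\ell, r e^{i\phi}) = \phi/(2\pi)$. (This local form is forced by the bundle structure together with the fact that $\pa F_t$ is a longitude on each component of $\pa N(L)$.) In this model, $F_0$ and $F_{1/2}$ meet the neighborhood as the half-planes $\phi = 0$ and $\phi = \pi$, intersecting precisely along the axis $L$. Thus globally $F_0 \cap F_{1/2} = L$, and $\Sigma$ is the double of the compact fiber $F$ along $\pa F = L$; in particular $\Sigma$ is a closed orientable surface of genus $2g + |L| - 1$, where $g = \genus(F)$. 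The same local model confirms that $V$ and $\what V$ are $3$-submanifolds of $S^3$ with common boundary $\Sigma$ and union $S^3$, giving (ii).

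The main content is (iii); I treat $V$, as $\what V$ is symmetric. Trivializing the bundle $\pi$ over the contractible arc $[0,1/2]$ gives $\pi^{-1}([0,1/2]) \cong \mathring F \times [0,1/2]$; taking closure in $S^3$ glues $L$ back by collapsing $\pa F \times \{s\}$ onto $\pa F = L$ for each $s \in [0,1/2]$. Equivalently,
\[V \cong (F \times I)/\!\sim, \qquad (x,s) \sim (x,s') \text{ for all } x \in \pa F,\ s, s' \in I.\]
The local analysis above confirms that this is a genuine $3$-manifold with boundary $DF \cong \Sigma$; equivalently, $V$ is obtained from $F \times I$ by smoothing the corners along $\pa F \times \pa I$. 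Now choose a handle decomposition of $F$ (as a compact surface with boundary) with one $0$-handle and $k := 2g + |L| - 1$ one-handles; thickening by $I$ yields a $3$-dimensional handle decomposition of $V$ with one $3$-ball $0$-handle and $k$ solid $1$-handles, exhibiting $V$ as a genus-$k$ handlebody with $\pa V = \Sigma$.

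The step most in need of care is the analysis near $L$, where one must verify that the closure operation actually produces a $3$-manifold with boundary, rather than a space with singularities along the link. Once the local picture near $L$ is in hand, the rest is routine: the identification $V \cong F \times I$ follows from bundle triviality over an interval, and thickening a handle decomposition of the fiber $F$ yields the desired genus-$k$ handlebody structure on $V$.
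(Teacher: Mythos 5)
Your proof is correct and follows essentially the same route as the paper's: decompose $S^3$ into the closures of $\pi^{-1}((0,1/2))$ and $\pi^{-1}((1/2,1))$, identify each with $F\times I$ after trivializing the bundle over an interval and absorbing the collapse along $L$ (you do this via the quotient/local model near $L$, the paper via a mapping cylinder argument), and then exhibit $F\times I$ as a handlebody (your thickened handle decomposition of $F$ is dual to the paper's cutting along arcs $a_i\times I$). The extra care you take with the local model near $L$ is a welcome addition but does not change the argument.
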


\begin{proof}
Identify $S^1$ with $\R/\Z$. For $\theta\in S^1$, set $\mathring F_\theta=\pi^{-1}(\theta)$, and denote $F_\theta=F_\theta\cup L$ (a compact surface with boundary $L$). We show that 
\[\Sigma= F_0\cup F_{1/2}\]
is a Heegaard surface. 

Consider the interval $J=[0,1/2]\subset S^1$. Then $\pi^{-1}(J)\cong\mathring F\times J$. We want to show that $W:=(\mathring F\times J)\cup K$ is a handlebody. 

First observe that $F\times J$ is a handlebody. Choose a maximal collection of disjoint essential arcs $a_1,\ldots,a_m\subset F$ whose complement is a disk. Then $a_1\times J,\ldots,a_m\times J$ is a collection of disjoint disks whose complement in $F\times J$ is a 3-ball. 

Next observe that $W$ is homeomorphic to $F\times J\cup C(q)$, where $C(q)$ is the mapping cylinder of the projection $q:\partial F\times J\rightarrow \pa F$. Observe that $C(q)$ is homeomorphic to $\partial F\times C(q')$, where $q':J\rightarrow\{*\}$ (constant map). It is easy to show that $C(q')$ is homeomorphic to $J\times [0,1]$, and it follows that $W\cong F\times J\cup C(q)$ is homeomorphic to $F\times J$. 

The argument works the same for $J=[1/2,1]$. Therefore, $\Sigma\subset S^3$ is a Heegaard surface. 
\end{proof}

\begin{rmk}[Monodromy]\label{rmk:monodromy}
Using Lemma \ref{lem:fibered-link}, we obtain a description for the gluing map of the Heegaard splitting associated to a fibered link. This is pictured schematically in Figure \ref{fig:fibered-link}; see also Figure \ref{fig:I-bundles}. Here $\phi\in\Mod(F)$ is a lift of the monodromy of the fibration $\mathring F\ra S^3\setminus L\rightarrow S^1$ with respect to the short exact sequence
\[1\ra\Z^{|L|}\ra\Mod(F)\ra\Mod(\mathring F)\ra1.\]
Any lift defines a Heegaard splitting, but changing $\phi$ by an element of the kernel (a boundary twist) may result in a different manifold ($\neq S^3$). This is discussed more in the Appendix, where we give a proof of the classification of genus-1 fibered knots. 

\begin{figure}[h!]
\labellist
\pinlabel $F_0$ at 130 300
\pinlabel $F_{1/2}$ at 205 200
\pinlabel $F_1$ at 230 300
\pinlabel $L$ at 180 300
\pinlabel $W$ at 120 230
\pinlabel $\what W$ at 250 230
\pinlabel $\phi$ at 180 364
\endlabellist
\centering
\vspace{.1in}
\includegraphics[scale=.5]{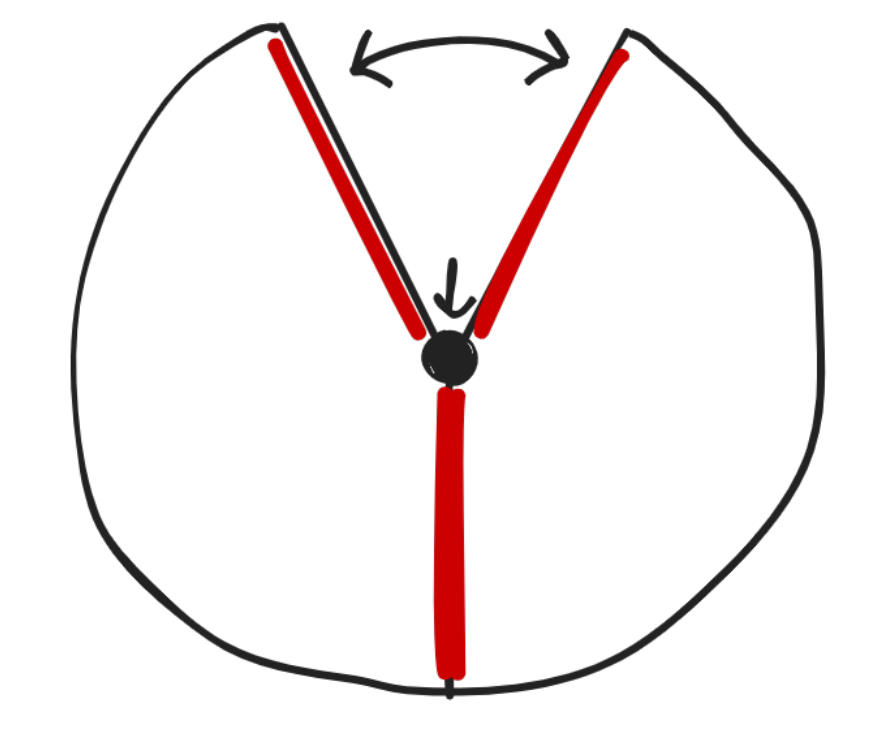}
\vspace{.1in}
\caption{$M_\phi$ is obtained by gluing two handlebodies $F\ti I$ and $\what F\ti I$.}
\label{fig:fibered-link}
\end{figure}

\end{rmk}

\paragraph{A converse.} Lemma \ref{lem:fibered-link} and Remark \ref{rmk:monodromy} have an obvious converse. Fix two copies of a compact, oriented surface $F=\what F$. Let $M$ be a 3-manifold with a decomposition $M=(F\ti I)\cup(\what F\ti I)$, where the gluing $h:\pa (F\ti I)\rightarrow\pa (\what F\ti I)$ matches the horizontal boundary components $h(F\ti\{j\})=\what F\ti \{j\}$ for $j=0,1$. This is a Heegaard splitting and $\pa F\ti\{1/2\}$ is a fibered link in $M$. 

The homeomorphism type of $M$ is determined by the gluing map $h$ and more precisely by a single mapping class $[\phi]\in\Mod(F)$. After an isotopy of $h$, we can assume that $\rest{h}{(\pa F)\ti I}$ is the identity. Define a homeomorphism $\phi:F\ra F$ by the composition
\[\phi: F= F\ti 0\cong F\ti 1\xra{h} \what F\ti 1\cong  \what F\ti0\xra{h^{-1}} F\ti 0= F.\]
The homeomorphism type of $M$ depends only on the isotopy class of $\phi$, and we denote $M=M_\phi$. 

\begin{rmk}\label{rmk:gluing}Since we have a fixed identification $F=\what F$, we can (and will for convenience) choose $h$ so that $\rest{h}{ F\ti0}=\id$. Then $\phi=\rest{h}{F\ti 1}$. See Figure \ref{fig:I-bundles}. 
\end{rmk}


\begin{figure}[h!]
\labellist
\pinlabel $F\ti I$ at 20 660
\pinlabel $\what F\ti I$ at 470 660
\pinlabel $X=F\ti 1$ at 110 770
\pinlabel $Y=F\ti 0$ at 110 540
\pinlabel $\rest{h}{F\ti 1}=\phi$ at 240 760
\pinlabel $\rest{h}{F\ti 0}=\id$ at 240 560
\endlabellist
\centering
\vspace{.1in}
\includegraphics[scale=.6]{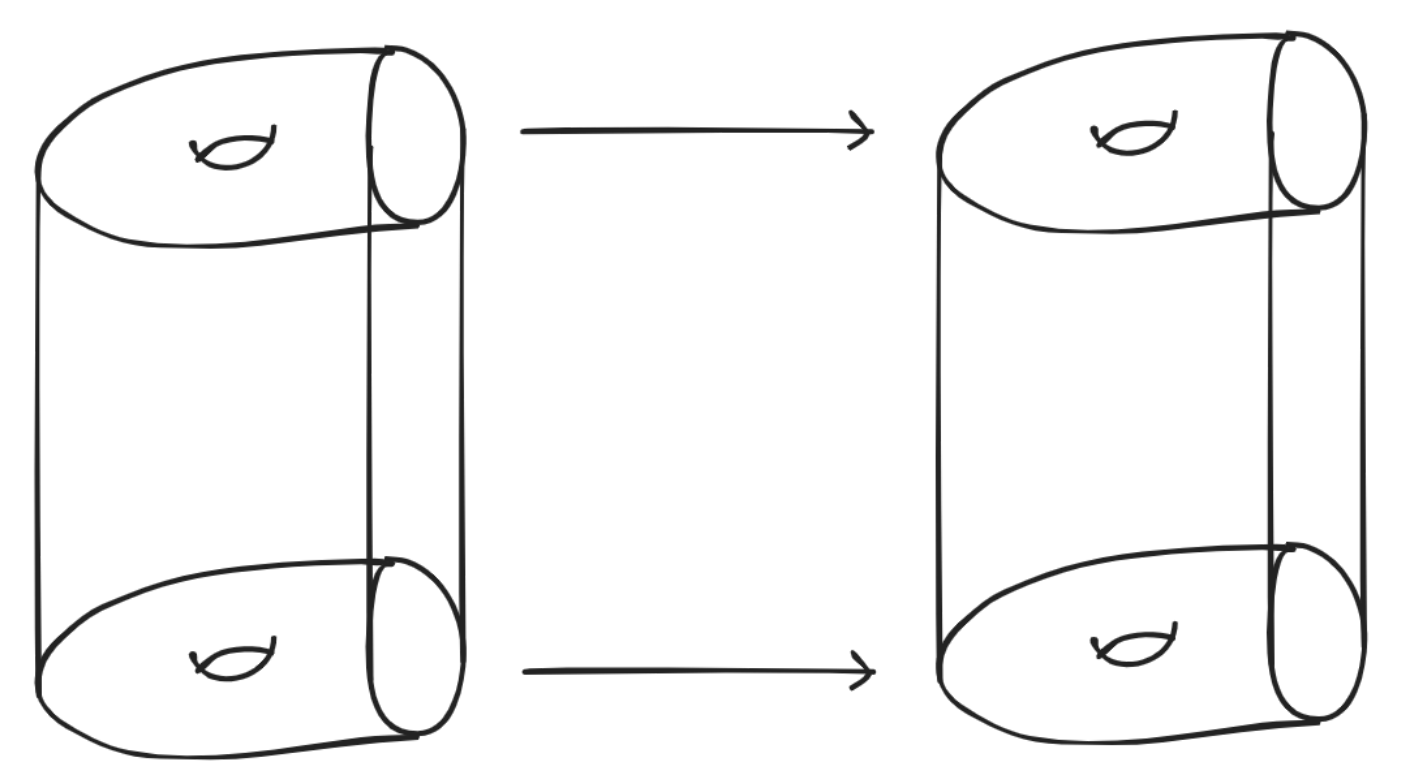}
\vspace{.1in}
\caption{$M_\phi$ is obtained by gluing two handlebodies $F\ti I$ and $\what F\ti I$.}
\label{fig:I-bundles}
\end{figure}

\begin{rmk}[Orientations] 
To view $M$ as an oriented 3-manifold, fix an orientation on $F=\what F$, orient $F\ti I$ so that the positive normal direction to $F\ti\{1/2\}$ points toward $F\ti\{1\}$, and orient $\what F\ti I$ in the opposite way, so that the positive normal direction to $\what F\ti\{1/2\}$ points toward $\what F\ti\{0\}$. With this choice, $h$ is orientation reversing, so $M$ inherits its orientation from $F\ti I$ and $\what F\ti I$. 
\end{rmk}

\begin{rmk}[Fiber-preserving Goeritz elements]\label{rmk:fiber-preserving} If $M_\phi=(F\ti I)\cup(\what F\ti I)$ as above, and $\wtil\phi\in\Homeo(F,\pa F)$ represents $\phi\in\Mod(F)$, then the product homeomorphism $\phi\times\id$ of $F\ti I$ and $\what F\ti I$ induces a homeomorphism of $M_\phi$ that preserves the Heegaard splitting. By construction, the isotopy class of this homeomorphism restricted to $F\ti 1$ is $\phi$.
\end{rmk}

\begin{prop}[Fiber-preserving homeomorphisms]\label{prop:fiber-preserving}
Let $F$ be a compact surface, and fix $M=(F\ti I)\cup_\phi(\what F\ti I)$ as above. Fix an orientation-preserving homeomorphisms $g:M\ra M$ that preserves this Heegaard splitting. If $g$ preserves $F\ti0\cup F\ti 1$, then $g$ is isotopic, through homeomorphisms preserving the Heegaard splitting, to a homeomorphism whose restriction to each of $F\ti I$ and $\what F\ti I$ is fiber preserving. 
\end{prop}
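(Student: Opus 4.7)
The plan is to isotope $g$ in two stages: first normalize its restriction to the Heegaard surface $\Sigma=\pa(F\ti I)$ so that it is compatible with the $I$-bundle fibrations on both sides, then extend the isotopy to the interior of each handlebody using a uniqueness result for $I$-bundle structures on handlebodies. As a preliminary reduction, if $g$ swaps $F\ti I$ and $\what F\ti I$, I would precompose with the natural fiber-preserving involution of $M$ exchanging them (which exists by $F=\what F$ and $\rest{h}{F\ti 0}=\id$ from Remark \ref{rmk:gluing}), so it suffices to consider the case where $g$ preserves each handlebody.

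Next, decompose $\Sigma=(F\ti 0)\cup A\cup (F\ti 1)$, where $A=\pa F\ti I$ is an annular region foliated by arcs $\{x\}\ti I$ for $x\in\pa F$. This $I$-foliation of $A$ is shared by the $I$-bundle structures of both $F\ti I$ and $\what F\ti I$, since both induce the same vertical arc structure on the vertical boundary. Since any homeomorphism of an annulus is isotopic through annulus homeomorphisms to one preserving a prescribed $I$-fibration, I would isotope $\rest{g}{A}$ to preserve this foliation; this isotopy extends to an ambient isotopy of $M$ supported in a collar of $A$, arranged to preserve the Heegaard splitting.

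After this normalization, $\rest{g}{F\ti I}$ is a handlebody homeomorphism preserving its horizontal boundary $F\ti\{0,1\}$ and fiber-preserving on its vertical boundary $A$. The central technical step is to show that any such homeomorphism is isotopic rel $A$ to a fiber-preserving homeomorphism of $F\ti I$. I would prove this using the essential uniqueness, up to isotopy rel vertical boundary, of the $I$-bundle structure on $F\ti I$ with prescribed horizontal boundary: choose a system of disjoint essential arcs $\{a_1,\ld,a_n\}$ on $F$ cutting it into a disk, consider the corresponding properly embedded disks $\{a_i\ti I\}\sbs F\ti I$, and use irreducibility of $F\ti I$ together with the uniqueness of embedded disks up to isotopy in a handlebody to isotope $g$ rel $A$ so that it preserves this disk system; once preserved, $g$ restricted to the complementary $3$-ball can be straightened to a fiber-preserving form by an isotopy supported inside the ball. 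Applying the analogous argument to $\rest{g}{\what F\ti I}$ yields a fiber-preserving isotopy rel its vertical boundary, and since the two isotopies are trivial on $A$ and their restrictions to $F\ti\{0,1\}$ are forced to match (because $g$ is a global homeomorphism of $M$), they combine into the required ambient isotopy of $g$ through Heegaard-splitting-preserving homeomorphisms. The main obstacle is the uniqueness step rel vertical boundary: some care is needed to ensure the inductive disk-straightening does not disturb the already-normalized action on $A$, and to separately handle the subcase where $g$ permutes the components $F\ti 0$ and $F\ti 1$ of the horizontal boundary.
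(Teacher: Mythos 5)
There is a genuine gap, and it sits exactly at your ``central technical step.'' After you normalize $\rest{g}{A}$ on the vertical annulus $A=(\pa F)\ti I$, the restrictions $g_0=\rest{g}{F\ti 0}$ and $g_1=\rest{g}{F\ti 1}$ agree on $\pa F$ and are isotopic as homeomorphisms of $F$ (they induce the same outer action on $\pi_1$), but a priori they need \emph{not} be isotopic rel $\pa F$: they may differ by a power of the boundary twist $T_{\pa F}^n$. If $n\neq 0$, then for an arc system $\{a_i\}$ the boundary curve of $g(a_i\ti I)$, namely $g_0(a_i)\ti 0\cup g_1(a_i)\ti 1\cup(\text{two vertical arcs})$, is not isotopic in $\pa(F\ti I)$ rel $A$ to $\pa(a_j\ti I)$ for any $j$, so no isotopy rel $A$ can make $g$ preserve your disk system, and your straightening step fails. ``Uniqueness of embedded disks in a handlebody up to isotopy'' is an isotopy that is free to move the vertical boundary, so it does not give the rel-$A$ statement you need. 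Showing $n=0$ is the real content of the proposition: the paper does it by observing that (after composing with the product map $g_0\ti\id$) the residual map restricted to $\Sigma=\pa(F\ti I)$ is a multitwist about $\pa F\ti\{1/2\}$ that extends over the handlebody, and then invoking Oertel's theorem that such a multitwist extends only if the curves bound essential disks or annuli -- which $\pa F\ti\{1/2\}$ does not in $F\ti I$. Nothing in your proposal plays the role of this 3-dimensional input; your annulus normalization quietly absorbs the twist into $A$ without ever proving it is trivial.

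A second, smaller problem is the final gluing. You construct isotopies of $\rest{g}{F\ti I}$ and $\rest{g}{\what F\ti I}$ independently, each rel its vertical boundary but moving the horizontal boundary $F\ti\{0,1\}$, and then assert their traces on the shared horizontal surface ``are forced to match because $g$ is a global homeomorphism.'' That does not follow: two independently chosen isotopies of the two handlebodies have no reason to agree on $F\ti 0\cup F\ti 1$ at intermediate times, so they do not assemble into an ambient isotopy of $M$. The standard fix (and the paper's route) is to first produce a single fiber-preserving model $f=g_0\ti\id$ and an isotopy of $\rest{f^{-1}\circ g}{\Sigma}$ to the identity on all of $\Sigma$ at once, and only then fill into each handlebody using the fact that a homeomorphism of a handlebody which is the identity on the boundary is isotopic to the identity.
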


\begin{proof}
There are two cases: either the components of $F\ti 0\cup F\ti 1$ are preserved, or they are interchanged. We explain the case each component is preserved; the other case is similar. 

Since a homeomorphism of a handlebody that is the identity on the boundary is isotopic to the identity \cite[\S1]{hatcher-handlebody}, it suffices to show that there exists a fiber-preserving homeomorphism $f$ so that $\rest{f^{-1}\circ g}{\Sigma}$ is isotopic to the identity, where $\Sigma=\pa(F\ti I)$. 

Set $g_i=\rest{g}{F\ti\{i\}}$ for $i=0,1$. 
Observe that $g_0$ and $g_1$ are isotopic homeomorphisms of $F$, since they induce the same (outer) automorphism of $\pi_1(F)$ (identifying $\pi_1(F\ti 1)=\pi_1(F\ti I)=\pi_1(F\ti0)$. Note that this isotopy is not necessarily the identity on $\pa F$. In any case, by extension we can isotope $g$ to a homeomorphism (still denoted $g$) satisfying $g_0=g_1$. 

Let $f=g_0\ti \id$ on $F\ti I$ and $\what F\ti I$. By construction, $f^{-1}\circ g$ is the identity on $F\ti\{0,1\}$, so $\rest{f^{-1}\circ g}{\Sigma}$ is supported on $(\pa F)\ti I$. Then the isotopy class of $\rest{f^{-1}\circ g}{\Sigma}$ is a multitwist. 

By \cite[Thm.\ 1.11]{oertel}, a multitwist about $c\sbs\Sigma$ extends to a handlebody only if the curves in $c$ bound a union of essential disks and annuli. Since no curve in $c=\pa F\ti\{1/2\}$ bounds a disk in $F\ti I$, and no two curves of $c$ bound an annulus in $F\ti I$, we conclude that $\rest{f^{-1}\circ g}{\Sigma}$ is isotopic to the identity. 
\end{proof}

\begin{rmk}\label{rmk:relation}
Assume that $f$ is fiber preserving on $F\ti I$ and $\what F\ti I$. If $f$ preserves the components of $F\ti 0\cup F\ti 1$, then we can write $f=\psi\ti\id$ on $F\ti I$ and $f=\what\psi\ti\id$ on $\what F\ti I$, and the gluing $h:\pa(F\ti I)\xra{\cong}\pa(\what F\ti I)$ forces the relation $\psi=\what\psi$, and $\psi\>\phi=\phi\>\psi$ in $\Mod(F)$. Together these imply that $\psi\>\phi\>\psi^{-1}=\phi$. 

Suppose instead that $f$ interchanges the components of $F\ti 0\cup F\ti 1$. Write $f(x,t)=(\psi(x),1-t)$ on $F\ti I$ and $f(x,t)=(\what\psi(x),1-t)$ on $\what F\ti I$. Here the gluing forces the relations $\psi=\what\psi\>\phi$ and $\what\psi=\phi\>\psi$. Together these imply $\psi\>\phi\>\psi^{-1}=\phi^{-1}$. 


We also observe that the homeomorphism $(x,t)\leftrightarrow(x,1-t)$ on $F\ti I$ and $\what F\ti I$ defines an orientation-reversing homeomorphism between $M_\phi$ and $M_\phi^{-1}$. 
\end{rmk}

\subsection{Heegaard splittings and Mayer--Vietoris}

Here we make a simple observation that will be useful in multiple arguments. 

\begin{lem}\label{lem:mayer-vietoris} 
Let $M$ be a $3$-manifold with a Heegaard splitting $M=W\cup_\Sigma\what W$. Suppose there exists a multicurve $c\sbs\Sigma$ that bounds a surface in $W$ and $\what W$. If $c$ can be oriented so that the associated homology class $[c]\in H_1(\Sigma)$ is nonzero, then $H_2(M)\neq0$. 
\end{lem}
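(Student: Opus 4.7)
The plan is to apply the Mayer--Vietoris sequence to the decomposition $M = W\cup\what W$, using a small open thickening of each handlebody so that the intersection deformation-retracts to $\Si$. The relevant portion of the sequence reads
\[
H_2(W)\op H_2(\what W)\ra H_2(M)\xra{\pa} H_1(\Si)\xra{(i_*,-\what i_*)} H_1(W)\op H_1(\what W),
\]
where $i\colon\Si\hra W$ and $\what i\colon\Si\hra\what W$ are the inclusions.

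Next, I would translate the topological hypothesis into a homological one. By assumption, there exist oriented surfaces $F\sbs W$ and $\what F\sbs\what W$ with $\pa F = c = \pa\what F$ as oriented $1$-cycles in $\Si$. Thus $i_*[c]=0$ in $H_1(W)$ and $\what i_*[c]=0$ in $H_1(\what W)$, so the class $[c]\in H_1(\Si)$ lies in the kernel of $(i_*,-\what i_*)$.

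By exactness, $[c]$ is in the image of the connecting homomorphism $\pa\colon H_2(M)\ra H_1(\Si)$. Since $[c]\ne 0$ by hypothesis, this forces $H_2(M)\ne 0$, as desired.

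There is essentially no obstacle here; the argument is a direct exactness chase once the Mayer--Vietoris sequence is written down. The only point to be careful about is the orientation convention, which is exactly why the hypothesis is phrased as ``$c$ can be oriented so that $[c]\ne 0$'': this ensures that the bounding relations $[\pa F]=[c]=[\pa\what F]$ are compatible with the chosen orientation and give a genuine nonzero element in the kernel.
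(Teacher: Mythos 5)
Your proof is correct and follows exactly the paper's argument: both apply Mayer--Vietoris to the Heegaard splitting, note that $[c]$ lies in the kernel of $H_1(\Si)\ra H_1(W)\op H_1(\what W)$ because $c$ bounds in each handlebody, and conclude from exactness that the nonzero class $[c]$ is in the image of the connecting map, so $H_2(M)\neq 0$.
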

\begin{proof}
The Mayer--Vietoris sequence associated to the Heegaard splitting contains the exact sequence 
\[H_2(M)\xra{d} H_1(\Si)\xra{j} H_1(W)\oplus H_1(\what W)\]
By assumption there is a nonzero class $H_1(\Sigma)$ in $\ker(j)=\im(d)$, so $H_2(M)\neq0$. 
\end{proof}

\section{Classification of holes for $\ca P(V)$} \label{sec:holes}


This section is devoted to the proof of Theorem \ref{thm:holes}. Our analysis is based on work of Masur--Schleimer \cite[\S9-12]{masur-schleimer}, who characterize large-diameter holes for the disk complex $\ca D(V)$. Theorem \ref{thm:holes} does not follow easily from \cite{masur-schleimer}, but we do use several of the same ideas. 

\paragraph{Overview of the proof.} 

Let $X\subset S$ be an essential subsurface, and assume $X$ is a hole for $\ca P(V)$. It is easy to show that if $X=S$, then $X$ supports a pseudo-Anosov in $\bb G$ (see Figure \ref{fig:thurston-pseudo}), so $X$ has infinite diameter. Therefore, we assume that $X\subsetneq S$ is a proper subsurface. We divide into three cases: $X\cong\Sigma_{0,2}$ is an annulus , $X\neq \Sigma_{0,2}$ is primitively compressible, and $X\neq \Sigma_{0,2}$ is primitively incompressible. In the first two cases, we will show that every hole $X$ has diameter $\le13$ (Thm.\ \ref{thm:annuli} and Prop.\ \ref{prop:compressible}). The last case is the most complicated and interesting. When $X\neq\Sigma_{0,2}$ is primitively incompressible with diameter $\ge61$, we construct homeomorphisms 
\[V\cong\Sigma_{1,1}\times I\cong\what V\]
such that $X$ is a component of the horizontal boundary of each $I$-bundle (Theorem \ref{thm:incompressible}). In particular, this gives a fibering 
\begin{equation}\label{eqn:fibering}X\setminus\pa X\ra S^3\setminus\partial X\ra S^1.\end{equation} Therefore, $\partial X$ is a genus-1 fibered knot. It is well known that this implies that $\partial X$ is either the figure-8 knot or the right/left-handed trefoil knot (this is discussed more in the Appendix). It is easy to show that $X$ has infinite diameter when $\partial X$ is the figure-8 knot (since this knot has hyperbolic monodromy, c.f.\ Remark \ref{rmk:fiber-preserving}). We show that if $\partial X$ is a trefoil knot, then $X$ has finite diameter (\S\ref{sec:trefoil}). Altogether this proves Theorem \ref{thm:holes}. 

\begin{figure}[h!]
\labellist
\small
\pinlabel $c$ at 170 680
\pinlabel $d$ at 300 750
\endlabellist
\centering
\includegraphics[scale=.25]{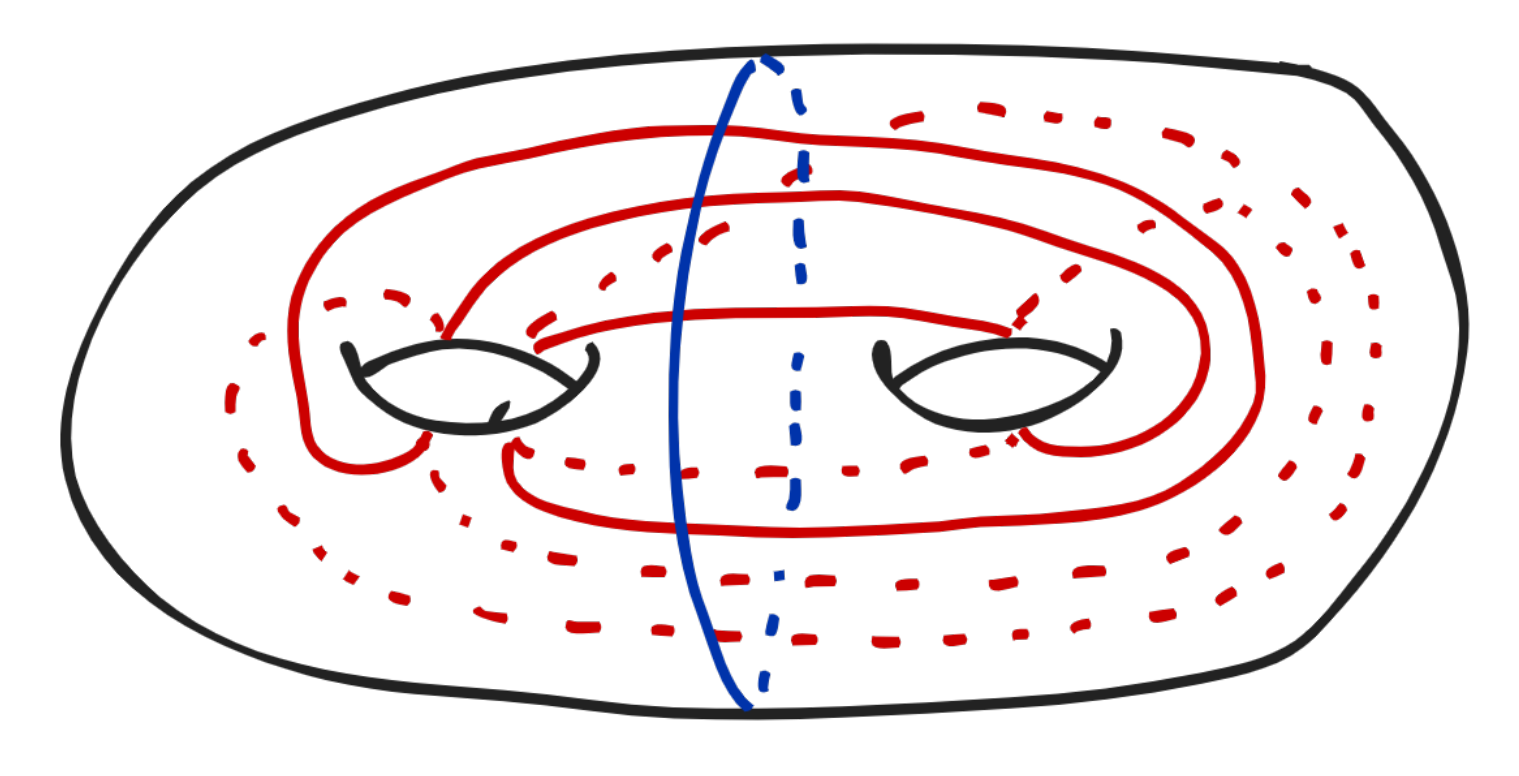}
\caption{The curves $c,d$ fill $S$, so by a result of Thurston, the mapping class $T_cT_d^{-1}$ is pseudo-Anosov. Since $c,d$ are reducing sphere curves, $T_c,T_d\in\bb G$. }
\label{fig:thurston-pseudo}
\end{figure}

As part of our analysis, we give a concrete description of the set of primitive disks that are vertical with respect to $V\cong \Sigma_{1,1}\times I$ (in each case), and we give a precise description of the image of the subsurface projection of $\ca P(V)$ to the horizontal boundary components (\S\ref{sec:fig8}). This is needed for the proof of the distance formula (Theorem \ref{thm:distance}). We also determine the subgroup of $\bb G$ that preserves the fibering (\ref{eqn:fibering}) when $\pa X$ is the figure-8 knot (\S\ref{sec:fig8-symmetry}). This is used for the proof of Theorem \ref{thm:pseudo}.

\subsection{The hole $X$ is an annulus}\label{sec:annuli}

\begin{thm}[Annular holes for $\ca P(V)$]\label{thm:annuli}
Let $X\subset S$ be an essential annulus. If $X$ is a hole, then the diameter of $X$ is at most $11$. 
\end{thm}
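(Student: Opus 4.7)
The plan is to apply the surgery framework of \S\ref{sec:surgery} together with Lemma~\ref{lem:surgery-sequence}. The first observation is that $X \cong \Sigma_{0,2}$ admits no essential, non-peripheral simple closed curves, so it cannot support a primitive disk; hence $X$ is primitively incompressible in the sense of \S\ref{sec:surgery}.

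Given two primitive disks $D_0, E_0 \in \ca P(V)$ (both of which cut $X$, since $X$ is a hole), I would apply Lemma~\ref{lem:surgery-sequence} to produce primitive disks $D, E$ with $d_X(D_0, D) \le 6$ and $d_X(E_0, E) \le 6$ such that neither $D$ nor $E$ admits a primitive boundary compression supported in $S \setminus n(\partial X)$. Call such disks $X$-\emph{terminal}. By the triangle inequality, it then suffices to bound $d_X(D, E)$ by a small absolute constant for $X$-terminal pairs, with careful bookkeeping yielding the stated constant $11$.

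To bound $d_X(D, E)$ for $X$-terminal disks, I would argue by contradiction, following the annular-hole strategy of Masur--Schleimer \cite{masur-schleimer} adapted to the primitive setting. Assume $d_X(D, E)$ is large and put $D, E, X$ in standard position with $D \cap E$ circle-free (\S\ref{rmk:standard-position}). An outermost arc $a$ of $D \cap E$ in $E$ produces a surgery bigon $(B, a, b)$ which is automatically a primitive boundary compression of $D$ by Example~\ref{ex:outermost}. If such a bigon has base $b \subset S \setminus n(\partial X)$, then $D$ is primitively boundary compressible into $S \setminus n(\partial X)$, contradicting its $X$-terminality. So the goal is to locate an outermost-arc bigon with base outside $n(\partial X)$.

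The main obstacle is ensuring that some outermost bigon has base $b \subset S \setminus n(\partial X)$, since a priori the surgery arc $b$ could always cross $\partial X$. To overcome this, I would exploit that large $d_X(D, E)$ forces the arcs of $\partial D \cap X$ and $\partial E \cap X$ to wind many times around the core curve of $X$; this produces many intersection points of $\partial D$ and $\partial E$ distributed cyclically along $\partial E$, interleaved by the arcs of $\partial E \cap X$ and $\partial E \cap (S \setminus X)$. A pigeonhole argument over this cyclic structure then locates an outermost arc of $D \cap E$ in $E$ whose corresponding bigon base $b$ lies entirely in a single arc component of $\partial E \setminus \partial X$; after possibly swapping the roles of $D$ and $E$, that component can be arranged to lie in $S \setminus n(\partial X)$, producing the contradiction. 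Tight tracking of the pigeonhole thresholds combined with the $6+6$ bound from the surgery reduction should yield the sharp numerical constant $11$.
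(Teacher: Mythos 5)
Your proposal takes a genuinely different route from the paper, and it has two concrete gaps. First, the arithmetic: running Lemma \ref{lem:surgery-sequence} on both $D_0$ and $E_0$ already costs $6+6=12$ in the triangle inequality, so no bound on $d_X(D,E)$ for ``terminal'' pairs, however small, can recover the stated constant $11$. The paper avoids this loss entirely: it never invokes Lemma \ref{lem:surgery-sequence} in the annular case. Instead it fixes a single primitive disk $D$ that \emph{globally minimizes} $i(\pa D,c)$ over all of $\ca P(V)$ (where $c$ is the core of $X$), notes that diameter $\ge 12$ would force some $E$ with $d_X(D,E)\ge 6$, and derives a contradiction from that one inequality. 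The quantity being contradicted is the minimality of $|\pa D\cap X|$, not terminality of a surgery sequence.

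Second, and more seriously, your pigeonhole step is unjustified and is exactly where the difficulty lives. The base $b$ of an outermost bigon of $D\cap E$ in $E$ is a specific component of $\pa E\setminus\pa D$; there may be as few as two outermost arcs, and nothing forces either of their bases to avoid $\pa X$. Indeed, Lemma \ref{lem:intersect} (which the paper does use, with the same threshold $6$) forces every component of $\pa D\cap X$ to cross every component of $\pa E\cap X$, so the points of $\pa D\cap\pa E$ pile up inside $X$, and the outermost bases can perfectly well be arcs that enter and exit $X$ --- these are Cases 3 and 4 of the paper's proof, which occupy most of the argument. In those cases no compression into $S\setminus n(\pa X)$ is produced, so terminality gives you nothing; the paper instead shows, by a careful analysis of how the endpoints of $b$ sit relative to the two components of $\pa X$, that one of the surgered disks $D',D''$ (which are automatically primitive by \cite[Thm.\ 2.3]{cho}) meets $X$ in strictly fewer components than $D$, contradicting minimality. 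You would need to supply a replacement for this mechanism; as written, the cases where every outermost base crosses $\pa X$ are not handled, and the claimed constant is out of reach by your reduction.
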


Theorem \ref{thm:annuli} is similar to \cite[Thm.\ 10.1]{masur-schleimer}, but there is one ingredient from their argument that doesn't work: the first Claim following Theorem 10.1, which asserts that if $X=n(c)$ is an annular hole for $\ca D(V)$, then $i(\pa D,c)\ge2$ for each $D\in\ca D(V)$. Figure \ref{fig:counterexample} shows that this statement is not true if $\ca D(V)$ is replaced by $\ca P(V)$. Although we cannot use this claim, we will use the  same overall strategy as the proof of \cite[Thm.\ 10.1]{masur-schleimer} to prove Theorem \ref{thm:annuli}. 

\begin{figure}[h!]
\labellist
\pinlabel $D$ at 145 305
\pinlabel $\pa F$ at 320 230
\pinlabel $c$ at 440 300
\endlabellist
\centering
    \includegraphics[scale=.4]{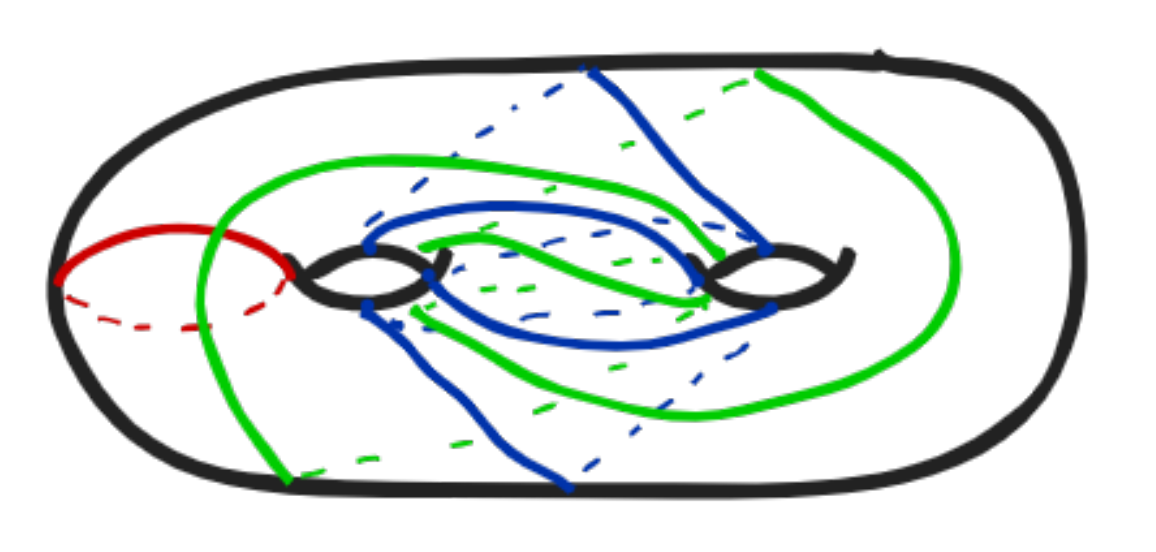}
\caption{The annulus $X=n(c)$ is a hole for $\ca P(V)$, and $D$ is a primitive disk with $i(\pa D,c)=1$. To see that $X$ is a hole, if $E$ is a primitive disk disjoint from $c$, then (by surgery) there exists primitive disk $E'$ that is disjoint from $D\cup c$. But there is a unique disk $F\sbs V$ with $\pa F$ disjoint from $\pa D\cup c$, and this disk is not primitive. }
\label{fig:counterexample}
\end{figure}

\begin{proof}[Proof of Theorem \ref{thm:annuli}]
Fix an essential annulus $X=n(c)\subset S$, and assume that $X$ is a hole for $\ca P(V)$. Choose primitive $D\sbs V$ minimizing $i(\pa D,c)$ among all primitive disks (equivalently, $D$ minimizes the number of components of $\pa D\cap X$). Assume for a contradiction that the diameter of $X$ is at least $12$. Then there exists primitive $E\sbs V$ so that $d_X(D,E)\ge 6$. 

Put $D,E,X$ in standard position (cf.\ \S\ref{rmk:standard-position}) with any triangular components of $S\setminus(\pa D\cup\pa E\cup\pa X)$ contained in $X$. Let $B\subset E$ be an outermost bigon cut by $D\cap E$. Denote $a:=B\cap D$ and $b:=B\cap \pa E$, and let $D', D''$ be the disks obtained by surgering $D$ along $B$; cf.\ Example \ref{ex:outermost}. 

By the Lemma \ref{lem:intersect} below, every component of $\pa D\cap X$ intersects every component of $\pa E\cap X$. 

\begin{lem}\label{lem:intersect} 
Fix vertices $v,w\in\ca C(S)$ that cut the annulus $X\sbs S$. Choose representatives so that $v,w,X$ are in standard position with triangular components of $S\setminus(v\cup w\cup \pa X)$ contained in $X$. If $d_X(v,w)\ge 6$, then every component of $v\cap X$ intersects every component of $w\cap X$. 
\end{lem}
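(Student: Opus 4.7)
The plan is to reduce the lemma to a straightforward computation in the annular cover of $S$ associated to $X$. Let $p:\wtil X\to S$ denote the annular cover corresponding to the subgroup $\pi_1(X)\subset \pi_1(S)$, compactified so that $\wtil X$ is an annulus with two boundary components. Recall that, by definition, $\pi_X(v)\sbs \ca C(X)$ is the collection of essential arcs in $\wtil X$ (arcs connecting the two boundary components, up to isotopy rel $\pa\wtil X$) obtained as lifts of $v$. Since $v,w$ cut $X$ and $v,w,X$ are in standard position, every component of $v\cap X$ and $w\cap X$ is an essential arc joining the two components of $\pa X$, and each lifts to an essential arc in $\wtil X$.

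First I would bound the diameters of $\pi_X(v)$ and $\pi_X(w)$ each by $1$. The key observation is that \emph{all} lifts to $\wtil X$ of components of $v\cap X$ are pairwise disjoint. Two lifts of the same component $\alpha\subset v\cap X$ are disjoint because $\alpha$ is simply connected and the restriction of $p$ over $\alpha$ is a trivial cover (so $p^{-1}(\alpha)$ is a disjoint union of sheets each mapping homeomorphically to $\alpha$). Lifts of distinct disjoint components $\alpha_i,\alpha_j\subset v\cap X$ must be disjoint in $\wtil X$, else their images would intersect in $X$. Since disjoint essential arcs in an annulus are at distance $\le 1$ in the arc complex, $\diam_{\ca C(X)} \pi_X(v)\le 1$, and the same argument gives $\diam_{\ca C(X)} \pi_X(w)\le 1$.

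Next suppose for contradiction that some component $\alpha\sbs v\cap X$ is disjoint from some component $\beta\sbs w\cap X$. Then any lift $\wtil\alpha\in\pi_X(v)$ and any lift $\wtil\beta\in\pi_X(w)$ are disjoint in $\wtil X$ (again because their images in $X$ are disjoint), so $d_{\ca C(X)}(\wtil\alpha,\wtil\beta)\le 1$. Applying the triangle inequality yields
\[
d_X(v,w)\;\le\;\diam \pi_X(v)+d_{\ca C(X)}(\wtil\alpha,\wtil\beta)+\diam \pi_X(w)\;\le\;1+1+1\;=\;3,
\]
contradicting $d_X(v,w)\ge 6$. Therefore every component of $v\cap X$ must intersect every component of $w\cap X$.

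The only potentially subtle point is verifying that distinct lifts of a simple arc are genuinely disjoint in the cover; this is the main step and is handled by the simply-connectedness of arcs and the standard lifting/evenly-covered properties of covering maps. The bound $6$ in the hypothesis is far from sharp for this particular conclusion (any bound $\ge 4$ would suffice), which is consistent with its use later as part of the proof of Theorem \ref{thm:annuli}.
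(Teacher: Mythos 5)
Your key step is not valid, and the error is located exactly where you flagged the ``only potentially subtle point.'' A component $\alpha$ of $v\cap X$ does \emph{not} lift to an essential arc of the compactified annular cover $\what X$: its lift is a compact arc crossing only the core component of $p^{-1}(X)$, and to obtain a vertex of $\ca C(X)$ one must continue along the lift of the \emph{rest} of $v$ until it exits to $\pa\what X$. Thus the elements of $\pi_X(v)$ are essential components of the full preimage $p^{-1}(v)$, each carrying lifts of all of $v$, including the portions outside $X$. Knowing $\alpha\cap\beta=\vn$ therefore says nothing about whether the essential lifts $\wtil v\supset\wtil\alpha$ and $\wtil w\supset\wtil\beta$ meet over points of $v\cap w$ lying outside $X$; in general they do, and since $d_X(v,w)$ is computed precisely from the intersection numbers of these full lifts, your claim would make the lemma true with constant $3$ and with no hypothesis on the triangles at all. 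It is not: let $w$ cross the core $c$ of $X=n(c)$ once and let $v=T_c^n(w)$ be realized with the twisting supported in an annulus parallel to $c$ but disjoint from $X$. Then $v,w,\pa X$ are pairwise in minimal position, $v\cap X$ and $w\cap X$ are disjoint single arcs, yet $d_X(v,w)$ grows linearly in $n$. This does not contradict the lemma only because the triangles of $S\setminus(v\cup w\cup\pa X)$ here lie in $S\setminus X$ rather than in $X$.

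That last point is the real diagnosis: your argument never uses the hypothesis that the triangles are contained in $X$, and no correct proof can avoid it. Pushing the triangles into $X$ drags intersection points of $v$ with $w$ across $\pa X$, and only after that is the relative twisting of $v$ and $w$ about $c$ recorded by the arcs of $v\cap X$ and $w\cap X$ themselves; that is the content of the Masur--Schleimer argument (the second claim after their Theorem 10.1), which the paper cites rather than reproves. Your bound $\diam_{\ca C(X)}\pi_X(v)\le 1$ is correct and standard (components of the preimage of an embedded curve are pairwise disjoint), but the middle term of your triangle inequality is unjustified, and the closing remark that any constant $\ge 4$ would suffice rests on the broken estimate.
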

Lemma \ref{lem:intersect} is the second claim after Theorem 10.1 in \cite{masur-schleimer}. The statement above is more general than the statement in \cite{masur-schleimer}, but their proof holds in this generality. We apply the lemma to with $(v,w)=(\pa D,\pa E)$. 

The proof proceeds by studying the relationship between the arc $b$ and the arcs $X\cap\pa E$ in $\pa E$. First note that no component of $X\cap\pa E$ can be strictly contained in $b$, since then this component would be disjoint from $\pa D\cap X$, contradicting Lemma \ref{lem:intersect}. Now we consider cases based on whether the endpoints of $b$ meet $X$, and how many components of $X$ they meet. These cases are pictured in \cite[Fig.\ 10.4]{masur-schleimer} and listed below. Ultimately, we will show that each of these cases is impossible, since otherwise either $D$ and $E$ are not in minimal position or at least one of $D',D''$ would intersect $X$ in fewer components than $D$ (contradicting our assumption). From this we deduce that there is no pair $D,E$ with $d_X(D,E)\ge6$, so the diameter of $X$ is at most $11$.  

The only case that is treated differently from \cite{masur-schleimer} is Case $4$ below. We include the other cases for completeness.

\paragraph{Case $1$:} the arc $b$ is disjoint from $X$. In this case, each component of $D\cap X$ contributes exactly one component to either $D'\cap X$ or $D''\cap X$ (not both). It follows then that one or both of $D',D''$ intersect $X$ in fewer components than $D$, a contradiction. This is illustrated in Figure \ref{fig:annulus-case1}. 

\begin{figure}[h!]
\labellist
\pinlabel $\pa D\cap X\>-$ at 5 855
\pinlabel $D$ at 130 850
\pinlabel $B$ at 200 880
\pinlabel $b$ at 165 915
\pinlabel $a$ at 205 830
\endlabellist
\centering
    \includegraphics[scale=.4]{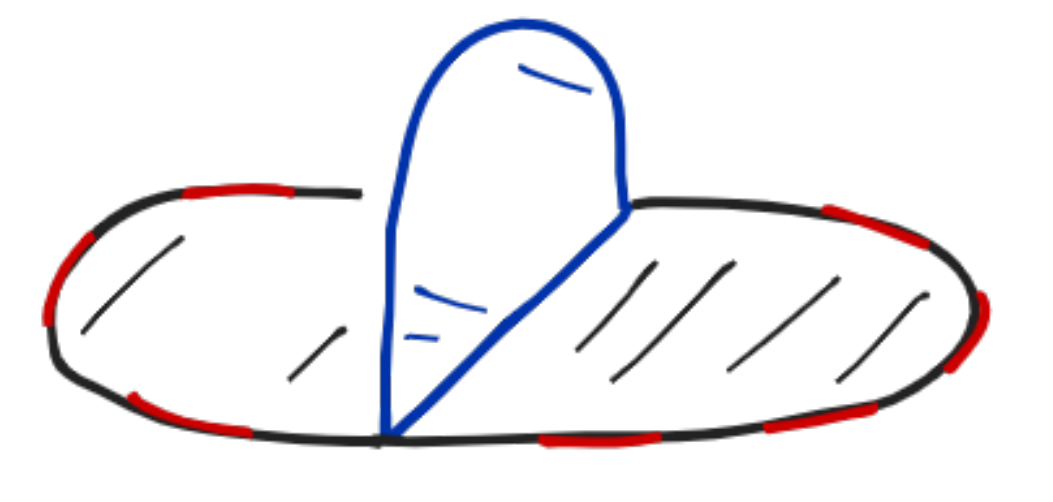}
\caption{The disk $D$, the bigon $B$, and the arcs of $\pa D\cap X$. In the figure, $b$ is disjoint from $X$, and one observes that $\pa D'$ and $\pa D''$ have fewer intersections with $X$ than $\pa D$ does.}
\label{fig:annulus-case1}
\end{figure}

\paragraph{Case $2$:} the arc $b$ is contained in $X$. In this case, either the endpoints of $b$ meet (i) the same or (ii) different components of $\pa D\cap X$. In case (i), let $d$ denote the component of $D\cap X$ meeting $b$. Then the complement of $d\cup b$ in $X$ contains a bigon, which contradicts the assumption that $D,E$ intersect minimally. In case (ii), observe that if $\pa D$ is oriented, then the two intersections of $b$ with $\pa D$ appear on the same side of the normal bundle of $\pa D$ in $S$ because $b$ is an arc of a surgery bigon. Then we conclude that there is a component of each of $D'\cap X$ and $D''\cap X$ whose endpoints lie on the same component of $\pa X$, so after an isotopy $D'$ and $D''$ intersects $X$ in fewer components than $D$, a contradiction. This case is illustrated in Figure \ref{fig:annulus-case2}.

\begin{figure}
\labellist
\pinlabel $b$ at 605 860
\pinlabel $b$ at 859 868
\pinlabel $d$ at 535 865
\pinlabel $\text{component of }\pa D'\cap X$ at 850 913
\pinlabel $\text{component of }\pa D''\cap X$ at 850 810
\pinlabel $\nwarrow$ at 800 825
\pinlabel $\searrow$ at 850 895
\endlabellist
\centering
\includegraphics[scale=.6]{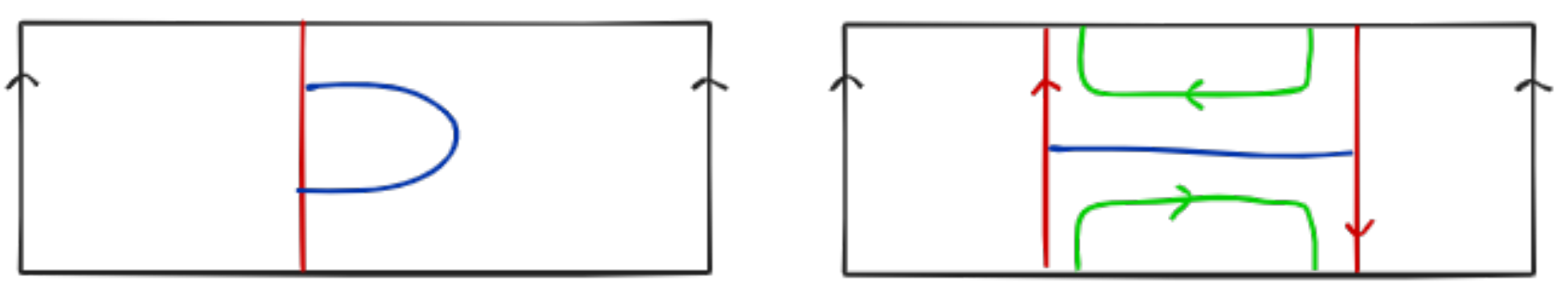}
\caption{The arc $b$ is contained in $X$ and connects either a component of $\pa D\cap X$ to itself (left) or two distinct components of $\pa D\cap X$ (right).}
\label{fig:annulus-case2}
\end{figure}

\paragraph{Case $3$:} exactly one endpoint of $b$ is contained in $X$. Let $d$ be the component of $D\cap X$ that meets $b$. The endpoints of $d$ meet different components of $X$ (otherwise $\pa D$ and $X$ do not intersect minimally). Then $b\cap\pa X$ (which is a single point) belongs to the same component as one endpoint of $d$. This implies that the surgered disk that contains this endpoint (either $D'$ or $D''$) can be isotoped to intersect $X$ in fewer components than $D$. See Figure \ref{fig:annulus-case3}. 

\begin{figure}
\labellist
\pinlabel $d$ at 420 250
\pinlabel $b$ at 460 255
\pinlabel $\text{component of }\pa D'\cap X$ at 410 305
\pinlabel $\text{component of }\pa D''\cap X$ at 555 260
\pinlabel $\nwarrow$ at 450 260
\pinlabel $\searrow$ at 455 293
\pinlabel $\nwarrow$ at 485 270
\endlabellist
\centering
\includegraphics[scale=.8]{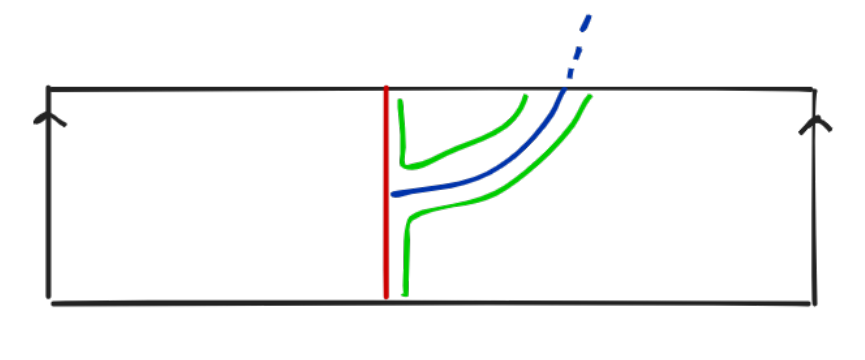}
\caption{The arc $b$ has one endpoint in $X$. }
\label{fig:annulus-case3}
\end{figure}

\paragraph{Case $4$:} both endpoints of $b$ are contained in $X$, but $b\not\subset X$. As in Case $2$, we divide into two additional subcases: either the endpoints of $b$ meet (i) the same or (ii) different components of $D\cap X$. 

In case (i), and let $d$ be the component of $D\cap X$ that meets $b$. Let $b_{\pm}$ and $d_{\pm}$ denote the endpoints of $b$ and $d$, labeled so that $d$ is a union of intervals $[d_-,b_-]\cup[b_-,b_+]\cup[b_+,d_+]$. We also denote $a_\pm\in b$ the point of $b\cap\pa X$ belonging to the same component of $b\cap X$ as $b_\pm$. As previously observed, $d_+$ and $d_-$ lie on different components of $\pa X$, and if we orient $d$, then the intersections of $b$ with $d$ both occur on the same side of $d$. See Figure \ref{fig:annulus-case4(i)}. 

Let $D''$ be the surgered disk that intersects the interval $[b_-,b_+]\subset\delta$. Consider further cases, depending on whether $a_+$ and $d_+$ belong to the same or different components of $\pa X$, and similarly for $a_-$ and $d_-$. If $a_+,d_+$ lie on the same component of $\pa X$ and the same for $a_-,d_-$, then $D'$ intersects $X$ in fewer components than $D$ (after an isotopy). If one pair $a_+,d_+$ or $a_-,d_-$ lie on the same component, and the other pair lies on different components, then $D''$ intersects $X$ in fewer components than $D$ (after an isotopy). These cases are pictured in Figure \ref{fig:annulus-case4(i)}. Finally, assume that $a_+,d_+$ lie on different components of $\pa X$ and the same for $a_-,d_-$. In this situation, the arcs $[b_-,a_-]$ and $[b_+,a_+]$ in the disk $X\setminus d$ have endpoints that link on the boundary. This forces these subarcs of $b$ to intersect, a contradiction. See Figure \ref{fig:annulus-case4(i)b}. This concludes case 4(i). 

\begin{figure}
\labellist
\pinlabel $d_-$ at 550 680
\pinlabel $d_+$ at 550 780
\pinlabel $D'$ at 580 680
\pinlabel $\uparrow$ at 580 694
\pinlabel $D'$ at 580 780
\pinlabel $\downarrow$ at 580 767
\pinlabel $D''$ at 620 720
\pinlabel $b_-$ at 530 715
\pinlabel $b_+$ at 530 738
\pinlabel $a_-$ at 630 680
\pinlabel $a_+$ at 610 780
\pinlabel $d_-$ at 800 680
\pinlabel $d_+$ at 800 780
\pinlabel $b_-$ at 780 715
\pinlabel $b_+$ at 780 738
\pinlabel $a_-$ at 855 680
\pinlabel $a_+$ at 885 680
\pinlabel $D'$ at 900 720
\pinlabel $D'$ at 830 680
\pinlabel $\uparrow$ at 830 694
\pinlabel $D''$ at 845 718
\endlabellist
\centering
\includegraphics[scale=.8]{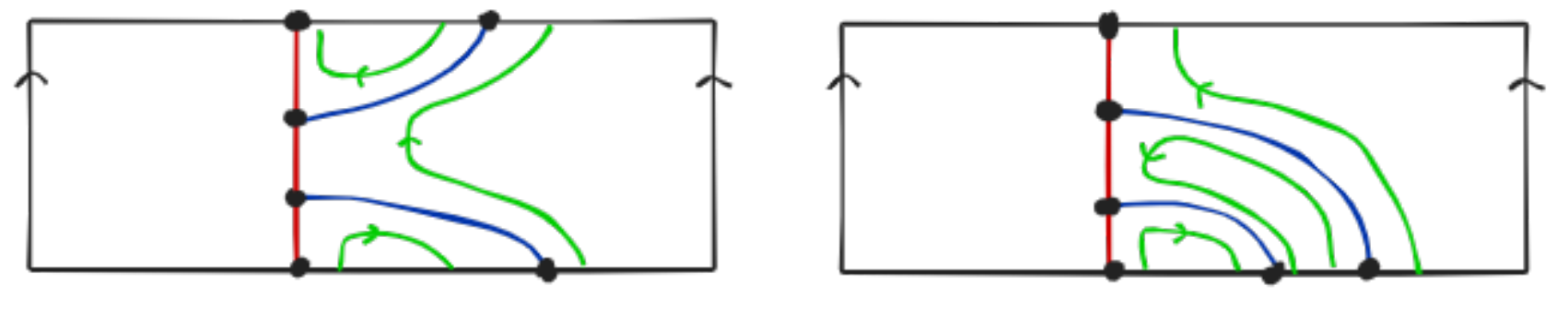}
\caption{Left: $a_+,d_+$ lie on the same component of $\pa X$, and the same holds for $a_-,d_-$. Right: $a_+,d_+$ lie on different components of $\pa X$, and $a_-,d_-$ lie on the same component of $\pa X$. }
\label{fig:annulus-case4(i)}
\end{figure}

\begin{figure}
\labellist
\endlabellist
\centering
\includegraphics[scale=.7]{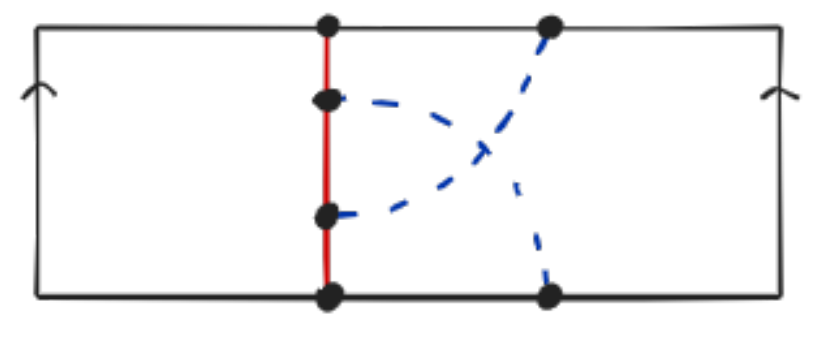}
\caption{$a_+,d_+$ lie on different components of $\pa X$ and the same holds for $a_-,d_-$.}
\label{fig:annulus-case4(i)b}
\end{figure}

Finally we consider case 4(ii). In this case, $D$ intersects $X$ in exactly 2 components, since otherwise $D'$ or $D''$ would have smaller intersection with $X$ than $D$. Let $d_{\pm}$ denote the two components of $D\cap X$ meeting $b$. Let $d_\pm'$ be the endpoint of $d_\pm$ that belongs to $D'$, and define $d_\pm ''$ similarly. Define $a_\pm, b_\pm$ as in case 4(i) with $[a_\pm,b_\pm]$ a subinterval of $b$ with $b_\pm\in d_\pm$ and $a_\pm\in\pa X$. Since the endpoints $d_+',d_+''$ of $d_+$ belong to different components of $\pa X$, exactly one endpoint lies in the same component as $a_+$. If $a_+$ (resp.\ $a_-$) lies on the same component of $\pa X$ as $d_+'$ (resp.\ $d_-'$), then $D'$ will be disjoint from $X$ (after isotopy); we conclude similarly when $a_+,a_-$ lie on the same component of $\pa X$ as $d_+'',d_-''$. If $a_+$ (resp.\ $a_-$) lies on the same component of $\pa X$ as $d_+'$ (resp.\ $d_-''$), then $|D'\cap X|=|D''\cap X|=1<|D\cap X|=2$, contradicting the fact that $D$ is minimal. The remaining case $(a_+,a_-)\leftrightarrow(d_+'',d_-')$ is similar. These different cases are pictured in Figure \ref{fig:annulus-case4(ii)}. 

\begin{figure}
\labellist
\pinlabel $d_+''$ at 520 520
\pinlabel $d_+'$ at 520 410
\pinlabel $d_-''$ at 620 520
\pinlabel $d_-'$ at 620 410
\pinlabel $b_+$ at 505 465
\pinlabel $b_-$ at 632 465
\pinlabel $a_+$ at 552 410
\pinlabel $a_-$ at 588 410
\pinlabel $D''\cap X$ at 780 490
\pinlabel $D'\cap X$ at 775 440
\pinlabel $d_+'$ at 520 290
\pinlabel $d_+''$ at 520 390
\pinlabel $d_-''$ at 620 290
\pinlabel $d_-'$ at 620 390
\pinlabel $a_+$ at 552 290
\pinlabel $a_-$ at 655 290
\pinlabel $b_+$ at 505 340
\pinlabel $b_-$ at 600 340
\pinlabel $D''\cap X$ at 780 365
\pinlabel $D'\cap X$ at 775 315
\pinlabel $d_+'$ at 520 150
\pinlabel $d_+''$ at 520 260
\pinlabel $d_-''$ at 620 260
\pinlabel $d_-'$ at 620 150
\pinlabel $b_+$ at 505 200
\pinlabel $b_-$ at 632 200
\pinlabel $a_+$ at 554 152
\pinlabel $a_-$ at 580 255
\pinlabel $D''\cap X$ at 775 230
\pinlabel $D'\cap X$ at 775 180
\pinlabel $d_+'$ at 520 20
\pinlabel $d_+''$ at 520 125
\pinlabel $d_-'$ at 620 125
\pinlabel $d_-''$ at 620 20
\pinlabel $a_+$ at 568 20
\pinlabel $a_-$ at 655 122
\pinlabel $b_+$ at 505 75
\pinlabel $b_-$ at 600 75
\pinlabel $D'\cap X$ at 775 50
\pinlabel $D''\cap X$ at 780 100
\endlabellist
\centering
\includegraphics[scale=.75]{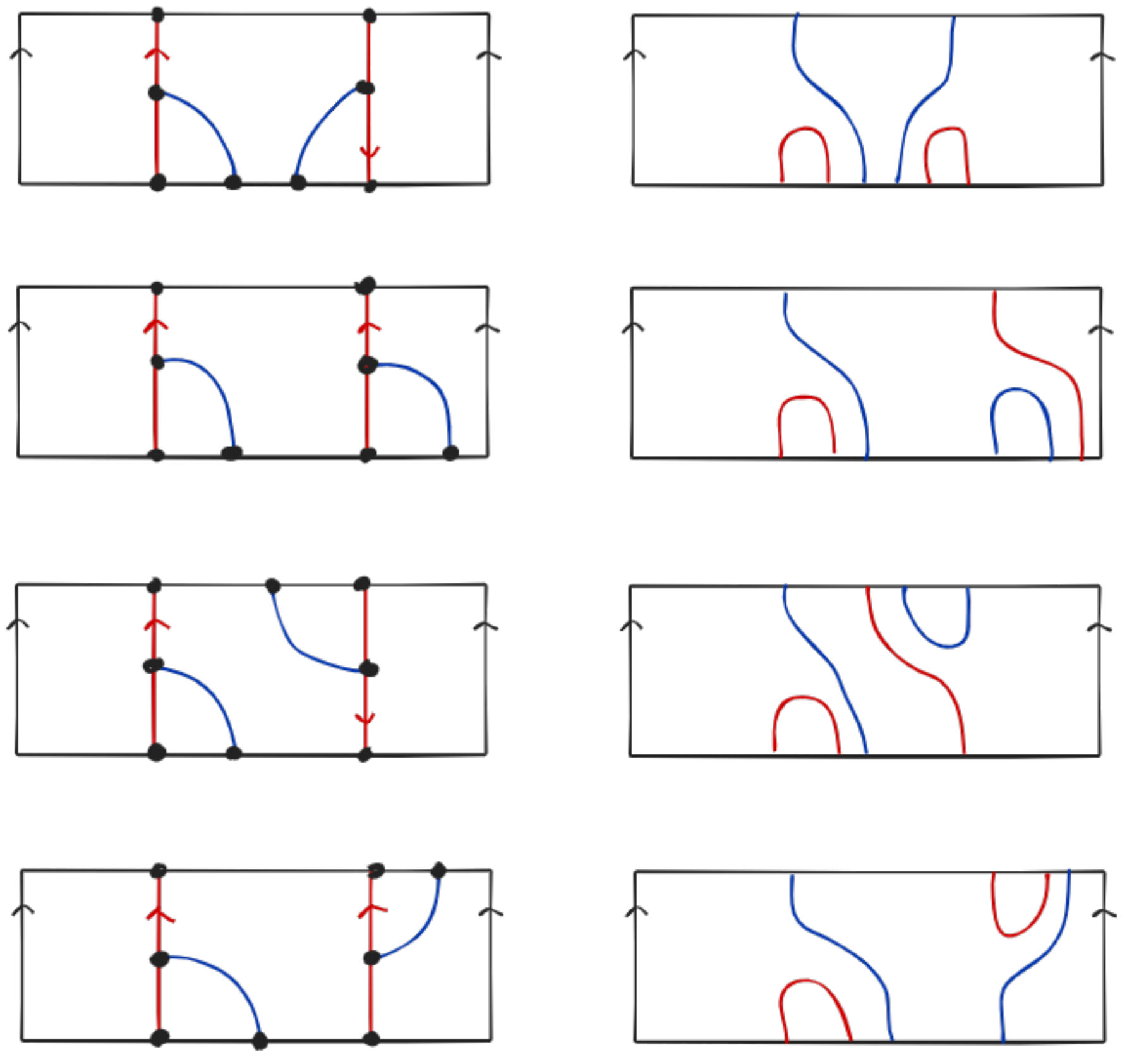}
\caption{Left: The entire intersection of $\pa D$ and $b$ with $X$ in the various cases, depending on whether $a_\pm$ lies on the same component as $d_{\pm}'$ or $d_{\pm}''$. Right: The entire intersection of $\pa D'$ and $\pa D''$ with $X$, which illustrates that (after an isotopy) $D',D''$ intersect $X$ in fewer components than $D$ does.}
\label{fig:annulus-case4(ii)}
\end{figure}

This completes the proof the proof of Theorem \ref{thm:annuli}. \end{proof}

\subsection{The hole $X$ is primitively compressible}\label{sec:compressible}

Recall from \S\ref{sec:background} that $X\subset S$ is primitively compressible if there exists a primitive disk $E\sbs V$ such that $\pa E\subset X$ is nonperipheral. 

\begin{prop}[Primitively compressible holes]\label{prop:compressible}
Let $X\subset S$ be a non-annular subsurface that is primitively compressible. If $X$ is a hole for $\ca P(V)$, then either $X=S$ or $X$ has diameter $\le 13$. 
\end{prop}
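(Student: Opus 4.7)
The plan is to reduce, via Lemma \ref{lem:surgery-sequence}, to bounding the $d_X$-diameter of the set of primitive disks supported in $X$, and then to show that this latter quantity is small.

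Since $X$ is a hole for $\ca P(V)$, every primitive disk cuts $X$. Because $X$ is also primitively compressible, Lemma \ref{lem:surgery-sequence} applies in its first conclusion: for any primitive disk $D$ there is a primitive disk $E$ supported in $X$ with $d_X(D,E) \le 6$. Given two primitive disks $D, D'$, let $E, E'$ be corresponding primitive disks supported in $X$; the triangle inequality for $d_X$ then yields
\[
d_X(D,D') \;\le\; d_X(D,E) + d_X(E,E') + d_X(E',D') \;\le\; 12 + d_X(E,E').
\]
So it suffices to show that any two primitive disks $E, E'$ supported in $X$ satisfy $d_X(E,E') \le 1$.

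For the main claim, I would use outermost-bigon surgery. Put $E, E'$ in minimal position, transverse and circle-free. An outermost bigon $B \subset E$ cut off by $E \cap E'$ has outer arc $b = B \cap S$ lying on $\pa E' \subset X$. By Example \ref{ex:outermost} (with the genus-$2$ remark from the introduction that primitivity is preserved along surgery sequences in genus $2$), the two surgered disks $E^*, E^{**}$ are primitive. Their boundaries lie in $\pa E \cup b \subset X$, and they cannot be peripheral in $X$: if $\pa E^*$ were isotopic to a component $c$ of $\pa X$, then $c$ would bound a primitive disk in $V$, producing a primitive disk disjoint from $X$ and contradicting the hole hypothesis. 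Thus $E^*$ and $E^{**}$ are again primitive disks supported in $X$, and this iterative construction strictly decreases $i(\pa E, \pa E')$, terminating in a primitive disk $E^\circ \subset X$ with $\pa E^\circ$ disjoint from $\pa E'$.

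The main obstacle is turning this surgery sequence into the sharp bound $d_X(E, E') \le 1$ needed for the final estimate $13$. The key observation is that each surgery step modifies $\pa E$ only inside the bigon and appends only a subarc of $\pa E' \subset X$; consequently the subsurface projections $\pi_X(E^{(k)})$ of the successive disks in the surgery sequence stay within distance $1$ of both $\pa E$ and $\pa E'$ in $\ca C(X)$. Making this projection-stability precise (so that the diameter bound does not accumulate with the number of surgery steps) is the technical heart of the argument; it closely parallels the analogous compressible-hole estimate for $\ca D(V)$ in \cite[\S11]{masur-schleimer}, with the extra ingredient that the ``non-peripherality'' step genuinely uses the definition of hole for $\ca P(V)$, not $\ca D(V)$.
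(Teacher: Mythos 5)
Your reduction is the right one and matches the paper's: Lemma \ref{lem:surgery-sequence} plus the triangle inequality gives $d_X(D,D')\le 12+d_X(E,E')$ for primitive disks $E,E'$ supported in $X$, so everything rests on showing $d_X(E,E')\le 1$. But that is exactly the step you do not prove, and the mechanism you propose cannot deliver it. Iterating outermost-bigon surgery produces a primitive disk $E^{\circ}$ disjoint from $E'$, but there is no reason $E^{\circ}$ is close to $E$ in $\ca C(X)$: along a surgery sequence the projection to $X$ drifts by a bounded but nonzero amount at each comparison (this is precisely why Lemma \ref{lem:surgery-sequence} only gives the constant $6$, via \cite[Lem.\ 11.5]{masur-schleimer}, not $1$). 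Your claimed ``projection stability'' --- that every disk in the sequence stays within distance $1$ of both $\pa E$ and $\pa E'$ --- is unjustified and, as stated, not something the Masur--Schleimer machinery provides. So as written the argument yields at best a weaker constant than $13$, and the ``technical heart'' you defer is the whole proposition.

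The fix is that no iteration is needed: a single surgery already gives a contradiction, which is what the paper does. If $E,E'$ are supported in $X$ and intersect, one outermost-bigon surgery produces three pairwise disjoint, pairwise non-isotopic primitive disks $E,E^{*},E^{**}$ whose boundaries cobound a pair of pants $n(\pa E\cup b)\cong\Sigma_{0,3}$ inside $X$. A proper, essential, non-annular subsurface of $\Sigma_2$ with nonempty curve complex is homeomorphic to $\Sigma_{0,4}$, $\Sigma_{1,1}$, or $\Sigma_{1,2}$, each of which cannot contain three disjoint, pairwise non-isotopic essential curves that are all non-peripheral; hence one of $\pa E^{*},\pa E^{**}$ is parallel to a component of $\pa X$. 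That contradicts your own (correct) observation that the surgered disks are non-peripheral because $X$ is a hole --- equivalently, it produces a primitive disk that can be isotoped off $X$. The upshot is that any two primitive disks supported in a proper hole are already disjoint, so $d_X(E,E')\le 1$ immediately and the bound $6+1+6=13$ follows. Notice the internal tension in your write-up: you prove both surgered disks are non-peripheral, yet for proper $X$ the topology forces one of them to be peripheral, so the iteration you describe can never take even its first step.
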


\begin{proof}[Proof of Proposition \ref{prop:compressible}]
Fix $X$ as in the statement, and suppose that there exist primitive disks $D_1,E_1$ with $d_X(D_1,E_1)\ge14$. We will show $X=S$, proceeding by contradiction. Since $X\neq \Sigma_{0,2}$ (by assumption) and $X\neq \Sigma_{0,3}$ (because $\ca C(\Sigma_{0,3})$ is empty), $X$ is homeomorphic to one of $\Sigma_{0,4}, \Sigma_{1,1}$, and $\Sigma_{1,2}$. 

By Lemma \ref{lem:surgery-sequence}, there exist disks $D,E$ supported in $X$ with $d_X(D_1,D),d_X(E_1,E)\le 6$. Then $d_X(D,E)\ge2$ by the triangle inequality. Since $\pa D,\pa E$ are vertices of $\ca C(X)$, this implies that $D$ and $E$ have nonzero intersection number. After an isotopy, we may assume that $D\cap E$ is circle free (c.f.\ \S\ref{rmk:standard-position}).

Consider an outermost bigon $(B,a,b)$ cut off from $E$ by $D\cap E$, and the surgered disks $D',D''$. The disks $D,D',D''$ are disjoint, and no two are isotopic (Example \ref{ex:outermost}). Furthermore, $\pa D,\pa D',\pa D''$ bound a subsurface $Y\cong \Sigma_{0,3}$ that is contained in $X$. Since $X$ is homeomorphic to one of $\Sigma_{0,4}$, $\Sigma_{1,1}$, or $\Sigma_{1,2}$, one of the boundary component of $Y$ is parallel to a boundary component of $X$. This means the corresponding disk ($D'$ or $D''$) can be isotoped to be disjoint from $X$, contradicting the assumption that $X$ is a hole. 
\end{proof}

\begin{rmk}\label{rmk:compressible}
By the proof of Proposition \ref{prop:compressible}, if $X\subset S$ is a hole and there exists primitive disks $D,E$ supported in $X$ such that $i(\pa D,\pa E)\neq0$, then there exist a triple $(D,D',D'')$ of disjoint disks supported in $X$, and this implies that $X=S$. We will use this observation later. 
\end{rmk}

\subsection{The hole $X$ is primitively incompressible} \label{sec:incompressible}

In this section we prove that if $X$ is primitively incompressible with sufficiently large diameter, then $X$ is a genus-1 Seifert surface for a fibered knot $K\sbs S\sbs S^3$. This is the content Theorem \ref{thm:incompressible}, although we formulate the theorem slightly differently. 

\begin{thm}[Primitively incompressible holes]\label{thm:incompressible}
If $X\sbs S$ is a primitively incompressible hole for $\ca P(V)$ with diameter $\ge61$, then $V$ and $\what V$ are homeomorphic to the $I$-bundle $\Sigma_{1,1}\times I$ with $X$ a component of the horizontal boundary of each. 
\end{thm}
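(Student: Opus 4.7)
The strategy follows Masur--Schleimer's analysis of incompressible holes for $\ca D(V)$ in \cite[\S12]{masur-schleimer}, adapted to preserve primitivity throughout.

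\textbf{Construction of the $I$-bundle on $V$.} Starting from primitive disks $D_0, E_0 \in \ca P(V)$ realizing $d_X(D_0, E_0) \ge 61$, apply Lemma \ref{lem:surgery-sequence} to each to obtain primitive disks $D, E$ that cut $X$, cannot be primitively boundary compressed into $S \sm n(\pa X)$, and satisfy $d_X(D, E) \ge 49$ by the triangle inequality. In particular $\pa D$ and $\pa E$ intersect nontrivially, and we place $(D, E, X)$ in standard position. Each outermost bigon $(B, a, b)$ of $D \cap E$ in $E$ is a primitive boundary compression for $D$ (Example \ref{ex:outermost}); since $D$ admits no primitive compression into $S \sm n(\pa X)$, the arc $b$ must cross $\pa X$. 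Combined with a version of Lemma \ref{lem:intersect}, this forces the arcs of $\pa D \cap (S \sm X)$ and $\pa E \cap (S \sm X)$ into a rigid combinatorial configuration: any pair of disjoint arcs in a single component of $S \sm X$ would produce, via an isotopy, a primitive compression avoiding $X$. Promote this rigidity to topology as follows: the arcs of $\pa D$ in $S \sm X$ serve as \emph{vertical sides} of rectangles, and a regular neighborhood of $D \cup X$ identifies $V$ with an $I$-bundle $V \cong B \times I$ in which $X$ is realized as a horizontal boundary component. Since $\chi(B) = \chi(V) = -1$, the base lies in $\{\Si_{1,1}, \Si_{0,3}, N_{2,1}, N_{1,2}\}$. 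The cases giving $X \in \{\Si_{0,3}, \Si_{0,4}\}$ are excluded because $\ca C(\Si_{0,3})$ is empty and because $\Si_{0,4}$ would force primitive compressibility, contradicting the hypothesis. The twisted case $B = N_{2,1}$ (giving $X = \Si_{1,2}$) is excluded by arguing that the rigid arc pattern produces a trivialized, hence orientable, bundle with two horizontal boundary components. Hence $B = \Si_{1,1}$ and $V \cong \Si_{1,1} \times I$, with $X$ a horizontal boundary component.

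\textbf{$I$-bundle on $\what V$.} Each primitive disk $D \in \ca P(V)$ comes paired with a dual primitive disk $\what D \in \ca P(\what V)$. Using the duals $\what D, \what E$ of the rigid disks constructed above, together with the explicit structural information from the $V$-side analysis (in particular, that $\pa X$ is a specific genus-1 curve in $S$ and that the arcs of $\pa \what D, \pa \what E$ in $S \sm X$ inherit analogous rigidity from the paired bigons in $V$), re-run Steps 1--3 for $\what V$: verify the analogous hole-type hypotheses for $\ca P(\what V)$ relative to $X$, and conclude $\what V \cong \Si_{1,1} \times I$ with the same subsurface $X$ as a horizontal boundary component.

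\textbf{Main obstacle.} The principal difficulty is promoting the combinatorial rigidity of arcs into a genuine topological $I$-bundle structure while remaining in the primitive setting. Masur--Schleimer's analogous argument for $\ca D(V)$ uses arbitrary boundary compressions, many of which fail to be primitive; here one must verify primitivity after each surgery move, typically by exhibiting a dual disk in $\what V$ for every disk produced. A secondary obstacle is the transfer to $\what V$: the hole hypothesis on $X$ is not manifestly symmetric under $V \leftrightarrow \what V$, so establishing the analogous conditions for $\ca P(\what V)$ relies on the explicit structural consequences of the $V$-side argument, in particular the identification of $\pa X$ as a specific curve on $S$ and the rigidity transported via dual disks.
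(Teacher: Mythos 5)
Your overall strategy (surger $D_0,E_0$ to primitive disks that cannot be primitively compressed into $S\setminus n(\pa X)$, extract an $I$-bundle from the resulting arc pattern, then transfer to $\what V$) matches the paper's, but there are two genuine gaps. First, the Masur--Schleimer construction does not produce $V\cong B\ti I$ directly; it only produces an \emph{embedded} $I$-bundle $T\hra V$ (built from rectangles in $D$ and $E$ containing parallel diagonals, not from a regular neighborhood of $D\cup X$) whose horizontal boundary fills $X$. Your step ``$\chi(B)=\chi(V)=-1$'' presupposes $T=V$, which is exactly the hard part of the theorem. In general $T$ can be a proper compression body-like piece, the base can be nonorientable (when $\Theta$ and $\Theta'$ intersect and $\pa_hT$ is connected), and $X$ can be $\Sigma_{0,4}$ or $\Sigma_{1,2}$. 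None of these possibilities is excluded by ``rigid arc patterns'' or by primitive compressibility of $X$; the paper rules them out by extracting homological information (every component of $\pa X$ is null-homologous in $V$, and likewise in $\what V$) and then invoking $H_2(S^3)=0$ via Mayer--Vietoris, since the components of $\pa X$ are nonseparating in $S$ in those cases. Without some use of the global topology of $S^3$ your case exclusions do not go through, and indeed the statement is false for an abstract handlebody: a genus-$2$ handlebody contains embedded twisted $I$-bundles over nonorientable surfaces with incompressible horizontal boundary.

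Second, the transfer to $\what V$ is not carried out. You correctly identify that the hole hypothesis is not manifestly symmetric, but ``re-run Steps 1--3 for $\what V$'' requires first \emph{proving} that $X$ is a primitively incompressible hole for $\ca P(\what V)$ of diameter $\ge 57$. The paper does this by (a) showing a primitive disk of $\what V$ supported in $S\setminus X$ would either violate $H_2(S^3)=0$ (when $X\cong\Sigma_{1,2},\Sigma_{0,4}$, using the null-homology of $\pa X$ in $V$ from Step 1) or force $\pa X$ to be a reducing curve (when $X\cong\Sigma_{1,1}$, via a $\pi_1(\what V)$ computation), contradicting primitive incompressibility; and (b) transferring the diameter bound using the dual disks, since $d_X(D,\what E)\le 2$ for a dual pair. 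Your sketch gestures at the dual disks but supplies neither (a) nor (b). Finally, even after both $I$-bundles exist, one still needs the endgame ruling out the mixed cases (e.g.\ $\pa X$ bounds a disk in exactly one handlebody), which again uses that $X$ is a hole on both sides; this is absent from your write-up.
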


Theorem \ref{thm:incompressible} is similar to \cite[Thm.\ 12.1]{masur-schleimer}, which we state below in a special case. 

\begin{thm}[Masur--Schleimer]\label{thm:MS-incompressible}
Suppose $X$ is an incompressible hole for $\ca D(V)$ with diameter $\ge57$. Then there is an $I$-bundle $T$ with an embedding $T\hra V$ so that $\pa_hT\sbs S$ and with $X$ a component of $\pa_hT$.  
\end{thm}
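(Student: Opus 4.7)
The plan is to apply Theorem \ref{thm:MS-incompressible} to obtain a sub-$I$-bundle $T \subset V$ containing $X$ on its horizontal boundary, and then to use the primitive-disk hypothesis together with the genus-$2$ topology of the Heegaard splitting to upgrade the embedding to $T = V$. Finally, I would derive the analogous structure on $\what V$.

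\textbf{Reductions and verifying the Masur--Schleimer hypothesis.} Theorem \ref{thm:annuli} and Proposition \ref{prop:compressible} together rule out $X$ being an annulus or primitively compressible (both would force diameter $\le 13 < 61$), so I may assume $X$ is a non-annular, primitively incompressible proper subsurface. Because $\ca P(V) \sbs \ca D(V)$, the $\ca D(V)$-diameter of $X$ is at least $61 \ge 57$. The nontrivial step is to show that $X$ is a $\ca D$-incompressible hole for $\ca D(V)$: primitively incompressible does not imply $\ca D$-incompressible, so \emph{a priori} there could be a non-primitive disk $D \sbs V$ with $\pa D \sbs X$, or a non-primitive $D$ with $\pa D \cap X = \vn$. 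In either case I would surger $D$ against a primitive disk attaining the diameter (using Lemma \ref{lem:surgery-sequence} and the analogue of Remark \ref{rmk:compressible}) to produce a bounded common region in $\ca C(X)$ for $\pi_X(\ca P(V))$, contradicting diameter $\ge 61$. This case analysis mirrors the compressible/non-hole strategy for $\ca D(V)$ in \cite{masur-schleimer}, but must be redone so that every move is via a primitive disk.

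\textbf{Upgrading $T \hra V$ to $T = V$ and identifying $X$.} Theorem \ref{thm:MS-incompressible} produces $T \hra V$ an $I$-bundle with $X$ a component of $\pa_h T$. A twisted $I$-bundle with $\pa_h T = X$ would require a base $B$ with $2\chi(B) = \chi(X)$; since we will see $X \cong \Sigma_{1,1}$ has $\chi = -1$ (odd), this is impossible, so $T$ is the product bundle $X \ti I$ with a second horizontal boundary $X' \sbs S$ parallel to $X$, and $\pa_v T$ is an annulus properly embedded in $V$. Suppose for contradiction $T \subsetneq V$. Then $\pa(\ov{V \sm T})$ is the torus $\pa_v T \cup A$, where $A = S \sm (X \cup X')$ is the annulus on $\pa V$ between $\pa X$ and $\pa X'$. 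By irreducibility of $V$ inside $S^3$, the closure $W := \ov{V \sm T}$ is a solid torus. I would exhibit a meridian disk $D$ of $W$ whose boundary is isotopic into $A \sbs S \sm X$, and then argue, using the genus-$2$ Heegaard diagram, that $D$ is primitive in $V$: a dual disk $\what D \sbs \what V$ meeting $\pa D$ transversely once can be constructed from the fact that the core of $A$, sitting on the common Heegaard surface, has a curve of intersection $1$ on the $\what V$ side (the required primitivity test of Remark \ref{rmk:primitive-test}). This primitive $D$ is disjoint from $X$, contradicting that $X$ is a hole for $\ca P(V)$. Hence $T = V$, and $X \cong \Sigma_{1,1}$ is forced by $\chi(V) = \chi(X) = -1$ together with $X$ non-annular and $\ca C(X) \neq \vn$. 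This yields $V \cong \Sigma_{1,1} \ti I$ with $X$ a horizontal boundary component.

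\textbf{The symmetric statement for $\what V$ and main obstacle.} With $V \cong X \ti I$, the curve $c := \pa X$ sits on $S$ as a separating genus-$1$ curve bounding the Seifert surface $X \sbs V$. To conclude $\what V \cong X \ti I$ as well, I would transfer the primitive-hole property to the $\what V$ side: show that the decomposition $S = X \cup A \cup X'$ forces $X$ to be primitively incompressible and a hole for $\ca P(\what V)$, for instance by arguing that any primitive disk in $\what V$ with boundary disjoint from $X$ would pair with the $V$-side structure to produce, via surgery, a primitive disk in $V$ avoiding $X$. One can then rerun the $V$-argument on $\what V$, and the composite $I$-bundle structures on $V$ and $\what V$ assemble to a fibration $\mathring{X} \ra S^3 \sm c \ra S^1$ as in Lemma \ref{lem:fibered-link}. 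I expect the main obstacle to be precisely this last transfer step: the hypothesis is one-sided (only about $\ca P(V)$) while the conclusion is two-sided, so some asymmetric input (perhaps a specific Goeritz involution interchanging $V$ and $\what V$ while preserving $X$, or a Mayer--Vietoris argument in the spirit of Lemma \ref{lem:mayer-vietoris}) is needed. A secondary obstacle is the primitivity claim for the meridian disk $D$ of $W$, where one must be careful with the choice of $\pa D$ in $A$ to ensure that the corresponding element of $\pi_1(\what V)$ is actually primitive rather than merely nontrivial.
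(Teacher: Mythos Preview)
You have misidentified the target. The statement in question, Theorem~\ref{thm:MS-incompressible}, is a quoted result of Masur--Schleimer (a special case of \cite[Thm.\ 12.1]{masur-schleimer}); the paper does not prove it but merely states it for comparison with its own Theorem~\ref{thm:incompressible}. Your proposal does not prove Theorem~\ref{thm:MS-incompressible} at all: your first sentence is ``The plan is to apply Theorem~\ref{thm:MS-incompressible}\ldots'', i.e.\ you take the statement as a black box. What you are actually sketching is a proof of Theorem~\ref{thm:incompressible} (primitively incompressible holes for $\ca P(V)$) by reducing to Theorem~\ref{thm:MS-incompressible}.

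Even viewed as a proof of Theorem~\ref{thm:incompressible}, your reduction strategy has a real gap. The paper explicitly notes that this reduction does not go through directly: ``We cannot use Theorem~\ref{thm:MS-incompressible} directly to prove Theorem~\ref{thm:incompressible} since a primitively incompressible hole is not necessarily incompressible.'' Your attempt to close this gap---arguing that a non-primitive disk $D$ with $\pa D\sbs X$ or $\pa D\cap X=\vn$ can be surgered against a primitive disk to bound the $\ca P(V)$-diameter of $X$---does not work as stated. Surgering a primitive disk $E$ along bigons cut by a \emph{non-primitive} $D$ yields disks that need not be primitive, so neither Lemma~\ref{lem:surgery-sequence} nor the mechanism of Remark~\ref{rmk:compressible} applies. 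And indeed the paper's own proof, in Case~1(c) ($X\cong\Sigma_{0,4}$), explicitly handles the subcase where $X$ is compressible in $\ca D(V)$, showing that this situation is not vacuous. The paper's actual approach is not to verify the Masur--Schleimer hypotheses but to rerun their $I$-bundle construction from scratch using primitive disks and primitive boundary compressions as the raw input, then do a separate case analysis (their Steps~1--3) to pin down $T=V$ and transfer to $\what V$.
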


We cannot use Theorem \ref{thm:MS-incompressible} directly to prove Theorem \ref{thm:incompressible} since a primitively incompressible hole is not necessarily incompressible. However, we will use some arguments from \cite[Thm.\ 12.1]{masur-schleimer} in our proof. 

\begin{proof}[Proof of Theorem \ref{thm:incompressible}] Fix $X$ as in the statement of the Theorem. We divide the proof into several steps:
\begin{enumerate}
\item Assuming that $X$ has diameter $\ge 57$ with respect to $\ca P(V)$, we build an embedded $I$-bundle $T\hra V$ that contains $X$ as a component of its horizontal boundary. For this we follow the argument of \cite[\S12]{masur-schleimer} as far as possible. We show that either $T\hra V$ is (isotopic to) a homeomorphism or every component of $\pa X$ is nullhomologous in $V$. 
\item We show that if $X$ has diameter $\ge 61$, then $X$ is also a primitively incompressible hole for $\ca P(\what V)$ with diameter $\ge 57$. Our proof of this relies on details of the proof of Step 1. 
\item Combining Steps 1 and 2, we have embedded $I$-bundles $T\hra V$ and $\what T\hra\what V$, each with $X$ as a component of the horizontal boundary. We use the topology of $S^3=V\cup\what V$ to rule out all cases where either $T\hra V$ or $\what T\hra\what V$ is not a homeomorphism, and from this we conclude. 
\end{enumerate}

\paragraph{Step 1.} In this step we construct an embedded $I$-bundle $T\hra V$. The assumption needed in the construction is that $X$ is a primitively incompressible hole for $\ca P(V)$ with diameter $\ge57$. 
Part of the construction follows \cite[\S12]{masur-schleimer} and we refer the reader there for more details. We mostly use their notation. We also remark that a similar construction appeared in earlier work of Oertel; cf.\ \cite[Lem.\ 2.13]{oertel}. 

Fix primitive disks $D_0,E_0\sbs V$ so that $d_X(D_0,E_0)\ge57$. By Lemma \ref{lem:surgery-sequence}, we can find primitive disks $D,E$ that cannot be primitively boundary compressed into $S\setminus n(\pa X)$ and with $d_X(D,E)\ge45$. 

We put $D,E,X$ in standard position with triangular components of $S\setminus(\pa D\cup\pa E\cup\pa X)$ contained in $S\setminus X$ (c.f.\ \S\ref{rmk:standard-position}). 

Give $D$ the structure of a polygon with vertices $\pa D\cap\pa X$, and similarly for $E$. The edges of these polygons alternate between $X$ and $Y:=\ov{S\setminus X}$. Set $\Ga=D\cap E$. The assumption that $D,E$ cannot be primitively boundary compressed into $S\setminus n(\pa X)$ implies that every arc in $\Ga$ is a \emph{diagonal}, i.e.\ it connects a pair of distinct sides. (This argument is the same as \cite[first claim of \S12.5]{masur-schleimer}, although our assumption is slightly different.) 

\paragraph{A side with not many different diagonals.} A counting argument shows that there are sides $a\sbs\pa D\cap X$ and $b\sbs \pa E\cap X$ meeting at most $8$ different types of diagonals (two diagonals have the same type if they are parallel, i.e.\ their endpoints share the same sides). See \cite[Lem.\ 12.4]{masur-schleimer}. 

\paragraph{Arcs and rectangles.} We decompose $a$ into at most $8$ subarcs $\{a_i\}$, each containing one parallel collection of diagonals, and we choose rectangles $R_i\sbs D$ with one side of $R_i$ equal to $a_i$ and so that $R_i$ contains all the diagonals in $\Ga$ meeting $a_i$. We write $a_i'\sbs\pa D$ for the side of $R_i$ that's parallel to $a_i$. We do the same for $b$, getting subarcs $\{b_j\}$, rectangles $Q_j\sbs E$, and parallel subarcs $b_j'\sbs\pa E$. 

\paragraph{Large arcs.} If $|a_i\cap b_j|\ge 3$ for some $i,j$, then $a_i$ and $b_j$ are called \emph{large}. Similarly, say $a_i'$ and $b_j'$ are large if $|a_i'\cap b_j'|\ge3$. Note that $|a_i\cap b_j|=|a_i'\cap b_j'|$, so if $a_i$ and $b_j$ are large, then $a_i'$ and $b_j'$ are large (and vice versa). 

\paragraph{Graphs $\Theta,\Theta'\sbs S$.} Define $\Theta\sbs X$ the union of all the large $a_i$ and $b_j$, and define $\Theta'$ the union of the large $a_i'$ and $b_j'$. The set $\Theta$ (and $\Theta'$ similarly) has the structure of a graph where every vertex has degree either $1$ (coming from endpoints of $a_i$ and $b_j$) or $4$ (coming from intersections of $a_i$ and $b_j$). In addition, $\Theta$ and $\Theta'$ are isomorphic as graphs in an obvious way. 

Define $Z,Z'$ as a small regular neighborhood of $\Theta,\Theta'$ respectively. We have the following properties, which are proved in \cite{masur-schleimer}. We give some brief explanation of the proofs to help assure the reader that these proofs hold in our setting. 
\begin{itemize}
\item \cite[Claim 12.7]{masur-schleimer}: The graph $\Theta$ is nonempty. 

Proof sketch. If $\Theta=\vn$, then all the $a_i,b_j$ are small. From this one deduces that $|a\cap b|\le128$. Since $a,b$ are components of $\pa D\cap X$ and $\pa E\cap X$, this implies that $d_X(D,E)$ can't be large. In fact, $d_X(D,E)\le 24$, contradicting our assumption. 

\item \cite[Claim 12.8]{masur-schleimer}: No component of either $\Theta$ or $\Theta'$ is contained in a disk in $S$ or an annulus in $S$ that's peripheral in $X$. 

Proof sketch. If some component of $\Theta$ is contained in a disk or peripheral annulus, then one argues that either some $a_i,b_j$ cut a bigon from $S$, or $a_i,b_j,\pa X$ cut a triangle contained in $X$. Both of these contradict our assumption that $D,E,X$ are in standard position. The argument for $\Theta'$ is the same.

\item \cite[Claim 12.9]{masur-schleimer}: Let $Z_1$ be a component of $Z$ , and let $d$ be a component of $\pa Z_1$. Then $d$ is either inessential or peripheral in $X$. 

Proof sketch. Supposing that $d$ is both essential and non-peripheral, one can deduce that $|a\cap d|<256$ (counting separately intersections with $d$ coming from large and small $a_i$). Similarly, $|b\cap d|<256$, and this implies that $d_X(D,E)<45$, a contradiction. 

\item $\Theta$ (and hence also $\Theta'$) is connected, and $\Theta$ fills $X$. 

Proof sketch. This follows from the previous items: any component $Z_1\sbs Z$ fills $X$ by \cite[Claim 12.9]{masur-schleimer}. Then any other component must be contained in a disk or peripheral annulus in $X$, which contradicts \cite[Claim 12.8]{masur-schleimer}. Thus $Z$ is connected and fills $X$, and this implies the same for $\Theta$. 
\end{itemize} 

Let $\ca R=\{R_i\}$ and $\ca Q=\{Q_j\}$ be the large rectangles. Observe that $\ca R\cup\ca Q$ is an $I$-bundle whose horizontal boundary is $\Theta\cup\Theta'$. We can thicken $\ca R\cup\ca Q$ to an $I$-bundle $T_0$ with horizontal boundary $Z\cup Z'$.

\paragraph{Case 1:} $\Theta$ and $\Theta'$ intersect nontrivially. Then $Z\cup Z'$ is a connected subsurface of $X$. Since complementary components of $Z$ in $X$ are disks and peripheral annuli, the same is also true for complementary components of $Z\cup Z'$ in $X$. 

In this case, the horizontal boundary $\pa_hT_0$ is connected. Then since $T_0$ is orientable (because $V$ is), the base space of the $I$-bundle is a nonorientable surface. See \S\ref{sec:I-bundles}.

The vertical boundary $\pa_vT_0$ is a union of annuli, each connecting a pair of components of $\pa(Z\cup Z')$. If $A$ is such an annulus, and both components of $\pa A$ bound disks in $X$, then the union of $A$ and these disks is an embedded 2-sphere, which can be filled by a 3-ball, which we parameterize as $D^2\ti I$ to preserve the $I$-bundle structure. In this way we obtain an $I$-bundle $T$ with the following properties: 
\begin{enumerate}
\item[(1)] For every component $A$ of $\pa_vT$, at least one component of $\pa A$  is peripheral in $X$. In particular the number of components of $\pa_vT$ is at most the number of components of $\pa X$.
\item[(2)] Every component of $\pa X$ is parallel to a boundary component of some annulus $A\sbs\pa_vT$ (because $Z\cup Z'$ fills $X$, so every component of $\pa X$ is parallel to a component of $\pa(Z\cup Z')$). In particular the number of components of $\pa_vT$ is at least half the number of components of $\pa X$. 
\end{enumerate} 

Next we use $T$ to show that every component of $\pa X$ is null-homologous in $V$. We separate into cases, depending on whether $X$ is homeomorphic to $\Sigma_{1,1}$, $\Sigma_{1,2}$, or $\Sigma_{0,4}$ (we do not need to consider $\Sigma_{0,2}$, $\Sigma_{0,3}$, or $\Sigma_2$). 

\paragraph{Case 1(a):} $X\cong \Sigma_{1,1}$. By properties (1) and (2), $\pa_vT$ has a single component $A$. One component of $\pa A$ bounds a disk in $X$, and the other is parallel to $\pa X$. Thus $\pa X$ bounds a disk in $V$. 

\paragraph{Case 1(b):} $X\cong \Sigma_{1,2}$. Then $\pa_vT$ has either one or two components. 

If $\pa_vT$ has one component $A$, then the two components of $\pa A$ are parallel to the two components of $\pa X$. If $A$ is compressible, then each component of $\pa X$ bounds a disk in $V$. Otherwise, if $A$ is incompressible, then $A$ can be isotoped into $S$; the proof of this is contained in \cite[Claim 12.16]{masur-schleimer}. From this we conclude that $\pa X$ bounds an embedded $(\R P^2\#\R P^2)\setminus D^2$ in $V$ because in this case $T$ is a bundle over $(\R P^2\#\R P^2)\setminus D^2$, and $(\R P^2\#\R P^2)\setminus D^2$ embeds as a section in the $I$-bundle, and the boundary of this section is isotopic to $\pa X$.

If $\pa_vT$ has two components $A_1,A_2$, then each $A_i$ has one boundary component parallel to a component of $\pa X$ and another boundary component that is inessential in $X$. Then each component of $\pa X$ bounds a disk in $V$ (if one component of $\pa X$ bounds a disk then the other does automatically since the boundary components of $X$ are necessarily parallel in $S$).

Note that in every case, the components of $\pa X$ are nonseparating curves that are null-homologous in $V$. 

\paragraph{Case 1(c):} $X\cong \Sigma_{0,4}$. In this case $\pa_vT$ can have two, three, or four components. 

If $\pa_vT$ has four components, then every component of $\pa X$ bounds a disk in $V$. 

If $\pa_vT$ has three components, then two components of $\pa X$ bound disks in $V$. The other two components of $\pa X$ are joined by a component $A$ of $\pa_vT$. If $A$ is compressible, then every component of $\pa X$ bounds a disk. If $A$ is incompressible, then $A$ is isotopic into $S$, again by \cite[proof of Claim 12.16]{masur-schleimer}. In this case, $T$ is an $I$-bundle over $\R P^2\setminus(D^2\cup D^2\cup D^2)$ (because $\pa_h T\cong\Si_{0,6}$, c.f.\ \S\ref{sec:I-bundles}), and there is an embedding of $\R P^2\setminus D^2$ (the M\"obius band) with boundary isotopic to the components of $\pa X$ parallel to $\pa A$. 

Finally suppose that $\pa_vT$ has two components. In this case $\pa_hT\cong\Si_{0,4}$, so $T$ is an $I$-bundle over $\R P^2\setminus (D^2\cup D^2)$. 

First suppose that $X$ is incompressible (i.e.\ there is no disk $D\sbs V$ supported in $X$, primitive or not) and that $X$ is a hole for the disk complex $\ca D(V)$. Then since the curve complex of $B:=\R P^2\setminus (D^2\cup D^2)$ has diameter $\le 4$ \cite[\S2]{scharlemann-nonorientable} and the natural map $\ca C(B)\ra\ca C(X)$ is distance non-increasing \cite[\S6]{masur-schleimer}, this contradicts the fact that the diameter of $X$ with respect to $\ca P(V)$ is $\ge 57$ (any primitive disk $D$ can be surgered to a (not necessarily primitive) vertical disk $D'$ with $d_X(D,D')\le6$ \cite[Lems.\ 8.12 and 11.7]{masur-schleimer}). See also the argument of \cite[\S12.18]{masur-schleimer}.

If $X$ is incompressible, but is not a hole for $\ca D(V)$, then two components of $\pa X$ bound disks, and this implies that at least one component of $\pa_vT$ is compressible. Suppose the other component $A$ is incompressible (otherwise every component of $\pa X$ bounds a disk), then similar to a previous case, we find that there are two components of $\pa X$ that each bound an embedded $\R P^2\setminus D^2$ in $V$. 

Now suppose that $X$ is compressible, i.e.\ there exists a (not necessarily primitive) disk $D\sbs V$ with $\pa D\sbs X$. If each component of $\pa_vT$ is incompressible, then $D$ can be isotoped to be contained in $T$, which contradicts the fact that the horizontal boundary component of an $I$-bundle is incompressible \cite[Ex.\ 5.5]{masur-schleimer}. Therefore, at least one component of $\pa_vT$ is compressible, and we conclude as in the previous paragraph.



Overall, we find that in every case, every component of $\pa X$ is null-homologous in $V$. Note that the only case when $\pa X$ has a component that is separating in $S$ is when $X\cong\Sigma_{1,1}$. 

\paragraph{Case 2:} $\Theta$ and $\Theta'$ are disjoint. In this case $T_0\cong Z\ti I$ with horizontal boundary $\pa_hT_0=Z\sqcup Z'$ and $Z\sbs X$ and $Z'\sbs Y$. 

{\it Claim.} $\xi(Y)\ge\xi(X)$, where $\xi$ denotes the complexity $\xi(\Sigma_{g,b})=3g-3+b$. 

Since $X\sbs S$ is homeomorphic to one of $\Sigma_{1,1}, \Sigma_{1,2}, \Sigma_{0,4}\sbs \Sigma_2$, the claim implies that $X$ and $Y$ are both homeomorphic to $\Sigma_{1,1}$. 

{\it Proof of Claim.} Since $S=X\cup Y$ (disjoint subsurfaces glued along boundary), $X$ and $Y$ have the same number of boundary components. Thus to prove the claim, it suffices to show that $g(Y)\ge g(X)$. Since $Z$ fills $X$, $g(X)=g(Z)$. Since $Z\cong Z'$ embeds in $Y$, $g(Y)\ge g(Z)$. These combine to give $g(Y)\ge g(X)$. This proves the claim. 

As a consequence, we also conclude that $Z'$ fills $Y$, since if $b\ge1$, then any embedded subsurface $\Sigma_{g,b}\hra \Sigma_{g,1}$ fills. 

Every component $\pa_vT_0$ is an annulus $A$ connecting a component of $\pa Z$ to a component of $\pa Z'$. If both components of $\pa A$ are inessential, then we obtain a 2-sphere that can be filled with a 3-ball ($\cong D^2\ti I$). In this way, we enlarge $T_0$ to an $I$-bundle $T$ so that every component of $\pa_vT$ is an annulus $A$ so that at least one component of $\pa A$ is isotopic to $\pa X\simeq\pa Y$. 

Consequently, the vertical boundary $\pa_vT$ has either one or two components. 

If $\pa_vT$ has two components, then it follows that $\pa X$ bounds a disk in $V$. 

Suppose $\pa_vT$ has one component $A$. Then $T\cong \Sigma_{1,1}\ti I$. The two components of $\pa A$ are parallel to $\pa X$ and $\pa Y$. If $A$ is compressible, then $\pa X$ bounds a disk in $V$ (and same for $\pa Y$). If $A$ is incompressible, then $A$ is isotopic into $S$ by \cite[proof of Claim 12.16]{masur-schleimer}. This implies that $T\hra V$ is isotopic to a homeomorphism. 

\paragraph{Step 2.} In this step we show that if $X$ has diameter $\ge 61$, then we can also construct an embedded $I$-bundle $\what T$ in $\what V$. 

\begin{lem}Assume that $X$ is a primitively incompressible hole for $\ca P(V)$ with diameter $\ge 57$. Then $X$ is also a hole for $\ca P(\what V)$. 
\end{lem}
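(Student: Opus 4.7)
Proceed by contradiction. Suppose some primitive disk $\what D \in \ca P(\what V)$ has $\pa \what D$ disjoint from $X$, so $\pa \what D \subset Y := \ov{S \sm X}$. The target is to manufacture a primitive disk $D^* \in \ca P(V)$ with $\pa D^* \subset Y$, which directly contradicts the assumption that $X$ is a hole for $\ca P(V)$.

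\textbf{Setup (dual disk).} Choose a primitive disk $D_0 \in \ca P(V)$ dual to $\what D$, so $i(\pa D_0, \pa \what D) = 1$; the unique intersection point $p$ lies in $Y$. Since $X$ is a hole for $\ca P(V)$, $D_0$ cuts $X$ and $\pa D_0 \cap \pa X$ is nonempty. Put $D_0, \what D, X$ in standard position (in the sense of \S\ref{rmk:standard-position}), with $\pa D_0, \pa X$ in minimal position and triangle components of $S \sm (\pa D_0 \cup \pa X \cup \pa \what D)$ pushed into $Y$.

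\textbf{Key step (compression into $Y$).} Show that $D_0$ admits a primitive boundary compression $(B, a, b)$ with $b \subset Y$, and that the primitivity dual $\what D_b \sbs \what V$ is provided by a small isotope of $\what D$. The mechanism is that $\pa \what D$ lies in $Y$ and crosses $\pa D_0$ transversely once at $p$; a local push-off of $\pa \what D$ off $\pa D_0$ near $p$ yields a curve in $Y$ disjoint from $\pa D_0$ that meets $b$ once transversely. The bigon $B \sbs V$ itself is produced by an outermost-bigon argument (in the style of Example \ref{ex:outermost}) applied to $D_0$ and an auxiliary disk in $V$ drawn from the $I$-bundle $T \sbs V$ constructed in Step 1 of the proof of Theorem \ref{thm:incompressible}: the vertical annuli $\pa_v T$ and the horizontal boundary structure supply disks in $V$ whose intersection with $D_0$ forces such a bigon to exist on the $Y$-side of $D_0$.

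\textbf{Iteration.} Surgering $D_0$ along $B$ produces disjoint primitive disks $D_0', D_0'' \in \ca P(V)$ with $i(\pa D_0', \pa X) + i(\pa D_0'', \pa X) < i(\pa D_0, \pa X)$. The curve $\pa \what D$ remains in $Y$ and still serves as a primitivity witness for the subsequent compressions by the same mechanism. Using Lemma \ref{lem:surgery-sequence} together with the hypothesis that $X$ is primitively incompressible in $V$, the iteration must terminate in a primitive disk $D^* \in \ca P(V)$ with $\pa D^* \cap \pa X = \vn$; primitive incompressibility of $X$ forces $\pa D^* \subset Y$, the desired contradiction.

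\textbf{Main obstacle.} The delicate point is the Key Step: producing the bigon $B \sbs V$ whose compression arc lies in $Y$ and whose primitivity is certified by $\what D$. While the single transverse intersection at $p \in Y$ supplies the primitivity dual for free, the compression disk itself must live in $V$ and be obtained by a careful local analysis near $p$, combining the standard-position arrangement from \S\ref{sec:surgery} with the $I$-bundle data produced in Step 1 of the proof of Theorem \ref{thm:incompressible}. This is what forces us to invoke the full standing hypothesis (hole, primitively incompressible, diameter $\ge 57$) rather than a purely local argument.
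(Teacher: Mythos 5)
There is a genuine gap here, and I don't believe the strategy can be repaired. Everything rests on your Key Step: that at every stage the current primitive disk admits a primitive boundary compression whose arc $b$ lies in $Y$, with $\what D$ serving as the primitivity certificate. Neither half of this is established, and the first half fails in general. Since $X$ is a hole for $\ca P(V)$, no primitive disk of $V$ can be isotoped off $X$, and since $X$ is primitively incompressible, none is supported in $X$; hence $S\setminus n(\pa X)$ supports no primitive disk of $V$ at all, and by the discussion following Definition \ref{defn:maximal} every maximal primitive compression sequence for $(X,D_0)$ \emph{ends essentially}: it terminates with disks that still meet $\pa X$ and admit no further primitive compression into $S\setminus n(\pa X)$ (on either side). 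This is exactly what Lemma \ref{lem:surgery-sequence} delivers in the incompressible case --- a disk that \emph{cannot} be compressed into $S\setminus n(\pa X)$, not one disjoint from $\pa X$ --- so your iteration cannot reach $i(\pa D^*,\pa X)=0$. Note also that the conclusion you are driving toward ($\pa D^*\sbs Y$) is precisely the negation of ``$X$ is a hole for $\ca P(V)$,'' so it cannot come out of a surgery argument in which $\what D$ plays no essential role; and the one place $\what D$ does enter --- as the dual disk $\what D_b$ --- is also unsupported, since there is no reason a push-off of $\pa\what D$ should meet the compression arc $b$ in exactly one point. (The vertical disks of the $I$-bundle built in Step 1 are concrete witnesses that primitive disks admitting \emph{no} compression into $S\setminus n(\pa X)$ exist.)

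The paper's proof takes a different, essentially homological route, and makes real use of the output of Step 1, namely that every component of $\pa X$ is nullhomologous in $V$. If $X\cong\Sigma_{1,2}$ or $\Sigma_{0,4}$, a curve $\pa\what D$ disjoint from $X$ is forced to be parallel to a component of $\pa X$, which then bounds in both handlebodies and yields $H_2(S^3)\neq0$ via Lemma \ref{lem:mayer-vietoris}, a contradiction. If $X\cong\Sigma_{1,1}$, one computes in $\pi_1(\what V)$ (with free basis dual to $\what D$ and a disjoint primitive disk $\what E$) that $\pa X$, being disjoint from $\what D$ and nullhomologous in $S$, is trivial in $\pi_1(\what V)$; hence $\pa X$ bounds a disk in $\what V$ as well as in $V$, so $\pa X$ is a reducing curve, and the solid torus on the $X$ side contains a primitive disk supported in $X$, contradicting primitive incompressibility. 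If you want to salvage your write-up, that is the argument to aim for.
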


\begin{proof}
Suppose for a contradiction that $X$ is not a hole for $\ca P(\what V)$. This means that there is a primitive disk $\what D\sbs\what V$ that is supported in $S\setminus X$. 

If $X\cong\Sigma_{1,2}$ or $X\cong\Sigma_{0,4}$, then by Step 1, every component of $\pa X$ is nonseparating and nullhomologous in $V$. This implies that no component of $\pa X$ is isotopic to $\pa\what D$, since otherwise this would imply that $H_2(S^3)\neq0$ by Lemma \ref{lem:mayer-vietoris}. 

Assume now that $X$ is homeomorphic to $\Sigma_{1,1}$. By Step 1, $\pa X$ bounds a disk in $V$. 

{\it Claim.} $\pa X$ also bounds a disk in $\what V$.

The claim implies that $\pa X$ is a reducing sphere, which implies that there is a (unique) primitive disk $D\sbs V$ with $\pa D\sbs X$. This contradicts the fact that $X$ is primitively incompressible. Thus to prove the lemma it suffices to prove the claim. 

{\it Proof of Claim.} We want to show that $\pa X$ is homotopically trivial in $\what V$. Let $\what E\sbs\what V$ be a primitive disk that is disjoint from $\what D$. An element of $\pi_1(\what V)$ is determined by its intersection with $\what D$ and $\what E$ (Remark \ref{rmk:primitive-test}). Denoting $\de,\ep\in\pi_1(\what V)$  the free generators dual to $\what D$ and $\what E$, then the word $w\in\pair{\de,\ep}$ determined by $\pa X$ is a power of $\epsilon$ since $\pa X$ is disjoint from $\what D$. In addition $\pa X$ is in the commutator subgroup since $\pa X$ bounds $X\cong\Sigma_{1,1}$. This implies $w$ is trivial. 
\end{proof}

Next we show that the diameter of $X$ with respect to $\ca P(\what V)$ is $\ge 57$. By assumption, there exist primitive disks $D,E\sbs V$ with $d_X(D,E)\ge 61$. Let $\what D,\what E\sbs \what V$ be the (unique) dual pair of primitive disks. In particular, $D\cap\what E=\vn=\what D\cap E$, which implies $d_X(D,\what E),d_X(\what D,E)\le 2$ \cite[Lem.\ 2.3]{masur-minsky2}. Now the triangle inequality gives $d_X(\what D,\what E)\ge 57$ and hence the diameter bound.  

Finally, $X$ is primitively incompressible in $\what V$ because primitively compressible holes for $\ca P(\what V)$ have diameter $\le 13$ by Proposition \ref{prop:compressible}.

\paragraph{Step 3.} By Steps 1 and 2, there are embedded $I$ bundles $T\hra V$ and $\what T\hra \what V$. Both $T$ and $\what T$ have $X$ as a component of their horizontal boundary. We will show that most of the cases enumerated in Step 1 are impossible, for a variety of reasons. 

First suppose that $X$ is homeomorphic to $\Sigma_{1,2}$ or $\Sigma_{0,4}$. By Step 1, every component of $\pa X$ is null-homologous in both $V$ and $\what V$. Since the components of $\pa X$ are non-separating in $S$, this implies that $H_2(S^3)\neq0$ (Lemma \ref{lem:mayer-vietoris}), which is a contradiction. 

Next suppose that $X\cong \Sigma_{1,1}$. Either $\pa X$ bounds a disk in both, exactly one, or neither of $V$ and $\what V$. We consider these cases separately. Recall from Step 1 that if $\pa X$ does not bound a disk in $V$ then the inclusion $T\hra V$ is (isotopic to) a homeomorphism (and similarly for $\what V$). 

If $\pa X$ bounds a disk in both, then together these disks form a reducing sphere for the Heegaard splitting. In particular, on either side of this reducing sphere we can find primitive disks in $V$, but this contradicts the fact that $X$ is a hole for $\ca P(V)$. 

Suppose that $\pa X$ bounds a disk in $\what D\sbs \what V$ and that $T\hra V$ is a homeomorphism. The disk $\what D$ separates $\what V$ into two genus-1 handlebodies, one containing $X$, and each containing a unique disk $\what D',\what D''$. These disks are primitive, since we can easily find a vertical disk in $T\cong V$ that intersects $\what D'$ or $\what D''$ once. But this implies that $X$ is not a hole for $\ca P(\what V)$, contradicting Step 2.

The only remaining possibility is that both $T\hra V$ and $\what T\hra \what V$ are homeomorphisms, which is the desired conclusion of the Theorem. 
\end{proof}

\subsection{Subsurface projection for the trefoil knot}\label{sec:trefoil}

In this section we prove the last part of Theorem \ref{thm:holes}, showing if $X$ is a large-diameter hole for $\ca P(V)$, then there exists $g\in\bb G$ that preserves $X$ and so that $\rest{g}{X}$ is pseudo-Anosov. Given the discussion in the introduction to \S\ref{sec:holes}, it remains to show that if $X$ is a genus-1 Seifert surface for the trefoil knot, then $X$ has bounded diameter. 

In this section we write $S^3=V\cup\what V$ as $(F\ti I)\cup_\phi(\what F\ti I)$ with $F=\what F=\Sigma_{1,1}$ and $\phi=T_aT_b$. We also denote $\Sigma=\pa(F\ti I)$. For more on this notation, see \S\ref{sec:fibered-link}. 

\begin{thm}\label{thm:finite-diameter}
Fix $F=\what F=\Sigma_{1,1}$ and fix $\phi=T_aT_b\in\Mod(F)$. Write $S^3=(F\times I)\cup_\phi(\what F\times I)$, and identify this with the standard Heegaard splitting $S^3=V\cup\what V$. Then the subsurfaces $X=F\ti\{1\}$ and $Y=F\ti\{0\}$ have finite diameter with respect to $\ca P(V)$. 
\end{thm}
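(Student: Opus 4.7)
The plan is to use the $I$-bundle structure $V = F \times I$ to parametrize primitive disks in $V$ by essential arcs in $F$, and then to exploit the fact that $\phi = T_aT_b$ has order $6$ in $\Mod(F)$ to bound the resulting set of arcs.

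First, since $V$ is an $I$-bundle over the surface $F = \Sigma_{1,1}$, which is not a disk, I would invoke the $I$-bundle classification of essential surfaces (Waldhausen/Jaco--Shalen) to conclude that every essential properly embedded disk $D \subset V$ is isotopic to a vertical disk $\alpha \times I$ for some essential arc $\alpha \subset F$; the same applies in $\what V = \what F \times I$. Under the standard quasi-isometric identification of the arc complex $\ca A(X)$ with $\ca C(X)$, the subsurface projection $\pi_X(\alpha \times I)$ is determined up to bounded Hausdorff distance by $\alpha$. So to prove that $X$ has finite diameter with respect to $\ca P(V)$, it suffices to show that the set $\Pi$ of essential arcs $\alpha \subset F$ with $\alpha \times I$ primitive is bounded in $\ca A(F)$.

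Next I would compute intersection numbers. With the gluing convention of Remark \ref{rmk:gluing} (identity on $F \times \{0\}$, $\phi$ on $F \times \{1\}$), and after arranging $\partial \alpha, \partial \beta$ to be disjoint in $\partial F$, the two horizontal-boundary contributions are $i_F(\alpha, \beta)$ and $i_F(\alpha, \phi^{-1}(\beta))$ while the vertical-boundary contribution is zero, giving
\[ i\big(\partial(\alpha \times I),\; \partial(\beta \times I)\big) \;=\; i_F(\alpha, \beta) \,+\, i_F(\alpha, \phi^{-1}(\beta)). \]
Hence $\alpha \times I$ is primitive iff some essential arc $\beta$ makes this sum equal to $1$. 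Since $\ca A(\Sigma_{1,1})$ is the Farey graph (two arcs are disjoint iff Farey-adjacent), $i_F = 0$ corresponds to Farey distance $\le 1$ and $i_F = 1$ to Farey distance exactly $2$; a short case analysis on the splittings $1 = 0+1$ and $1 = 1+0$, combined with the triangle inequality, then forces every $\alpha \in \Pi$ to satisfy $d_{\ca A(F)}(\alpha, \phi^{-1}(\alpha)) \le 3$.

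Finally, $\phi = T_aT_b$ maps under $\Mod(\Sigma_{1,1}) \twoheadrightarrow \SL_2(\Z)$ to the matrix $\left(\begin{smallmatrix} 0 & 1 \\ -1 & 1 \end{smallmatrix}\right)$, which has order $6$; consequently $\phi$ acts on $\ca A(F)$ as a finite-order isometry fixing a Farey triangle. Since the Farey graph is Gromov-hyperbolic, the displacement function $\alpha \mapsto d(\alpha, \phi(\alpha))$ of such a finite-order isometry is proper with bounded sub-level sets: any $\alpha$ at distance $R$ from the fixed triangle satisfies $d(\alpha, \phi(\alpha)) \ge 2R - O(\delta)$. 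Therefore $\Pi$ is bounded in $\ca A(F)$, giving the desired diameter bound on $\pi_X(\ca P(V))$. The argument for $Y = F \times \{0\}$ is entirely analogous, with $\phi$ replaced by $\phi^{-1}$, also of order $6$.

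The hardest step will be the first one: a primitive disk is a priori only known to be essential in the sense that $\partial D$ is essential in $\Sigma$, and it may be $\partial$-compressible, so a direct appeal to Waldhausen's theorem is not quite enough. One likely needs to use the surgery machinery of \S\ref{sec:surgery} --- tracking primitivity through each boundary compression, as in the proof of Lemma \ref{lem:surgery-sequence} --- to reduce any primitive disk to a vertical representative without blowing up the $X$-projection. Once that vertical normalization is in place, the Farey-graph intersection calculus and the finite-order displacement bound are routine.
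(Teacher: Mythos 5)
Your overall architecture matches the paper's: first normalize an arbitrary primitive disk to a vertical one via the surgery machinery (the paper does exactly this, using Lemma \ref{lem:surgery-sequence} and a normalization through the hexagonal prisms cut out by $D_0,D_1,D_\infty$), then bound the set of vertical primitive disks. For the second half you diverge from the paper, and this is where there is a genuine gap. The paper identifies the vertical primitive disks by the algebraic criterion of Remark \ref{rmk:primitive-test}: it writes down the cyclically reduced word $w\in F_2=\pi_1(\what V)$ determined by the intersections of $\pa D_{p/q}$ with the fixed dual disks $\what E_1,\what E_2$, and uses the fact that a cyclically reduced primitive word cannot contain both a generator and its inverse; this is a complete characterization of primitivity. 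You instead assert that $\alpha\times I$ is primitive \emph{if and only if} some essential arc $\beta$ satisfies $i_F(\alpha,\beta)+i_F(\alpha,\phi^{-1}\beta)=1$. Only the ``if'' direction is immediate. The definition of primitivity allows the dual disk $\what D\sbs\what V$ to be an arbitrary properly embedded disk, and not every essential disk in $\what F\times I$ is vertical (Waldhausen's theorem applies only to $\pa$-incompressible surfaces, and essential disks in a handlebody are typically $\pa$-compressible); surgering $\what D$ toward vertical position does not preserve the condition $i(\pa D,\pa\what D)=1$. Without the ``only if'' direction, your set $\Pi$ may be strictly larger than the set you bound, and the argument does not close.

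Two secondary issues. First, your displacement estimate ``any $\alpha$ at distance $R$ from the fixed triangle satisfies $d(\alpha,\phi(\alpha))\ge 2R-O(\delta)$'' is not a general fact about finite-order isometries of hyperbolic spaces: an order-two isometry of a tree can fix one branch pointwise while swapping two others, so sub-level sets of the displacement function need not be bounded. For this particular elliptic element of $\PSL_2(\Z)$ acting on the Farey graph the claim is plausible, but it requires its own proof rather than an appeal to hyperbolicity. Second, the exact intersection formula $i(\pa(\alpha\times I),\pa(\beta\times I))=i_F(\alpha,\beta)+i_F(\alpha,\phi^{-1}\beta)$ needs care: the arcs must be put in minimal position rel their endpoints on $\pa F$ simultaneously at both horizontal levels while keeping the vertical annulus contribution zero, and since you need the total to equal exactly $1$, an off-by-one from half-bigons at the boundary would break the case analysis. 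The paper's word-based computation avoids both of these delicate points entirely.
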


\begin{proof}[Proof of Theorem \ref{thm:finite-diameter}]
The proof for $X$ and $Y$ is the same. For concreteness we work with $X$. Let $\ca V\sbs\ca P(V)$ denote the set of vertical disks $D\sbs F\ti I\cong V$. First  we show that $\ca V$ is finite. Then we use this to show $X$ has finite diameter with respect to $\ca P(V)$.


\paragraph{Finitely many vertical primitive disks.} 
First we specify the coordinates we will use to compute. Consider the curves and arcs in Figure \ref{fig:S11-cut}; as pictured, we cut $F=\Sigma_{1,1}$ along the arcs $a',b'$ to get an octagon, which we use to draw pictures. 

\begin{figure}[h!]
\labellist
\pinlabel $a$ at 230 880
\pinlabel $b$ at 380 890
\pinlabel $a'$ at 560 830
\pinlabel $b'$ at 400 720
\pinlabel $a'$ at 810 730
\pinlabel $b$ at 820 890
\pinlabel $a$ at 860 830
\pinlabel $b'$ at 930 850
\endlabellist
\centering
\includegraphics[scale=.3]{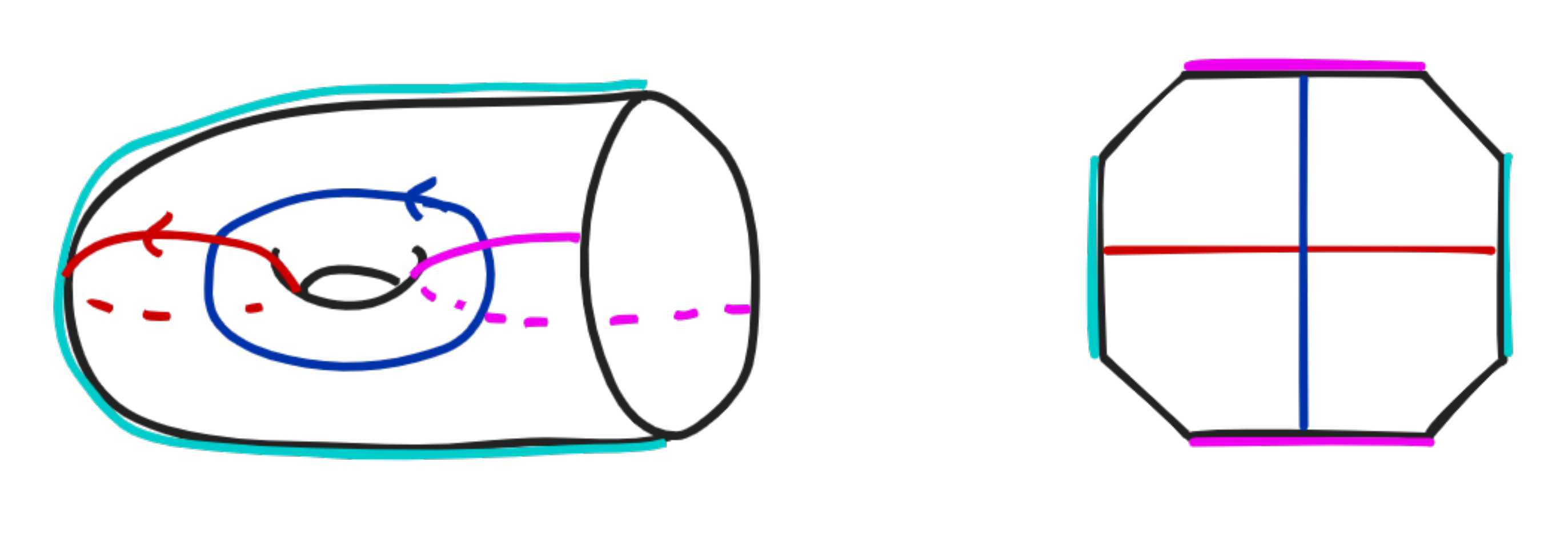}
\caption{Coordinates for the computation.} 
\label{fig:S11-cut}
\end{figure}

Figure \ref{fig:dual-primitive-ab} shows the intersection of two vertical primitive disks in $\what F\ti I$ with $F\ti0$ and $F\ti 1$.

\begin{figure}[h!]
\labellist
\pinlabel $\pa\what E_1$ at 100 90
\pinlabel $\pa\what E_2$ at 180 160
\pinlabel $\pa\what E_1$ at 415 80
\pinlabel $\pa\what E_2$ at 400 160
\endlabellist
\centering
\includegraphics[scale=.5]{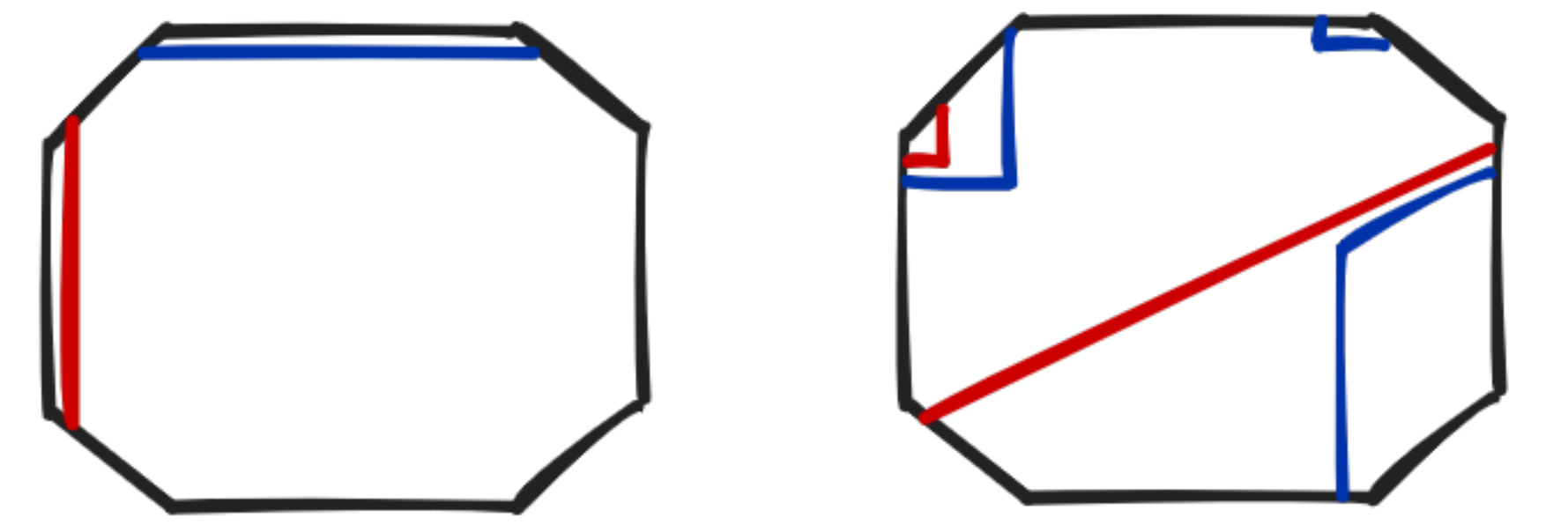}
\caption{Dual primitive disks.}
\label{fig:dual-primitive-ab}
\end{figure}

Given an arc $\lambda\subset \Sigma_{1,1}$, we define the slope as $\fr{\lambda\cdot a}{\lambda\cdot b}\in\Q\cup\{\infty\}$, where the dot product denotes the algebraic intersection number. This depends on our fixed orientation of $a,b$, but doesn't depend on how we orient $\lambda$. For any slope $\frac{p}{q}\in\Q\cup\{\infty\}$, there is a unique arc with that slope; see Figure \ref{fig:slopes-pos-neg}.

\begin{figure}[h!]
\labellist
\pinlabel $p-1$ at 350 660
\pinlabel $q-1$ at 160 520
\pinlabel $p-1$ at 655 660
\pinlabel $q-1$ at 850 520
\endlabellist
\centering
\includegraphics[scale=.4]{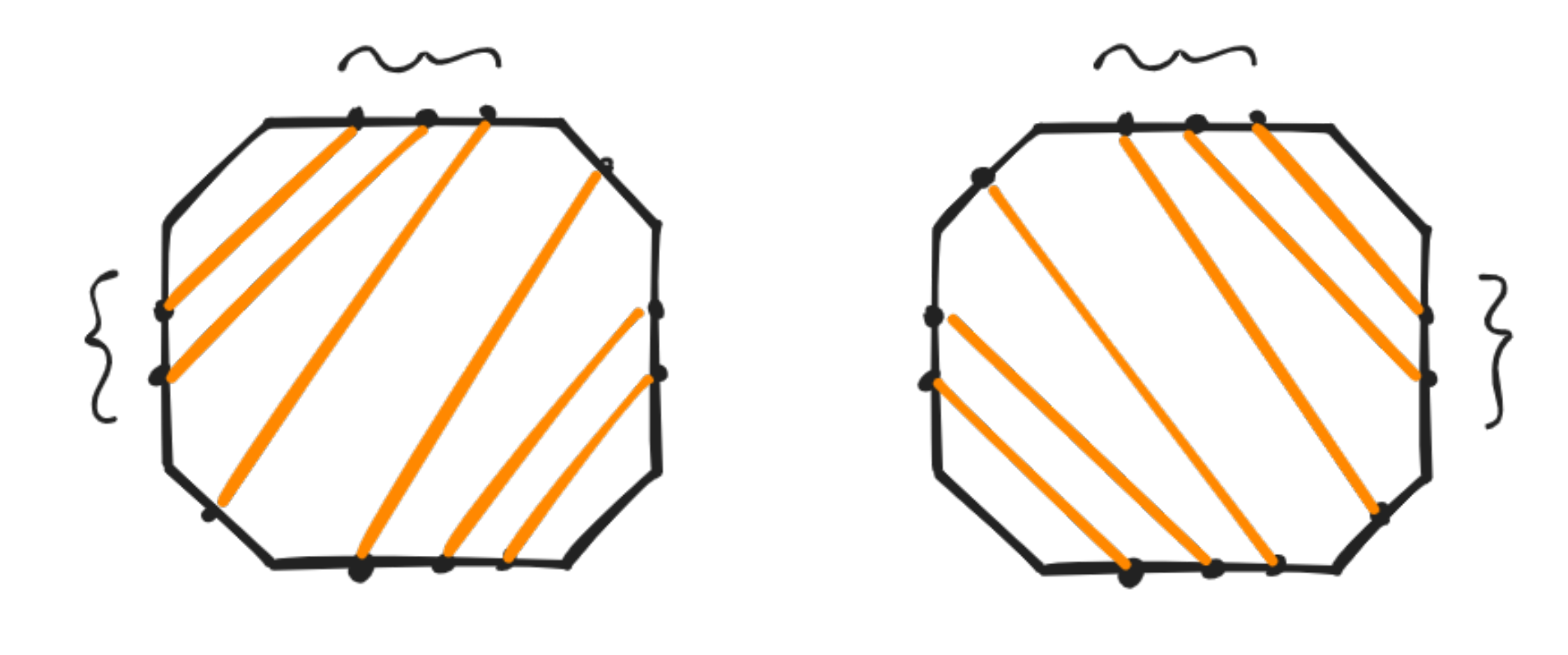}
\caption{Arcs on $\Sigma_{1,1}$ of slope $4/3$ (left) and $-4/3$ (right).}
\label{fig:slopes-pos-neg}
\end{figure}

It is easy to check that the vertical disks corresponding to the arcs of slope 
\[\pm1,0, \infty, 1/2, 2\] are all primitive. These fall into two orbits of $\phi=T_aT_b$, which has order 3 in $\PSL_2(\Z)$. We show that no other vertical disk is primitive. Let $D=D_{p/q}$ be the vertical disk corresponding to an arc of slope $\fr{p}{q}$. If $\fr{p}{q}\notin\{0,1,\infty\}$, then up to the action of $\phi$, we can assume that $0<\frac{p}{q}<1$. The word $w$ in $r,b$ corresponding to $\partial D$ contains $(q-1)$ occurrences of $r$ and $b$, $(q-1-p)$ occurrences of $r^{-1}$, and $(p-1)$ occurrences of $b^{-1}$. See Figure \ref{fig:TaTb-slope}. Here it's important to remember that the orientations on $F\times 0$ and $F\times 1$ are induced from $F\times I$, and the obvious homeomorphism $F\times0\cong F\times 1$ is orientation-reversing. 

It's easy to check that $w$ is cyclically reduced (the subword corresponding to $F\ti0$ starts and ends with $r$, and the subword corresponding to $F\ti1$ starts and ends with $b$, so there is no cancellation). A cyclically-reduced primitive word doesn't contain both $r$ and $r^{-1}$, and also doesn't contain both $b$ and $b^{-1}$. Then at least one of $q-1$, $p-1$, or $q-1-p$ is equal to $0$. The case $q-1=0$ is impossible since $0<\frac{p}{q}<1$. If $p-1=0$, then $w=r^q(br^{-1})^{q-1}b$, which implies that $q=1$, which again is impossible. Finally, if $q-1-p=0$, then $w=r(b^{-1}r)^{p-1}b^p$, which implies that $p=1$, so $\frac{p}{q}=\frac{1}{2}$. 

\begin{figure}[h!]
\labellist
\endlabellist
\centering
\includegraphics[scale=.5]{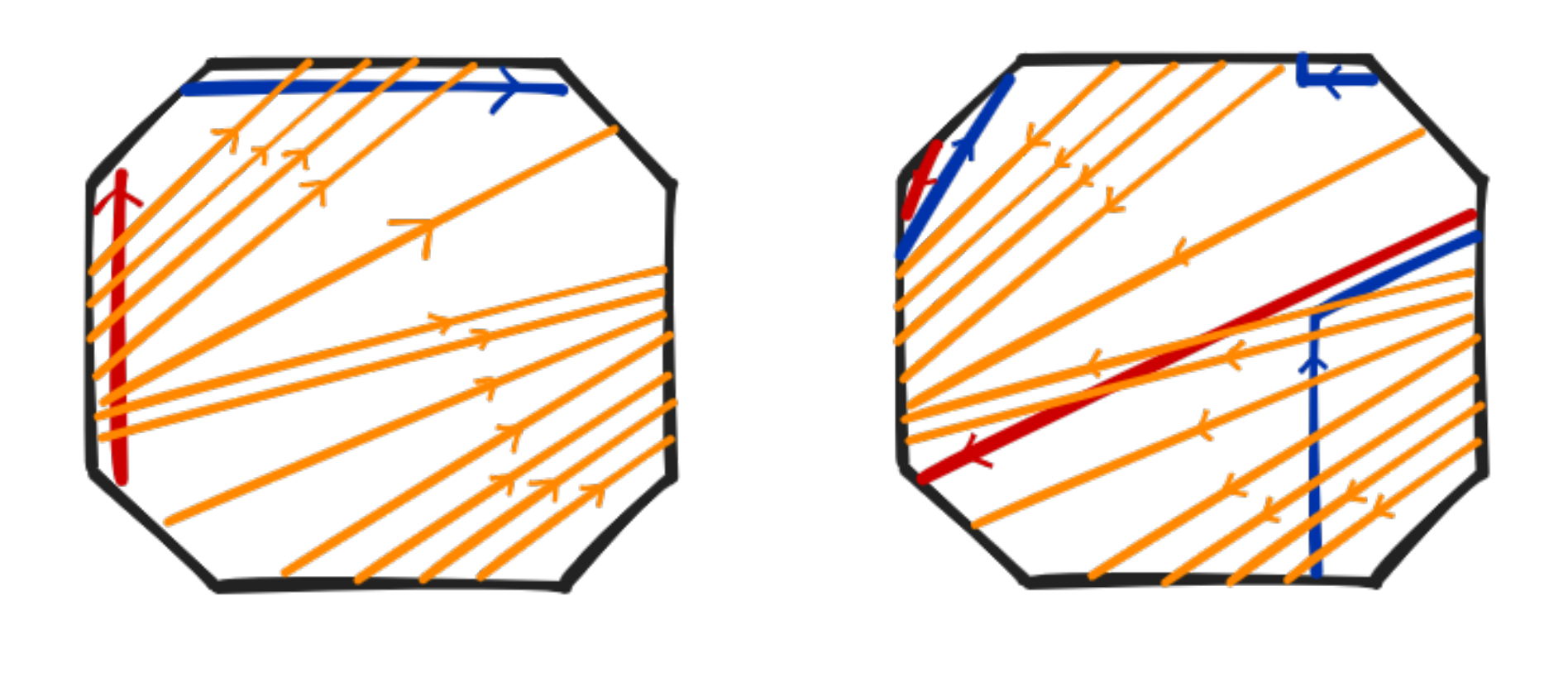}
\caption{Arc slope $0<\fr{p}{q}<1$ pictured on $F\ti 0$ and $F\ti 1$.}
\label{fig:TaTb-slope}
\end{figure}

This proves that the only vertical primitive disks are the ones corresponding to arcs of slope $\fr{p}{q}\in\{\pm1,0, \infty, 1/2, 2\}$.

\paragraph{$X$ has finite diameter.} 
Fix a primitive disk $D_0\sbs V$. By Lemma \ref{lem:surgery-sequence}, there exists primitive $D\sbs V$ such that $D$ does not admit a primitive boundary compression into $S\setminus n(\pa X)$ and $d_{X}(D_0,D)\le 6$. To prove that $X$ has finite diameter, it suffices to show that $D$ is vertical, since we showed above that there are only finitely many vertical disks. The argument below is similar to \cite[Lem.\ 8.12]{masur-schleimer} but is more subtle.

Let $D_0,D_1,D_\infty\subset F\ti I$ be the vertical primitive disks corresponding to slopes $0,1,\infty$. These disks split $F\ti I$ into two hexagonal prisms. Let $D\subset F\ti I$ be a primitive disk, and assume that $D$ cannot be primitively compressed into $S\setminus n(\pa X)$. We show that $D$ can be isotoped to be vertical. 

First we can isotope $D$ to be be in minimal position with $\pa X$, and by a further isotopy we can assume that $D$ is vertical on $\pa F\ti I$. Next we can isotope $D$ to be vertical on $D_0\cup D_1\cup D_\infty$, since otherwise there would exist a primitive compression of $D$ into $S\setminus n(\pa X)$. Compare with \cite[Proof of Lem.\ 8.12]{masur-schleimer}.

It remains to show we can isotope $D$ to be vertical in the two hexagonal prisms. Fix one prism $H\ti I$, and let $D'$ be the intersection of $D$ with $H\times I$. The surface $D'$ can be built from $D'\cap H\ti [1-\ep,1]$ by a sequence of handle attachments. The first handle attachment is dual to boundary compression $(B,a,b)$ of $D'$ into $X=F\ti 1$; here $b\subset X$ is an arc connecting two components $\delta_1,\delta_2$ of $D'\cap X$, and $a$ is the core of the handle being attached. One can deduce that the bigon $B$ is primitive (defined in \S\ref{sec:surgery}) by showing that $B$ is an outermost bigon cut from a vertical primitive disk $D''$ by $D'$. To see this, observe that either the arc of slope $1/2$ or $-1$ on $X$ can be isotoped to intersect both $\delta_1$ and $\delta_2$ exactly once. See Figure \ref{fig:primitive-compression} for an example. It follows that $D'$ cuts a bigon isotopic to $B$ from either $D''=D_{1/2}$ or $D''=D_{-1}$. This contradicts the fact that $D$ is primitively incompressible, and from this we deduce that we can isotope $D$ to be vertical in each hexagonal prism, as desired.
\begin{figure}[h!]
\labellist
\pinlabel $\de_1$ at 340 460
\pinlabel $\de_2$ at 430 480
\pinlabel $\pa D''$ at 200 580
\pinlabel $\pa D''$ at 610 645
\pinlabel $B$ at 755 625
\endlabellist
\centering
\includegraphics[scale=.4]{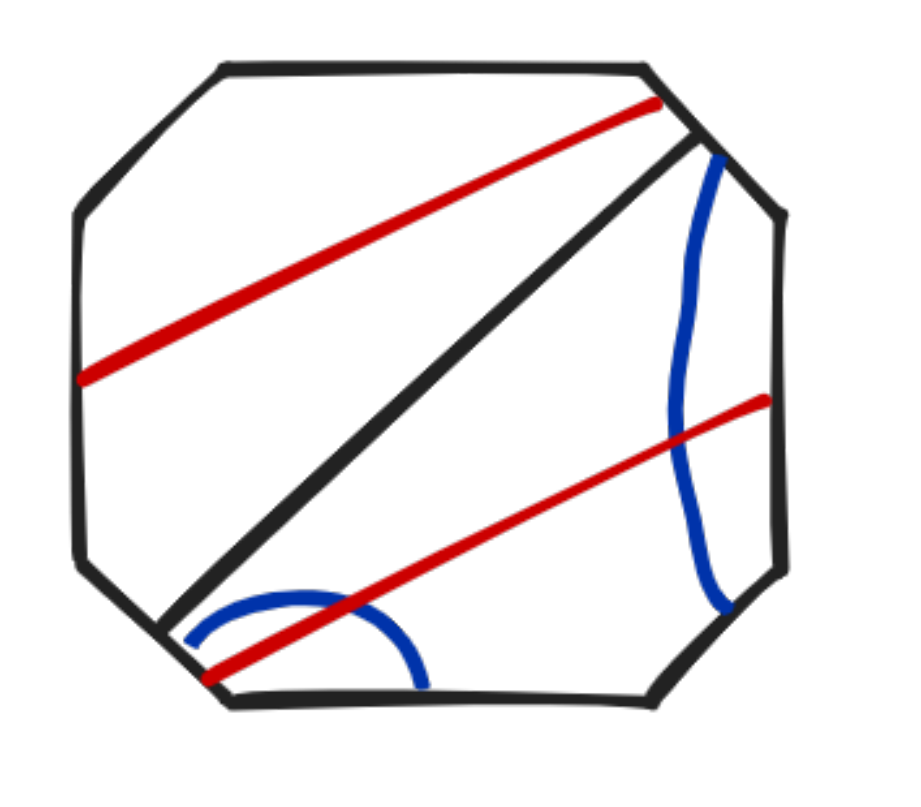}\hspace{.5in}
\includegraphics[scale=.4]{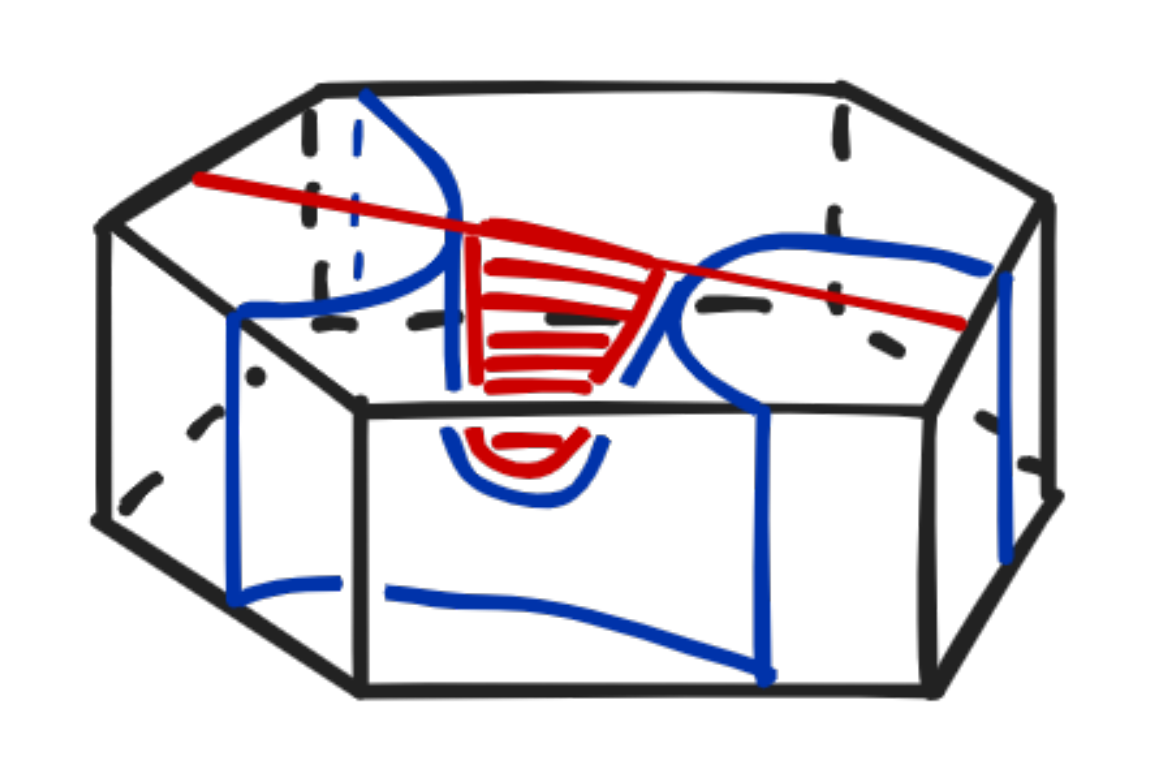}
\caption{Left: $F\times 1$. Right: hexagonal prism $H\times [1-\ep,1]$ and a bigon $B$ cut from the vertical disk $D''$ by $D'$.}
\label{fig:primitive-compression}
\end{figure}
\end{proof} 

This completes the proof of Theorem \ref{thm:holes}.  

\subsection{Subsurface projection for the figure-8 knot}\label{sec:fig8}

In this section, similar to \S\ref{sec:trefoil}, we write $S^3=V\cup\what V$ as $(F\ti I)\cup_\phi(\what F\ti I)$ with $F=\what F=\Sigma_{1,1}$, but now with $\phi=T_aT_b^{-1}$. 

Here we compute the set $\ca V\sbs \ca P(V)$ of primitive disks that are vertical in $F\ti I\cong V$, and prove that the subcomplex $\ca P(V;X)\subset\ca P(V)$ spanned by $\ca V$ is quasi-isometric to a line. This computation is necessary for proving Theorem \ref{thm:distance} (distance formula). 




\begin{figure}[h!]
\labellist
\pinlabel $\pa\what E_1$ at 70 700
\pinlabel $\pa\what E_2$ at 120 750
\pinlabel $\pa\what E_1$ at 495 680
\pinlabel $\pa\what E_2$ at 420 730
\endlabellist
\centering
\includegraphics[scale=.5]{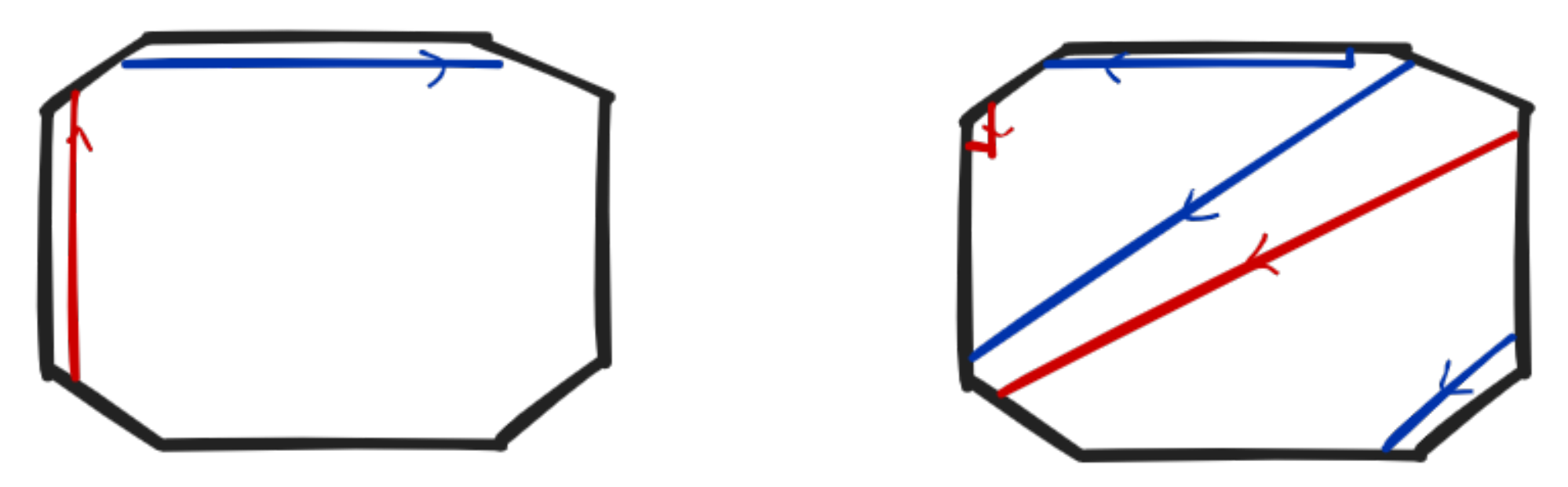}
\caption{Dual primitive disks.}
\label{fig:dual-primitive-ab-inv}
\end{figure}

We begin by identifying $\ca V$. As in the previous section, let $D_{p/q}$ be the vertical disk whose intersection with $X$ has slope $p/q$. Figure \ref{fig:dual-primitive-ab-inv} shows the intersection of two (vertical) primitive disks in $\what F\ti I$ with $F\ti0$ and $F\ti 1$. Using these, it is easy to see that $D_{0}$ and $D_\infty$ are primitive. Then the same is true for the orbits of $\left(\begin{array}{c}1\\0\end{array}\right)$ and $\left(\begin{array}{c}0\\1\end{array}\right)$ under $\phi=T_aT_b^{-1}=\left(\begin{array}{cc}2&1\\1&1\end{array}\right)$. See Figure \ref{fig:farey}. 

\begin{figure}[h!]
\labellist
\small
\pinlabel $-\fr{21}{34}$ at 270 440
\pinlabel $-\fr{8}{5}$ at 340 350
\pinlabel $-\fr{3}{2}$ at 400 300
\pinlabel $-\fr{1}{1}$ at 500 230
\pinlabel $\fr{0}{1}$ at 645 200
\pinlabel $\fr{1}{2}$ at 780 225
\pinlabel $\fr{3}{5}$ at 880 280
\pinlabel $\fr{8}{13}$ at 950 340
\pinlabel $\fr{21}{34}$ at 1005 430
\pinlabel $-\fr{34}{21}$ at 270 760
\pinlabel $-\fr{13}{8}$ at 330 860
\pinlabel $-\fr{5}{3}$ at 400 930
\pinlabel $-\fr{2}{1}$ at 500 980
\pinlabel $\fr{1}{0}$ at 640 1000
\pinlabel $\fr{1}{1}$ at 780 975
\pinlabel $\fr{2}{3}$ at 880 920
\pinlabel $\fr{5}{8}$ at 960 830
\pinlabel $\fr{13}{21}$ at 1015 740
\endlabellist
\centering
\includegraphics[scale=.2]{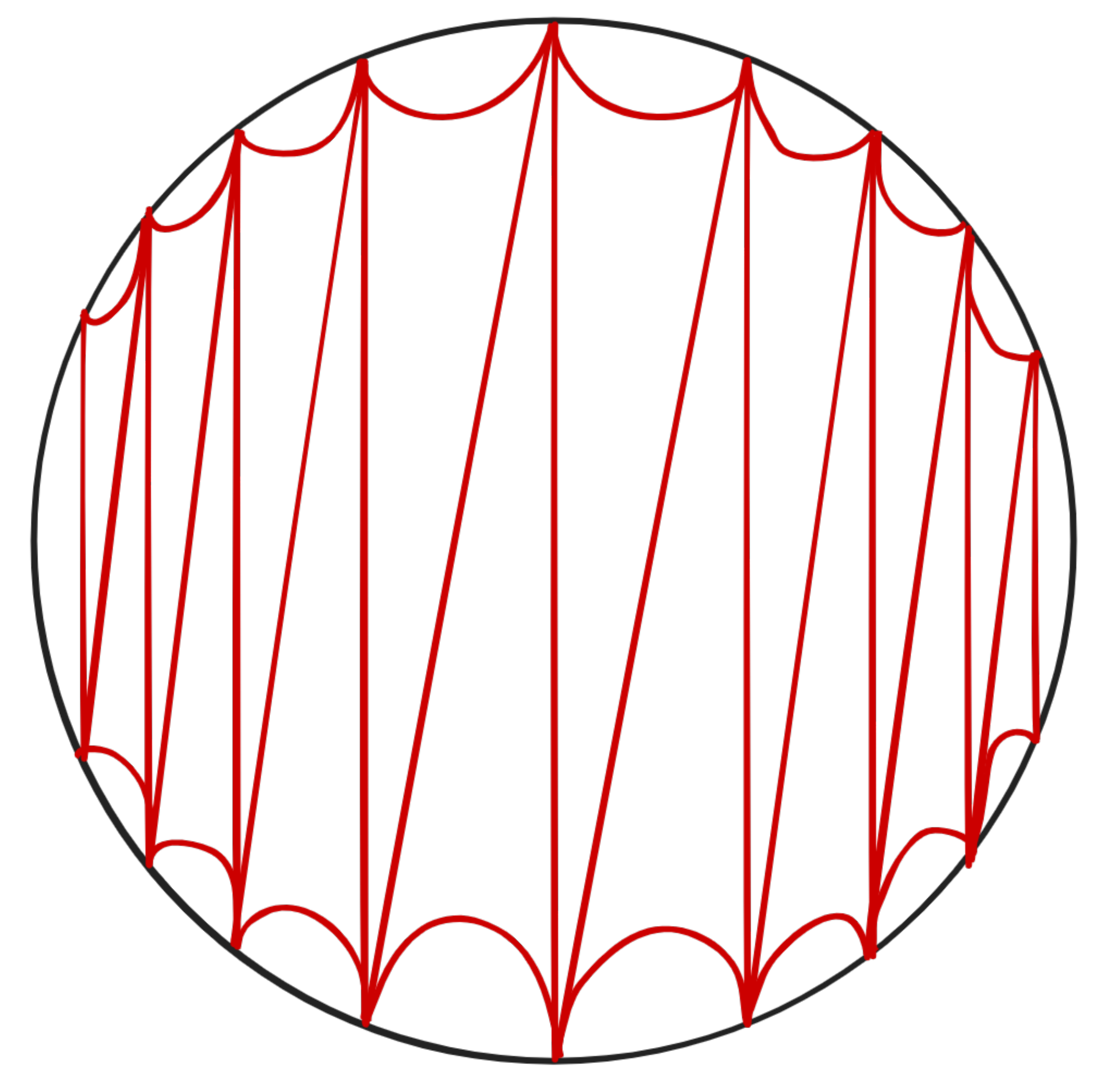}
\caption{The complex $\ca P(V;X)$ embeds in the Farey graph.}
\label{fig:farey}
\end{figure}

We show that any vertical primitive disk is in one of these two orbits. Fix $D_{p/q}$ such that $\fr{p}{q}$ is not in one of the above two orbits. Up to the action of $\phi$, we can assume that $\fr{p}{q}$ belongs to one of the two intervals $(-1,0)$ or $(1,\infty)$ in $\Q$. 

First suppose that $\fr{p}{q}\in(1,\infty)$. The word $w\in\pair{r,b}$ corresponding to $D_{p/q}$ has $(2p-q-1)$ occurrences of $b$ and $(p-1)$ occurrences of $b^{-1}$. See Figure \ref{fig:fig8-slopes}. It's easy to check that $w$ is cyclically reduced (the subword corresponding to $F\times 0$ starts and ends with $b^{-1}$, and the subword corresponding to $F\times 1$ starts and ends with $r$, so there is no cancellation). Then either $2p-q-1=0$ or $p-1=0$. Both of these are incompatible with the assumption that $\fr{p}{q}\in(1,\infty)$. This shows $D_{p/q}$ is not primitive when $\fr{p}{q}\in(1,\infty)$. 

\begin{figure}[h!]
\labellist
\endlabellist
\centering
\includegraphics[scale=.5]{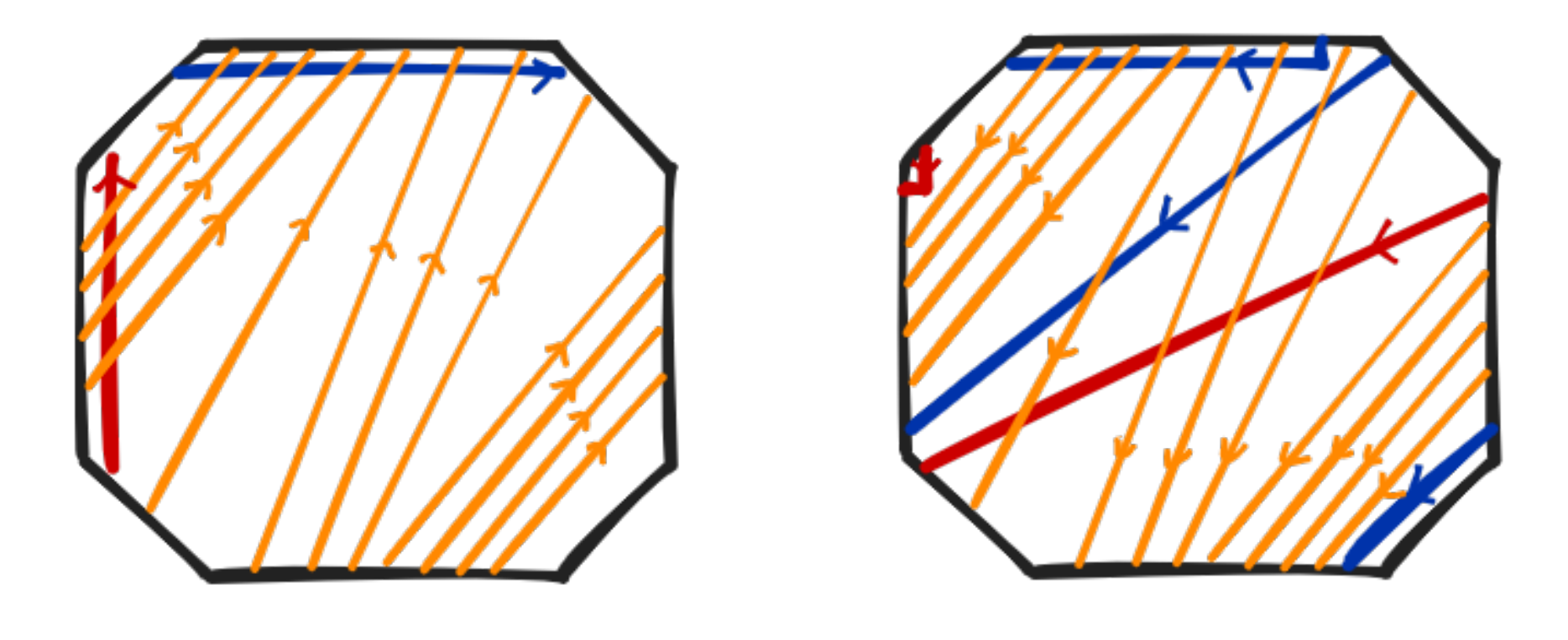}
\includegraphics[scale=.5]{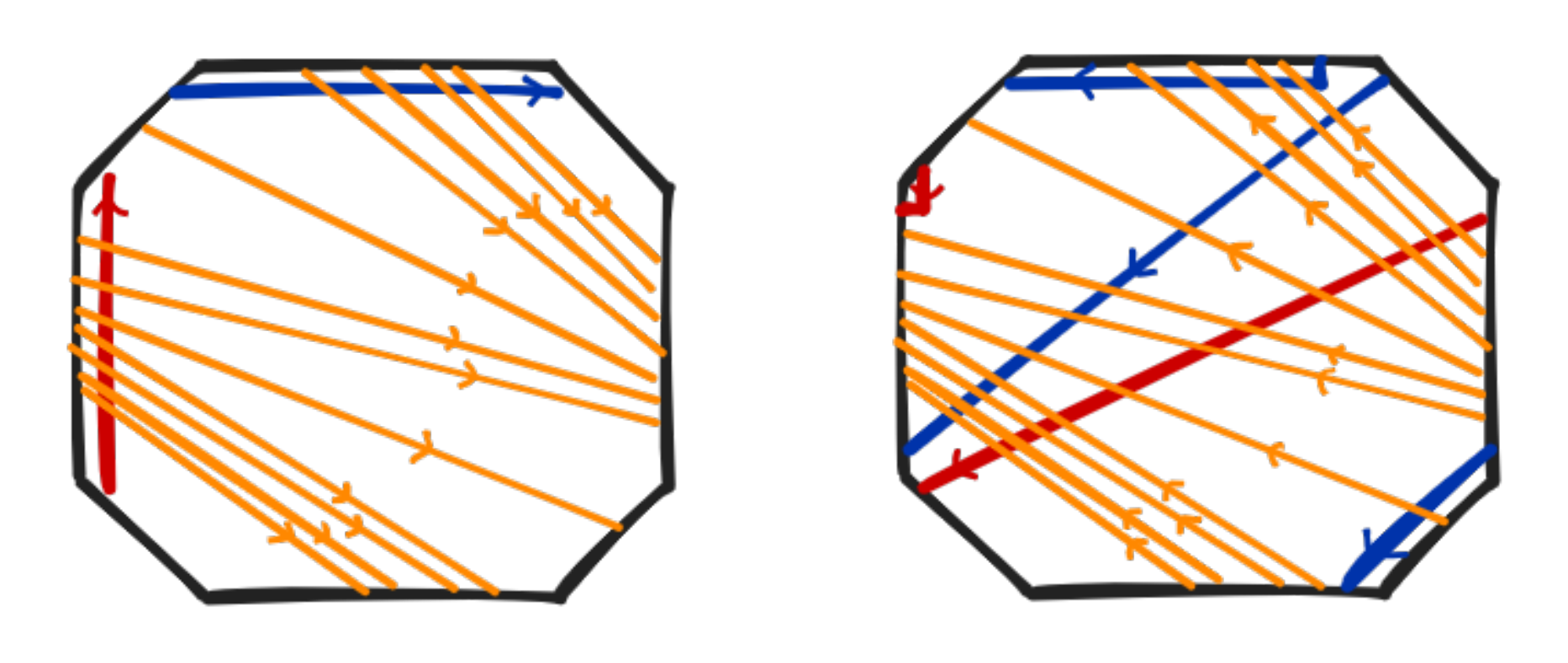}
\caption{An arc with slope $1<\fr{p}{q}<\infty$ (top row) or $-1<\fr{p}{q}<0$ (bottom row) pictured on $F\ti 0$ and $F\ti 1$.}
\label{fig:fig8-slopes}
\end{figure}

Next suppose that $\fr{p}{q}\in(-1,0)$. The word $w\in\pair{r,b}$ corresponding to $D_{p/q}$ has $(|q|-1)$ occurrences of $r$ and $(|p|+|q|-1)$ occurrences of $r^{-1}$. See Figure \ref{fig:fig8-slopes}. The word $w$ is cyclically reduced because the subword corresponding to $F\times 0$ starts and ends with $r$, the subword corresponding to $F\times 1$ starts and ends with $b^{-1}$, which implies there is no cancellation. Either of the condition $|q|-1=0$ or $|p|+|q|-1=0$ is incompatible with the assumption that $\fr{p}{q}\in(-1,0)$. This shows that $D_{p/q}$ is not primitive. 


Now we claim that for every vertex $D\in\ca P(V)$ there is a vertical disk $D'$ so that $d_X(D,D')\le 6$. This follows by the exact same argument as in \S\ref{sec:trefoil}; the special fact we needed for that argument is that the vertical disks with slopes $\pm 1, 0, \infty, 1/2$ are primitive; this also holds in the current case. 

It follows that the projection of $\ca P(V)$ to $\ca C(X)$ is quasi-isometric to the subcomplex pictured in Figure \ref{fig:farey}, which is quasi-isometric to a line. 

\begin{rmk}\label{rmk:incompressible-distance}
The curve and arc complexes of $X\cong \Sigma_{1,1}$ are isometric via the map $\ca A(X)\ra\ca C(X)$ used to define the subsurface projection (\S\ref{sec:subsurface-projection}). Consequently the subsurface projection of $\ca P(V;X)$ to $\ca C(X)$ is an embedding, which is in fact a quasi-isometric embedding. In particular, the distance in $\ca P(V;X)$ between vertices $D,E\in \ca P(V;X)$ is (coarsely) bounded above by $d_X(D,E)$. 
\end{rmk}

\begin{rmk}\label{rmk:paired} 
In a similar way, we arrive at the same description for $\ca P(V;Y)$, where $Y=F\ti0$ is the other horizontal boundary component. 
Since every primitive disk can be surgered to a vertical disk, while only changing its subsurface projection by a uniformly bounded amount, it follows that the infinite-diameter holes $X$ and $Y$ are \emph{paired} in the sense of \cite[Defn.\ 5.6]{masur-schleimer}. This will be used in the discussion of the distance formula. 
\end{rmk}

\subsection{Goeritz symmetries preserving the figure-8 knot}\label{sec:fig8-symmetry}

The following result is needed for Theorem \ref{thm:pseudo}. 

\begin{prop}[figure-8 knot stabilizer]\label{prop:fig8-stabilizer}
Let $K$ be the embedding of the figure-$8$ knot on $S\subset S^3$ in Figure \ref{fig:fig8}. The subgroup of $\bb G$ that fixes $K$ is generated by $\al$, $\beta\delta\beta^{-1}\delta$, and $\ga\de$. 
\end{prop}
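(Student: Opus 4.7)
The plan is to leverage the $I$-bundle structure of the Heegaard splitting induced by the fibration of the figure-$8$ knot complement. By Lemma \ref{lem:fibered-link} and the computations in \S\ref{sec:fig8}, we may identify $S^3 = V \cup_S \hat V$ with $(F \times I) \cup_\phi (\hat F \times I)$ where $F = \hat F = \Sigma_{1,1}$ and $\phi = T_a T_b^{-1}$, so that $K = \partial F \times \{1/2\}$ and the two components of $S \setminus K$ are $X = F \times \{1\}$ and $Y = F \times \{0\}$.

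The inclusion $\pair{\alpha, \beta\delta\beta^{-1}\delta, \gamma\delta} \subseteq \bb G_K$ can be checked element by element. The hyperelliptic involution $\alpha$ is central in $\bb G$ and fixes every isotopy class of simple closed curve on $S$. The element $\beta\delta\beta^{-1}\delta$ is conjugate to the monodromy $\phi$ of the fibration (see the remark after Theorem \ref{thm:pseudo}); it arises from the fiber-preserving Goeritz element $\phi \times \mathrm{id}$ of Remark \ref{rmk:fiber-preserving} and hence fixes $K$. For $\gamma\delta$, I would inspect Figures \ref{fig:generators} and \ref{fig:fig8} directly to verify that it realizes the strong-inversion symmetry of $K$: the order-$2$ Goeritz element interchanging $X$ and $Y$ while preserving $K$ setwise.

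For the reverse inclusion, fix $g \in \bb G_K$. Since $K$ separates $S$ into $X \sqcup Y$, $g$ either preserves or swaps these components, and so it preserves $F \times \{0\} \cup F \times \{1\}$. Proposition \ref{prop:fiber-preserving} then lets me replace $g$ (up to isotopy through Heegaard-preserving homeomorphisms) by a homeomorphism that is fiber-preserving on each $I$-bundle. By Remark \ref{rmk:relation} such a $g$ is determined by a single homeomorphism $\psi$ of $F$: in the component-preserving case $\psi$ is orientation-preserving and commutes with $\phi$, while in the component-swapping case $\psi$ is orientation-reversing on $F$ (this is forced by the requirement that $g|_V$ remain orientation-preserving) and satisfies $\psi \phi \psi^{-1} = \phi^{-1}$. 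The assignment $g \mapsto \psi$ is injective: its kernel consists of multi-twists about $K$, and since $K$ does not bound a disk in $F \times I$, Oertel's theorem \cite[Thm.\ 1.11]{oertel} prevents any such twist from extending to the handlebody.

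It then suffices to compute the image, namely the extended centralizer
\[
Z^{\pm}(\phi) = \{\psi \in \mathrm{GL}_2(\Z) : \psi \phi \psi^{-1} \in \{\phi,\,\phi^{-1}\}\}
\]
under the standard identification of the (orientation-reversing-inclusive) mapping class group of $\Sigma_{1,1}$ with $\mathrm{GL}_2(\Z)$. Since $\phi$ corresponds to the primitive hyperbolic matrix $\left(\begin{smallmatrix} 2 & 1 \\ 1 & 1 \end{smallmatrix}\right)$, its centralizer in $\SL_2(\Z)$ is the unit group of $\Z[\phi]$ of norm $+1$, namely $\pair{-I} \times \pair{\phi} \cong \Z/2 \times \Z$, and $Z^{\pm}(\phi) \cong \Z/2 \times (\Z \rtimes \Z/2)$ with an explicit order-$2$ orientation-reversing involution $\tau$ inverting $\phi$. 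Matching $-I \leftrightarrow \alpha$, $\phi \leftrightarrow \beta\delta\beta^{-1}\delta$, and $\tau \leftrightarrow \gamma\delta$ yields the proposition. The main obstacle will be the concrete identification of $\gamma\delta$ with $\tau$: this requires tracking the $\gamma$- and $\delta$-actions in $I$-bundle coordinates, which I would do by computing the $\gamma\delta$-action on the finite set of vertical primitive disks classified in \S\ref{sec:fig8} and comparing with the action of $\tau$ on slopes in the Farey graph of Figure \ref{fig:farey}.
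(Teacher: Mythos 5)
Your overall strategy is the same as the paper's: reduce via Proposition \ref{prop:fiber-preserving} and Remark \ref{rmk:relation} to a computation in the mapping class group of the fiber, identify the centralizer of $\left(\begin{smallmatrix}2&1\\1&1\end{smallmatrix}\right)$ in $\SL_2(\Z)$ as $\pair{-I}\times\pair{\phi}$, match $\alpha\leftrightarrow -I$ and $\beta\delta\beta^{-1}\delta\leftrightarrow\phi$, and then adjoin $\gamma\delta$ as the component-swapping involution conjugating $\phi$ to $\phi^{-1}$. The paper phrases this as ``the component-preserving stabilizer has index at most $2$'' rather than packaging everything into one extended centralizer, and, like you, it verifies the actions of $\beta\delta\beta^{-1}\delta$ and $\gamma\delta$ by direct inspection.

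There is, however, a genuine error in your final step. The group you define,
\[
Z^{\pm}(\phi)=\{\psi\in\GL_2(\Z):\psi\phi\psi^{-1}\in\{\phi,\phi^{-1}\}\},
\]
is the full normalizer of $\pair{\phi}$ in $\GL_2(\Z)$, and it is strictly larger than $\Z/2\times(\Z\rtimes\Z/2)$ with $\phi$ generating the $\Z$ factor. Indeed $B=\left(\begin{smallmatrix}1&1\\1&0\end{smallmatrix}\right)$ has $\det B=-1$, commutes with $\phi$, and satisfies $B^2=\phi$; and $S=\left(\begin{smallmatrix}0&1\\-1&0\end{smallmatrix}\right)$ has $\det S=+1$ and conjugates $\phi$ to $\phi^{-1}$. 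Neither lies in $\pair{-I,\phi,\tau}$, so if the image of $\bb G_K$ really were all of $Z^\pm(\phi)$ your argument would prove the wrong statement. The repair is already implicit in what you wrote: Remark \ref{rmk:relation} links the two dichotomies, so a component-preserving $g$ forces $\det\psi=+1$ \emph{and} $\psi\phi\psi^{-1}=\phi$, while a component-swapping $g$ forces $\det\psi=-1$ \emph{and} $\psi\phi\psi^{-1}=\phi^{-1}$. Hence the image lands in the proper subgroup $\{\pm\phi^{n}\}\cup\{\pm\phi^{n}\sigma\}$ (for one fixed orientation-reversing involution $\sigma$ inverting $\phi$), which excludes $B$ and $S$ and is exactly $\Z_2\times(\Z\rtimes\Z_2)$, generated by the images of $\alpha$, $\beta\delta\beta^{-1}\delta$, and $\gamma\delta$. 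You should state the target group this way rather than as the full normalizer; with that correction, and granting the direct verifications of how $\beta\delta\beta^{-1}\delta$ and $\gamma\delta$ act (which the paper also simply asserts), the argument goes through and agrees with the paper's.
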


\begin{proof}
First we apply Proposition \ref{prop:fiber-preserving} and Remark \ref{rmk:relation}. If $g\in\bb G$ preserves $K$ and preserves each component of $S\setminus K$, then $g$ is represented by a product homeomorphism $g=\psi\ti\id$ on $F\ti I$ and $\what F\ti I$, where $\psi$ commutes with $\phi=T_aT_b^{-1}$. The centralizer of 
\[A:=\left(\begin{array}{cc}2&1\\1&1\end{array}\right)\leftrightarrow\phi\>\>\>\text{ in }\>\>\>\SL_2(\Z)\cong\Mod(F)\]
is generated by $A$ and $-I$. 

One can show that $\be\de\be^{-1}\de$ fixes $K$, preserves each of $F\ti 1$ and $F\ti 0$, and acts on the homology of both by the matrix $A$ (we found this element by trail and error). The hyperelliptic $\alpha$ (of course) also fixes $K$ and acts on homology by $-I$. Therefore the subgroup of $\bb G$ that both fixes $K$ and preserves each component of $S\setminus K$ is generated by $\alpha$ and $\beta\delta\beta^{-1}\delta$. This subgroup has index (at most) 2 in the stabilizer of $K$. Finally, observe that $\gamma\delta$ conjugates $\beta\delta\beta^{-1}\delta$ to its inverse. This element preserves $K$ and swaps the two components of $S\setminus K$ (c.f.\ Remark \ref{rmk:relation}). Therefore, the stabilizer of $K$ is generated by $\alpha,\beta\delta\beta^{-1}\delta$, and $\gamma\delta$. 
\end{proof}


\section{Distance formula for $\ca P(V)$}\label{sec:distance}

Our proof of the distance formula (Theorem \ref{thm:distance}) follows closely the approach of \cite{masur-schleimer}. There are two parts: an upper and a lower bound on $d_{\ca P(V)}(D,E)$. The lower bound follows directly from \cite[Thms.\ 6.10, 6.12]{masur-minsky2}, as explained in \cite[Thm.\ 5.14]{masur-schleimer}. To prove the upper bound requires additional work, and for this we use the axiomatic approach of \cite[\S13]{masur-schleimer}.

The general axiom-based proof of the upper bound \cite[Thm.\ 13.1]{masur-schleimer} is an inductive argument, which, in our case, necessitates the definition of relative versions of $\ca P(V)$. If $X\subset S$ is a hole for $\ca P(V)$, then we define $\ca P(V;X)\subset\ca P(V)$ as the subcomplex spanned by primitive disks supported in $X$. Note that we do not necessarily assume that $X$ has infinite diameter. If $X$ is primitively incompressible, then $\ca P(V;X)$ is empty, which is undesirable, so when $X$ also has infinite diameter, we define $\ca P(V;X)$ as the complex spanned by vertical primitive disks in the $I$-bundle constructed in the proof of Theorem \ref{thm:holes}. By the nature of the induction, we will not need to deal with the case $X$ is a primitively incompressible hole with bounded diameter, so we keep $\ca P(V;X)$ empty in this case. 

For the induction, we need to prove the following more general version of the distance formula upper bound. 

\begin{thm}\label{thm:distance-induction}
Let $X\subset S$ be a hole for $\ca P(V)$. Given $c\ge0$, there is a constant $A$ depending only on $c$ and the topological type of $X$, such that for any two vertices $D,E\in\ca P(V;X)$, we have
\[d_{\ca P(V;X)}(D,E)\le A\sum \{d_Y(D,E)\}_c+A,\]
where the sum is over all holes $Y\subset X$ for $\ca P(V)$. 
\end{thm}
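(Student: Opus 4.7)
My plan is to adapt the axiomatic framework of Masur--Schleimer \cite[\S13]{masur-schleimer} to $\ca P(V)$ and induct on the complexity $\xi(X)$. The general result \cite[Thm.\ 13.1]{masur-schleimer} reduces a distance-formula upper bound to verifying a short list of axioms about $\ca P(V;X)$, the most important being the existence of combinatorial surgery, bounded subsurface projection along surgery sequences, and a classification of holes. The ingredients are already set up earlier in the paper: surgery is developed in \S\ref{sec:surgery}, the projection bound is Lemma \ref{lem:surgery-sequence}, and the classification is Theorem \ref{thm:holes}.

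First I would handle the base cases. If $X$ is an annulus or a non-annular primitively compressible proper subsurface, then $X$ has diameter at most $13$ by Theorem \ref{thm:annuli} and Proposition \ref{prop:compressible}, so $\ca P(V;X)$ has bounded diameter and the asserted inequality holds trivially for any sufficiently large $A$. If $X$ is primitively incompressible with $X\neq S$ and has infinite diameter, Theorem \ref{thm:holes} (together with Theorem \ref{thm:finite-diameter} to rule out the trefoil case) identifies $X$ as a genus-$1$ Seifert surface for the figure-$8$ knot. By \S\ref{sec:fig8} the complex $\ca P(V;X)$ embeds in the Farey graph as the quasi-line of Figure \ref{fig:farey}, and Remark \ref{rmk:incompressible-distance} shows that $d_{\ca P(V;X)}(D,E)$ is coarsely equal to $d_X(D,E)$, from which the theorem follows directly.

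For the inductive step, I would fix $D,E\in\ca P(V;X)$ and build a surgery path between them. The path is decomposed into maximal subpaths each of which projects with bounded diameter to all but a single proper hole $Y\subsetneq X$, and the length of each such subpath is controlled by the inductive hypothesis applied to $\ca P(V;Y)$, yielding a bound of the form $A_Y\sum_{Z\subseteq Y}\{d_Z(D,E)\}_c + A_Y$. Summing over the finitely many proper holes that appear along the path, and accounting for the bounded number of transition steps between subpaths (where Lemma \ref{lem:surgery-sequence} keeps projections under control), recovers the asserted upper bound. This matches the inductive structure in the proof of \cite[Thm.\ 13.1]{masur-schleimer}.

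The main obstacle will be verifying that the Masur--Schleimer axioms remain valid when every disk is constrained to be primitive. Specifically, one must ensure that outermost-bigon surgery stays inside $\ca P(V)$, which is guaranteed by Example \ref{ex:outermost}, and that maximal primitive compression sequences behave as in \cite[Lem.\ 11.5]{masur-schleimer}, which is the content of Lemma \ref{lem:surgery-sequence}. Additional care is needed for the paired-hole structure of Remark \ref{rmk:paired}, since the two horizontal boundary components of the figure-$8$ $I$-bundle provide a pair of infinite-diameter holes that must be treated simultaneously, but this matches the pairing formalism in \cite[\S5]{masur-schleimer}. Once these checks are complete, the formal induction proceeds along the template of \cite[Thm.\ 13.1]{masur-schleimer}.
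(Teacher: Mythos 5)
Your proposal follows essentially the same route as the paper: dispose of the proper-subsurface cases directly (bounded diameter for annular and primitively compressible holes via Theorem \ref{thm:annuli} and Proposition \ref{prop:compressible-holes}, and the Farey quasi-line description of Remark \ref{rmk:incompressible-distance} for the figure-8 case), then reduce the main case $X=S$ to verifying the Masur--Schleimer axioms and invoking \cite[Thm.\ 13.1]{masur-schleimer}. The only difference is cosmetic — you sketch the internal subpath decomposition of the Masur--Schleimer induction rather than simply listing and checking the seven axioms (Large Holes, Marking Path, Accessibility Intervals, Combinatorial Sequence, Replacement, Straight Intervals, Shortcut Intervals) as the paper does, but the substance and the supporting lemmas you cite are the same.
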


Given our previous work, the most interesting case of Theorem \ref{thm:distance-induction} is when $X=S$, as we now explain. If $X\subsetneq S$ is an infinite-diameter hole, then the distance formula holds by our classification of such $X$ in Theorem \ref{thm:holes} and by the computation of $\ca P(V;X)$ given in \S\ref{sec:fig8}; see Remark \ref{rmk:incompressible-distance}. In the remaining case, when $X$ is a primitively compressible hole, Theorem \ref{thm:distance-induction} holds trivially by the following proposition. 

\begin{prop}\label{prop:compressible-holes}
Let $X\subsetneq S$ be a primitively compressible hole for $\ca P(V)$. Then $\ca P(V;X)$ is a connected, graph with at most two vertices. 
\end{prop}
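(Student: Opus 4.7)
The plan is to combine Remark \ref{rmk:compressible}, which controls how primitive disks supported in a proper hole can intersect, with a pants-complexity count on $X$.

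Since $X$ is primitively compressible, $\ca P(V;X)$ has at least one vertex. First I would dispose of the small topological types: $X$ cannot be homeomorphic to $\Sigma_{0,2}$, since an annulus has no essential non-peripheral simple closed curves and hence supports no primitive disk, nor to $\Sigma_{0,3}$, whose curve complex is empty. As $X\subsetneq S=\Sigma_2$ is essential, this leaves three possibilities: $X\cong\Sigma_{0,4}$, $\Sigma_{1,1}$, or $\Sigma_{1,2}$.

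Next I would invoke Remark \ref{rmk:compressible}: if $D,E$ are primitive disks supported in $X$ with $i(\pa D,\pa E)\neq 0$, then the surgery argument from the proof of Proposition \ref{prop:compressible} produces a triple of disjoint primitive disks supported in $X$, and this forces $X=S$. Contrapositively, since $X\subsetneq S$, any two vertices of $\ca P(V;X)$ have disjoint (necessarily non-isotopic) boundary curves and hence span an edge of $\ca P(V)$. Thus $\ca P(V;X)$ is a complete graph and is connected.

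Finally, a complexity count finishes the proof: the maximal number of pairwise disjoint, pairwise non-isotopic, essential non-peripheral simple closed curves on a surface $\Sigma_{g,b}$ is the pants complexity $3g+b-3$, which equals $1$ for $\Sigma_{0,4}$ and $\Sigma_{1,1}$ and $2$ for $\Sigma_{1,2}$. Therefore $\ca P(V;X)$ has at most two vertices, and a fortiori contains no $2$-simplex, so it is a graph. There is no real obstacle here; the substantive work is already contained in Remark \ref{rmk:compressible} and in the elementary classification of proper essential subsurfaces of $\Sigma_2$ that support an essential non-peripheral curve.
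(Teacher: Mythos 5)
Your proof is correct and follows essentially the same route as the paper: the key step in both is Remark \ref{rmk:compressible}, which forces any two vertices of $\ca P(V;X)$ to have disjoint boundaries and hence to span an edge. The paper leaves the bound on the number of vertices implicit, whereas you justify it explicitly via the complexity count $3g+b-3\le 2$ for $\Sigma_{0,4},\Sigma_{1,1},\Sigma_{1,2}$; this is a harmless elaboration rather than a different argument.
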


\begin{proof}
As noted in Remark \ref{rmk:compressible}, if a hole $X\subset S$ supports two primitive disks with nonzero intersection number, then $X=S$. Therefore, our assumption $X\neq S$ implies that $\ca P(V;X)$ is either a single vertex or is two vertices connected by an edge. 
\end{proof}

It remains to prove Theorem \ref{thm:distance-induction} for $X=S$. It is here that we use the axiomatic approach of \cite{masur-schleimer}. 

\subsection{Axiomatic approach to the distance formula for $\ca P(V)$} \label{sec:upperbound}

This section explains a proof of Theorem \ref{thm:distance-induction} in the main case of interest $X=S$. 

Our approach is to use \cite[Thm.\ 13.1]{masur-schleimer}, which states that there are seven axioms that together imply the desired upper bound. Instead of writing the full statement of the axioms with all the necessary definitions (this takes multiple pages in \cite{masur-schleimer}), we give precise references to \cite{masur-schleimer} and follow their notation as closely as possible. Our goal is to explain why the axioms hold for $\ca P(V)$, focusing on the parts that are the least straight-forward. Ultimately, the argument is very similar to the proof in \cite[\S18-19]{masur-schleimer} that the axioms are satisfied for the disk complex $\ca D(V)$, using our analogous results. In fact, there are even some simplifications because we only work in genus 2 and because the classification of large holes (Theorem \ref{thm:holes}) is simpler than the corresponding result for $\ca D(V)$, c.f.\ \cite[Thm.\ 1.1]{masur-schleimer}. 

Below we list the axioms and explain why they hold for $\ca P(V)$. See \cite[\S13]{masur-schleimer} for statements of the axioms, and see \cite[\S18-19]{masur-schleimer} for additional information about why the axioms hold for $\ca D(V)$. 

\paragraph{Large Holes Axiom.} This is stated in \cite[Axiom 13.3]{masur-schleimer}. Here we may take ``large" to mean diameter $\ge 61$, which implies the hole has infinite diameter (Theorem \ref{thm:holes}). This axiom states that any two large holes either overlap \cite[Defn.\ 2.8]{masur-schleimer} or are paired \cite[Defn.\ 5.6]{masur-schleimer}. This holds trivially for $\ca P(V)$ because all proper holes are homeomorphic to $\Sigma_{1,1}$, so the only way two large holes can be disjoint is if they are paired, cf.\ Remark \ref{rmk:paired}.

\paragraph{Marking Path, Accessibility Intervals, Combinatorial Sequence Axioms.} 
These three axioms concern properties of the following data associated to a pair of primitive disks $D,E\subset V$. These axioms hold for the exact same reason as they hold for $\ca D(V)$. Start with a surgery path $D=D_0,\ldots,D_N=E$ as defined in \S\ref{sec:surgery}. 

\begin{itemize}
\item The \emph{marking path} is defined as follows. The sequence $\{D_n\}$ induces a nested sequence of train tracks $\tau_0\succ \cd\succ\tau_N$ by the construction explained in \cite[\S4]{masur-minsky-qc}. A train track $\tau$ has an associated a vertex cycle $\vertex(\tau)\subset\ca C(S)$ (see e.g.\ \cite[\S3]{masur-minsky-qc}), which is a marking in the sense of \cite[\S2.9]{masur-schleimer}. The marking path is defined as $\mu_n=\vertex(\tau_n)$. 
\item Define the \emph{combinatorial sequence} as follows. By \cite[Thm.\ 1.3]{masur-minsky-qc}, the subset $\bigcup_{n=0}^N\vertex(\tau_n)\subset\ca C(S)$ is an unparameterized quasi-geodesic, which means that there is a subsequence $\tau_0\succ\cdots\succ\tau_K$ such that for any choice of $v_k\in\vertex(\tau_k)$, the sequence $v_0,\ldots,v_K$ is a quasi-geodesic in $\ca C(S)$. The combinatorial sequence, is the associated sequence $D=D_0,\ldots,D_K=E$ of primitive disks. 
\item By construction of the combinatorial sequence, there is an increasing \emph{re-indexing function} $r:[0,K]\rightarrow[0,N]$. 
\item Given the sequence $\{\tau_n\}_{n=0}^N$, for each subsurface $X\subset S$, an \emph{accessibility interval} $J_X\subset[0,N]$ is defined in \cite[\S5.2]{MMS}. 
\end{itemize}

The Marking Path Axiom \cite[Axiom 13.4]{masur-schleimer} states that the $\mu_n$ have nested supports and that the projection $\pi_X(\mu_n)$ to any subsurface $X\sbs S$ is an unparameterized quasi-geodesic in $\ca C(X)$ with constant depending only on $\ca P(V)$. The former property holds by construction, and the latter property holds as a consequence of a general result proved by Masur--Mosher--Schleimer \cite[Thm.\ 5.5]{MMS}.

The Accessibility Intervals Axiom (see \cite[Axiom 13.5]{masur-schleimer}) is verified in \cite[Thm.\ 5.3]{MMS}, just as for $\ca D(V)$; see \cite[\S18]{masur-schleimer}. 

The Combinatorial Sequence Axiom (see \cite[Axiom 13.6]{masur-schleimer}) follows from the construction of $\{D_k\}_{k=0}^K$ and $\{\mu_n\}_{n=0}^N$, just as for $\ca D(V)$. Details are in \cite[\S3-4]{masur-minsky-qc}; see also \cite[Thm.\ 19.3]{masur-schleimer}. 

%

\paragraph{Replacement Axiom.} This is stated in \cite[Axiom 13.7]{masur-schleimer}. The proof of this axiom for $\ca P(V)$ follows quickly from the following claim (cf.\ \cite[\S19.5]{masur-schleimer}). 

{\it Claim.} Let $Y\subsetneq S$ be a large incompressible hole for $\ca P(V)$ with corresponding $I$-bundle $T$, and fix a primitive disk $D$ such that $i(\pa D,\pa Y)<K$ for some constant $K$, then there exists a primitive disk $D'$ that is vertical in $T$ and such that $d_{\ca P(V)}(D,D')\le\log_2(K)$. 

{\it Proof of Claim.} The proof is similar to \cite[\S19.5]{masur-schleimer}. Consider a maximal sequence of primitive compressions of $D$ into $S\setminus\pa Y$. Each compression replaces $D$ with two disks, and we choose the one with at most half as many intersections with $\pa Y$. This process ends after $\le \log_2(K)$ steps with a disk $D'$ with no primitive compression into $S\setminus\pa Y$, and by the argument in \S\ref{sec:fig8}, $D'$ can be isotoped to be vertical. This proves the claim. 

\paragraph{Straight Intervals Axiom.} See \cite[Axiom 13.13]{masur-schleimer}. This axiom holds for for $\ca P(V)$ by the exact same argument as \cite[\S19.7]{masur-schleimer}.

\paragraph{Shortcut Intervals Axiom.} See \cite[Axiom 13.14]{masur-schleimer}. The proof of this axiom ultimately boils down to the following claim (cf.\ \cite[\S19.8]{masur-schleimer}). 

{\it Claim.} Let $Z$ be a non-hole for $\ca P(V)$, and fix disks $D,E$ such that $i(\pa D,\pa Z)<K$ and $i(\pa E,\pa Z)<K$. Then $d_{\ca P(V)}(D,E)<K'$ for some constant $K'$ depending only on $K$. 

{\it Proof of Claim.} Let $Y\sbs S$ be the (closure of the) complement of $Z$. Note that $Y$ is primitively compressible because $Z$ is a non-hole for $\ca P(V)$ and $Y\cup Z=S$. As in the argument for the Replacement Axiom, we choose maximal compression sequences $D\leadsto D'$ and $E\leadsto E'$ of $D$ and $E$ into $S\setminus\pa Z$ with each sequence of length $\le\log_2(K)$. It suffices to give a uniform bound on $d_{\ca P(V)}(D',E')$. If $D'$ and $E'$ are disjoint, then we are done, so we assume $D',E'$ intersect nontrivially. Then $D',E'$ lie in the same component of $S\setminus\pa Z$. If this component is $Z$, then $d_{\ca P(V)}(D',E')\le 2$ since $Y$ is primitively compressible. Next suppose $D',E'$ are contained in a component $Y'$ of $Y$. Observe that $Y'$ is not a hole for $\ca P(V)$ since if it were, this would imply $Y=S$ by Remark \ref{rmk:compressible}. Then $Y$ is not a hole, so again we conclude that $d_{\ca P(V)}(D',E')\le 2$.

\subsection{Consequences of the distance formula} 

In this section we explain several consequences of the distance formula that we use in Section \ref{sec:ccc} to prove Theorem \ref{thm:ccc}. The results in this section appear in a similar form in \cite{BBKL}, although we need to formulate them differently, replacing the assumption that $G$ is quasi-isometrically embedded in $\Mod(S)$ with the assumption that $G<\bb G$ and the orbit map $G\ra\ca P(V)$ is quasi-isometrically embedded. In fact, the results we prove work more generally: 

In this section (and this section only), we let $S$ denote any closed oriented surface of genus $\ge2$, and we fix a subgroup $\bb G<\Mod(S)$ that acts on a subcomplex $\ca P\sbs\ca C(S)$. We assume that $\ca P$ has a distance formula. Similar to \cite[\S2]{BBKL}, we express the distance formula in the following form, which is equivalent to (\ref{eqn:distance}): given $\be>0$ (sufficiently large), there is $\la>0$ so that 
\begin{equation}\label{eqn:distance-general}
\fr{1}{\la} d_{\ca P}(u,u')\le \sum_{\substack{X\sbs S\\\text{ hole for $\ca P$}}}\{d_X(u,u')\}_\be\le \la\>d_{\ca P}(u,u')
\end{equation}
for any vertices $u,u'$ of $\ca P$ satisfying either $\sum_X\{d_X(u,u')\}_\be\neq0$ or $d_{\ca P}(u,u')\ge \la$. 

We also assume that there is complex $\ca M$ whose vertices are markings (i.e.\ vertices of $\ca M(S)$; see \S\ref{sec:complexes}) such that (1) $\ca M$ is quasi-isometric to $\bb G$, and (2) there is a uniform bound on the distance in $\ca M(S)$ between adjacent markings in $\ca M$. 

All of these assumptions hold for $\bb G$ the genus-2 Goeritz group, $\ca P=\ca P(V)$ the primitive disk complex, and $\ca M=\ca M(V,\what V)$ the Heegaard marking complex. See Theorem \ref{thm:distance} and Lemmas \ref{lem:marking-comparison} and \ref{lem:marking-qi-G}. 

The main result needed for the proof of Theorem \ref{thm:ccc} is Proposition \ref{prop:contains-reducible}. This result and its proof are similar to the main theorem of \cite{BBKL}. 

\begin{prop}\label{prop:contains-reducible}
Fix a finitely-generated subgroup $G<\bb G$. If the orbit map $G\ra\ca P$ is a q.i.\ embedding, but the orbit map $G\ra\ca C(S)$ is not a q.i.\ embedding, then $G$ contains a reducible element. 
\end{prop}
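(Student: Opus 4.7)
The plan is to adapt the approach of \cite{BBKL} using the distance formula (\ref{eqn:distance-general}) and the classification of large holes (Theorem \ref{thm:holes}). Fix a basepoint $u \in \ca P$ and assume the orbit map $G \to \ca P$ is a $(K, K)$-quasi-isometric embedding. Suppose for contradiction that $G$ contains no reducible element and yet $G \to \ca C(S)$ is not a quasi-isometric embedding; then some sequence $g_n \in G$ has $|g_n|_G \to \infty$ and $d_{\ca C(S)}(u, g_n u) = o(|g_n|_G)$.

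The first step is to localize the defect. Combining the lower bound $d_{\ca P}(u, g_n u) \ge |g_n|_G/K - K$ with the upper estimate in (\ref{eqn:distance-general}) yields
\[\sum_{X \subsetneq S}\{d_X(u, g_n u)\}_\be \;\ge\; \tfrac{1}{K\la}|g_n|_G - d_{\ca C(S)}(u, g_n u) - O(1),\]
so the proper-hole contribution grows at least linearly in $|g_n|_G$. Choosing the threshold $\be \ge 61$ in advance, Theorem \ref{thm:holes} forces every subsurface contributing to this sum to be a genus-$1$ Seifert surface for the figure-$8$ knot embedded in $S$; in particular all such surfaces lie in a single $\Mod(S)$-orbit.

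The second step, following \cite[\S3]{BBKL}, is to extract a single preserved subsurface. For each $g_n$ only finitely many subsurfaces contribute, and Behrstock's inequality (combined with the fact that same-topological-type subsurfaces cannot be nested) forces the large contributors for a fixed $g_n$ to be pairwise disjoint. A pigeonhole and diagonal argument then produces a subsequence, a fixed figure-$8$ Seifert surface $X_0 \subset S$, and elements $h_n \in G$ with $d_{X_0}(u, h_n u) \to \infty$. A limit argument on $\ca C(X_0)$ (a hyperbolic space whose relevant subcomplex $\ca P(V; X_0)$ is quasi-isometric to a line by \S\ref{sec:fig8}) yields an element of $G$ preserving the isotopy class of $X_0$, and hence preserving the essential simple closed curve $\pa X_0 \subset S$. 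This element is reducible, contradicting the hypothesis.

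The main obstacle is the extraction step: one must genuinely import the pigeonhole/acylindricity technology from \cite{BBKL}, adapted from the marking complex to $\ca P$. A delicate point is ensuring that the stabilizing element produced lies in $G$ itself, rather than merely in $\Mod(S)$; this is where the virtually abelian structure of the figure-$8$ knot stabilizer (Proposition \ref{prop:fig8-stabilizer}) and the acylindricity of the action of $\Mod(S)$ on the curve graphs of figure-$8$ holes enter. The classification of large holes simplifies matters considerably compared to the general $\Mod(S)$ setting, since all relevant proper subsurfaces have a fixed topological type.
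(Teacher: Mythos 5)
Your proposal has a genuine gap at its central step, the ``pigeonhole and diagonal argument.'' Linear growth of $\sum_{X\subsetneq S}\{d_X(u,g_nu)\}_\be$ does \emph{not} produce a single subsurface $X_0$ with $d_{X_0}(u,h_nu)\to\infty$: the number of holes contributing to the sum can grow with $|g_n|$, so the defect may be spread over many subsurfaces each contributing a uniformly bounded amount. (Also, Behrstock's inequality does not force the large contributors to be pairwise disjoint --- it time-orders the ones that overlap; in a single BBF factor they all pairwise overlap.) Even if you could extract such an $X_0$, unbounded projection along a sequence $h_n$ is not enough: the reducibility criterion you ultimately need (Proposition \ref{prop:reducibility}, i.e.\ \cite[Prop.\ 3.1]{BBKL}) requires a \emph{single} element $g$ with $d_{\ca M(Z)}(\mu,g\mu)\ge c\,|g|$ --- a projection that is linear in the word length and measured in the \emph{marking complex} of $Z$, not the curve complex --- and your ``limit argument on $\ca C(X_0)$'' producing an element of $G$ preserving $X_0$ is asserted, not proved; it is precisely the hard content you are trying to establish.

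The paper closes this gap with a different mechanism that your outline omits entirely: Proposition \ref{prop:linear-factors}. One decomposes $g=g_1\cdots g_k$ geodesically into prefixes, with $k\le d_S(\mu,g\mu)$ (hence $k$ small under the hypothesis $d_S(\mu,g\mu)\le|g|/M$), and groups the time-ordered holes into at most $k$ maximal proper subsurfaces $Z_j$, converting the sum of curve-complex projections inside each $Z_j$ into a marking-complex distance $d_{\ca M(Z_j)}(\mu,g_j\mu)$ via the Masur--Minsky distance formula. A counting argument (partitioning the indices by whether $d_{\ca M(Z_j)}(\mu,g_j\mu)$ exceeds $|g_j|/t$) then shows some factor $g_j$ satisfies the hypotheses of Proposition \ref{prop:reducibility}, which hands you the reducible element in $G$. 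Note also that the paper's proof never uses Theorem \ref{thm:holes} or the figure-8 structure of the holes --- it is an abstract argument requiring only the distance formula for $\ca P$ --- so your reliance on Proposition \ref{prop:fig8-stabilizer} and acylindricity is both unnecessary and insufficient to repair the extraction step.
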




To prove Proposition \ref{prop:contains-reducible}, we use two additional results. The first, Proposition \ref{prop:reducibility}, is proved in \cite[Prop.\ 3.1]{BBKL}. The second, Proposition \ref{prop:linear-factors}, is similar to  \cite[Prop.\ 4.1]{BBKL} but with a slightly different assumption. We sketch the proof below. 


For a subsurface $Z\sbs S$, we write $\ca M(Z)$ for the marking complex; see \cite[\S2]{BBKL}. Here ``marking" refers to the clean, complete markings defined in \cite[\S2.5]{masur-minsky2}; this differs from the more flexible notion of markings \cite[\S2.9]{masur-schleimer} that is used in \S\ref{sec:upperbound}.

\begin{prop}[Reducibility criterion]\label{prop:reducibility} 
Let $G<\Mod(S)$ be finitely generated, and let $|g|$ denote the word length of $g\in G$ with respect to a finite generating set. Fix a marking $\mu$. For any $c>0$ there exists $R=R(c)>0$ so that if $|g|>R$ and there exists a proper subsurface $Z\subset S$ with $d_{\ca M(Z)}(\mu,g\mu)\ge c|g|$, then $G$ contains a reducible element. 
\end{prop}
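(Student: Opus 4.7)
Fix a finite generating set for $G$ and write $g = s_1 \cdots s_n$ as a word of minimal length $n = |g|$. Set $h_i = s_1\cdots s_i$, $\mu_i = h_i\mu$, and $Z_i := h_i^{-1}Z$. My plan is a pigeonhole argument: I want to produce indices $0 \le i < j \le n$ with $Z_i = Z_j$. Once such a pair exists, the subword $\rho := s_{i+1}\cdots s_j = h_i^{-1}h_j$ fixes the proper subsurface $Z_i$ and is therefore reducible in $\Mod(S)$; and $\rho \neq 1$ because it is a subword of a geodesic representative of $g$, so it cannot be trivial without contradicting minimality of $n$.

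The two key inputs are both consequences of the Masur--Minsky distance formula for the marking complex. First, using coarse Lipschitz-ness of the projection $\ca M(S) \to \ca M(W)$ (equivalently, restricting the distance-formula sum to subsurfaces $Y \subseteq W$ and bounding by $d_{\ca M(S)}(\mu, s\mu)$), one obtains a constant $K$, depending only on the generating set and $\mu$, such that $d_{\ca M(W)}(\mu, s\mu) \le K$ for every subsurface $W$ and every generator $s$. By $\Mod(S)$-equivariance of subsurface projection, this yields $d_{\ca M(Z)}(\mu_i, \mu_{i+1}) = d_{\ca M(Z_i)}(\mu, s_{i+1}\mu) \le K$ for every $i$. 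Combined with $d_{\ca M(Z)}(\mu, g\mu) \ge cn$ and the triangle inequality, a routine averaging step extracts a linear-in-$n$ collection $I \subseteq \{0,\ldots,n-1\}$ of indices at which $d_{\ca M(Z_i)}(\mu, s_{i+1}\mu) \ge \ep$, for some $\ep = \ep(c, K) > 0$. Second, the set
\[\Omega_\ep := \{W \subset S : d_{\ca M(W)}(\mu, s\mu) \ge \ep \text{ for some generator } s\}\]
is finite: the distance formula converts this lower bound into the existence of some $Y \subseteq W$ with curve-complex projection $d_{\ca C(Y)}(\mu, s\mu)$ exceeding the Masur--Minsky threshold, only finitely many such $Y$ exist for each of the finitely many generators $s$, and each such $Y$ is contained in finitely many subsurfaces $W$ since $S$ has finite type.

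With these two inputs in place, once $n$ exceeds $|\Omega_\ep|$ divided by the averaging density, the pigeonhole principle applied to the map $i \mapsto Z_i$ on $I$ produces $i < j$ in $I$ with $Z_i = Z_j$, giving the desired reducible element; $R(c)$ can then be taken to be an explicit function of $c$, $K$, and $|\Omega_\ep|$. The main obstacle I expect is the finiteness of $\Omega_\ep$ when $c$ (and hence $\ep$) is small: the additive constant in the distance formula means $d_{\ca M(W)}(\mu, s\mu)$ can be strictly positive while all curve-complex projections of $\mu$ to $s\mu$ lie below the Masur--Minsky threshold. One therefore needs $\ep$ to exceed this additive constant, which weakens the averaging step and forces $c$ to be at least that constant; for smaller $c$, the argument must be supplemented, for instance by passing to a bounded power of $g$ to amplify the per-step displacement before running the same pigeonhole scheme.
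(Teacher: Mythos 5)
The paper does not actually prove this proposition: it is quoted directly from \cite[Prop.\ 3.1]{BBKL}, so there is no in-paper argument to compare against, and your proposal has to stand on its own. Your skeleton --- pull $Z$ back along the prefixes, pigeonhole to force $Z_i=Z_j$, and note that the nontrivial subword $h_i^{-1}h_j$ then preserves the proper essential subsurface $Z_j$, hence its boundary multicurve, and is therefore reducible --- is a sensible plan, and your first input (a uniform bound $d_{\ca M(W)}(\mu,s\mu)\le K$ over all $W$ and all generators $s$, via coarse Lipschitzness of marking projections) together with the averaging step is fine. The additive-constant worry you flag at the end is the lesser issue: if $d_{\ca M(W)}$ is defined as the diameter of the union of the coarse projections (as $d_X$ is in this paper), the triangle inequality is exact, and in any case one can group the $s_i$ into blocks of bounded length $L$ with $cL$ large; note that ``passing to a power of $g$'' does not amplify the per-step displacement, whereas blocking does.

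The genuine gap is the finiteness of $\Omega_\ep$. Your justification --- ``each such $Y$ is contained in finitely many subsurfaces $W$ since $S$ has finite type'' --- is false: a fixed essential proper subsurface $Y\subsetneq S$ is contained in infinitely many isotopy classes of essential subsurfaces $W$ (already in $\Sigma_2$, a fixed one-holed torus $Y$ sits inside $S\setminus n(\delta)$ for each of the infinitely many nonperipheral curves $\delta$ in the complementary one-holed torus). Worse, $\Omega_\ep$ really is infinite in general: if some generator $s$ admits even one subsurface $Y_0$ with $d_{Y_0}(\mu,s\mu)$ exceeding a threshold determined by $\ep$ and the distance-formula constants --- which happens for perfectly ordinary generating sets and markings, e.g.\ a generator with a sizeable twisting component --- then the lower bound in the Masur--Minsky distance formula gives $d_{\ca M(W)}(\mu,s\mu)\ge\ep$ for every one of the infinitely many $W\supseteq Y_0$. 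So the pigeonhole set is infinite and the argument does not close. Pigeonholing instead on the finitely many witnesses $Y_i\subseteq Z_i$ does not rescue it: $Y_i=Y_j$ only says that $h_iY_i$ and $h_jY_i$ are both subsurfaces of $Z$, not that $h_i^{-1}h_j$ preserves anything. Some additional input constraining which translates of $Z$ can carry a large projection at a given time along the path $\mu,h_1\mu,\ldots,g\mu$ (Behrstock-inequality/consistency-type arguments) is needed to make the counting work, and that is where the substance of \cite[Prop.\ 3.1]{BBKL} lies.
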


\begin{prop}[Linearly summing projections]\label{prop:linear-factors}
Fix $G<\bb G$ with finite generating set $\Omega$, and fix a marking $\mu\in\ca M$. Assume that the orbit map $G\ra\ca P$ is a quasi-isometric embedding. Then there exists $ K,C>0$ with the following property. For $g\in G$, if $|g|>C$ and $d_S(\mu,g\mu)<\frac{|g|}{K}$, then we can find subsurfaces $Z_1,\ldots,Z_k\sbs S$ and write $g=g_1\cdots g_k$ geodesically in $G$ with $k\le d_S(\mu,g\mu)$ so that 
\[|g|\le K\sum_{j=1}^kd_{\ca M(Z_j)}(\mu,g_j\mu).\]
Furthermore, by increasing $K$ we can also arrange that $d_{\ca M(Z_j)}(\mu,g_j\mu)\le K|g_j|$ for each $j$. 
\end{prop}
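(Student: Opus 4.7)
The plan is to follow the argument of \cite[Prop.\ 4.1]{BBKL}, replacing its use of the marking-complex distance formula with the distance formula for $\ca P$ (Theorem \ref{thm:distance}). First I would use the q.i.\ embedding $G\ra\ca P$ together with (\ref{eqn:distance-general}) to write
\[|g|\le K_1\sum_{X\text{ hole for }\ca P}\{d_X(\mu,g\mu)\}_\be+K_1.\]
Choosing $\be$ larger than the universal diameter bounds from Theorem \ref{thm:annuli}, Proposition \ref{prop:compressible}, and Theorem \ref{thm:finite-diameter}, only infinite-diameter holes can contribute; by Theorem \ref{thm:holes} these are $S$ itself and the genus-$1$ Seifert surfaces for the figure-$8$ knot. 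The hypothesis $d_S(\mu,g\mu)<|g|/K$, with $K$ chosen large compared to $K_1$, then forces the $X=S$ term to account for only a small fraction of the sum, leaving
\[|g|\le 2K_1\sum_{X\subsetneq S}d_X(\mu,g\mu)+2K_1,\]
where the sum is over proper infinite-diameter holes.

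Next I would fix a Masur--Minsky hierarchy path from $\mu$ to $g\mu$ in $\ca M(S)$ and appeal to the Behrstock inequality and the Bounded Geodesic Image theorem \cite{masur-minsky2}. These associate to each proper hole $X$ appearing in the sum above an interval along the hierarchy during which $X$ is ``active''; the intervals are totally ordered along the path, and BGI forces each one to make the $\ca C(S)$-projection of the hierarchy pass within uniformly bounded distance of $\pa X$. Counting intervals then bounds the number $k$ of distinct proper holes by a constant multiple of $d_S(\mu,g\mu)$, and absorbing that constant into $K$ gives $k\le d_S(\mu,g\mu)$.

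Labeling these proper holes in hierarchy order as $Z_1,\ld,Z_k$, I would use Lemmas \ref{lem:marking-qi-G} and \ref{lem:marking-comparison} to identify $G$ quasi-isometrically with $\ca M(V,\what V)$ and lift the ordering of the $Z_j$ to a geodesic factorization $g=g_1\cd g_k$ in $G$, with $g_j$ traversing the segment of the hierarchy associated to $Z_j$. The near-additivity $d_{Z_j}(\mu,g_j\mu)\asymp d_{Z_j}(\mu,g\mu)$ again follows from Behrstock and BGI, since projections to $Z_i$ with $i\ne j$ stay bounded while $g_j$ acts. Summing and passing from curve to marking projections,
\[|g|\le 2K_1\sum_{j=1}^kd_{Z_j}(\mu,g_j\mu)+2K_1\le K\sum_{j=1}^kd_{\ca M(Z_j)}(\mu,g_j\mu).\]
The final inequality $d_{\ca M(Z_j)}(\mu,g_j\mu)\le K|g_j|$ follows from the fact that generators of $G\sbs \bb G$ move $\mu$ a bounded amount in $\ca M(S)$ (Lemma \ref{lem:marking-comparison}) combined with the coarse Lipschitz property of $\pi_{Z_j}$.

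The main obstacle is the ordering step: establishing that the proper holes contributing to the distance formula for $\ca P$ can be totally ordered along a single hierarchy in $\ca M(S)$ and are each detected by the $\ca C(S)$-projection of that hierarchy. In \cite{BBKL} this structure is inherited directly from the marking-complex distance formula, whereas here one must check that the same Behrstock/BGI argument applies to the (a priori smaller) collection of holes for $\ca P$ supplied by Theorem \ref{thm:distance}. Since Theorem \ref{thm:holes} classifies the relevant proper holes as figure-$8$ Seifert surfaces, which are non-annular subsurfaces of fixed topological type $\Si_{1,1}$, the standard ordering and counting arguments should apply without essential modification.
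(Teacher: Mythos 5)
Your overall strategy---rerun the argument of \cite[\S4]{BBKL} with the distance formula for $\ca P$ substituted for the Masur--Minsky marking formula---is the right one and is what the paper does, but there is a genuine gap at the counting step. You claim that, because BGI forces the $\ca C(S)$-geodesic (or the main geodesic of a hierarchy) to pass within bounded distance of $\pa X$ for each proper hole $X$ with $d_X(\mu,g\mu)\ge\be$, the number of such holes is bounded by a constant multiple of $d_S(\mu,g\mu)$. This is false: BGI only places each $\pa X$ within distance $1$ of \emph{some} vertex of the main geodesic, and arbitrarily many distinct subsurfaces with large projections can have boundary near the same vertex. Indeed, in the regime of interest ($d_S(\mu,g\mu)<|g|/K$ with $|g|$ large) the distance formula forces $\sum_X\{d_X(\mu,g\mu)\}_\be$ to be of order $|g|$, which can be achieved by roughly $|g|/\be$ holes each contributing about $\be$ --- far more than $d_S(\mu,g\mu)$. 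The subsurfaces $Z_j$ in the statement are therefore \emph{not} the individual holes. Following \cite[Lem.\ 4.4]{BBKL}, one must first put the contributing holes $Y_1,\ldots,Y_n$ in increasing subsurface order (via the Behrstock inequality) and then group \emph{consecutive} ones into maximal blocks $Z_j'=Y_{i_{j-1}+1}\cup\cdots\cup Y_{i_j}$ whose union is still a \emph{proper} subsurface. It is the number $k$ of these blocks, not of the holes, that is bounded by $d_S(\mu,g\mu)$ (each new block together with the previous one fills $S$, forcing progress in $\ca C(S)$), and it is the Masur--Minsky distance formula (\ref{eqn:marking-distance}) applied inside each $Z_j$ that converts $\sum_{\ell}d_{Y_\ell}(\mu,g\mu)$ into $d_{\ca M(Z_j)}(\mu,g_j\mu)$. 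The factorization $g=g_1\cdots g_k$ then comes from prefixes of a fixed geodesic word for $g$ chosen relative to the pivot surfaces $Y_{i_j}$ as in \cite[Lem.\ 4.3]{BBKL}, rather than from ``segments of the hierarchy.''

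Two smaller points. First, the paper passes to a single BBF factor $\bb Y$ before ordering; this guarantees that the contributing holes pairwise overlap, which your total-ordering claim silently requires (in the Goeritz case, large proper holes need not overlap --- they occur in disjoint paired Seifert surfaces, cf.\ Remark \ref{rmk:paired}). Second, the proposition is stated in the general axiomatic setting of this section, so the proof should not invoke Theorem \ref{thm:holes} to classify the holes; all that is needed is that finite-diameter holes are killed by taking the cutoff $\be$ large, which (\ref{eqn:distance-general}) already provides. Your derivation of the last inequality $d_{\ca M(Z_j)}(\mu,g_j\mu)\le K|g_j|$ from Lemmas \ref{lem:contraction} and \ref{lem:marking-comparison} and the coarse Lipschitz property of projections is correct and matches the paper.
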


\begin{proof}[Proof of Proposition \ref{prop:contains-reducible}]
Fix a marking $\mu\in\ca M$. Let $K,C$ be the constants from Proposition \ref{prop:linear-factors}, and fix $M>K$ (to be chosen more precisely later). Since $G\ra\ca C(S)$ is not a q.i.\ embedding, we can find group elements $g\in G$ with $|g|$ arbitrarily large and $d_S(\mu,g\mu)\le\fr{|g|}{M}$. Then we can apply Proposition \ref{prop:linear-factors}: write $g=g_1\cdots g_k$ with $k\le d_S(\mu,g\mu)$ and take $Z_1,\ldots,Z_k\sbs S$ so that $|g|\le K\sum_{j=1}^kd_{\ca M(Z_j)}(\mu,g_j\mu)$. 

For $t>0$, let $R(\fr{1}{t})$ be the constant from Proposition \ref{prop:reducibility}, and consider the partition of $\{1,\ldots,k\}$ into 
\[J_{\le}(t)=\{j: d_{\ca M(Z_j)}(\mu,g_j\mu)\le\fr{|g_j|}{t}\}\>\>\>\text{ and }\>\>\>J_{>}(t)=\{j: d_{\ca M(Z_j)}(\mu,g_j\mu)>\fr{|g_j|}{t}\}.\]
If $|g_j|\le R(\fr{1}{t})$ for each $j\in J_>(t)$, then using the conclusions of Proposition \ref{prop:linear-factors} and the assumption $d_S(\mu,g\mu)\le\fr{|g|}{M}$, we obtain the following estimate (cf.\ \cite[\S5]{BBKL}): 
\[\begin{array}{rcl}
|g|&\le&K\sum_{j=1}^kd_{\ca M(Z_j)}(\mu,g_j\mu)\\[2mm]
&=&K\left[\sum_{j\in J_{\le}(t)}d_{\ca M(Z_j)}(\mu,g_j\mu)+\sum_{j\in J_>(t)}d_{\ca M(Z_j)}(\mu,g_j\mu)\right]\\[2mm]
&\le& K\left[\fr{|g|}{t}+\sum_{j\in J_>(t)} K|g_j|\right]\\[2mm]
&\le& K\left[\fr{|g|}{t}+k\cdot K\cdot R(\fr{1}{t})\right]\\[2mm]
&\le& K\left[\fr{|g|}{t}+d_S(\mu,g\mu)\cdot K\cdot R(\fr{1}{t})\right]\\[2mm]
&\le& K\left[\fr{|g|}{t}+\fr{|g|}{M}\cdot K\cdot R(\fr{1}{t})\right]=\left(\fr{K}{t}+\fr{K\cdot R(\fr{1}{t})}{M}\right)\cdot |g|\\[2mm]
\end{array}\]
Now choose $t>2K$ and any $M>2K\cdot R(\fr{1}{t})$, so that $\left(\fr{K}{t}+\fr{K\cdot R(\fr{1}{t})}{M}\right)<1$. By the computation above, there must be some $j\in J_>(t)$ with $|g_j|>R(\fr{1}{t})$. Then we may apply Proposition \ref{prop:reducibility} to conclude that $G$ contains a reducible element. 
\end{proof}

It remains to explain the proof of Proposition \ref{prop:linear-factors}. Before this, we recall the distance formula for the marking complex due to \cite{masur-minsky2}; see also \cite[Thm.\ 2.7]{BBKL} for the form that we state it. 
Let $Z\sbs S$ be a subsurface. Given $\be>0$ (sufficiently large), there is $\kappa>0$ so that 
\begin{equation}\label{eqn:marking-distance}
\fr{1}{\ka} d_{\ca M(Z)}(\mu,\mu')\le \sum_{Y\sbs Z}\{d_Y(\mu,\mu')\}_\be\le \ka\>d_{\ca M(Z)}(\mu,\mu')
\end{equation}
for every $\mu,\mu'\in\ca M(S)$ so that either the middle term is nonzero or $d_{\ca M(Z)}(\mu,\mu')\ge\ka$.


\begin{proof}[Proof of Proposition \ref{prop:linear-factors}]
We explain the last statement first (this is one difference between our argument and the argument in \cite[\S4]{BBKL}). Since the orbit map $G\ra\ca P$ is a quasi-isometric embedding, so too is the orbit map $G\ra\ca M$ (see Lemma \ref{lem:contraction}). Then by Lemma \ref{lem:marking-comparison}, $d_{\ca M(S)}(\mu,g_j\mu)$ is coarsely bounded above by $|g_j|$. Combining this with the fact that subsurface projection of markings is coarsely Lipschitz (see \cite[Prop.\ 2.3]{BBKL}), one deduces the last sentence of the proposition. 

The rest of the proof is very similar to \cite[\S4]{BBKL}. The only difference is that we substitute, in the appropriate place, the distance formula for $\ca P$ (\ref{eqn:distance-general}) for the Masur--Minsky distance formula (\ref{eqn:marking-distance}). (Note, however, that we still use the Masur--Minsky distance formula in a different part of the argument.) Since precise details are contained in \cite[\S4]{BBKL}, we will only sketch the argument. In particular, we will be imprecise with some of the constants (e.g.\ we will not carefully specify $C$). 

Fix $\be\gg0$ sufficiently large (precisely how large will be evident at the end of the argument). Since $G\ra\ca P$ is a q.i.\ embedding, by the distance formula (\ref{eqn:distance-general}), there is a constant $K_1$ so that if $|g|\gg0$, then 
\[|g|\le K_1\sum_{\substack{X\sbs S\\\text{ hole for $\ca P$}}}\big\{d_X(\mu,g\mu)\big\}_\be\]

\paragraph{BBF factors and subsurface order.} By \cite{BBF}, it is possible to decompose the set of essential subsurfaces of $S$ into finitely many equivalence classes (called BBF factors), one of which is $\{S\}$, so that surfaces in the same BBF factor overlap (i.e.\ are neither disjoint nor nested). Combining this fact with the preceding inequality, there is a constant $K_2\ge K_1$ and a BBF factor $\bb Y$ so that 
\begin{equation}\label{eqn:BBF}
|g|\le K_2\sum_{\substack{Y\in\bb Y\\ \text{hole}}}\{d_Y(\mu,g\mu)\}_\be.\end{equation}

Ultimately, we will choose our constant $K$ to be larger than $K_2$, for then the assumption $d_S(\mu,g\mu)<\fr{|g|}{K}$ implies that $\bb Y\neq\{S\}$ in (\ref{eqn:BBF}). Assuming this, let $Y_1,\ldots,Y_n\in\bb Y$ be the holes with $d_Y(\mu,g\mu)\ge\be$. If $\be$ is large enough, it's possible to put the $Y_i$ in increasing order with respect to the subsurface order, which is defined using the Behrstock inequality, c.f.\ \cite[Prop.\ 2.6]{BBKL}. 

\paragraph{Maximal subsurfaces.} 
Define $0=i_0<i_1<\cdots<i_k=n$ inductively so that $i_j$ is the largest index so that 
\[Y_{i_{j-1}+1}\cup\cdots \cup Y_{i_j}\]
is a proper subsurface. Denote this subsurface by $Z_j'$. If $\be$ is sufficiently large, then the number $k$ of these maximal subsurfaces is bounded above by $d_S(\mu,g\mu)$ by \cite[Lem.\ 4.4]{BBKL}. 

We use this to decompose the sum in (\ref{eqn:BBF})
\[\sum_{i=1}^n d_{Y_i}(\mu,g\mu)=\sum_{j=1}^k\>\sum_{\ell=i_{j-1}+1}^{i_j}d_{Y_\ell}(\mu,g\mu)\]

\paragraph{Prefixes of $g$.} Next we bound the preceding sum. Write $g=s_1\cdots s_m$ in the generating set $\Omega$. Let $b>0$ be an upper bound on $d_{\ca M(Z)}(\mu,s\mu)$ and $d_{Z}(\mu,s\mu)$ for $s\in\Omega$ and any subsurface $Z\subset S$. For $1\le j<k$ define $g_j'$ as the largest prefix of $g=s_1\cdots s_m$ so that $d_{Y_{i_j}}(g_j'\mu,g\mu)\ge 2b$. Set also $g_{0}=\id$ and $g_{k}=g$. By \cite[Lem.\ 4.3]{BBKL}, $g_{j-1}'$ is a prefix of $g_{j}'$, and 
\begin{equation}\label{eqn:BBKL}d_{Y_\ell}(\mu,g\mu)-5b\le d_{Y_\ell}(g_{j-1}'\mu,g_j'\mu)\end{equation}
for $i_{j-1}<\ell\le i_j$. (Note: here our notation differs from \cite{BBKL}; it would agree if we wrote $g_{i_j}'$ instead of $g_j'$.) Since $d_{Y_\ell}(\mu,g\mu)\ge\be$ by construction, if we choose $\be$ sufficiently large, we can assume that $d_{Y_\ell}(\mu,g\mu)-5b\ge\fr{1}{2}d_{Y_\ell}(\mu,g\mu)$. Combining this with (\ref{eqn:BBKL}) gives 
\[d_{Y_\ell}(\mu,g\mu)\le 2\>d_{Y_\ell}(g_{j-1}'\mu,g_j'\mu).\]
Compare with \cite[Eqn.\ (5)]{BBKL}. Denoting $g_j=(g_{j-1}')^{-1}g_j'$ and $Z_j=(g_{j-1}')^{-1}(Z_j')$, we have 
\[\begin{array}{rcl}
\sum_{\ell={i_{j-1}+1}}^{i_j}d_{Y_\ell}(\mu,g\mu)&\le& 2\sum_{\ell}d_{Y_\ell}(g_{j-1}'\mu,g_j'\mu)\\[2mm]
&=&2\sum_{\ell} d_{(g_{j-1}')^{-1}(Y_\ell)}(\mu,g_j\mu)\\[2mm]
&\le&2\sum_{Y\sbs Z_j}\{d_Y(\mu,g_j\mu)\}_\be\\[2mm]
&\le&2\ka\>d_{\ca M(Z_j)}(\mu,g_j\mu)
\end{array}\]
The last inequality uses the distance formula for the marking complex (\ref{eqn:marking-distance}). (We know that the distance formula applies because the sum on the left-hand side above is nonzero.) 

\paragraph{Conclusion.} Now summing over $j$, we obtain 
\[|g|\le K_2\sum_{i=1}^nd_{Y_i}(\mu,g\mu)\le 2\kappa K_2\sum_{j=1}^kd_{\ca M(Z_j)}(\mu,g_j\mu)\]
which is the desired inequality. Therefore, if $K=2\kappa K_2$ and $|g|$ is sufficiently large with $d_S(\mu,g\mu)\le\fr{|g|}{K}$, then we obtain $Z_1,\ldots,Z_k\sbs S$ and $g=g_1\cdots g_k$ with the desired properties.
\end{proof}

\section{Purely pseudo-Anosov implies convex cocompact} \label{sec:ccc}

This section contains the proof of Theorem \ref{thm:ccc}. 

Fix $G<\bb G$ finitely-generated, purely pseudo-Anosov. To show that $G$ is convex cocompact in $\Mod(S)$, it suffices to show that the orbit map $G\ra\ca C(S)$ is a q.i.\ embedding; furthermore, by  Proposition \ref{prop:contains-reducible}, it suffices to show the orbit map $G\ra\ca P(V)$ is a q.i.\ embedding. We proceed in the following steps. 

\begin{itemize}
\item \un{Step 1.} We show that $\ca P(V)$ is quasi-isometric to a certain coned-off Cayley graph $\Cone(\Ga_{\bb G},\bb G_E)$. 
\item \un{Step 2.} We show that $G\ra\Cone(\Ga_{\bb G},\bb G_E)$ is a q.i.\ embedding, using in particular a result of \cite{abbott-manning} and the fact that $\bb G$ is virtually free. 
\end{itemize}

\subsection{Step 1: quasi-isometry type of $\ca P(V)$} 

Let $\Ga_{\bb G}$ be the Cayley graph of $\bb G$ with respect to the standard generating set (Figure \ref{fig:generators}). As usual, we view $\Ga_{\bb G}$ as a metric space by giving each edge length 1. Given a subgroup $H<\bb G$, we denote $\Cone(\Ga_{\bb G},H)$ be the space obtained from $\Ga_{\bb G}$ by coning off each translate of $H\sbs\Ga_{\bb G}$. Specifically, $\Cone(\Ga_{\bb G},H)$ is the graph obtained from $\Ga_{\bb G}$ by adding an additional vertex $*_{gH}$ for each coset $gH \in\bb G/H$ and edges from $*_{gH}$ to the vertices corresponding to $gH$. 

More generally, given a $\bb G$-space $X$ with basepoint $x_0$ and a subgroup $H<\bb G$, we denote $\Cone(X,H)$ the space obtained by coning off $gH(x_0)$ for each $gH\in\bb G/H$. 

\begin{prop}[$\ca P(V)$ as a coned-off Cayley graph]\label{prop:cone}
Fix a primitive disk $E\subset V$, and let $\bb G_E$ be its stabilizer in $\bb G$. Then the complexes $\Cone(\Ga_{\bb G},\bb G_E)$ and $\ca P(V)$ are quasi-isometric. 
\end{prop}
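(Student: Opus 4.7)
\emph{Proof plan.} The strategy is a Milnor--Švarc-type argument adapted to coned-off graphs: I will show that $\bb G$ acts cocompactly on $\ca P(V)$ with vertex stabilizer $\bb G_E$, and then verify that in this setting the orbit map $g\mapsto gE$ extends to a quasi-isometry $\Cone(\Ga_{\bb G},\bb G_E)\to\ca P(V)$. The groundwork has already been laid in \S\ref{sec:background}: the vertex stabilizer is computed in Proposition \ref{prop:primitive-stabilizer}, and Remark \ref{rmk:cone} records the combinatorics relating primitive disks to reducing spheres that I will exploit.

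The first task is to establish the needed transitivity. For vertices, I combine Scharlemann's theorem that $\bb G$ acts transitively on reducing spheres with the observation (used inside the proof of Proposition \ref{prop:primitive-stabilizer}) that $\gamma\in\bb G$ interchanges the two primitive disks disjoint from a given reducing sphere; since every primitive disk $D$ is disjoint from some reducing sphere (take a primitive disk disjoint from $D$, which exists by surgery, and apply \cite[Lem.\ 2.2]{cho}), this gives vertex-transitivity. For edges, the $\bb G$-equivariant bijection between edges of $\ca P(V)$ and reducing spheres recorded in \S\ref{sec:background} reduces edge-transitivity to Scharlemann's theorem again. Together with the finite dimensionality of $\ca P(V)$, this shows the $\bb G$-action is cocompact.

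Given this, I would define $\phi:\Cone(\Ga_{\bb G},\bb G_E)\to\ca P(V)$ by sending each group element $g$ and each coset cone vertex $*_{g\bb G_E}$ to $gE$, extended affinely on edges. The map is well-defined because elements of a coset $g\bb G_E$ all have the same image $gE$, so the edges from $*_{g\bb G_E}$ collapse. Each standard generator $s\in\{\alpha,\beta,\gamma,\delta\}$ moves $E$ a bounded distance in $\ca P(V)$ (by inspection), so $\phi$ is Lipschitz, and it is coarsely surjective by vertex-transitivity. For the reverse distance bound, it suffices to show that adjacent vertices $D_1,D_2\in\ca P(V)$ have preimages at uniformly bounded distance in $\Cone(\Ga_{\bb G},\bb G_E)$: by edge-transitivity there are finitely many $\bb G$-orbits of edges, so after enlarging the generating set of $\bb G$ by one element $s$ from each orbit (which does not change the quasi-isometry type of $\Ga_{\bb G}$), we may assume that $D_2=g_1 s\cdot E$ whenever $D_1=g_1 E$. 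Then $g_2\in g_1 s\bb G_E$, so the path $g_1\to g_1 s\to *_{g_1 s\bb G_E}\to g_2$ has length at most $3$ in the coned-off graph.

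The argument is essentially formal once transitivity is in hand; the one place that requires attention is the identification of edges of $\ca P(V)$ with reducing spheres (a $\bb G$-orbit of \emph{triangles} of $\ca P(V)$ might a priori contain several $\bb G$-orbits of edges, but the correspondence with $\ca R(V,\what V)$ rules this out). Alternatively, one can factor through the intermediate space $\ca R(V,\what V)$: as noted in the summary paragraph of \S\ref{sec:background}, $\ca R(V,\what V)$ is quasi-isometric to $\Cone(\Ga_{\bb G},\bb G_P)$, and $\ca P(V)$ is obtained from $\ca R(V,\what V)$ by further coning by Remark \ref{rmk:cone}, which amounts to replacing $\bb G_P=\pair{\alpha,\beta,\gamma}$ by $\bb G_E=\pair{\alpha,\beta,\gamma\delta}$ (note $\bb G_P\cap\bb G_E=\pair{\alpha,\beta}$ has finite index in $\bb G_P$).
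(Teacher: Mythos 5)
Your main argument is correct, and it takes a genuinely different route from the paper. You run a direct orbit-map (Milnor--Schwarz type) argument on $\ca P(V)$ itself: vertex-transitivity (deduced from Scharlemann's transitivity on reducing spheres together with the $\gamma$-swap of the two primitive disks disjoint from a given reducing sphere), edge-transitivity (via the bijection between edges of $\ca P(V)$ and reducing spheres), and the stabilizer computation of Proposition \ref{prop:primitive-stabilizer} then give the quasi-isometry by the standard argument identifying a cobounded $\bb G$-graph with the Cayley graph coned off along the vertex stabilizers. The paper instead factors through the reducing sphere complex: it identifies $\Cone(\ca R(V,\what V),\bb G_E)$ with a subdivision of $\ca P(V)$ (Remark \ref{rmk:cone} plus the transitivity of $\bb G_E$ on $\ca R_E$), replaces $\ca R(V,\what V)$ by the Bass--Serre tree of the splitting (\ref{eqn:splitting}), identifies that tree with $\Cone(\Ga_{\bb G},\bb G_P)\sim\Cone(\Ga_{\bb G},\bb G_E\cap\bb G_P)$, and finishes by iterated coning --- essentially the alternative you sketch in your closing paragraph. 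Your route is more self-contained (no appeal to the tree structure of $\ca R(V,\what V)$ or to the amalgamated splitting beyond the stabilizer computations), while the paper's route makes explicit the chain of coarse cone-offs $\ca M(V,\what V)\ra\ca R(V,\what V)\ra\ca P(V)$ that is used elsewhere. Two details you should surface: connectivity of $\ca P(V)$, which your lower bound on distances silently uses (it follows from connectivity of $\ca R(V,\what V)$ together with Remark \ref{rmk:cone}, or from Cho's work); and the fact that in your edge step the element $g_2$ with $D_2=g_2E$ is only determined up to right multiplication by $\bb G_E$, and the translate $g_1^{-1}D_2$ only matches the chosen orbit representative up to the $\bb G_E$-action, so the connecting path in $\Cone(\Ga_{\bb G},\bb G_E)$ has length bounded by a slightly larger uniform constant than the $3$ you state. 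Neither point affects the conclusion.
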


\begin{proof}
Recall from \S\ref{sec:complexes} that $\ca R(V,\what V)$ denotes the reducing sphere complex. Choose as basepoint the reducing sphere $P$ pictured in Figure \ref{fig:reducing-sphere}, and let $E=E_2$ be the primitive disk pictured in Figure \ref{fig:handlebody}. 

\paragraph{Claim 1.} The complex $\Cone(\ca R(V,\what V),\bb G_E)$ is isomorphic to a simplicial subdivision of $\ca P(V)$. 

{\it Proof of Claim $1$.} This follows from the work of \cite{cho} that was discussed in \S\ref{sec:complexes}: By Remark \ref{rmk:cone}, $\ca P(V)$ is obtained from $\ca R(V,\what V)$ by coning off, for each primitive disk $D$, the subcomplex $\ca R_D\sbs\ca R(V,\what V)$ spanned by reducing spheres that are disjoint from $D$. By the proof of Proposition \ref{prop:primitive-stabilizer}, $\bb G_E$ acts transitively on the vertices of $\ca R_E$, so coning $\ca R_E$ is the same as coning off the orbit of $P$ under $\bb G_E$. Since $\bb G$ acts transitively primitive disks, we conclude that $\ca P(V)$ is obtained from $\ca R(V,\what V)$ by coning the $g\bb G_E(P)$ for each coset $g\bb G_E$. This proves Claim 1. 

Replacing each triangle of $\ca R(V,\what V)$ by the cone on its vertices defines a graph $T$ with a quasi-isometry $T\hra\ca R(V,\what V)$, and $T$ is a tree by \cite[Thm. 1]{akbas} and \cite[\S6]{cho}. Furthermore, $T$ is the Bass--Serre tree for the splitting of $\bb G$ in (\ref{eqn:splitting}) (indeed Akbas and Cho obtain the splitting for $\bb G$ from $T$). 

\paragraph{Claim 2.} The tree $T$ is quasi-isometric to $\Cone(\Ga_{\bb G},\bb G_P\cap\Ga_E)$. 

{\it Proof of Claim $2$.} As mentioned above, $T$ is a Bass--Serre tree for the splitting $G=A*_CB$ in (\ref{eqn:splitting}). In particular, $T$ can be obtained from $\Ga_{\bb G}$ by collapsing cosets of the vertex groups to points. Up to quasi-isometry, this is the same as coning off cosets of the vertex groups. From this one deduces that $T$ is quasi-isometric to $\Cone(\Ga_{\bb G},\bb G_P)$ (since $\bb G_P$ is one of the vertex groups, and the other vertex group is finite). 

Recall that $\bb G_P$ is generated by $\{\alpha,\beta,\gamma\}$ by \cite[\S2]{scharlemann}, and $\bb G_E$ is generated by $\{\alpha,\beta,\gamma\delta\}$ by Proposition \ref{prop:primitive-stabilizer}. It follows from the presentation (\ref{eqn:splitting}) for $\bb G$ that $\bb G_E\cap\bb G_P=\pair{\alpha,\beta}$. In particular, $\bb G_E\cap\bb G_P$ has finite index in $\bb G_P$, so $\Cone(\Ga_{\bb G},\bb G_P)$ and $\Cone(\Ga_{\bb G},\bb G_E\cap \bb G_P)$ are quasi-isometric. This proves Claim 2.

Finally, observe that 
$\Cone(\Cone(\Ga_{\bb G},\bb G_E\cap\bb G_P),\bb G_E)$ and $\Cone(\Ga_{\bb G},\bb G_E)$ are quasi-isometric (this holds generally). Combining all of the above quasi-isometries, gives the desired conclusion: 
\[\begin{array}{rcl}
\ca P(V)&\sim& \Cone(\ca R,\bb G_E)\sim\Cone(T,\bb G_E)\\[2mm]
&\sim&\Cone(\Cone(\Ga_{\bb G},\bb G_E\cap\bb G_P),\bb G_E)\sim \Cone(\Ga_{\bb G},\bb G_E).\qedhere\end{array}\]
\end{proof}

\subsection{Step 2: the orbit map $H\ra\Cone(\Ga_{\bb G},\bb G_E)$} 

In this step we show (Proposition \ref{prop:manning-abbott}) that the orbit map $G\ra\Cone(\Ga_{\bb G},\bb G_E)$ is a q.i.\ embedding. Combined with Step 1, this proves Theorem \ref{thm:ccc}. 

To show $G\ra\Cone(\Ga_{\bb G},\bb G_E)$ is a q.i.\ embedding, we first show $G\hra\Ga_{\bb G}$ is a q.i.\ embedding.

\begin{lem}\label{lem:qi-emb}\
\begin{enumerate}[(i)]
\item Every finitely generated subgroup $H<\bb G$ is quasi-isometrically embedded. 
\item The Goeritz group $\bb G$ is virtually free.
\end{enumerate}
\end{lem}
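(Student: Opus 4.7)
The plan is to prove (ii) first, then deduce (i) using a standard quasiconvexity fact for free groups.

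For (ii), I will use the amalgam (\ref{eqn:splitting}), $\bb G\cong A*_CB$, where $A=(\Z_2\ti\Z_2)\rt\Z$ is virtually infinite cyclic and both $B$ and $C$ are finite. The key point is that the finite subgroup $C=\pair{\al,\ga}$ is normal in the semidirect product $A$, so conjugation by $\be$ is some automorphism $\si$ of $C$ and $A$ itself admits an HNN splitting $A\cong C*_C^{\si}$ whose Bass--Serre line has $C$ as every vertex and edge stabilizer. Because this copy of $C\hra A$ agrees with the one appearing in the amalgam (\ref{eqn:splitting})---both are the subgroup generated by $\al$ and $\ga$---I can blow up each $A$-vertex of the Bass--Serre tree $T$ for $\bb G$ by an $A$-equivariant copy of this line, obtaining a refined tree $T'$ on which $\bb G$ acts cocompactly with every vertex stabilizer a conjugate of $C$ or $B$ and every edge stabilizer a conjugate of $C$; all stabilizers are therefore finite. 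Hence $\bb G$ is the fundamental group of a finite graph of finite groups, and by a classical theorem of Karrass--Pietrowski--Solitar this forces $\bb G$ to be virtually free.

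For (i), let $F<\bb G$ be a free subgroup of finite index provided by (ii). For any finitely generated $H<\bb G$, the intersection $K:=H\cap F$ is finitely generated and of finite index in $H$, so the inclusions $K\hra H$ and $F\hra\bb G$ are quasi-isometries and it suffices to prove $K<F$ is a quasi-isometric embedding. For this I will invoke Stallings's folding: $K$ is represented by a finite core graph $\Ga_K$ immersing into the rose for $F$, and the associated $K$-invariant lift $\wtil\Ga_K\sbs T_F$ to the Cayley tree of $F$ is an isometrically embedded subtree on which $K$ acts freely and cocompactly. The orbit map $K\to T_F$ is therefore a quasi-isometric embedding, and combining with the finite-index quasi-isometries above yields (i).

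The only nontrivial step is the tree-refinement in (ii); once the refined tree $T'$ is in hand, everything else follows from standard Bass--Serre theory for (ii) and from the standard structure theory of finitely generated subgroups of free groups for (i). The main thing to verify carefully is the compatibility of the two copies of $C$ used in the blow-up, but this is transparent from the description of the generators above.
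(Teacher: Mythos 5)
Your proof is correct, and its overall architecture matches the paper's: establish virtual freeness from the amalgam (\ref{eqn:splitting}) by exhibiting $\bb G$ as the fundamental group of a finite graph of finite groups, then reduce (i) to quasiconvexity of finitely generated subgroups of free groups via finite-index considerations. For (ii) your explicit blow-up of the $A$-vertices by the Bass--Serre line of the HNN splitting $A\cong C*_C^{\si}$ is exactly what the paper encodes pictorially in its graph-of-groups figure (a loop edge with group $\Z_2\ti\Z_2$ at the $\Z_2\ti\Z_2$ vertex), so that step is the same argument spelled out in more detail; the normality of $C$ in $A$ that you flag is indeed the point that makes the blow-up equivariant. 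The one genuine divergence is in (i): the paper invokes Marshall Hall's theorem to realize a finitely generated subgroup as a free factor of a finite-index subgroup, hence a retract, hence isometrically embedded; you instead use Stallings core graphs and the fact that the minimal invariant subtree of the Cayley tree is convex and $K$-cocompact. Both are standard proofs of quasiconvexity of finitely generated subgroups of free groups and buy the same thing here; the retraction argument is slightly more algebraic, while your subtree argument is more directly geometric and avoids citing Hall's theorem. No gaps.
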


\begin{proof}
First we explain why (ii) implies (i). This is a well-known consequence of Marshall Hall's theorem \cite[Thm.\ 5.1]{hall}, which states that if $K$ is a finitely generated subgroup of a free group $F$, then $K$ is a free factor of a finite-index subgroup, i.e.\ there exists a finite-index $F'<F$ so that $F'=K*L$ (for some subgroup $L$). By this result, $K$ is isometrically embedded in $F'$ (e.g.\ use the retract $F'\onto K$), and hence $K\hra F$ is a q.i.\ embedding. Clearly a similar argument works for virtually free groups. 

Now we prove (ii). A finitely generated group is virtually free if and only if it can be expressed as a the fundamental group of a finite graph of finite groups; see \cite[Ch.\ II.2.6]{serre-trees} and \cite[Thm.\ 7.3 ff.]{scott-wall}. We show that $\bb G$ has this structure. 

By the presentations of $\bb G$ given by \cite[Thm.\ 2]{akbas} and \cite[\S5]{cho}, $\bb G$ is an amalgamated product as in (\ref{eqn:splitting}). 
Any such group is the fundamental group of a graph of groups of the form pictured in Figure \ref{fig:graph-of-groups}. This proves the lemma.
\end{proof}

\begin{figure}[h!]
\labellist
\pinlabel $\Z_2\ti\Z_2$ at 530 850
\pinlabel $\Z_2\ti\Z_2$ at 820 835
\pinlabel $\Z_2\ti\Z_2$ at 785 900
\pinlabel $(\Z_3\rt\Z_2)\ti\Z_2$ at 1030 850
\pinlabel $\swarrow$ at 720 880
\endlabellist
\centering
\includegraphics[scale=.4]{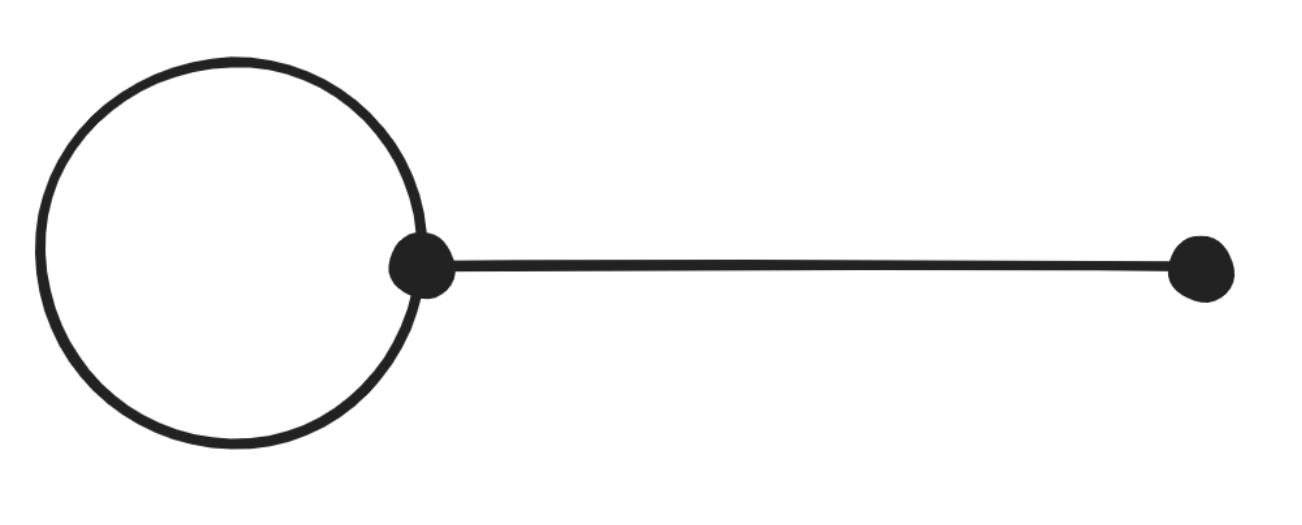}
\caption{A graph of groups description of $\bb G$.} 
\label{fig:graph-of-groups}
\end{figure}

We use Lemma \ref{lem:qi-emb} to prove Proposition \ref{prop:manning-abbott}, which implies that $G\ra\Cone(\Ga_{\bb G},\bb G_E)$ is a q.i.\ embedding (since $G$ is purely pseudo-Anosov).

\begin{prop}\label{prop:manning-abbott}
Fix a subgroup $H<\bb G$. Assume that $H$ acts freely on the set of primitive disks. Then the orbit map $H\ra\Cone(\Ga_{\bb G},\bb G_E)$ is a q.i.\ embedding. 
\end{prop}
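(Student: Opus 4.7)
The plan is to apply the criterion of Abbott--Manning \cite{abbott-manning} referenced in the introduction, which (roughly speaking) asserts that a finitely generated subgroup $H \le \bb G$ has quasi-isometrically embedded orbit in the coned-off Cayley graph $\Cone(\Ga_{\bb G}, \bb G_E)$ provided two conditions hold: (a) the inclusion $H \hookrightarrow \bb G$ is a quasi-isometric embedding, and (b) the intersection $H \cap g \bb G_E g^{-1}$ is finite for every $g \in \bb G$. So my plan is simply to check these two hypotheses and appeal to their result.

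First I would verify (a). Since $H < \bb G$ is finitely generated by hypothesis, and $\bb G$ is virtually free by Lemma \ref{lem:qi-emb}(ii), Lemma \ref{lem:qi-emb}(i) gives immediately that $H \hookrightarrow \bb G$ is a quasi-isometric embedding.

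Next I would verify (b). By Proposition \ref{prop:primitive-stabilizer}, $\bb G_E$ is the stabilizer in $\bb G$ of the primitive disk $E$, so the conjugate $g \bb G_E g^{-1}$ is precisely the stabilizer of the primitive disk $g \cdot E$. By the standing assumption that $H$ acts freely on the set of primitive disks, we have $H \cap g \bb G_E g^{-1} = \{1\}$ for every $g \in \bb G$, which is the required (in fact stronger) finiteness condition.

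The only real point of friction is locating and correctly applying the Abbott--Manning criterion; once the statement is pinned down, the verification of both hypotheses is immediate from the standing assumption on $H$ together with the structural facts already established (Lemma \ref{lem:qi-emb} and Proposition \ref{prop:primitive-stabilizer}).
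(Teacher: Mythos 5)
Your proposal is correct and follows essentially the same route as the paper: quasi-isometric embeddedness of $H$ in $\bb G$ via Lemma \ref{lem:qi-emb}, then the Abbott--Manning criterion, with the free action on primitive disks supplying triviality of $H\cap g\bb G_E g^{-1}$. The only difference is cosmetic: the paper states the Abbott--Manning hypothesis in terms of disjointness of limit sets in $\pa\bb G$ and passes from trivial intersections to disjoint limit sets via the identity $\Lam(G_1)\cap\Lam(G_2)=\Lam(G_1\cap G_2)$ for quasiconvex subgroups of a hyperbolic group, a bridge your phrasing of the criterion absorbs.
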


\begin{proof}

By Lemma \ref{lem:qi-emb} any finitely generated $H<\bb G$ is quasi-isometrically embedded in $\bb G$, hence quasi-convex \cite[III.$\Ga$.3.6]{bridson-haefliger}. By \cite[Corollary 6.13]{abbott-manning}, to show that $H\ra\Cone(\Ga_{\bb G},\bb G_E)$ is a q.i.\ embedding it suffices to show that the limit set $\Lam(H)\sbs\pa\bb G$ is disjoint from the limit sets $\Lam(g\bb G_Eg^{-1})=\Lam(g\bb G_E)$ for each $g\in\bb G$. 

Since $\Lam(G_1)\cap\Lam(G_2)=\Lam(G_1\cap G_2)$ in $\pa\bb G$ for quasi-convex subgroups $G_1,G_2<\bb G$ of a hyperbolic group $\bb G$ \cite[Lem.\ 2.6]{GMRS}, we conclude that $\Lam(H)\cap\Lam(g\bb G_Eg^{-1})$ must be empty, since otherwise $H\cap g\bb G_Eg^{-1}$ would be nontrivial for some $g\in\bb G$, which contradicts the assumption that $H$ acts freely on the set of primitive disks. 
\end{proof}

\section{Characterization of Goeritz pseudo-Anosov elements}\label{sec:nielsen-thurston}

This section contains the proof of Theorem \ref{thm:pseudo}. For this, we fix $g\in\bb G$ and assume $g$ is not pseudo-Anosov. If $g$ has finite order, then $g$ is conjugate into one of the vertex groups in (\ref{eqn:splitting}) by the theory of graphs of groups \cite[Cor.\ 3.8]{scott-wall}. These vertex groups appear as (ii) and (iii) in the statement of Theorem \ref{thm:pseudo}. Therefore, the most interesting case of Theorem \ref{thm:pseudo} is when $g$ has infinite order. 

Assuming $g\in\bb G$ is reducible and infinite order, to prove Theorem \ref{thm:pseudo}, it suffices to show that $g$ (or a power) stabilizes either a finite set of primitive disks or an infinite-diameter genus-1 subsurface (these are classified in \S\ref{sec:holes}). We prove this with a case-by-case analysis of the canonical reduction system $\CRS(g)$. As such, the proof will also immediately prove Corollary \ref{cor:CRS}. 

Recall that a \emph{reduction system} of a reducible mapping class $g$ is a multicurve that is preserved by $g$. Reduction systems for $g$ are partially ordered by inclusion, and the \emph{canonical reduction
system} $\CRS(g)$ is defined as the intersection of all maximal reduction systems of $g$. See \cite[\S13.2]{farb-margalit} for more information and also \cite[Lem.\ 2.6]{BLM} for basic properties of $\CRS(g)$.

In general, there are the following possibilities for a multicurve, like $\CRS(g)$, on a genus-2 surface: 
\begin{enumerate}
\item[(A)] a separating curve
\item[(B)] a nonseparating curve
\item[(C)] two curves: one separating, one nonseparating
\item[(D)] two nonseparating curves
\item[(E)] three curves: one separating, two nonseparating
\item[(F)] three nonseparating curves
\end{enumerate} 

To analyze these possibilities we use some results of Oertel \cite{oertel} about the structure of homeomorphisms of handlebodies: 
\begin{itemize}
\item Fact 1. A multitwist of $S=\pa V$ extends to the handlebody $V$ if and only if there is a collection of essential disks and annuli whose boundary is the multicurve \cite[Thm.\ 1.11]{oertel} (an essential annulus is incompressible and not boundary parallel). 
\item Fact 2. If $X\sbs S$ is an essential subsurface that is preserved by a homeomorphism $g:V\ra V$, then the \emph{characteristic compression body} $Q_X\sbs V$ is also preserved by $g$. See \cite[Cor.\ 2.2]{oertel} for this statement and \cite[\S2]{oertel} for the definition of the characteristic compression body; the main example we will use is that if $X\cong\Sigma_{1,1}$ supports a disk, then $Q_X$ is a solid torus. 
\end{itemize}

\paragraph{Case (A).} Suppose that $\CRS(g)=\{c\}$, where $c$ is separating. We are done if $c$ is a reducing curve (since then we are in case (ii) of Theorem \ref{thm:pseudo}), so assume not. Let $X_1,X_2\cong \Sigma_{1,1}$ be the subsurfaces that $c$ bounds in $S$. We can write $g^2=h_1\circ h_2\circ T_c^n$, where $h_i$ is supported on $X_i$, and $\rest{h_i}{X_i}\in\Mod(X_i)$ is either the identity or pseudo-Anosov. Suppose that $X_1$ is primitively compressible in $V$. Then the characteristic compression body $Q_{X_1}$ is a solid torus. From this we conclude that $X_1$ supports a unique primitive disk $D$, and hence either $\{D\}$ or $\{D,gD\}$ is preserved by $g$ (depending on whether $g(X_1)=X_1$ or $g(X_1)=X_2$). We conclude similarly if $X_1$ (or $X_2$) is primitively compressible in $\what V$. Thus it remains to consider the case $X_1$ and $X_2$ are primitively incompressible in both $V$ and $\what V$. Since $c$ is not reducing, we can assume (without loss of generality) that $c$ does not bound a disk in $V$. Since $T_c^n\notin\bb G$ (by Fact 1), we can assume that $h_1$, say, is pseudo-Anosov. Then $X_1$ is an infinite-diameter hole for $\ca P(V)$ that is preserved by $g^2$. 

\paragraph{Case (B).} Suppose that $\CRS(g)=\{c\}$, where $c$ is nonseparating. Let $X=S\setminus n(c)\cong \Sigma_{1,2}$. We can write $g^2=h\circ T_c^n$, where $h$ is supported on $X$ and preserves each component of $\pa X$, and $\rest{h}{X}\in\Mod(X)$ is either the identity or a pseudo-Anosov. By Fact 1, $T_c^n\notin\bb G$, so $h\neq\id$. If $c$ does not bound a primitive disk, then $X$ is a hole for $\ca P(V)$, and $X$ has infinite diameter since $\rest{g}{X}=h$ is pseudo-Anosov. Since $X\cong \Sigma_{1,2}$, this contradicts Theorem \ref{thm:holes} (classification of holes). Therefore, $c$ bounds a primitive disk, which is fixed by $g$. 

\paragraph{Case (C).} Suppose that $\CRS(g)=\{c_1,c_2\}$, where $c_1$ is separating and $c_2$ is nonseparating. Let $X,Y\cong \Sigma_{1,1}$ be the subsurfaces that $c_1$ bounds, and assume $Y$ contains $c_2$.  We can write $g^2=h\circ T_{c_1}^{n_1}\circ T_{c_2}^{n_2}$, where $h$ preserves $X$, and $\rest{h}{X}\in\Mod(X)$ is either pseudo-Anosov or the identity. We consider these cases separately. 

First assume that $h$ is pseudo-Anosov. Then $X$ has infinite diameter with respect to $\ca P(V)$. Observe that $Y$ must support a primitive disk; otherwise $X$ is an infinite diameter hole for $\ca P(V)$, which implies (by Theorem \ref{thm:holes} and \S\ref{sec:fig8-symmetry}) that $g$ acts as a pseudo-Anosov on both $X$ and $Y$ contrary to our assumption about $\CRS(g)$. Let $D$ be a primitive disk supported in $Y$. The canonical compression body $Q_Y$ is a solid torus preserved by $g$, and $D$ is the unique disk (up to isotopy) in this solid torus, so $g(D)=D$.  

If $\rest{h}{X}\in\Mod(X)$ is the identity, then $g^2$ is a multitwist. By Fact 1, $c_1$ and $c_2$ either bound disks in $V$ or bound an essential annulus in $V$, and the same holds for $\what V$. In any case, (after properly orienting $c_1,c_2$) the homology class $c_1-c_2\in H_1(S)$ is a nonzero element in the kernel of the homomorphism $H_1(S)\ra H_1(V)\oplus H_1(\what V)$. By Lemma \ref{lem:mayer-vietoris} this implies that $H_2(S^3)\neq0$, a contradiction. 

\paragraph{Case (D).} Suppose that $\CRS(g)=\{c_1,c_2\}$, where both $c_1$ and $c_2$ are nonseparating. Since $g$ preserves $\CRS(g)$, we are done if at least one of $c_1$ and $c_2$ bounds a primitive disk in $V$ or $\what V$, so we assume that neither bounds a primitive disk. Denoting $X=S\setminus n(c_1\cup c_2)$, we can write $g^4=h\circ T_{c_1}^{n_1}\circ T_{c_2}^{n_2}$, where $h$ is supported on $X$ and preserves each component of $\pa X$, and $\rest{h}{X}\in\Mod(X)$ is either pseudo-Anosov or the identity. In fact, $\rest{h}{X}$ is not pseudo-Anosov because otherwise $X\cong \Sigma_{0,4}$ would be an infinite-diameter hole for $\ca P(V)$, contrary to Theorem \ref{thm:holes}. Then $g^4$ is a multitwist, and by applying Fact 1 in the same way as in Case (C), we conclude that $H_2(S^3)\neq0$, a contradiction. 

\paragraph{Cases (E) and (F).} Suppose that $\CRS(g)=\{c_1,c_2,c_3\}$, as in (E) or (F). Each complementary component of $\CRS(g)$ in $S$ is a pair of pants, which has finite mapping class group. Thus a power of $g$ is a nontrivial multitwist about the curves in $\CRS(g)$. Then, as in Cases (C) and (D), we apply Fact 1 arrive at a contradiction. Here there are two cases. In the first case $c_1$, say, bounds disks in both $V$ and $\what V$, and $c_2$ and $c_3$ bound annuli in each of $V$ and $\what V$; this implies that $c_1$ is separating, so both $c_2,c_3$ are nonseparating, and this implies that $c_2-c_3$ is a nontrivial element in the kernel of $H_1(S)\ra H_1(V)\oplus H_1(\what V)$. In the second case, without loss of generality, $c_1$ bounds a disk in $V$, $c_3$ bounds a disk in $\what V$, $c_2$ and $c_3$ bound an annulus in $V$, and $c_1$ and $c_2$ bound an annulus in $\what V$. At most one of the $c_i$ is separating, so $c_1-c_2+c_3$ is a nontrivial element of the kernel of $H_1(S)\ra H_1(V)\oplus H_1(\what V)$. 

This completes the proof of Theorem \ref{thm:pseudo}. \qed

\appendix 

\section{Classification of genus-1 fibered knots} 

Recall that a knot $K\subset S^3$ is \emph{genus-$1$ fibered} if $S^3\setminus K$ fibers as a once-punctured torus bundle over the circle. In this appendix we give a proof of the following classical result, first proved in \cite{fico}.  

\begin{thm}[Gonz\'alez-Acu\~na]\label{thm:gof}
The genus-$1$ fibered knots $K\subset S^3$ are the trefoil, its mirror, and the figure-$8$ knot. 
\end{thm}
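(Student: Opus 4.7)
The plan is to convert the problem into a classification of conjugacy classes in $\SL_2(\Z)$ and handle each resulting case explicitly. Let $K\sbs S^3$ be a genus-$1$ fibered knot with monodromy $\mathring\phi\in\Mod(\mathring F)$, where $F=\Si_{1,1}$. By the converse to Lemma~\ref{lem:fibered-link} discussed in \S\ref{sec:fibered-link}, we have $S^3=M_\phi=(F\ti I)\cup_\phi(\what F\ti I)$ for some lift $\phi\in\Mod(F)$, with $K=\pa F\ti\{1/2\}$. The induced action $\phi_*\in\SL_2(\Z)=\Aut H_1(F)$ is independent of the choice of lift, since the kernel of $\Mod(F)\ra\Mod(\mathring F)$ is generated by $T_{\pa F}$, which acts trivially on homology.

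First I would derive an algebraic constraint on $\phi_*$. Applying Mayer--Vietoris to the Heegaard splitting $M_\phi=V\cup_\Si\what V$ with $V=F\ti I$ and $\what V=\what F\ti I$, together with the observation that the inclusion $\Si\hra V$ identifies the two horizontal copies of $H_1(F)$ by the identity while $\Si\hra\what V$ identifies them by $\phi_*$, yields
\[H_1(M_\phi)\cong\Z^2/(\phi_*-I)\Z^2.\]
The vanishing of $H_1(S^3)$ forces $\det(\phi_*-I)=\pm1$, and since $\det\phi_*=1$ this reduces to $|2-\tr\phi_*|=1$, i.e.\ $\tr\phi_*\in\{1,3\}$.

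Next I would classify $\SL_2(\Z)$-conjugacy classes with these traces. For $\tr\phi_*=1$, Cayley--Hamilton gives $\phi_*^2-\phi_*+I=0$, so $\phi_*$ has order $6$; there are exactly two $\SL_2(\Z)$-conjugacy classes of such elements, mutually inverse, represented by the matrices of $T_aT_b$ and $(T_aT_b)^{-1}$. For $\tr\phi_*=3$ the matrix is hyperbolic, and a short calculation (or classical reduction of the associated indefinite binary quadratic form of discriminant $5$, which has class number one) produces a single conjugacy class, represented by $T_aT_b^{-1}$. I would then argue that the two trace-$1$ classes correspond to the right- and left-handed trefoils, and that the trace-$3$ class corresponds to the figure-$8$ knot.

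The main obstacle is this final identification, since the $H_1$ constraint does not by itself distinguish $S^3$ from other integer homology spheres admitting a genus-$2$ Heegaard splitting (such as the Poincar\'e sphere). The cleanest route is to invoke the already-known fact that the trefoil and figure-$8$ knots are themselves genus-$1$ fibered with precisely the three monodromies listed above. For each candidate $\phi$ this produces a genus-$1$ fibered knot $K'\sbs S^3$ whose monodromy agrees with that of $K$; since a fibered knot in a fixed ambient manifold is determined up to isotopy by its monodromy (the fibered structure canonically reconstructs the complement, and in $S^3$ the complement determines the knot), we conclude $K=K'$. Alternatively one may proceed constructively: for each candidate $\phi$, draw the induced genus-$2$ Heegaard diagram and verify by handleslides that it matches Figure~\ref{fig:handlebody}, after which $\pa F\ti\{1/2\}$ can be read off directly; as a sanity check, the Alexander polynomial $\Delta_K(t)=\det(tI-\phi_*)=t^2-(\tr\phi_*)t+1$ already separates the three candidates.
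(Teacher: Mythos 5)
Your proposal is correct, and its first half coincides with Step 1 of the paper's proof of Theorem \ref{thm:I-bundle}: the Mayer--Vietoris computation giving $H_1(M_\phi)\cong\Z^2/(\phi_*-I)\Z^2$, the trace condition $\tr\phi_*\in\{1,3\}$, and the $\SL_2(\Z)$-conjugacy classification (class number one of $\Q(\sqrt5)$ for trace $3$, order six for trace $1$) are all there. Where you genuinely diverge is the endgame. The paper has to contend with the fact that the gluing map lives in $\Mod(F)$ rather than $\Mod(\mathring F)\cong\SL_2(\Z)$, so after pinning down $\phi_*$ there remains a $\Z$'s worth of lifts $\phi_0T_{\pa F}^n$: it shows the standard lifts give $S^3$ by exhibiting Heegaard diagrams equivalent to the standard one (which also lets it draw $K$ explicitly on the Heegaard surface --- a picture used elsewhere in the paper), and it rules out every other lift by interpreting the change of lift as a surgery on $K$ and computing that Casson's invariant $\tfrac12\Delta_K''(1)$ is nonzero for the trefoil and figure-$8$. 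You sidestep the lift ambiguity entirely by working at the level of the fibration monodromy: conjugate monodromies give homeomorphic complements, and Gordon--Luecke then identifies $K$ with the known trefoil or figure-$8$ (only up to mirror, which is harmless since your list is mirror-closed). Your route is shorter but leans on a much deeper theorem and on already knowing the monodromies of the three model knots; the paper's route is self-contained modulo Casson's surgery formula and produces the explicit surface embeddings of the knots needed in \S\ref{sec:holes}. One caveat: your ``constructive alternative'' (Heegaard diagrams and handleslides), taken on its own, inherits the lift problem --- verifying that one lift of each $\phi_*$ yields $S^3$ does not exclude the infinitely many others --- so if you drop Gordon--Luecke you still need something like the Casson invariant step to finish.
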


The author has not been able to access \cite{fico} and does not know the proof given there. The proof below is almost surely different because (for example) we use Casson's invariant, which was defined in 1985, after the publication of \cite{fico}. One might consider the argument below to be a ``modern proof". 

First we reformulate the result. Let $F=\Sigma_{1,1}$, and denote $\mathring F=F\setminus\partial F$. By the discussion in \S\ref{sec:fibered-link}, given a genus-1 fibering
\[\mathring F\ra S^3\setminus K\ra S^1\] there is a Heegaard decomposition 
\[S^3=(F\ti I)\cup_\phi (F\ti I)\]
whose gluing, which matches horizontal boundaries of the two $I$-bundles, is encoded by a mapping class $\phi\in\Mod(F)$. See Figure \ref{fig:I-bundles}. Under a homeomorphism $M_\phi\cong S^3$, the circle $(\partial F)\times 1$ corresponds to a fibered knot. To prove Theorem \ref{thm:gof}, we consider all 3-manifolds of the form $M_\phi=(F\ti I)\cup_\phi(F\ti I)$, determine when $M_\phi\cong S^3$, and determine the knot $(\partial F)\times 1$ in $S^3$ 

For clarity, we denote the two $I$-bundles by $F\times I$ and $\what F\times I$. Fix curves $a,b$ giving a homology basis for $F=\Sigma_{1,1}$ as pictured in Figure \ref{fig:S11}. We write $T_a,T_b$ for the (right) Dehn twists about these curves. 

\begin{figure}
\labellist
\pinlabel $a$ at 70 605
\pinlabel $b$ at 165 610
\endlabellist
\centering
\includegraphics[scale=.6]{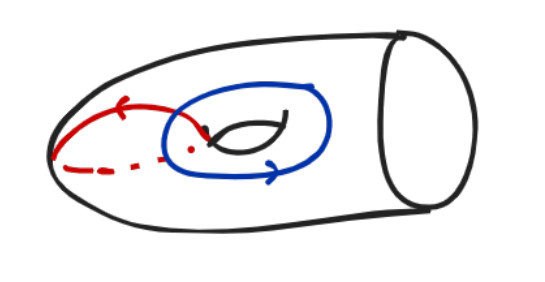}
\caption{Surface $F=\Sigma_{1,1}$ and a homology basis $a,b$. }
\label{fig:S11}
\end{figure}

We deduce Theorem \ref{thm:gof} from Theorem \ref{thm:I-bundle}. 

\begin{thm}[Building $S^3$ from $I$-bundles]\label{thm:I-bundle}
Fix $F=\Sigma_{1,1}$ and $\phi\in\Mod(F)$. Let $M_\phi=(F\ti I)\cup_\phi(\what F\ti I)$ be the closed 3-manifold described above. If $M_\phi\cong S^3$ then $\phi$ is conjugate in $\Mod( \Sigma_{1,1})$ to either $T_aT_b^{-1}$ or $(T_aT_b)^{\pm1}$. The respective fibered knots are the figure-8 and the trefoil and its mirror. 
\end{thm}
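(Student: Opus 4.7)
My plan is to constrain $\tr(\phi_*)$ via homology, pin down the $\SL_2(\Z)$-conjugacy class of $\phi_*$, and then eliminate the residual boundary-twist ambiguity via Dehn surgery. For the first step, I would apply Mayer--Vietoris to $M_\phi = W\cup_\Sigma \what W$ with $W = F\ti I$, $\what W = \what F \ti I$. In the basis $\{a\ti 0, b\ti 0, a\ti 1, b\ti 1\}$ for $H_1(\Sigma)\cong\Z^4$ and the obvious bases for $H_1(W)\oplus H_1(\what W)\cong\Z^4$, the map induced by inclusion has matrix
\[\begin{pmatrix}1&0&1&0\\0&1&0&1\\1&0&p&q\\0&1&r&s\end{pmatrix},\]
where $\phi_* = \left(\begin{smallmatrix}p&q\\r&s\end{smallmatrix}\right)$. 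A one-line row reduction gives determinant $2 - \tr(\phi_*)$, so $H_1(M_\phi)\cong \Z/|2-\tr(\phi_*)|$, and $M_\phi\cong S^3$ forces $\tr(\phi_*)\in\{1,3\}$.

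For the $\SL_2(\Z)$-conjugacy classification, I would invoke: trace-$3$ elements form a single conjugacy class (the class number of the real quadratic order $\Z[(3+\sqrt 5)/2]$ is one), represented by $(T_aT_b^{-1})_*$; trace-$1$ elements form exactly two classes of order $6$, represented by $(T_aT_b)_*$ and $(T_aT_b)_*^{-1}$, inequivalent because the equation $MAM^{-1}=A^{-1}$ with $A = \left(\begin{smallmatrix}0&1\\-1&1\end{smallmatrix}\right)$ forces $\det M = -(x^2+xy+y^2)$, never $+1$ over $\Z$. Because the projection $\Mod(\Sigma_{1,1})\onto\SL_2(\Z)$ has kernel $\pair{T_{\partial F}}$, after conjugating in $\Mod(\Sigma_{1,1})$ one may assume $\phi$ is one of
\[T_aT_b^{-1}\cdot T_{\partial F}^n, \quad T_aT_b\cdot T_{\partial F}^n, \quad (T_aT_b)^{-1}\cdot T_{\partial F}^n \qquad (n\in\Z).\]

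The third step reinterprets the boundary twist as Dehn surgery. The curve $K = \partial F\ti\{1/2\}$ lies on $\Sigma$, and $T_{\partial F}\in\Mod(F)$ is transported by the gluing into the Dehn twist $T_K\in\Mod(\Sigma)$. Composing the Heegaard gluing with $T_K^n$ realizes $1/n$-Dehn surgery on $K$ with respect to the surface framing from $\Sigma$, and since $F\subset W$ is a Seifert surface for $K$ with $\partial F = K$, this surface framing agrees with the Seifert framing. The trefoil, its mirror, and the figure-$8$ are all genus-$1$ fibered knots in $S^3$ with monodromies $T_aT_b$, $(T_aT_b)^{-1}$, and $T_aT_b^{-1}$; by Lemma~\ref{lem:fibered-link} and Remark~\ref{rmk:monodromy}, each fibration supplies at least one lift $\phi_0$ with $M_{\phi_0}\cong S^3$ and $K$ the corresponding knot. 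For any other lift $\phi_0\cdot T_{\partial F}^n$ with $n\neq 0$, $M_{\phi_0\cdot T_{\partial F}^n}$ is a nontrivial $1/n$-surgery on the trefoil, mirror trefoil, or figure-$8$, and this cannot be $S^3$ either by Gordon--Luecke or, more elementarily, by Casson's surgery formula ($\tfrac{1}{2}\Delta_K''(1) = 1$ for both knots, giving $\lambda(M_{\phi_0\cdot T_{\partial F}^n}) = n \neq 0$). Hence $n = 0$ is forced.

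The main obstacle will be the surgery identification in the third step: one must carefully verify that $T_{\partial F}\in\Mod(F)$ transports to $T_K\in\Mod(\Sigma)$ under the Heegaard embedding, and that the $\Sigma$-induced framing on $K$ really equals the Seifert framing from $F$ (rather than differing by a twist correction). Once this framing identification is pinned down, the Mayer--Vietoris computation, the $\SL_2(\Z)$ classification, and the uniqueness of nontrivial $S^3$-surgery make the remaining steps essentially mechanical.
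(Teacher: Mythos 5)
Your overall skeleton coincides with the paper's: a Mayer--Vietoris computation giving $\det = 2-\tr(\phi_*)$ and hence $\tr(\phi_*)\in\{1,3\}$; the $\SL_2(\Z)$ conjugacy classification (class number one for trace $3$, the two order-$6$ classes for trace $1$); and elimination of the boundary-twist ambiguity by interpreting $T_{\pa F}^n$ as surgery on $K=(\pa F)\ti\{1\}$ and applying Casson's surgery formula. The one place you genuinely diverge is the realization step: the paper proves directly, by exhibiting explicit Heegaard diagrams equivalent to the standard one, that the specific classes $T_aT_b^{-1}$ and $(T_aT_b)^{\pm1}$ (with \emph{zero} boundary twist) yield $S^3$, and it identifies the resulting knots by drawing $K$ on the standard Heegaard surface. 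You instead import the known fiberedness of the trefoil, its mirror, and the figure-$8$ together with their monodromies. Your invocation of Gordon--Luecke is a valid (much heavier) substitute for the Casson computation, and your minor slips are harmless: $H_1(M_\phi)$ is $\coker(\phi_*-I)$, which need not be cyclic of order $|2-\tr|$ (irrelevant, since you only need triviality iff $|2-\tr|=1$), and $\tfrac12\De_K''(1)=-1$ for the figure-$8$, not $+1$ (irrelevant, since only nonvanishing matters).

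The point that needs more care is the one your citation quietly leans on. To conclude the theorem as stated you must know that the lift of the monodromy realizing $S^3$ is exactly $T_aT_b^{-1}$ (resp.\ $(T_aT_b)^{\pm1}$) and not $T_aT_b^{-1}T_{\pa F}^{m}$ for some $m\neq0$; your argument shows only that \emph{some} lift in each family gives $S^3$ and that this lift is unique. The fibration of the knot complement determines the monodromy only in $\Mod(\mathring F)\cong\SL_2(\Z)$, so quoting ``the figure-$8$ has monodromy $T_aT_b^{-1}$'' closes the gap only if the quoted fact is the open-book monodromy in $\Mod(F,\pa F)$ with the correct (Seifert-framed) normalization --- which is precisely the content that is usually established by the kind of Heegaard-diagram or Kirby computation the paper carries out in Step 2(i), and which Remark \ref{rmk:monodromy} explicitly warns about. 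You flag the framing identification ($\Sigma$-framing of $K$ versus Seifert framing) as the main obstacle, and that identification is indeed correct and easy since $F\ti\{1\}\sbs\Sigma$ is itself a Seifert surface for $K$; but even granting it, you should either verify $M_{T_aT_b^{-1}}\cong S^3$ directly (as the paper does) or cite the rel-boundary monodromy precisely, since otherwise the boundary-twist ambiguity you set out to kill simply reappears inside the citation.
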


\begin{rmk}\label{rmk:inverse}The $I$-bundle structures on $M_{T_aT_b}$ and $M_{(T_aT_b)^{-1}}$ on $S^3$ differ by an orientation-reversing homeomorphism (Remark \ref{rmk:relation}). This explains why the corresponding fibered knots are mirrors. 
\end{rmk}

\begin{rmk}\label{rmk:capping} Recall (e.g.\ \cite[\S3.6]{farb-margalit}) that $\Mod(F)$ is a central extension
\[1\ra\Z\ra\Mod(F)\xra{p}\Mod(\mathring F)\ra1.\]
The kernel of $p$ is generated by the boundary Dehn twist $T_{\pa F}$. Furthermore, there is an isomorphism $\Mod(\mathring F)\cong\SL_2(\Z)$ given by the action on $H_1(\mathring F)\cong\Z^2$. In particular, working in the basis $a,b$,
\[p(T_aT_b^{-1})=\left(\begin{array}{cc}2&1\\1&1\end{array}\right)\>\>\>\text{ and }\>\>\>p(T_aT_b)=\left(\begin{array}{rr}0&1\\-1&1\end{array}\right).\]
The matrix $p(T_aT_b)$ has order 6, and in $\Mod( F)$ we have the relation $(T_aT_b)^6=T_{\pa F}$.
\end{rmk}

\begin{proof}[Proof of Theorem \ref{thm:I-bundle}]\

\paragraph{Step 1 (homology 3-spheres).} To reduce our search, we first show that if $H_1(M_\phi)=0$, then the action of $\phi$ on $H_1(F)$ is conjugate in $\SL_2(\Z)$ to one of 
\begin{equation}\label{eqn:matrices}\left(\begin{array}{cc}2&1\\1&1\end{array}\right)\>\>\>\text{ or }\>\>\>\left(\begin{array}{rr}0&1\\-1&1\end{array}\right)^{\pm1}.\end{equation}

Apply Mayer--Vietoris to the decomposition $M=(F\ti I)\cup(\what F\ti I)$. 
Denoting $\Sigma\cong\Sigma_2$ be the common boundary of $F\ti I$ and $\what F\ti I$, we have the following exact sequence
\[H_1(\Si)\xra{j} H_1(F\ti I)\oplus H_1(\what F\ti I)\xra{s} H_1(M_\phi)\xra{0} H_0(\Si).\]
It is not hard to show that the connecting homomorphism $H_1(M_\phi)\ra H_0(\Si)$ is zero. Then $H_1(M_\phi)=0$ if and only if $j$ is surjective, and since $j$ is a homomorphism $\Z^4\ra\Z^4$, $j$ is surjective if and only if $j$ is an isomorphism. 

Next we express $j$ in coordinates. Denote by $a_1,b_1\sbs \what F\ti 1\cong\what F=F$ the curves in Figure \ref{fig:S11}, which give a basis for $H_1(\what F\ti I)$. We use $\phi^{-1}(a_1),\phi^{-1}(b_1)\sbs F\ti 1$ as basis for $H_1(F\ti I)$. Let $a_0,b_0\sbs \what F\ti 0$ denote the parallel copies of $a_1,b_1$, and observe that $a_0,b_0,a_1,b_1\sbs\pa(\what F\ti I)=\Si$ is a basis for $H_1(\Si)$. With respect to these choices, $j$ has matrix
\[\left(\begin{array}{cc}
I&I\\
\tau&I
\end{array}\right),\]
where $I$ denotes the $2\times 2$ identity matrix, and $\tau$ is the matrix for the action of $\phi$ on $H_1(F)$. Then $j$ is an isomorphism if and only if 
\[\pm 1=\det\left(\begin{array}{cc}
I&I\\
\tau&I
\end{array}\right)=\det(I-\tau)=2-\tr(\tau).\] 
Consequently, either $\tr(\tau)=3$ or $\tr(\tau)=1$. It is well-known that this implies $\tau$ is conjugate to one of the matrices in (\ref{eqn:matrices}). We briefly explain this below. 

If $\tr(\tau)=3$, then $\tau$ is conjugate in $\SL_2(\Z)$ to $\left(\begin{array}{cc}2&1\\1&1\end{array}\right)$. This is because $\tau$ is a hyperbolic matrix with characteristic polynomial $x^2-3x+1$, and conjugacy classes of these matrices correspond to narrow ideal classes in $\Z[\la]$, where $\la$ is a root of the characteristic polynomial \cite{wallace}. Here $\Z[\la]=\Z\left[\fr{1+\sqrt{5}}{2}\right]$ is the ring of integers in $\Q(\sqrt{5})$. It is well-known that the narrow class number of this ring is $1$. 

If $\tr(\tau)=1$, then since $\tau$ satisfies its characteristic polynomial $x^2-x+1$, it follows that $\tau$ has order $6$. From the action of $\SL_2(\Z)$ on the dual tree to the Farey graph, it follows $\tau$ is conjugate to either $\left(\begin{array}{rr}0&1\\-1&1\end{array}\right)$ or its inverse.

\paragraph{Step 2} ($S^3$ recognition). As noted in Remark \ref{rmk:capping}, elements of $\Mod(F)$ that act on the same on $H_1(F)$ differ by the central Dehn twist $T_{\pa F}$. We want to show two statements: 
\begin{enumerate}
\item[(i)] If $\phi=T_aT_b^{-1}$ or $(T_aT_b)^{\pm1}$, then $M_\phi\cong S^3$. The fibered knots in these cases are the figure-$8$ and the trefoil, respectively. 
\item[(ii)] If $\phi$ is as in (i) and $\psi=\phi\circ T_{\pa F}^n$ where $n\neq0$, then $M_\psi\not\cong S^3$. 
\end{enumerate}

\boxed{(i)} Since $M_\phi$ and $M_\phi^{-1}$ are homeomorphic (Remark \ref{rmk:inverse}), it suffices to prove (i) for $\phi=T_aT_b$ and $\phi=T_aT_b^{-1}$.  
We can conclude quickly by showing that $M$ has a Heegaard diagram that is equivalent to the standard Heegaard diagram for $S^3$.

We first explain this for $\phi=T_aT_b$. We start with the Heegaard diagram, given by the collection of simple closed curves $(x_1,x_2;y_1,y_2)$ on $\Sigma$ pictured in Figure \ref{fig:heegaard-diagrams} (left). Now observe that the curves $(z,x_2;y_1,y_2)$ provide another Heegaard diagram for $M_\phi$, one that is equivalent to the standard Heegaard for $S^3$. To properly understand these claims and Figure \ref{fig:heegaard-diagrams}, see Figure \ref{fig:Sigma-vs-S}, which explains how we are translating between a curve the ``standard" genus-2 surface with curves on $\Sigma=\pa(F\ti I)$. (In particular, be careful not to confuse $\Sigma$ with $S\subset S^3$.) In Figure \ref{fig:heegaard-diagrams}, the curves $y_1,y_2$ are obtained by applying $\phi$ to $x_1,x_2$ on the genus-1 subsurface on the ``right" side of $\Sigma$. Then $y_1,y_2$ bound vertical disks in $\what F\times I$ by construction. The curve $z$ bounds a disk in $F\times I$ because it is disjoint from $x_1$ and $x_2$.

\begin{figure}[h!]
\labellist
\pinlabel $L$ at 300 610
\pinlabel $L$ at 85 545
\endlabellist
\centering
\includegraphics[scale=.5]{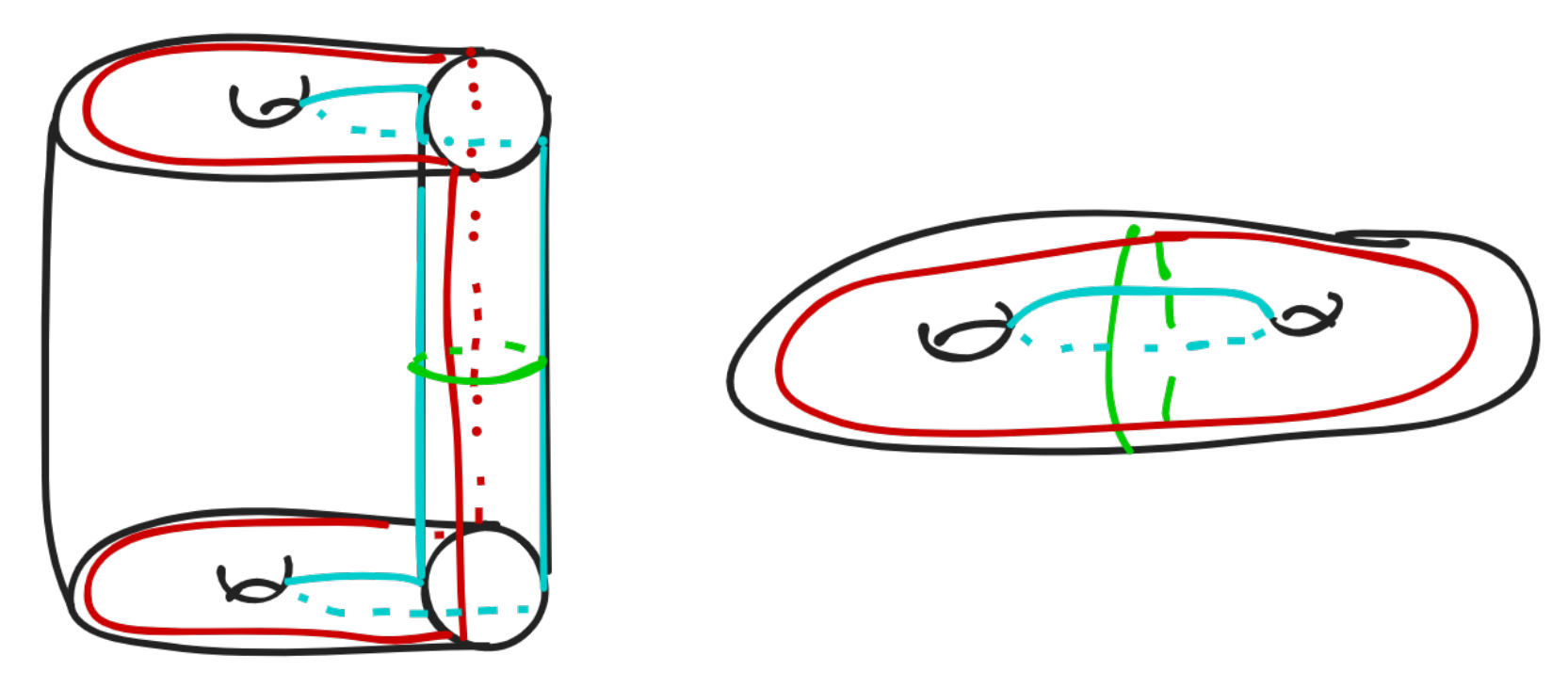}
\caption{Curves on $\Sigma=\pa(F\ti I)$. These curves bound vertical disks.}
\label{fig:Sigma-vs-S}
\end{figure}

\begin{figure}[h!]
\labellist
\small
\pinlabel $x_1$ at 320 548
\pinlabel $x_2$ at 408 483
\pinlabel $y_1$ at 280 480
\pinlabel $y_2$ at 545 480
\pinlabel $z$ at 350 500
\pinlabel $x_1$ at 620 545
\pinlabel $x_2$ at 724 483
\pinlabel $y_1$ at 590 480
\pinlabel $y_2$ at 865 480
\pinlabel $z$ at 650 480
\endlabellist
\centering
\includegraphics[scale=.5]{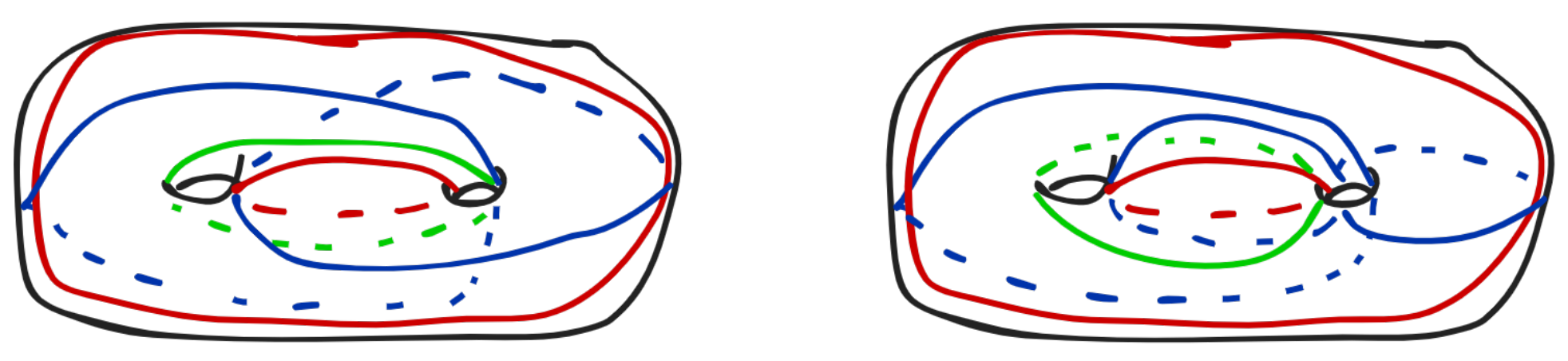}
\caption{Curves on $\Sigma$ (not $S$!) that give Heegaard diagrams for $(F\ti I)\cup(\what F\ti I)$. The cases $\phi=T_aT_b$ (left) and $\phi=T_aT_b^{-1}$ (right).} 
\label{fig:heegaard-diagrams}
\end{figure}


The argument for $\phi=T_aT_b^{-1}$ is similar. We start with the Heegaard diagram $(x_1,x_2;y_1,y_2)$ in Figure \ref{fig:heegaard-diagrams} (right). Then we observe that $(x_1,x_2;y_1,z)$ is also a Heegaard diagram for $M_\phi$, and it is equivalent to the standard Heegaard for $S^3$. 

The last sentence of (i) will be proved in the process of proving (ii). 

%

\boxed{(ii)} Fix $\phi\in\{T_aT_b^{-1},(T_aT_b)^{\pm1}\}$ and $n\neq0$ and define $\psi=\phi\circ T_{\pa F}^n$. We show $M_\psi$ is not homeomorphic to $S^3$. This can be deduced using Casson's invariant $\lambda(M)\in\Z$ for homology 3-spheres and its interpretation in terms of the Torelli group, as we now explain. Replacing the $\phi$ by $\psi$ changes the gluing changes the gluing of the handlebodies $F\ti I,\what F\ti I$ by a separating twist in $\Mod(\Sigma)$. By \cite[Lem.\ 3.4]{morita_casson} this implies that $M_\psi$ is obtained from $M_\phi=S^3$ by $(-1)$-surgery on $K:=(\pa F)\ti\{1\}$. Then by a theorem of Casson (see \cite[pg. xii]{akbulut-mccarthy}), one has
\[\lambda(M_\psi)=\frac{1}{2}\Delta_K''(1),\]
where $\De_K''$ is the second derivative of the Alexander polynomial. Since $\lambda(S^3)=0$, to show $M_\psi\neq S^3$, it suffices to determine $K$ and show $\De_K''(1)\neq0$. 

To identify $K$, we find a homeomorphism from $M_\phi=(F\ti I)\cup (\what F\ti I)$ to the standard Heegaard splitting $S^3=V\cup \what V$, and determine the image of $K$ as a curve on $S=\pa V$.

First we treat the case $\phi=T_aT_b^{-1}$. To draw $K$ on $V$ in this case, we first draw $K$ on $\Si=\pa(F\ti I)$ together with the curves in Figure \ref{fig:heegaard-diagrams} that give a Heegaard diagram that is equivalent to the standard one. Using this, we can we can easily transport $K$ to $S=\pa V$; the resulting curve is the figure-8 knot. See Figures \ref{fig:knot2} and \ref{fig:fig8}.

\begin{figure}[h!]
\labellist
\pinlabel $x_1$ at 690 395
\pinlabel $x_2$ at 790 362
\pinlabel $y_1$ at 665 355
\pinlabel $y_2$ at 715 360
\pinlabel $K\sbs \Sigma$ at 800 300
\pinlabel $x_1$ at 905 355
\pinlabel $x_2$ at 1135 362
\pinlabel $K\sbs S$ at 1040 300
\endlabellist
\centering
\includegraphics[scale=.6]{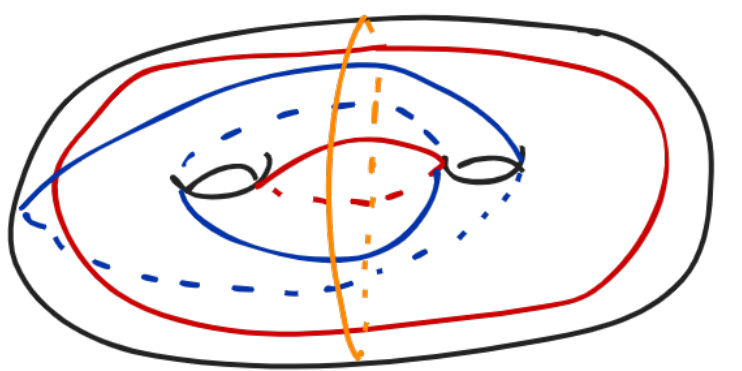}\hspace{.2in}\includegraphics[scale=.6]{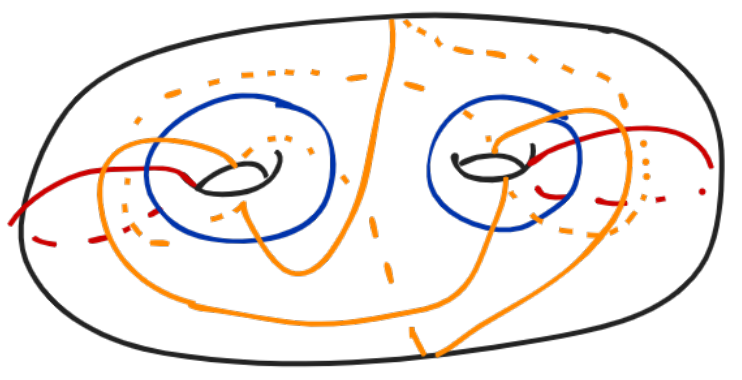}\\
\caption{Left: $\Sigma=\pa(F\ti I)$, a Heegaard diagram (equivalent to the standard one on $S^3$), and the curve $K=(\pa F)\ti1$. Right: $S=\pa V$, the standard Heegaard diagram, and the corresponding curve $K$.}
\label{fig:knot2}
\end{figure}

\begin{figure}[h!]
\labellist
\endlabellist
\centering
\includegraphics[scale=.6]{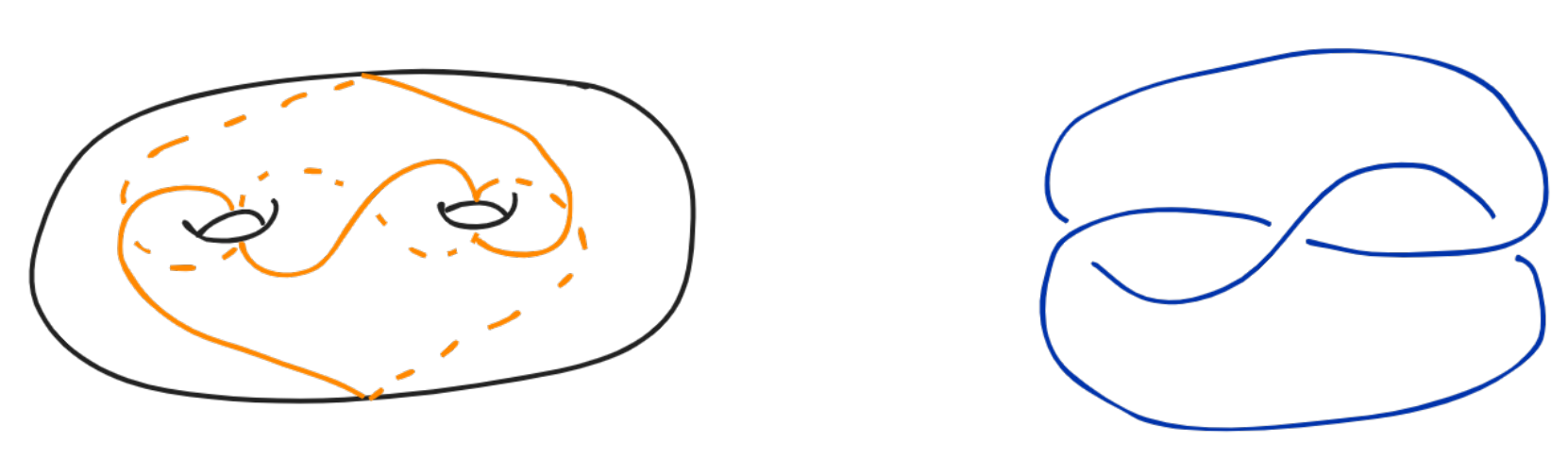}
\caption{Left: $K=\pa X$ in the case $\phi=(T_aT_b)^{\pm1}$, pictured on $S=\pa V$. Right: the trefoil knot.}
\label{fig:knot1}
\end{figure}


For the $K$ the figure-8 knot, $\De_K=3-x-x^{-1}$, and $\De_K''(1)=-2\neq0$. This shows that $M_\psi\not\cong S^3$ when $\psi=\phi\circ T_{\pa}^n$ $(n\neq0)$ and $\phi=T_aT_b^{-1}$. 

The case $\phi=(T_aT_b)^{\pm1}$ can be argued similarly. If we draw $K\subset\Sigma$ on $S$ we arrive at the curve Figure \ref{fig:knot1}, which is the trefoil knot. For $K$ the trefoil knot $\Delta_K=x+x^{-1}-1$, and $\De_K''(1)=2\neq0$ as desired. This shows that $M_\psi\not\cong S^3$ in the case $\psi=\phi\circ T_{\pa}^n$ $(n\neq0)$ and $\phi=(T_aT_b)^{\pm1}$.


This completes the proof of Theorem \ref{thm:I-bundle}.
\end{proof}

\bibliographystyle{alpha}
\bibliography{goeritz}

\section*{Notation guide} 

$V,\what V$ \Tab genus-2 handlebodies in Heegaard splitting of $S^3$

$S$ \Tab genus-2 Heegaard surface embedded in $S^3$

$\Sigma_{g,n}$ \Tab compact surface of genus $g$ with $n$ boundary components

$\ca C(S)$ \Tab curve complex of $S$

$\ca D(V)$ \Tab disk complex of $V$

$\ca P(V)$ \Tab complex of primitive disks in $V$

$\ca R(V,\what V)$ \Tab reducing sphere complex

$D,E$ \Tab primitive disks in $V$

$\what D,\what E$ \Tab primitive disks in $\what V$

$\bb G$ \Tab the genus-2 Goeritz group

$n(c)$ \Tab regular neighborhood of (multi)curve $c\subset S$

$T_c$ \Tab the right Dehn twist about simple closed curve $c$

$X,Y,Z$ \Tab essential subsurfaces of $S$

$i(a,b)$ \Tab geometric intersection number of simple closed curves $a,b$

$(B,a,b)$ \Tab surgery bigon/boundary compression

$\alpha,\beta,\gamma,\delta$ \Tab standard generators for the genus-2 Goeritz group

$d_{\ca C(S)}(a,b)$ \Tab distance in the 1-skeleton of the curve complex $\ca C(S)$

$d_X(a,b)$ \Tab subsurface projection distance

$T$ \Tab an $I$-bundle embedded in $V$

$\pa_hT,\pa_vT$ \Tab horizontal and vertical boundaries of $I$-bundle $T$

\vspace{.2in}
Bena Tshishiku \\
Department of Mathematics, Brown University\\ 
\emph{Email address}: \texttt{bena\_tshishiku@brown.edu}

\end{document}